\newcommand{\n}{\noindent}
\newcommand{\vp}{\varepsilon}
\newcommand{\bb}[1]{\mathbb{#1}}
\newcommand{\cl}[1]{\mathcal{#1}}
\newcommand{\ovl}{\overline}
\theoremstyle{plain}
\newtheorem{lem}{Lemma}[section]
\newtheorem{pro}[lem]{Proposition}
\newtheorem{thm}[lem]{Theorem}
\newtheorem{cor}[lem]{Corollary}
\theoremstyle{definition}
\newtheorem{defn}[lem]{Definition}
\theoremstyle{remark}
\newtheorem{rem}[lem]{Remark}
\numberwithin{equation}{section}
\def\ie{{\it i.e. \ }}
\def\EE{\bb E}
\def\N{\bb N}
\def\F{\bb F}
\def\C{\bb C}
\begin{document}

\title{Martingale inequalities and Operator space structures on $L_p$}

\author{by\\
Gilles Pisier\\
Texas A\&M University\\
College Station, TX 77843, U. S. A.\\
and\\
Universit\'e Paris VI\\
Equipe d'Analyse, Case 186, 75252\\
Paris Cedex 05, France}

\maketitle

\tableofcontents

\section*{Introduction}

\indent 
In probability theory and harmonic analysis, the classical  inequalities for martingales in $L_p$
due to Don Burkholder, and also to Richard Gundy and Burgess Davis have had
an invaluable impact, with multiple interaction with other fields. See \cite{Bur} for a recent review.\\ 
In \cite{PX1}, a non-commutative version of Burkholder's martingale inequalities is given. This is valid in any non-commutative $L_p$-space say $L_p(\tau)$ (associated to a finite trace $\tau$ on a von Neumann algebra) for any $1<p<\infty$. In particular this applies to martingales of the form $f_n = \sum^n_1 \vp_k\otimes x_k$ where $x_k\in L_p(\tau)$ and $(\vp_n)$ is a standard random choice of signs $\vp_n = \pm 1$ (equivalently we can think of $(\vp_n)$ as the Rademacher functions on [0,1]).
In that case, Burkholder's inequality reduces to Khintchine's inequality, for which the non-commutative case is due to Lust--Piquard (\cite{LP}).

In the classical setting, Khintchine's inequality expresses the fact that the closed span in $L_p$ of $\{\vp_n\}$ is isomorphic to $\ell_2$ (as a Banach space). If $\{\vp_n\}$ is replaced 
by a sequence $(g_n)$ of independent standard Gaussian
random variables, the span in $L_p$ becomes isometric to $\ell_2$.  In the recently developed theory of operator spaces, Lust--Piquard's non-commutative Khintchine inequalities can be interpreted (see \cite[p. 108]{P4}) as saying that the span in $L_p$ of $[\vp_n]$ (or $(g_n)$)  is completely isomorphic to a Hilbertian operator space that we will denote here by ${\cl K}{\cl H}_p$. The precise description of ${\cl K}{\cl H}_p$ is not important for this paper, but for reference let us say merely that, for $2<p<\infty$ (resp. $1\le p<2$),
 ${\cl K}{\cl H}_p$ has the structure of intersection (resp. sum) of row and column spaces in the Schatten class $S_p$. For the non-commutative Burkholder inequalities from \cite{PX1}, the situation is analogous:\ the relevant square function combines the two cases of ``row'' and ``column'' in analogy with
  the definition of ${\cl K}{\cl H}_p$. 

This result was a bit of a disappointment because there is a canonical notion of ``operator Hilbert space,'' namely the space $OH$ from \cite{P3} and one would have expected in closer analogy to the classical case, that the span in $L_p$ of $[\vp_n]$ (or $(g_n)$) should be completely isomorphic to $OH$. In the preceding references, the spaces $L_p$ (commutative or not) are always equipped with what we call their ``natural'' operator space structure defined using complex interpolation. Then the space ${\cl K}{\cl H}_p$ is
completely isomorphic to $OH$ only when $p=2$.

In the present paper, we take a different route. We will equip $L_p$ with another o.s.s., hopefully still rather natural, but limited to $p$ equal to an even integer (for some of our results we even assume $p=2^k$). Roughly we imitate the classical idea that $f\in L_4$ iff $|f|^2\in L_2$ in order  to define our new o.s.s.\ on $L_4$ and then we iterate the process to define the same for $L_6,L_{8}$ and so on. Thus   given  the  space $L_p(\Omega,\mu)$ we can associate to it (assuming $p\in 2{\bb N}$ and $p\ge 2$) an operator space that we denote by $\Lambda_p(\Omega,\mu)$ that is isometric to the original $L_p(\Omega,\mu)$ as a Banach space.

It turns out that with this new structure a quite different picture emerges for  Khintchine's (or more generally Burkholder's) inequalities. Indeed, we will prove that the span of $\{\vp_n\}$ in $\Lambda_p(\Omega,\mu)$ is completely isomorphic to the space $OH$, (i.e.\ to $\ell_2$ equipped with the o.s.s.\ of $OH$). Similarly we will prove martingale inequalities involving a square function that is simply defined as $S = \sum d_n\otimes \bar d_n$ when $(d_n)$ is a martingale difference sequence.

We limit our treatment to $L_p$ for $p$ an even integer. Thus we stopped shy of making the obvious extensions:\ we can use duality to define $\Lambda_q$ for $1<q<2$ of the form $q = \frac{2n}{2n-1}$ for some integer $n>1$ and then use complex interpolation to define $\Lambda_p$ in the remaining intermediate values of $p$'s or $q$'s. While this procedure makes perfectly good sense it is rather ``unnatural'' given that if $p(0), p(1)$ and $p(\theta)$ are even integers such that $p_0<p_\theta <p_1$ and $p(\theta)^{-1} = (1-\theta) p(0)^{-1} + \theta p(1)^{-1}$, the space $\Lambda_{p(\theta)}$ does \emph{not} coincide (in general) with $(\Lambda_{p(0)}, \Lambda_{p(1)})_\theta$. This happens for instance 
when $p(0)=2,p(1)=\infty$ and $\theta=1/2$, since the spaces $L_4$ and
$\Lambda_4$ differ as operator spaces.

We will now review the contents of this paper. After some general background in \S \ref{sec1}, we explain in \S \ref{sec2} some basic facts that will be used throughout the paper. The main point is that we use an ordering on $B(H)\otimes \ovl{B(H)}$ denoted by $T\prec S$ that is such that
\[
0 \prec T \prec S \Rightarrow \|T\|\le \|S\|
\]
where the norm is the minimal (or spatial) tensor product on $B(H) \otimes_{\min} \ovl{B(H)}$, i.e.\ the norm induced by $B(H\otimes_2 \ovl H)$. As we explain in \S 2, it is convenient to abuse the notation and to extend the notation $T\prec S$ to pairs, $T,S$ in $B(H_1) \otimes\cdots\otimes B(H_{2n})$ when the collection $\{H_1,\ldots, H_{2n}\}$ can be permuted to be of the form $\{K_1,\ldots, K_n, \ovl K_1,\ldots, \ovl K_n\}$ so that $T,S$ can be identified with elements of $B({\cl H})\otimes \ovl{B({\cl H})}$ with ${\cl H} = K_1 \otimes\cdots\otimes K_n$.

In \S \ref{sec3}, we use the properties of this ordering to prove a version of H\"older's inequality that allows us to introduce, for each even integer $p$, a new operator space structure on the space $L_p(\Omega,{\cl A},\mu)$ associated to a general measure space $(\Omega,{\cl A},\mu)$. We follow the same route that we used for $p=2$ in \cite{P3} to define the space $OH$ starting from Haagerup's Cauchy--Schwarz inequality. Suitable iterations of the latter leads to versions of H\"older's inequality in $L_4,L_6,L_8,\ldots$ from which a specific norm can be introduced on $B(H)\otimes  L_p(\mu)$ that endows $L_p(\mu)$ with an operator space structure. We denote by $\Lambda_p(\Omega,\Sigma,\mu)$ the resulting operator space. It is natural
to identify  $\Lambda_1(\Omega,\Sigma,\mu)$ with $L_1(\mu)$ equipped with its maximal operator space structure in the Blecher-Paulsen sense (see e.g. \cite{ER,P6} for details on this).

Some of the calculations involving $\Lambda_p(\mu)$ are rather satisfactory, e.g.\ for any $f_j\in B(H_j)\otimes L_p(\mu)$, $j=1,\ldots, p$ the pointwise product $L_p\times\cdots\times L_p\to L_1$ applied to $(f_1,\ldots, f_n)$ leads to an element denoted by $F=f_1 \dot\otimes \cdots \dot\otimes f_n$ in $B(H_1)\otimes\cdots\otimes B(H_n)\otimes L_1$, and if ${\cl H} = H_1 \otimes\cdots\otimes H_n$ we have
\[
\|F\|_{B({\cl H})\otimes_{\min} \Lambda_1(\mu)} \le \Pi\|f_j\|_{B(H_j)\otimes_{\min} \Lambda_p}.
\]
In particular, if $q\le p$ are even integers and if $\mu(\Omega)=1$, the inclusion $\Lambda_p(\mu)\to \Lambda_q(\mu)$ is completely contractive. We also show that all conditional expectations are completely contractive on $\Lambda_p(\mu)$. When $p\to\infty$, we recover the usual  operator space structure of $L_\infty(\mu)$ as the limit of those of $\Lambda_p(\mu)$.

In \S \ref{sec4} we prove a version of Burkholder's square function inequalities for martingales in $\Lambda_p(\mu)$. If $(d_n)$ is a sequence of martingale differences in $B(H)\otimes L_p$ the relevant square function is $S = \Sigma d_n\dot\otimes \bar d_n$. Here we restrict to $p=2^k$ for some $k$, but
at least one side of the inequality is established for any even $p$  by a different argument in \S \ref{sec11}. We also prove an analogue for $\Lambda_p$ of  the inequality due to Stein expressing that 
for any sequence $(f_n)$ in $L_p$ the $L_p$-norm
of $ ( \sum |f_n|^2)^{1/2}$ dominates that of $ ( \sum |\EE_n(f_n)|^2)^{1/2}$ for any $1<p< \infty$.

In \S \ref{sec5}, we consider the conditioned square function $\sigma = \sum \EE_{n-1}(d_n\dot\otimes \bar d_n)$ and we prove a version of the Burkholder--Rosenthal inequality adapted to $\Lambda_p(\mu)$. As can be expected, the preceding inequalities imply the complete boundedness of the multipliers called ``martingale transforms''. Not surprisingly, in \S \ref{sec6} we can also prove similar results for the Hilbert transform, say on ${\bb T}$ or ${\bb R}$, using the well known Riesz-Cotlar trick. 

In \S \ref{sec7}, we compare the ``old'' and the ``new'' o.s.s.\ on $L_p(\mu)$. We show that the (isometric) inclusion $L_p(\mu)\to \Lambda_p(\mu)$ is completely contractive, but its inverse is not completely bounded and we show that its c.b.\ norm in the $n$-dimensional case grows at least like $n^{\frac12(\frac12-\frac1p)}$.

In \S \ref{sec8}, we turn to the non-commutative case. We introduce the space $\Lambda_p(\tau)$ associated to a non-commutative measure space $(\cl M,\tau)$. By this we mean a von Neumann algebra $\cl M$ equipped
with a semi-finite faithful normal trace $\tau$. 

In \S \ref{compa}, we repeat the comparison made in \S \ref{sec7}. It turns out that the non-commutative
case is significantly more intricate, mainly because the (joint) complete boundedness
of the product map $L_p \times L_q\to L_r $ ($p^{-1}+ q^{-1}=r^{-1}$) no longer holds in general
(I am grateful to Quanhua Xu for drawing my attention to this).
This leads us to consider yet another operator space structure on $L_p(\tau)$ that we denote
by ${\cl L}_p(\tau)$ for which  it still holds (see Proposition  \ref{propro}).
 When $p\in 2\N$, we then extend the main result of \S \ref{sec7}
by showing (see Corollary \ref{corcor}) that the identity defines a completely contractive map 
 ${\cl L}_p(\tau)\cap  { {\cl L}_p(\tau)}^{op}\to {\Lambda}_p(\tau)  $.
In the commutative case ${ L}_p(\tau)$ and ${\cl L}_p(\tau)$ are identical. 
We give estimates of the growth in $n$  of the c.b. norms of the maps 
$L_p(M_n,tr) \to \Lambda_p(M_n,tr)$ 
and  $\Lambda_p(M_n,tr) \to L_p(M_n,tr)$  induced by the identity.

In \S \ref{sec9bis}, assuming $\tau(1)=1$,  we study the ``limit"
when $p\to \infty$ of the spaces $\Lambda_p(\cl M,\tau)$, that we denote by
$\Lambda_\infty(\cl M,\tau)$. Surprisingly, we are able to identify the resulting
operator space: Indeed, when  $(\cl M,\tau)$ is $M_n$ equipped with its normalized trace
then $\Lambda_\infty(\cl M,\tau)$ can be identified completely isometrically
with $CB(OH_n,OH_n)$, i.e. the space of c.b. maps on the 
$n$-dimensional operator Hilbert space. More generally (see Theorem \ref{mn}), when $\cl M\subset B(H)$, the o.s.s. of
$\Lambda_\infty(\cl M,\tau)$  can be identified  with the one induced
by $CB(OH,OH)$, where by $OH$ we mean $H$ equipped with its
unique self-dual structure in the sense of \cite{P6}. The verification of these facts
leads us to several observations on the space $CB(OH,OH)$ that may be of independent interest.
In particular, the latter space satisfies a curious identity (see \eqref{g}) that appears like an
operator space analogue of the Gelfand axiom for $C^*$-algebras. Furthermore, to any operator space
$E\subset B(H)$, we associate the operator space $\underline{E}\subset CB(OH)$
(equipped with the  operator space structure induced by $CB(OH)$), and we show
that if $F$ is another operator space, for any cb-map $u:\ E\to F$ we have
$$\|u:\ \underline{E}\to \underline{F}\|_{cb}\le \|u:\  {E}\to  {F}\|_{cb}.$$

In \S \ref{sec10} we extend the Burkholder inequalities except that---for the moment---we can only prove the two sides of the martingale inequality for $p=4$. Note however that the right hand side
is established for all even $p$ in \S \ref{sec11} by a different method based on the notion of $p$-orthogonality.

Nevertheless, in \S \ref{sec9}, using Buchholz's ideas in \cite{Buch2} we can prove versions of the non-commutative Khintchine inequalities for $\Lambda_p(\tau)$ with optimal constants for any even integer $p$. We may consider spin systems, free semi-circular (or ``free-Gaussian'') families, or the free generators of the free group in the associated free group factor. Returning to the commutative case this yields the Rademacher function case with optimal constants.
The outcome is that  the span of each of these sequences in $\Lambda_p$ is completely isomorphic to $OH$ and completely complemented.

In \S \ref{sec12} we transplant the results of \cite{Har} (see also \cite{Har2}) 
on non-commutative  lacunary series to the setting of $\Lambda_p(\tau)$.
We use the view point of \cite{P5}  to abbreviate the presentation.
Let $\Gamma$ be a  discrete group. We will consider $\Lambda(p)$-sets in Rudin's sense inside $\Gamma$.
The main point is that 
a certain class of $\Lambda(p)$-subsets of $\Gamma$  again spans a copy of $OH$ in the
operator spaces   $\Lambda_p(M,\tau)$ when $M$ is the von Neumann algebra of    $\Gamma$. 
For our new o.s.s. the relevant notion
of $\Lambda(p)$-set is slightly more general than the one needed in \cite{Har}. 

Lastly in the appendix \S \ref{sec13} we include a discussion of elements that have $p$-th moments
defined by pairings as in Buchholz's paper \cite{Buch2}. We simply translate in the abstract language of tensor products
some very well known classical  ideas on Wick products for Gaussian random variables.
Our goal is to emphasize the similarity between the Gaussian case
and the free or $q$-Gaussian analogues. We feel this appendix fits well with the extensive
use of tensor products throughout the sections preceding it.

The 
proof of the  initial non-commutative martingale inequalities of \cite{PX1} 
is the main source of inspiration for the present results. We also make crucial use
of our version for the space $\Lambda_p$ of Junge's ``dual Doob" inequality from \cite{Ju}. 
Although we take a divergent route, we should point out to the reader that
the methods of \cite{PX1} have been considerably improved 
in a series of important  later works, such as \cite{JX0,JX1,JX2}, 
by M. Junge and Q. Xu, or \cite{Ra1,Ra2,PRa} by N. Randrianantoanina
and J. Parcet. See also \cite{JPX, Ra3,Ra4,RX,X3} for progress related
to Khintchine's inequalities. The reader is referred to these papers
to get an idea of what the ``main stream" on 
non-commutative martingale and Khintchine inequalities is about.

\section{Background on operator spaces}\label{sec1}
In this section we summarize the Theory of Operator Spaces. We refer either to  \cite{ER} or \cite{P6}
for full details.

We recall that an operator space is just a closed subspace of the algebra $B(H)$ of all bounded operators on a Hilbert space $H$.

Given an operator space $E\subset H$, we denote by $M_n(E)$ the space of $n\times n$ matrices with entries in $E$ and we equip it with the norm induced by that of $M_n(B(H))$, i.e.\ by the operator norm on $H\oplus\cdots\oplus H$ ($n$ times). We denote by $E\otimes F$ the \emph{algebraic} tensor product of two vector spaces $E,F$.

If $E\subset B(H)$ and $F\subset B(K)$ are operator spaces, we denote by $E\otimes_{\min} F$ the closure of $E\otimes F$ viewed as a subspace of $B(H\otimes_2 K)$. We denote by $\|~~\|_{\min}$ the norm induced by $B(H\otimes_2 K)$ on $E\otimes F$ or on its closure $E\otimes_{\min} F$.
A linear map $u\colon \ E\to F$ between operator spaces is called completely bounded (c.b.\ in short) if the associated maps $u_n\colon \ M_n(E)\to M_n(F)$ defined by $u_n([x_{ij}]) = [u(x_{ij})]$ are bounded uniformly over $n$, and we define 
\[
 \|u\|_{cb} = \sup_{n\ge 1} \|u_n\|.
\]
We say that $u$ is completely isometric if $u_n$ is isometric for any $n\ge 1$ and that $u$ is a complete isomorphism if it is an isomorphism with c.b.\ inverse.

By a well known theorem due to Ruan (see \cite{ER,P6}), an operator space $E$ can be characterized up to complete isometry by the sequence of normed spaces $\{M_n(E)\mid n\ge 1\}$. The data of the sequence of norms on the spaces $M_n(E)$ $(n\ge 1)$ constitutes the operator space structure (o.s.s.\ in short) on the vector space underlying $E$. Note that $M_n(E) = B(H_n) \otimes_{\min} E$ where $H_n$ denotes here the $n$-dimensional Hilbert space.

Actually, the knowledge of the o.s.s.\ on $E$ determines that of the norm on $B(H) \otimes E$ for any Hilbert space $H$. Therefore we (may and) will take the viewpoint that the o.s.s.\ on $E$ consists of the family of normed spaces (before completion)
\[
 (B(H)\otimes E, \|\cdot\|_{\min}),
\]
where $H$ is an arbitrary Hilbert space. The reader should keep in mind that we may restrict either to $H=\ell_2$ or to $H=\ell^n_2$ with $n\ge 1$ allowed to vary arbitrarily. (If we fix $n=1$ everywhere, the theory reduces to the ordinary Banach space theory.) To illustrate our viewpoint we note that
\[
 \|u\|_{cb} = \sup_H \|u_H\colon \ B(H)\otimes_{\min} E\to B(H) \otimes_{\min} F\|
\]
where the mapping $u_H$ is the extension (by density) of $id\otimes u$.

The most important examples for this paper are the spaces $L_p(\mu)$ associated to a measure space $(\Omega,{\cl A},\mu)$. Our starting point will be the 3 cases $p=\infty$, $p=1$ and $p=2$. For $p=\infty$, the relevant norm  on $B(H)\otimes L_\infty(\mu)$ is the unique $C^*$-norm, easily described as follows:\ any $f$ in $B(H)\otimes L_\infty(\mu)$ determines a function $f\colon \ \Omega\to B(H)$ taking values in a finite dimensional subspace of $B(H)$ and we have
\begin{equation}\label{eq0.1}
 \|f\|_{B(H) \otimes_{\min} L_\infty(\mu)} = \underset{\omega\in\Omega}{\text{ess sup}} \|f(\omega)\|_{B(H)}.
\end{equation}
For $p=1$, the relevant norm on $B(H)\otimes L_1(\mu)$ is defined using operator space duality, but it can be explicitly written as follows
\begin{equation}\label{eq0.2}
 \|f\|_{B(H)\otimes_{\min} L_1(\mu)} = \sup \left\|\int f(t) \otimes g(t)\ d\mu(t)\right\|_{B(H\otimes_2 K)}
\end{equation}
where the sup runs over all $g$ in the unit ball of $(B(K) \otimes L_\infty(\mu), \|\cdot\|_{\min})$ and over all possible $K$. Equivalently, we may restrict to $H=K=\ell_2$. This is consistent with the standard dual structure on the dual $E^*$ of an operator space. There is an embedding $E^*\subset B({\cl H})$ such that the natural identification
\[
 B(H) \otimes E^* \longleftrightarrow B(E,B(H))
\]
defines for any $H$ an isometric embedding
\[
 B(H) \otimes_{\min} E^* \subset CB(E,B(H)).
\]
With this notion of duality we have $L_1(\mu)^* = L_\infty(\mu)$ completely isometrically. Moreover the inclusion $L_1(\mu) \subset L_\infty(\mu)^*$ is completely isometric, and this is precisely reflected by the formula \eqref{eq0.2}.

To define the o.s.s.\ on $L_2(\mu)$, we will use the complex conjugate $\ovl H$ of a Hilbert space $H$. Note that the map $x\to x^*$ defines an anti-isomorphism on $B(H)$. Since $\ovl{B(H)} = B(\ovl H)$ (canonically), we may view $x\to x^*$ as a linear $*$-isomorphism from $\ovl{B(H)} = B(\ovl H)$ to $B(H)^{op}$ where $B(H)^{op}$ is the same $C^*$-algebra as $B(H)$ but with reversed product. Then for any $x_k\in B(H)$, $y_k\in B(K)$ we have
\[
 \left\|\sum x_k\otimes y_k\right\|_{\min} = \sup\left\{\left\|\left(\sum x_k\otimes y_k\right) (\xi)\right\|_{H\otimes_2K} \mid\ \xi \in H\otimes K, \ \|\xi\|_{H\otimes_2K}\le 1\right\}.
\]
Let us denote by $S_2(H,K)$ the class of Hilbert--Schmidt operators from $H$ to $K$ with norm denoted by $\|\cdot\|_{S_2(H,K)}$ or more simply by $\|~~\|_2$. We may identify canonically $\xi\in H\otimes_2 K$ with an element $\hat\xi\colon \ K^*\to H$ with Hilbert--Schmidt norm $\|\hat\xi\|_2 =\|\xi\|_{H\otimes_2 K}$. Then for any $y\in B(K)$ let ${}^ty\colon \ K^*\to K^*$ denote the adjoint operator. We have then
\[
 \left\|\sum x_k\otimes y_k\right\|_{\min} = \sup\left\{\left\|\sum x_k\hat\xi \  {}^ty_k\right\|_2 \ \Big| \ \|\hat\xi\|_2\le 1\right\}.
\]
Using the identification $\ovl{B(K)} = B(K)^{op}$ via $\bar x\to x^*$, (let $x\to\bar x$ be the identity map on $B(K)$ viewed as a map from $B(K)$ to $\ovl{B(K)}$) we find
\[
 \left\|\sum x_k\otimes \bar y_k\right\|_{B(H\otimes_2\ovl K)} = \sup\left\{\left\| \sum x_k ay^*_k\right\|_2\ \Big| \ a\in S_2(K,H), \ \|a\|_2\le 1\right\}.
\]
We now define the ``natural'' o.s.s.\ on the space $\ell_2$ according to \cite{P3}. This is defined by the following formula:\ for any $f$ in $B(H)\otimes \ell_2$,  of the form $f = \sum^n_1 x_k\otimes e_k$ (here $(e_k)$ denotes the canonical basis of $\ell_2$) we have
\begin{equation}\label{eq0.3}
 \|f\|_{B(H)\otimes_{\min} \ell_2} = \left\|\sum x_k\otimes\bar x_k\right\|^{1/2}_{B(H \otimes_2\ovl H)}.
\end{equation}
The resulting o.s.\ is called ``the operator Hilbert space'' and is denoted by $OH$. Actually, the same formula works just as well for any Hilbert space ${\cl H}$ with an orthonormal basis $(e_i)_{i\in I}$. The resulting o.s.\ will be denoted by ${\cl H}_{oh}$ (so that $OH$ is just another notation for $(\ell_2)_{oh}$).

In this paper our main interest will be the space $L_2(\mu)$. The relevant o.s.s.\ can then be described as follows:\ for any $f$ in $B(H)\otimes L_2(\mu)$ we have  
\begin{equation}\label{eq0.4}
 \|f\|_{B(H)\otimes_{\min} L_2(\mu)_{oh}} = \left\|\int f(\omega)\otimes \ovl{f(\omega)} d\mu(\omega)\right\|^{1/2}_{B(H\otimes_2\ovl H)}.
\end{equation}
It is not hard to see that this coincides with the definition \eqref{eq0.3} when $(e_n)$ is an orthonormal basis of $L_2(\mu)$.

We refer the reader to \cite{P3} for more information on the space ${\cl H}_{oh}$, in particular for the proof that this space is uniquely characterized by its self-duality in analogy with Hilbert spaces among Banach spaces. We note that ${\cl H}_{oh}$ and $H_{oh}$ are completely isometric iff the Hilbert spaces ${\cl H}$ and $H$ are isometric (i.e.\ of the same Hilbertian dimension).

The ``natural o.s.s." on $L_p=L_p(\mu)$ is defined in \cite{P4}
for $1<p<\infty$ using complex interpolation. It is characterized by the following isometric identity: For any finite dimensional Hilbert space $H$
$$B(H)\otimes_{\min} L_p=(B(H)\otimes_{\min} L_\infty,B(H)\otimes_{\min} L_1)_{1/p}.$$
When $p=2$ we recover the o.s.s. defined above
for $L_2(\mu)_{oh}$.

We now turn to multilinear maps.
Let $E_1,\ldots, E_m$ and $F$ be operator spaces. Consider an $m$-linear map $\varphi\colon \ E_1\times\cdots\times E_m\to F$. Let $H_1,\ldots, H_m$ be Hilbert spaces. We set $B_j = B(H_j)$. By multilinear algebra we can associate to $\varphi$ an $m$-linear map
\[
 \widehat\varphi\colon \ B_1\otimes E_1 \times\cdots\times B_m\otimes E_m\to B_1 \otimes\cdots\otimes B_m \otimes F
\]
characterized by the property that ($\forall b_j\in B_j, \forall  e_j\in E_j$) 
\[ \widehat\varphi(b_1\otimes e_1,  \cdots  ,b_m\otimes e_m)=b_1\otimes\cdots\otimes b_m \otimes\varphi(e_1,\cdots  ,e_m).
\]
We say that $\varphi$ is (jointly) completely bounded
(c.b. in short) if 
$\widehat\varphi$ is bounded from
\[
B_1 \otimes_{\min} E_1 \times\cdots\times B_m \otimes_{\min} E_m\quad \text{to}\quad B_1 \otimes_{\min} \cdots\otimes_{\min} B_m \otimes_{\min} F.
\]
It is easy to see that we may reduce to the case when $H_1 = H_2 =\cdots= H_m = \ell_2$ (equivalently we could restrict to finite dimensional Hilbert spaces of arbitrary dimension). With this choice of $H_j$ we set $$\|\varphi\|_{cb} = \|\widehat\varphi\|.$$

\section{Preliminary results}\label{sec2}

\indent 

We first recall Haagerup's version of the Cauchy--Schwarz inequality on which is based a lot of what follows.

Let $H,K$ be Hilbert spaces, $a_k\in B(H), b_k\in B(K)$ $(k=1,\ldots, n)$. We have then
\[
 \left\|\sum a_k\otimes \bar b_k\right\|_{B(H\otimes_2\ovl K)} \le \left\|\sum a_k\otimes \bar a_k\right\|^{1/2}_{B(H\otimes_2\ovl H)} \left\|\sum b_k\otimes \bar b_k\right\|^{1/2}_{B(K\otimes_2\ovl K)}
\]
where $\otimes_2$ denotes the Hilbert space tensor product.

We will sometimes need the following reformulation: let ${\cl H}$ be another Hilbert space and for any $f\in {\cl H}\otimes B(H)$, say $f = \sum x_k\otimes a_k$, and $g\in {\cl H}\otimes B(K)$, say $g = \sum y_\ell \otimes b_\ell$ let $\langle\langle f,g\rangle\rangle \in B(H)\otimes \ovl{B(K)}$ be defined by
\[
 \langle\langle f,g\rangle\rangle = \sum\nolimits_{k,\ell} \langle x_k,y_\ell\rangle a_k\otimes \bar b_\ell.
\]
We have then
\begin{equation}\label{eq2.01}
 \|\langle\langle f,g\rangle\rangle\|_{B(H\otimes_2\ovl K)} \le  \|\langle\langle f,f\rangle\rangle\|^{1/2}\|\langle\langle g,g\rangle\rangle\|^{1/2}.
\end{equation}
More generally for any finite sequences $(f_\alpha)_{1\le \alpha\le N}$ in ${\cl H}\otimes B(H)$ and $(g_\alpha)_{1\le\alpha\le N}$ in ${\cl H} \otimes B(K)$ we have
\begin{equation}\label{eq2.02}
 \left\|\sum\nolimits_\alpha \langle\langle f_\alpha,g_\alpha\rangle\rangle\right\|_{B(H\otimes_2 \ovl K)} \le \left\|\sum\nolimits_\alpha \langle\langle f_\alpha,f_\alpha\rangle\rangle\right\|^{1/2}_{B(H \otimes_2\ovl H)} \left\|\sum\nolimits_\alpha \langle\langle g_\alpha,g_\alpha\rangle\rangle\right\|^{1/2} _{B(K\otimes_2\ovl K)}.
\end{equation}
It is convenient to also observe here that if $E_1,E_2$ are orthogonal subspaces of ${\cl H}$ and if $f_j\in E_j\otimes B(H)$ $(j=1,2)$ then
\begin{equation}\label{eq2.03}
 \langle\langle f_1+f_2, f_1+f_2\rangle\rangle = \langle\langle f_1,f_1\rangle\rangle + \langle\langle f_2,f_2\rangle\rangle.
\end{equation}

We will use an order on $B(H)\otimes \ovl{B(H)}$: \ Let $C_+$ be the set of all finite sums of the form $\sum a_k\otimes \bar a_k$. If $x,y$ are in $B(H)\otimes \ovl{B(H)}$, we write $x \prec y$ (or $y\succ x$) if $y-x \in C_+$. In particular $x\succ 0$ means $x\in C_+$. Any element $x\in B(H)\otimes \ovl{B(H)}$ defines a (finite rank) sequilinear form $\tilde x\colon \ B(H)^* \times B(H)^* \to {\bb C}$. 
More generally, for any complex vector space $E$, we may define similarly
 $x\succ 0$  for any $x\in E\otimes \bar E$.\\
The following criterion is easy to show by linear algebra. 
\begin{lem}\label{lem1.0} Let $x\in B(H)\otimes \ovl{B(H)}$. Then
 $x\in C_+$ iff $\tilde  x$ is positive definite i.e.\ $\tilde x(\xi,\xi)\ge 0$ for any $\xi$ in $B(H)^*$. Moreover, this holds iff
 $\tilde x(\xi,\xi)\ge 0$ for any $\xi$ in
the predual  $B(H)_*\subset B(H)^*$ of $B(H)$. Lastly, if $H$ is separable, there is a countable subset
$D\subset B(H)_*$ such that $x\succ 0$  iff
 $\tilde x(\xi,\xi)\ge 0$ for any $\xi$ in $D$.

\end{lem}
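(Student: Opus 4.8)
The plan is to reduce the statement to a finite-dimensional fact and then exploit the standard dictionary between $C_+$-membership and positive-definiteness of the associated sesquilinear form. First I would fix $x \in B(H)\otimes\ovl{B(H)}$ and note that, being a finite sum of elementary tensors, $x$ lives in $E\otimes\ovl E$ for some finite-dimensional subspace $E\subset B(H)$; all the conditions in the statement restrict harmlessly to this finite-dimensional setting, since $\tilde x(\xi,\xi)$ depends only on the restrictions $\xi|_E$. So it suffices to prove: for a finite-dimensional complex vector space $E$, an element $x\in E\otimes\bar E$ lies in $C_+ = \{\sum_k a_k\otimes\bar a_k\}$ if and only if the form $\tilde x$ on $E^*\times E^*$ is positive definite.

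For the ``only if'' direction this is immediate: if $x = \sum_k a_k\otimes\bar a_k$ then $\tilde x(\xi,\xi) = \sum_k |\langle a_k,\xi\rangle|^2 \ge 0$. For the converse I would use the canonical identification of $E\otimes\bar E$ with the space of (conjugate-linear-in-the-second-slot) sesquilinear forms on $E^*$, equivalently with Hermitian-type matrices once a basis of $E$ is chosen: writing $x = \sum_{i,j} x_{ij}\, e_i\otimes\bar e_j$, the form $\tilde x$ is positive definite exactly when the matrix $[x_{ij}]$ is positive semidefinite (here one first checks that positive definiteness of $\tilde x$ forces $[x_{ij}]$ to be Hermitian, $x_{ji}=\bar x_{ij}$, by polarization). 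Then the spectral theorem gives $[x_{ij}] = \sum_k \bar\lambda_{ki}\lambda_{kj}$ for suitable scalars, i.e. $x = \sum_k a_k\otimes\bar a_k$ with $a_k = \sum_i \lambda_{ki} e_i \in E\subset B(H)$. This proves the main equivalence. The passage from ``all $\xi\in B(H)^*$'' to ``all $\xi\in B(H)_*$'' is then automatic because the predual is weak-$*$ dense in the dual and $\xi\mapsto \tilde x(\xi,\xi)$ is weak-$*$ continuous on the finite-dimensional relevant quotient (again, $\tilde x$ only sees $\xi|_E$, and $B(H)_*$ still separates points of the finite-dimensional $E$, so its image in $E^*$ is all of $E^*$).

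For the last assertion, with $H$ separable, I would choose a countable weak-$*$ dense subset $D_0\subset B(H)_*$ (available since $B(H)_*$ is separable in norm, hence has a countable dense set, which is a fortiori weak-$*$ dense); one wants a single countable $D$ that works simultaneously for all $x$. Here the point is that $x\succ 0$ is equivalent to the continuous function $\xi\mapsto\tilde x(\xi,\xi)$ being nonnegative, and by continuity it is enough to test on a weak-$*$ dense set, so $D = D_0$ works for every $x$. The main obstacle, such as it is, is purely bookkeeping: making sure that restricting to the finite-dimensional $E$ does not lose information when one replaces $B(H)^*$ by $B(H)_*$, i.e. that the natural map $B(H)_* \to E^*$ is still surjective (which holds because $E$ is finite-dimensional and $B(H)_*$ separates its points), and that one fixed countable $D$ suffices uniformly in $x$ — both handled by the weak-$*$ density and continuity remarks above. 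No genuinely hard step is involved; the lemma is, as the author says, ``easy to show by linear algebra.''
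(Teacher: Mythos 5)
Your proposal is correct and follows essentially the same route as the paper: reduce to a finite-dimensional subspace $F\subset B(H)$ containing the tensor legs of $x$, identify $C_+$-membership with positive semidefiniteness of the coefficient matrix via the spectral decomposition, pass from $B(H)^*$ to $B(H)_*$ by weak-$*$ density (or, as you note, by surjectivity of the restriction map onto $F^*$), and take a countable dense subset of the separable predual for the last assertion. The only difference is that you spell out the polarization/Hermitian step that the paper leaves implicit.
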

\begin{proof} The first part is a general fact valid for any complex Banach space $E$ in place of
$B(H)$: Assume $x\in E\otimes \bar E$, then $x\in F\otimes \bar F$ for some finite dimensional $F\subset E$, thus the equivalence in Lemma \ref{lem1.0} just reduces to the classical spectral decomposition
of a positive definite matrix. If $E$ is a dual space with a predual $E_*\subset E^*$,
then $E_*$ is $\sigma(E^*,E)$-dense in $E^*$, so the condition
$\tilde x(\xi,\xi)\ge 0$ will hold  for any $\xi\in E^*$ if it does  for any $\xi\in E_*$.  When the predual $E_*$ is separable, the last assertion becomes immediate.
\end{proof}
\begin{rem}  Let $E$ be a complex Banach space. Let $[a_{ij}]$ be a complex $n\times n$ matrix, $x_j\in E$.
Consider $x=\sum a_{ij} x_i\otimes \bar x_j\in E\otimes \bar E$. Then $x\succ 0$ if $[a_{ij}]$ is positive definite, and 
 if the $x_j$'s are linearly independent, the converse also holds. Indeed, by the preceding argument,
 all we need to check is $\tilde x(\xi,\xi)=\sum a_{ij} \xi(x_i)  \overline{\xi( x_j)}\ge 0$.
\end{rem}
The importance of this ordering for us lies in the following fact:

\begin{lem}\label{lem1.1}
If $x,y\in B(H) \otimes\ovl{B(H)}$ and $0\prec x\prec y$ then
\[
 \|x\|_{\min} \le \|y\|_{\min}
\]
where $\|x\|_{\min} = \|x\|_{B(H\otimes_2\ovl H)}$. \end{lem}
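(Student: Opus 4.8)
The plan is to reduce the inequality to a statement about Hilbert–Schmidt operators, where the ordering $\prec$ becomes genuine operator positivity. Recall from the discussion preceding \eqref{eq0.3} that for $a_k\in B(H)$ and any $a\in S_2(\ovl H,H)$ (equivalently $a\in S_2(H,H)$ after the obvious identification) one has
\[
 \left\|\sum a_k\otimes\bar a_k\right\|_{B(H\otimes_2\ovl H)} = \sup\left\{\left\|\sum a_k\, a\, a_k^*\right\|_{S_2} \ \Big|\ \|a\|_{S_2}\le 1\right\}.
\]
More generally, for an arbitrary $x\in B(H)\otimes\ovl{B(H)}$ write $x=\sum_k b_k\otimes\bar c_k$; the same computation shows
\[
 \|x\|_{\min} = \|x\|_{B(H\otimes_2\ovl H)} = \sup\left\{\left\|\sum_k b_k\, a\, c_k^*\right\|_{S_2}\ \Big|\ \|a\|_{S_2}\le 1\right\}.
\]
So to each $x\in B(H)\otimes\ovl{B(H)}$ I associate the bounded linear map $\Phi_x\colon S_2(H)\to S_2(H)$ given by $a\mapsto \sum_k b_k\, a\, c_k^*$, and $\|x\|_{\min}=\|\Phi_x\|_{B(S_2(H))}$. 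The key observation is that $x\mapsto\Phi_x$ carries the cone $C_+$ into the cone of \emph{positive} operators on the Hilbert space $S_2(H)$: if $x=\sum_k a_k\otimes\bar a_k$ then $\Phi_x(a)=\sum_k a_k a a_k^*$, and for $a\in S_2(H)$ one computes $\langle \Phi_x(a),a\rangle_{S_2}=\sum_k \langle a_k a a_k^*,a\rangle_{S_2}=\sum_k \|a_k^* a^{1/2}\cdot\|^2\ge 0$ — or, more robustly without worrying about $a^{1/2}$, note that $a\mapsto a_k a a_k^*$ is completely positive on $B(H)$ for each $k$, hence $\Phi_x$ is a positive (indeed completely positive) map, and restricted to the self-adjoint part of $S_2(H)$ it is a positive operator in the Hilbert-space sense. (To be careful about self-adjointness of $\Phi_x$ itself as an operator on $S_2(H)$, I will note $x\in C_+$ forces $\Phi_x$ to commute with $a\mapsto a^*$ and to be symmetric for $\langle\cdot,\cdot\rangle_{S_2}$, so $\Phi_x$ is a positive self-adjoint operator on the complex Hilbert space $S_2(H)$.)

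With this dictionary the proof is immediate. Given $0\prec x\prec y$, both $\Phi_x$ and $\Phi_{y-x}=\Phi_y-\Phi_x$ are positive operators on $S_2(H)$, hence $0\le\Phi_x\le\Phi_y$ as operators, and therefore $\|\Phi_x\|_{B(S_2(H))}\le\|\Phi_y\|_{B(S_2(H))}$ by monotonicity of the operator norm on positive operators. Translating back, $\|x\|_{\min}\le\|y\|_{\min}$, which is exactly the assertion.

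The main obstacle, and the only point requiring genuine care, is the identification $\|x\|_{\min}=\|\Phi_x\|_{B(S_2(H))}$ together with the claim that $\Phi_x$ is a \emph{self-adjoint} positive operator on the complex Hilbert space $S_2(H)$ when $x\in C_+$ — as opposed to merely a positive map in the operator-algebra sense. If one prefers to avoid this, an equivalent and perhaps cleaner route is to use Lemma \ref{lem1.0}: by that lemma the condition $0\prec x\prec y$ is tested on the bilinear forms $\tilde x,\tilde y$ via $\tilde x(\xi,\xi)\le\tilde y(\xi,\xi)$ for all $\xi\in B(H)_*$, and then to recognize that the norm $\|x\|_{\min}$ on $B(H)\otimes\ovl{B(H)}$, realized on $B(H\otimes_2\ovl H)$, is the supremum of $|\tilde x(\xi,\eta)|$ over $\xi,\eta$ in the unit ball of a suitable space of rank-one-type functionals — at which point an elementary $2\times2$ positivity argument (if $0\le A\le B$ in the cone of positive forms then $|A(\xi,\eta)|\le A(\xi,\xi)^{1/2}A(\eta,\eta)^{1/2}\le B(\xi,\xi)^{1/2}B(\eta,\eta)^{1/2}$) finishes it. Either way the heart of the matter is the passage from the formal cone $C_+$ to honest Hilbert-space positivity; once that is set up, monotonicity of the norm does the rest.
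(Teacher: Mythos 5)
There is a genuine gap, and it sits precisely at the point you yourself flagged as ``the only point requiring genuine care.'' Your identification $\|x\|_{\min}=\|\Phi_x\|_{B(S_2(H))}$ is correct (it is the identity recorded in the background section of the paper), but the claim that $x\in C_+$ makes $\Phi_x$ a positive self-adjoint operator on the Hilbert space $S_2(H)$ is false. Complete positivity of $a\mapsto\sum_k a_kaa_k^*$ --- preservation of the cone of positive operators --- is a much weaker property than positivity with respect to the inner product of $S_2(H)$, and your computation via $a^{1/2}$ is only valid for $a\ge 0$; it does not extend to the self-adjoint part of $S_2$. Concretely, take $H=\C^2$, a single term $x=a_1\otimes\bar a_1$ with $a_1=e_{12}$, and the self-adjoint vector $b=e_{11}-e_{22}$: then $\Phi_x(b)=e_{12}\,b\,e_{21}=-e_{11}$, so $\langle\Phi_x(b),b\rangle_{S_2}=\mathrm{tr}\bigl(b(-e_{11})\bigr)=-1<0$. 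Equivalently, under $S_2(H)\cong H\otimes_2\ovl H$ the operator $\Phi_x$ is just $x=\sum a_k\otimes\bar a_k$ itself, which is not even self-adjoint on $H\otimes_2\ovl H$ unless the $a_k$ can be taken self-adjoint, and which already has the eigenvalue $-1$ (on $e_1\otimes\bar e_2$) when $x=a\otimes\bar a$ with $a=e_{11}-e_{22}$. So the implication ``$0\prec x\prec y\Rightarrow 0\le\Phi_x\le\Phi_y$ as operators on $S_2$'' fails and the monotonicity step collapses. Your fallback route suffers from a parallel defect: the supremum of $|\tilde x(\xi,\eta)|$ over rank-one (vector-state) functionals only tests $x$ against elementary tensors of $H\otimes_2\ovl H$ and computes an injective-type norm that is in general much smaller than $\|x\|_{B(H\otimes_2\ovl H)}$ --- for instance, for $x=\sum_1^n e_{k1}\otimes\bar e_{k1}\in C_+$ the elementary-tensor supremum equals $1$ while $\|x\|_{\min}=\sqrt n$ --- so the $2\times2$ Cauchy--Schwarz argument never reaches the norm you need to control.

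The missing ingredient is exactly the nontrivial identity from \cite{P3} that the paper's proof invokes, namely
$\bigl\|\sum a_k\otimes\bar a_k\bigr\|=\sup\bigl\{\bigl(\sum_k\|\xi a_k\eta\|_2^2\bigr)^{1/2}\ :\ \|\xi\|_4\le1,\ \|\eta\|_4\le1\bigr\}$,
where $\xi,\eta$ range over the unit ball of the Schatten class $S_4$. The right-hand side is a supremum of sums of nonnegative terms indexed by the $a_k$, so appending further terms $b_j\otimes\bar b_j$ can only increase it; that is where the monotonicity genuinely comes from, and it cannot be replaced by naive operator positivity on $S_2(H)$.
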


\begin{proof}
Let $x = \sum a_k\otimes \bar a_k$. Then the lemma is immediate from the identity
\[
 \left\|\sum a_k\otimes \bar a_k\right\| = \sup\left\{\left(\sum \|\xi a_k\eta\|^2_2\right)^{1/2} \ \Big| \ \|\xi\|_4 \le 1, \|\eta\|_4\le 1\right\}
\]
for which we refer to \cite{P3}. Indeed, if $d=\sum b_j\otimes \bar b_j$ and $y=x+d$
this identity applied to $x+d$ makes it clear that
 $\|x\|\le \|x+d\|=\|y\|$.
\end{proof}

It will be convenient to extend our notation:\ Let $H_1,H_2,\ldots, H_m$ be an $m$-tuple of Hilbert spaces. For any $k=1,\ldots, m$ we set
\[
 H_{m+k} = \ovl H_k.
\]
Let $\sigma$ be any permutation of $\{1,\ldots, 2m\}$. For any element $x$ in $B(H_1) \otimes\cdots\otimes B(H_{2m})$ we denote by
\[
 \sigma \cdot x\in B(H_{\sigma(1)}) \otimes\cdots\otimes B(H_{\sigma(2m)})
\]
the element obtained from $x$ by applying $\sigma$ to the factors, i.e.\ if $x = t_1\otimes\cdots\otimes t_{2m}$ then $\sigma\cdot x = t_{\sigma(1)} \otimes\cdots\otimes t_{\sigma(2m)}$ and $x\to\sigma\cdot x$ is the linear extension of this map.

Let ${\cl H} = H_1\otimes\cdots\otimes H_m$. Now if we are given a permutation $\sigma$ and $x,y$ in $B(H_{\sigma(1)})\otimes\cdots\otimes B(H_{\sigma(2m)})$ we note that
\[
 \sigma^{-1}\cdot x, \sigma^{-1}\cdot y \in B({\cl H}) \otimes \ovl{B({\cl H})}.
\]
We will write (abusively) $x\prec y$ (or $y\succ x$) if we have
\[
 \sigma^{-1}\cdot x \prec \sigma^{-1}\cdot y.
\]
Of course this order depends on $\sigma$ and although our notation does not keep track of that, we will need to remember $\sigma$, but hopefully no confusion should arise. While we will use various choices for $\sigma$, we never change our choice in the middle of a calculation, e.g. when adding two ``positive" terms.

For instance we allow ourselves to write that $\forall a_k \in B(H)$ $\forall b_k\in B(K)$ we have
\begin{equation}\label{diag}
 \sum a_k\otimes \bar a_k \otimes b_k\otimes \bar b_k \succ 0
\end{equation}
in $B(H\otimes \ovl H \otimes K\otimes \ovl K)$, where implicitly we are referring to the permutation $\sigma$ that takes $H\otimes \ovl H \otimes K \otimes \ovl K$ to $H\otimes K \otimes \ovl H \otimes \ovl K$. In particular, we note that with this convention $\forall x,y\in B(H)\otimes \ovl{B(H)}$ $\forall b\in B(K)$ we have
\begin{equation}\label{eq-or}
 x\prec y \Rightarrow b \otimes x \otimes \bar b \prec b \otimes y \otimes \bar b.
\end{equation}
Note that since the minimal tensor product is commutative, we still have,  for any $x,y$ in $B(H_{\sigma(1)}) \otimes\cdots\otimes B(H_{\sigma(2m)})$, that
\begin{equation}\label{eq1.1-}
 0 \prec x \prec y \Rightarrow \|x\|_{\min} \le \|y\|_{\min}.
\end{equation}
From the obvious identity $(x,y\in B(H))$
\[
 (x+y) \otimes \ovl{(x+y)} + (x-y) \otimes \ovl{(x-y)} = 2(x\otimes \bar x + y\otimes \bar y)
\]
it follows that
\begin{equation}\label{eq1.1}
(x+y)\otimes \ovl{(x+y)} \prec 2(x\otimes \bar x + y\otimes \bar y).
\end{equation}
Note that if we set $\Phi(x)=x\otimes \bar x$,
then the preceding
expresses the ``order convexity" of this function:
$$\Phi((x+y)/2) \prec (\Phi(x) +\Phi(y))/2.$$
More generally, for any finite set $x_1,\ldots, x_n$ in $B(H)$ we  have
$\Phi(n^{-1}\sum\nolimits^n_1 x_k) \prec n^{-1}\sum\nolimits^n_1\Phi(x_k),$ or
\begin{equation}\label{eq1.3}
\left(\sum\nolimits^n_1 x_k\right) \otimes \ovl{\left(\sum\nolimits^n_1 x_k\right)} \prec n \sum\nolimits^n_1 x_k \otimes \bar x_k.
\end{equation}
We need to record below several variants of  the ``order convexity" of $\Phi$.\\ From now on we assume that $H$ is a {\it separable} Hilbert space.\\
More generally, for any $x$ in $B(H)\otimes L_2(\Omega, {\cl A}, {\bb P})$ and any $\sigma$-subalgebra ${\cl B}\subset {\cl A}$,
we may associate to   $x\otimes \bar x$ the function $x\dot\otimes \bar x\colon \ \omega\to B(H)\otimes\overline{ B(H)}$ defined by
$x\dot\otimes \bar x(\omega)=x(\omega)\otimes \bar x(\omega)$.
We have then almost surely
\begin{equation}\label{eq1.1+}
  0 \prec {\bb E}^{\cl B}(x\dot\otimes \bar x), 
\end{equation} and more precisely (again almost surely)
\begin{equation}\label{eq1.2}
 ({\bb E}^{\cl B}x) \dot\otimes \ovl{({\bb E}^{\cl B}x)} \prec {\bb E}^{\cl B}(x \dot\otimes \bar x).
\end{equation}
Indeed, \eqref{eq1.1+} follows from  Lemma \ref{lem1.0} (separable case) and the right hand side of \eqref{eq1.2} is equal to
$$
 ({\bb E}^{\cl B}x) \dot\otimes \ovl{({\bb E}^{\cl B}x)} +{\bb E}^{\cl B}( y \dot\otimes \bar y) \quad \text{where}\quad y = x-{\bb E}^{\cl B}x.
$$
When ${\cl B}$ is the trivial algebra, we obtain
\begin{equation}\label{eq1.1++}
  0 \prec \int x\dot\otimes \bar x, 
\end{equation} and hence for any measurable subset $A\subset \Omega$
\begin{equation}\label{eq1.1+++}
   \int_A  x\dot\otimes \bar x \  d{\bb P} \prec \int_\Omega  x\dot\otimes \bar x\  d{\bb P}  ,
\end{equation} and also
\begin{equation}\label{eq1.1iv}
   (\EE   x) \otimes (\EE  \bar x)  \prec \EE(  x\dot\otimes \bar x) .
\end{equation}

We need to observe that for any integer $m\ge 1$ we have
\begin{equation}\label{eq1.6}
 0 \prec x \Rightarrow 0 \prec x ^{\otimes m}
\end{equation}
and more generally
\begin{equation}\label{eq1.7}
 0 \prec x \prec y \Rightarrow 0 \prec x^{\otimes m} \prec y^{\otimes m}.
\end{equation}
Furthermore, if $x_1,y_1\in B(H_1)\otimes \ovl{B(H_1)}$ and $x_2,y_2\in B(H_2)\otimes \ovl{B(H_2)}$
\begin{equation}\label{eq1.7bbis}
 0 \prec x_1 \prec y_1 \ {\rm and}\   0 \prec x_2 \prec y_2 \Rightarrow 0 \prec x_1{\otimes } x_2 \prec y_1{\otimes } y_2,
\end{equation}
where the natural permutation is applied to $x_1{\otimes } x_2$ and  $ y_1{\otimes } y_2$, allowing to view them as
elements of $B(H_1\otimes H_2)  \otimes \ovl{B(H_1\otimes H_2)}$.

Returning to \eqref{eq1.2}, we note that it implies
$$  ( ({\bb E}^{\cl B}x) \dot\otimes \ovl{({\bb E}^{\cl B}x)} )^{\otimes 2}\prec ( {\bb E}^{\cl B}(x\dot\otimes \bar x))^{\otimes 2}\prec {\bb E}^{\cl B}(x\dot\otimes \bar x\dot\otimes  x\dot\otimes \bar x),$$
and hence
$$  \|( ({\bb E}^{\cl B} x) \dot\otimes \ovl{({\bb E}^{\cl B} x)} )^{\otimes 2}\|\le \| {\bb E}^{\cl B}(x\dot\otimes \bar x\dot\otimes  x\dot\otimes \bar x)\|.$$
More generally, iterating this argument, we obtain
for any integer $k\ge 1$

\begin{equation}\label{eq1.8}
  ( ({\bb E}^{\cl B}x) \dot\otimes \ovl{({\bb E}^{\cl B}x)} )^{\otimes 2^k}\prec  {\bb E}^{\cl B}( (x\dot\otimes \bar x)^{\otimes 2^k} ).
  \end{equation}
  In particular
  \begin{equation}\label{eq1.8+}
( ({\bb E} x) \dot\otimes \ovl{({\bb E} x)} )^{\otimes 2^k}  \prec  {\bb E}( (x\dot\otimes \bar x)^{\otimes 2^k} ),\end{equation}
and consequently for any finite sequence $x_1,\cdots,x_n\in B(H)\otimes L_2(\Omega, {\cl A}, {\bb P})$
\begin{equation}\label{eq1.8++}
\|\sum\nolimits_j ( ({\bb E}^{\cl B} x_j) \dot\otimes \ovl{({\bb E}^{\cl B} x_j)} )^{\otimes 2^k}\|\le \| \sum\nolimits_j {\bb E}^{\cl B}( (x_j\dot\otimes \bar x_j)^{\otimes 2^k} )\|.\end{equation}

In a somewhat different direction, for any measure $\mu$, let $f\in B(H)\otimes L_2(\mu)$, let $P$ be any orthogonal projection
on $L_2(\mu)$ and let $g=(I\otimes P)(f)$. Then
\begin{equation}\label{eq1.9}
0\prec \int g\dot\otimes \bar g d\mu\prec \int f\dot\otimes \bar f d\mu.
\end{equation}
Indeed, this is immediate by \eqref{eq2.03}.

\section{Definition of $\pmb{\Lambda_{2m}}$}\label{sec3}

\indent 

Our definition of the operator space $\Lambda_{2m}(\mu)$ is based on the case $m=1$, i.e.\ on the operator Hilbert space $OH$, studied at length in \cite{P3}. The latter is based on the already mentioned  Cauchy--Schwarz inequality due to Haagerup as follows:\ Let $H,K$ be Hilbert spaces and let $a_k\in B(H)$, $b_k\in B(K)$
\begin{equation}\label{eq2.1}
 \left\|\sum a_k\otimes b_k\right\| \le \left\|\sum a_k\otimes \bar a_k\right\|^{1/2}  \left\|\sum b_k\otimes \bar b_k\right\|^{1/2}.
\end{equation}
This is usually stated with $\sum a_k\otimes \bar b_k$ on the left hand side, but since the right hand side is unchanged if we replace $b_k$ by $\bar b_k$ we may write this as well. It will be convenient for our exposition to use the functional version of \eqref{eq2.1} as follows:\ For any Hilbert spaces $H,K$ and any $f\in B(H)\otimes L_2(\mu)$ and $g\in B(K) \otimes L_2(\mu)$, we denote
by $f\dot \otimes g$ the $B(H)\otimes B(K)$-valued function defined by  $$(f\dot \otimes g)(\omega)=f(\omega) \otimes g(\omega).$$ 
Of course, using the identity $   B(H)\otimes \ovl{B(H)}\simeq B(H)\otimes  {B(\ovl H)} $,
this extends the previously introduced notation  for $f\dot \otimes \bar f\colon\ \Omega\to B(H)\otimes \ovl{B(H)} $.\\   
Similarly, given $n$ measurable functions $f_j\colon\ \Omega\to B(H_j)$, we denote
by $f_1\dot\otimes\cdots \dot\otimes f_n$ the pointwise product viewed as a function with values
in $B(H_1)  \otimes\cdots  \otimes B(H_n)$.

Note that if $f_j$ corresponds to an element in $B(H_j) \otimes L_{p_j} $
with  $p_j>0$ such that $\sum p_j^{-1}=p^{-1}$, then by H\"older's inequality, 
$f_1\dot\otimes\cdots \dot\otimes f_n \in B(H_1)  \otimes\cdots  \otimes B(H_n) \otimes L_{p} $.

By \eqref{eq2.01} applied with ${\cl H}=L_2(\mu)$,   we have
\begin{equation}\label{eq2.2pre}
 \left\|\int f\dot\otimes \bar g\ d\mu\right\|_{B(H\otimes \ovl K)} \le \left\|\int f\dot\otimes \bar f\ d\mu\right\|^{1/2}_{B(H\otimes\ovl H)} \left\|\int g\dot\otimes \bar g\ d\mu\right\|^{1/2}_{B(K\otimes\ovl K)}.
\end{equation}
Replacing $\bar g$ by $g$, we obtain
\begin{equation}\label{eq2.2}
 \left\|\int f\dot\otimes g\ d\mu\right\|_{B(H\otimes K)} \le \left\|\int f\dot\otimes \bar f\ d\mu\right\|^{1/2}_{B(H\otimes\ovl H)} \left\|\int g\dot\otimes \bar g\ d\mu\right\|^{1/2}_{B(K\otimes\ovl K)}.
\end{equation}
We note that this functional variant of \eqref{eq2.1} appears in unpublished work by
Furman and Shalom (personal communication). 

We will also invoke the following variant: for any $\psi \in B(\ell_2)\otimes L_\infty$ with
norm $\|\psi\|_{\min}\le 1$ we have
\begin{equation}\label{eq2.2bis}
 \left\|\int f\dot\otimes g \dot\otimes\psi \ d\mu\right\|_{B(H\otimes K\otimes \ell_2)} \le \left\|\int f\dot\otimes \bar f\ d\mu\right\|^{1/2}_{B(H\otimes\ovl H)} \left\|\int g\dot\otimes \bar g\ d\mu\right\|^{1/2}_{B(K\otimes\ovl K)}.
\end{equation}
This (which can be 
interpreted as saying that the product map $L_2\times L_2\to L_1$ is jointly completely contractive) can be
verified  rather easily using complex interpolation, see e.g.  the proof of Lemma \ref{lem4.1} below
for a  more detailed argument.

For simplicity in this section we abbreviate $L_p(\mu)$ or $L_p(\Omega,\mu)$ and we simply write $L_p$ instead.

We start by a version of H\"older's inequality adapted to our needs that follows easily from \eqref{eq2.2}. The proof uses an iteration idea already appearing in \cite{Buch}.

\begin{lem}\label{lem2.1}
Let $m\ge 1$ be any integer. Then for any $f_1,\ldots, f_{2m}$ in $B(H)\otimes L_{2m}$ we have $ f_1\dot\otimes\cdots\dot\otimes f_{2m}\in B(H)^{\otimes 2m}\otimes L_{1}$ and 
\begin{equation}\label{eq2.3}
  \left\|\int f_1\dot\otimes\cdots\dot\otimes f_{2m}\ d\mu\right\| \le \prod^{2m}_{k=1} \left\|\int f^{\dot\otimes m}_k \dot\otimes \bar f^{\dot\otimes m}_k\ d\mu\right\|^{\frac1{2m}},
\end{equation}
where we   denote $f^{\dot\otimes m} = f\dot\otimes\cdots\dot\otimes f$ ($m$ times).
\end{lem}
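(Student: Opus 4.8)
The plan is to prove \eqref{eq2.3} by induction on $k$ where $m=2^{k-1}$ \dots no, wait — $m$ is an arbitrary integer, not necessarily a power of $2$. Let me reconsider. The natural route, following the iteration idea of \cite{Buch} already flagged in the text, is to first establish the case where the number of factors is a power of two, then deduce the general case by a padding argument inserting copies of the constant function $\mathbf 1$.

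\emph{Step 1: the dyadic case by iterating \eqref{eq2.2}.} First I would prove by induction on $r\ge 1$ that for $f_1,\dots,f_{2^r}\in B(H)\otimes L_{2^r}$,
\[
\left\|\int f_1\dot\otimes\cdots\dot\otimes f_{2^r}\ d\mu\right\|
\le \prod_{k=1}^{2^r}\left\|\int f_k^{\dot\otimes 2^{r-1}}\dot\otimes \bar f_k^{\dot\otimes 2^{r-1}}\ d\mu\right\|^{2^{-r}}.
\]
The base case $r=1$ is exactly \eqref{eq2.2} with $f=f_1$, $g=f_2$. For the inductive step, group the $2^{r}$ factors into two blocks of $2^{r-1}$, write $F=f_1\dot\otimes\cdots\dot\otimes f_{2^{r-1}}$ (an element of $B(H^{\otimes 2^{r-1}})\otimes L_2$, by H\"older since $2^{r-1}$ factors in $L_{2^r}$ land in $L_2$) and $G=f_{2^{r-1}+1}\dot\otimes\cdots\dot\otimes f_{2^r}$. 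Apply \eqref{eq2.2} to the pair $(F,G)$ (with the Hilbert spaces $H^{\otimes 2^{r-1}}$ in place of $H,K$): this bounds the left side by $\|\int F\dot\otimes\bar F\,d\mu\|^{1/2}\|\int G\dot\otimes\bar G\,d\mu\|^{1/2}$. Now $\int F\dot\otimes\bar F\,d\mu=\int (f_1\dot\otimes\cdots\dot\otimes f_{2^{r-1}})\dot\otimes(\bar f_1\dot\otimes\cdots\dot\otimes\bar f_{2^{r-1}})\,d\mu$, which — after applying the permutation $\sigma$ that regroups the factors as $f_1\dot\otimes\bar f_1\dot\otimes\cdots$ and observing that the $\min$-norm is permutation-invariant — is exactly the left side of the inductive hypothesis applied to the $2^{r-1}$ functions $f_1^{\dot\otimes ?},\dots$; more precisely one applies the hypothesis at level $r-1$ to the functions $g_j := f_j\dot\otimes \bar f_j$ viewed in $B(H\otimes\bar H)\otimes L_{2^{r-1}}$ (they lie in $L_{2^{r-1}}$ since each $f_j\in L_{2^r}$ so $f_j\dot\otimes\bar f_j\in L_{2^{r-1}}$). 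That gives $\|\int F\dot\otimes\bar F\|\le\prod_{k\le 2^{r-1}}\|\int g_k^{\dot\otimes 2^{r-2}}\dot\otimes\bar g_k^{\dot\otimes 2^{r-2}}\|^{2^{-(r-1)}}$, and $g_k^{\dot\otimes 2^{r-2}}\dot\otimes \bar g_k^{\dot\otimes 2^{r-2}}=(f_k\dot\otimes\bar f_k)^{\dot\otimes 2^{r-2}}\dot\otimes(\bar f_k\dot\otimes f_k)^{\dot\otimes 2^{r-2}}$ rearranges (again by a permutation) to $f_k^{\dot\otimes 2^{r-1}}\dot\otimes\bar f_k^{\dot\otimes 2^{r-1}}$. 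Collecting the two blocks yields the claim at level $r$. Membership in $B(H)^{\otimes}\otimes L_1$ is H\"older's inequality: $2^r$ factors in $L_{2^r}$ multiply into $L_1$.

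\emph{Step 2: from dyadic to general $m$.} Given arbitrary $m$ and $f_1,\dots,f_{2m}\in B(H)\otimes L_{2m}$, assume $\mu$ is a probability measure first (one reduces to this, or simply notes the pointwise/homogeneity structure lets one normalize; if not, one can still pad since $\mathbf 1\in L_\infty\subset L_p$ only when $\mu$ is finite — so in fact the clean statement uses the product-map formulation and one should pad with $\psi=\mathbf 1$ using the variant \eqref{eq2.2bis}). Choose $r$ with $2^r\ge 2m$ and set $N=2^r$. Consider the $N$ functions obtained by taking $f_1,\dots,f_{2m}$ together with $N-2m$ copies of the constant function $\mathbf 1$ — but adjust the integrability: we want each of the $N$ functions in $L_N$. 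Since $f_j\in L_{2m}\subseteq L_N$ is false in general, instead I would proceed differently: rather than padding, prove \eqref{eq2.3} directly for all $m$ by strong induction using a \emph{binary grouping that need not be balanced}. In fact \eqref{eq2.2} is stable under splitting $2m = m' + m''$ into any two parts: writing $F=f_1\dot\otimes\cdots\dot\otimes f_{m'}$, $G=f_{m'+1}\dot\otimes\cdots\dot\otimes f_{2m}$, H\"older gives $F\in B(H^{\otimes m'})\otimes L_{2m/m'}$ which is $L_2$-integrable only when $m'=m$. So the balanced split $m'=m''=m$ is forced, and then each half is a product of $m$ functions in $L_{2m}$, i.e. lies in $L_2$, and \eqref{eq2.2} applies. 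The resulting two factors $\int F\dot\otimes\bar F$, $\int G\dot\otimes\bar G$ each have the form ``$\int (\text{product of }m\text{ functions, }\cdot,\text{ conjugate})$'', and one must now recognize these as the quantities $\|\int f_k^{\dot\otimes m}\dot\otimes\bar f_k^{\dot\otimes m}\|$ raised to appropriate powers — which again requires a sub-induction. I would therefore phrase the whole thing as a single induction on $m$: \emph{for every $m$ and every $2m$-tuple}, split into halves of size $m$, apply \eqref{eq2.2}, and then apply H\"older's plain inequality inside $L_2(\mu)$ together with the inductive hypothesis \emph{in the form} that for $m$ functions $h_1,\dots,h_m\in L_2$ one has $\|\int h_1\dot\otimes\cdots\dot\otimes h_m\,d\mu\|\le\prod\|\int h_k^{\dot\otimes m}\dot\otimes\bar h_k^{\dot\otimes m}\,d\mu\|^{1/(2m)}$ when each $h_k$ is itself an $m$-fold pointwise product — i.e. restating Buchholz's trick cleanly. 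The cleanest formulation (and the one I expect the author uses) is: prove by induction on $j\ge 0$ the statement for $2^j$ factors as in Step 1, and then obtain general $m$ by writing the $2m$-fold product as a limit/restriction — but since general $m$ is not a power of $2$, the honest path is Step 1 plus the observation that Lemma~\ref{lem2.1} as literally stated may itself only be needed, downstream, for the dyadic iteration; barring that, one pads with $\mathbf 1$ in $L_\infty$ via \eqref{eq2.2bis}.

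\emph{Main obstacle.} The genuine difficulty is bookkeeping the permutations $\sigma$: at each application of \eqref{eq2.2} and of the inductive hypothesis, the $B(H)$-factors appear in an order different from $f_k^{\dot\otimes m}\dot\otimes\bar f_k^{\dot\otimes m}$, and one must invoke permutation-invariance of $\|\cdot\|_{\min}$ (and of the tensor regrouping $B(\mathcal H)\otimes\overline{B(\mathcal H)}$) to put everything in the canonical shape before the next step — exactly the abuse-of-notation convention set up in \S\ref{sec2}. One must also verify the integrability at each stage via plain H\"older (an $m$-fold product of $L_{2m}$ functions is in $L_2$; a $2m$-fold product is in $L_1$). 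The analytic content is entirely in \eqref{eq2.2}; everything else is combinatorial care with indices and the $\prec$/permutation conventions.
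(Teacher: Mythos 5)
Your Step 1 (the dyadic case, $2^r$ factors) is correct and is essentially the iteration the paper uses as its engine; the permutation bookkeeping you worry about is handled exactly as you describe. The gap is Step 2: none of your suggested routes to general $m$ works, and the lemma genuinely is needed for every even integer (it underlies Proposition \ref{pro2.2}, hence the very definition of $\Lambda_{2m}$ for all $m$, as well as Corollaries \ref{cor2.4-} and \ref{cor2.4} and the complete contractivity of conditional expectations for all $p\in 2\N$). Padding with $\mathbf 1$ fails for the reason you yourself note ($f_j\in L_{2m}$ need not lie in $L_{2^r}$ for $2^r>2m$ when $\mu$ is a probability) and, worse, even for bounded $f_j$ it would produce the quantities $\|\int f_k^{\dot\otimes 2^{r-1}}\dot\otimes\bar f_k^{\dot\otimes 2^{r-1}}\|^{2^{-r}}$ rather than the $(2m)$-th moment quantities the statement requires. ``Induction on $m$'' does not descend: after the (forced) balanced split and one application of \eqref{eq2.2} you are still facing $2m$-fold products of elements of $\{f_1,\ldots,f_m,\bar f_1,\ldots,\bar f_m\}$, so the parameter $m$ never decreases and the induction has nothing to recurse on.

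The device you are missing is an extremal self-improvement. Normalize so that $\|\int f_k^{\dot\otimes m}\dot\otimes\bar f_k^{\dot\otimes m}\,d\mu\|\le 1$ for all $k$, and set $C=\max\|\int g_1\dot\otimes\cdots\dot\otimes g_{2m}\,d\mu\|$, the maximum over all choices of the $g_j$ in the finite set $\{f_1,\ldots,f_{2m},\bar f_1,\ldots,\bar f_{2m}\}$ (finite by ordinary H\"older). Each application of \eqref{eq2.2} to a balanced split then yields $I(g_1,\ldots,g_{2m})\le\bigl(I(g_1,\ldots,g_m,\bar g_1,\ldots,\bar g_m)\,C\bigr)^{1/2}$: you track only the first half (rearranged by permutation invariance as $g_1,\bar g_1,g_2,\bar g_2,\ldots$) and discard the second half at the cost of a factor $C^{1/2}$. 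Iterating finitely many times drives the tracked tuple down to $(f_1,\bar f_1,\ldots,f_1,\bar f_1)$, whose $I$-value is $\le 1$ by normalization, and gives $I(g_1,\ldots,g_{2m})\le C^{1-\theta}$ with $\theta=2^{-K}$ for $K$ the number of iterations. Since this holds for every admissible tuple, $C\le C^{1-\theta}$, whence $C\le 1$. This requires no dyadic assumption on $m$, because the split is always into two halves of size exactly $m$ and the untracked half is simply absorbed into $C$ rather than resolved recursively.
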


\begin{proof}
By homogeneity we may (and do) normalize and assume that
\begin{equation}
 \left\|\int f^{\dot\otimes m}_k \dot\otimes \bar f^{\dot\otimes m}_k\ d\mu\right\|\le 1. \tag*{$\forall k=1,\ldots, 2m$}
\end{equation}
Let $$C = \max\big\{\big\|\int g_1\dot\otimes\cdots \dot\otimes g_{2m}\ d\mu\big\|\big\}$$ where 
the maximum runs over all $g_k$ in the set $\{f_1,\ldots, f_{2m}, \bar f_1,\ldots, \bar f_{2m}\}$. \\
It clearly suffices to prove $C\le 1$. 
 In the interest of the reader,
we first do the proof in the simplest case $m=2$. 
We have by \eqref{eq2.2}
$$\|\int f_1\dot\otimes\cdots\dot\otimes f_{4}d\mu  \|\le \|\int f_1\dot\otimes f_2 \dot\otimes \ovl{  f_1\dot\otimes f_2}d\mu\|^{1/2}  \|\int f_3\dot\otimes f_4 \dot\otimes \ovl{  f_3\dot\otimes f_4}d\mu\|^{1/2}  $$
which we may rewrite as
\begin{equation}\label{p4}\|\int f_1\dot\otimes\cdots\dot\otimes f_{4}d\mu  \|\le \|\int f_1\dot\otimes \bar f_1 \dot\otimes  {  f_2\dot\otimes \bar f_2}d\mu\|^{1/2}  \|\int f_3\dot\otimes \bar f_3 \dot\otimes  {  f_4\dot\otimes \bar f_4}d\mu\|^{1/2}  ,\end{equation}
and by  \eqref{eq2.2} again we have
$$ \|\int f_1\dot\otimes \bar f_1 \dot\otimes  {  f_2\dot\otimes \bar f_2}d\mu\|^{1/2} \le  \|\int f_1\dot\otimes \bar f_1 \dot\otimes  {  \bar f_1\dot\otimes  f_1}d\mu\|^{1/2} \|\int f_2\dot\otimes \bar f_2 \dot\otimes  {  \bar f_2\dot\otimes  f_2}d\mu\|^{1/2}\le 1$$
and similarly for the other factor in \eqref{p4}.
Thus we obtain the announced inequality for $m=2$.

To check  the general case, let us denote 
\[
 I(f_1,\ldots, f_{2m}) = \left\|\int f_1\dot\otimes\cdots\dot\otimes f_{2m}\ d\mu\right\|.
\]
By  \eqref{eq2.2} we find
\[
 I(f_1,\ldots, f_{2m}) \le (I(f_1,\ldots, f_m, \bar f_1,\ldots, \bar f_m)C)^{1/2}.
\]
Note that $I(f_1,\ldots, f_{2m})$ is invariant under permutation of entries. Thus we have
\[
 I(f_1,\ldots, f_m, \bar f_1,\ldots, \bar f_m) = I(f_1,\bar f_1, f_2,\bar f_2,\ldots, f_m,\bar f_m).
\]
Using \eqref{eq2.2} again we find
\[
 I(f_1,\ldots, f_m, \bar f_1,\ldots, \bar f_m) \le (I(f_1,\bar f_1,f_1,\bar f_1, f_2,\bar f_2,\ldots)C)^{1/2}
\]
and continuing in this way we obtain
\[
 I(f_1,\ldots, f_{2m}) \le I(f_1,\bar f_1, f_1,\bar f_1,\ldots, f_1,\bar f_1)^\theta C^{1-\theta}
\]
where $0<\theta<1$ is equal to $2^{-K}$ with $K$ the number of iterations.\\
Since we assume $I(f_1,\bar f_1,\ldots, f_1,\bar f_1) \le 1$ we find
\[
 I(f_1,\ldots, f_{2m}) \le C^{1-\theta}.
\]
But we may replace $f_1,\ldots, f_{2m}$ by $g_1,\ldots, g_{2m}$ and the same argument gives us
\[
 I(g_1,\ldots, g_{2m}) \le C^{1-\theta}
\]
and hence $C\le C^{1-\theta}$, from which $C\le 1$ follows immediately.
\end{proof}

\begin{pro}\label{pro2.2}
Let $m\ge 1$ be an integer and $p=2m$. There is an isometric embedding
\[
 L_p(\mu) \subset B({\cl H})
\]
so that for any $f\in B(H)\otimes L_p(\mu)$ and any $H$ we have
\[
 \|f\|_{B(H\otimes {\cl H})} = \left\|\int f^{\dot\otimes m} \dot\otimes \bar f^{\dot\otimes m} \ d\mu\right\|^{\frac1{2m}}
\]
where we again denote $f^{\dot\otimes m} = f\dot\otimes\cdots\dot\otimes f$ ($m$ times).
\end{pro}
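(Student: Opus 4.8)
Below is how I would go about proving Proposition~\ref{pro2.2}.

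\medskip

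The plan is to read the asserted formula as the \emph{definition} of the matrix norms of $\Lambda_p(\mu)$ and then to check that this is legitimate. For $f\in B(H)\otimes L_p(\mu)$ set
$$N_H(f)=\left\|\int f^{\dot\otimes m}\dot\otimes \bar f^{\dot\otimes m}\,d\mu\right\|^{\frac1{2m}},$$
the integral converging absolutely by H\"older (cf.\ Lemma~\ref{lem2.1}) and the norm being that of $B(H^{\otimes m}\otimes \ovl H^{\otimes m})$. Since any $f\in B(H)\otimes L_p(\mu)$ already lies in $B(H_0)\otimes L_p(\mu)$ for some finite dimensional $H_0\subset H$, and $N_H(f)$ depends only on $f$, we have $N_H(f)=N_{H_0}(f)$ and it suffices to treat finite dimensional $H$; in particular $N_\C=\|\cdot\|_{L_p(\mu)}$. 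I would then prove (i) that $N_H$ is a norm on $B(H)\otimes L_p(\mu)$, and (ii) that the norms $\|x\|_n:=N_{\ell^n_2}(x)$ on $M_n(L_p(\mu))$ satisfy Ruan's axioms. By Ruan's theorem this produces a completely isometric embedding $L_p(\mu)\subset B(\cl H)$ realising $\|\cdot\|_n$; because $\|\cdot\|_{B(H)\otimes_{\min}L_p(\mu)}$ is determined by the corners $M_r(L_p(\mu))=B(\ell^r_2)\otimes_{\min}L_p(\mu)$ and every element of $B(H)\otimes L_p(\mu)$ is supported on a finite dimensional $H_0$, one gets $\|f\|_{B(H\otimes \cl H)}=N_{H_0}(f)=N_H(f)$ for all $H$, and $N_\C=\|\cdot\|_{L_p}$ makes the embedding isometric as a Banach space embedding.

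Homogeneity of $N_H$ is clear (the integrand gets multiplied by $|\lambda|^{2m}$), and faithfulness is routine: if $N_H(f)=0$ then the positive element $\int f^{\dot\otimes m}\dot\otimes \bar f^{\dot\otimes m}\,d\mu$ (positive by \eqref{eq1.1++}) vanishes; writing it as $\langle\langle f^{\dot\otimes m},f^{\dot\otimes m}\rangle\rangle$ with $\cl H=L_2(\mu)$ and orthonormalising the $L_2$-components, the identity recalled in the proof of Lemma~\ref{lem1.1} forces $f^{\dot\otimes m}=0$ in $B(H^{\otimes m})\otimes L_2(\mu)$, hence $f(\omega)^{\otimes m}=0$ a.e.\ and $f=0$. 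The crux is the triangle inequality. Expanding $(f+g)^{\dot\otimes m}\dot\otimes \overline{(f+g)^{\dot\otimes m}}$ by multilinearity of $\dot\otimes$ exhibits $\int (f+g)^{\dot\otimes m}\dot\otimes \overline{(f+g)^{\dot\otimes m}}\,d\mu$ as a sum of $4^m$ terms $\int F_1\dot\otimes\cdots\dot\otimes F_{2m}\,d\mu$ with each $F_i\in\{f,g,\bar f,\bar g\}$ and exactly $m$ of the $F_i$ conjugated. Lemma~\ref{lem2.1} (used with the permutation conventions of \S\ref{sec2}, which let it absorb the conjugated entries, and with $\|\int \bar f^{\dot\otimes m}\dot\otimes f^{\dot\otimes m}\,d\mu\|^{\frac1{2m}}=N_H(f)$) bounds each such term by $N_H(f)^{a}N_H(g)^{2m-a}$, where $a=\#\{i:F_i\in\{f,\bar f\}\}$. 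The number of terms with a prescribed $a$ is $\sum_{j+j'=a}\binom mj\binom m{j'}=\binom{2m}{a}$ by Vandermonde, so the triangle inequality for the operator norm gives
$$N_H(f+g)^{2m}\le\sum_{a=0}^{2m}\binom{2m}{a}N_H(f)^{a}N_H(g)^{2m-a}=\big(N_H(f)+N_H(g)\big)^{2m}.$$

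For Ruan's axioms, first let $\alpha\in M_{k,n}$, $\beta\in M_{n,k}$ be scalar and $x\in M_n(L_p(\mu))$. Pointwise $(\alpha x\beta)^{\dot\otimes m}=\alpha^{\otimes m}\,x^{\dot\otimes m}\,\beta^{\otimes m}$, hence
$$\int (\alpha x\beta)^{\dot\otimes m}\dot\otimes \overline{(\alpha x\beta)^{\dot\otimes m}}\,d\mu=(\alpha^{\otimes m}\otimes\ovl\alpha^{\otimes m})\Big(\int x^{\dot\otimes m}\dot\otimes \bar x^{\dot\otimes m}\,d\mu\Big)(\beta^{\otimes m}\otimes\ovl\beta^{\otimes m}),$$
and since $\|\alpha^{\otimes m}\otimes\ovl\alpha^{\otimes m}\|=\|\alpha\|^{2m}$ and $\|\beta^{\otimes m}\otimes\ovl\beta^{\otimes m}\|=\|\beta\|^{2m}$, taking $2m$-th roots yields $N(\alpha x\beta)\le\|\alpha\|\,N(x)\,\|\beta\|$. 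For the direct-sum axiom, take $f\in M_n(L_p)$, $g\in M_{n'}(L_p)$; then $(f\oplus g)^{\dot\otimes m}\dot\otimes \overline{(f\oplus g)^{\dot\otimes m}}$ is block-diagonal, the blocks indexed by pairs $(\vp,\vp')\in\{1,2\}^m\times\{1,2\}^m$: the ``pure'' blocks $\vp=\vp'\equiv1$ and $\vp=\vp'\equiv2$ give $\int f^{\dot\otimes m}\dot\otimes \bar f^{\dot\otimes m}\,d\mu$ and its analogue for $g$, while every block has the form covered by Lemma~\ref{lem2.1} and hence norm at most $\max(N(f),N(g))^{2m}$. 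As the norm of a block-diagonal operator is the supremum of the norms of its blocks, $N(f\oplus g)=\max(N(f),N(g))$. Ruan's theorem then finishes the proof as explained above.

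\medskip

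\emph{Main obstacle.} I expect the subadditivity of $N_H$ to be the delicate point. For $m=1$ it is immediate from the order convexity \eqref{eq1.1} of $x\mapsto x\dot\otimes\bar x$, but for $m\ge2$ the pointwise map $f\mapsto f^{\dot\otimes m}\dot\otimes\bar f^{\dot\otimes m}$ is \emph{not} order convex: the naive argument would require $(a-b)^{\dot\otimes 2}\succ0$ for positive $a,b$, which fails since $a-b$ need not be $\succ0$. One is therefore forced to run the iterated Cauchy--Schwarz inequality of Lemma~\ref{lem2.1} term by term over the $4^m$ expansion terms and keep both the multinomial count and the conjugation conventions of \S\ref{sec2} under control; everything else is bookkeeping.
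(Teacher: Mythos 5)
Your proof is correct, but it takes a genuinely different route from the paper. The paper never invokes Ruan's theorem: it observes that, by Lemma~\ref{lem2.1}, the quantity $\bigl\|\int f^{\dot\otimes m}\dot\otimes\bar f^{\dot\otimes m}d\mu\bigr\|^{1/2m}$ equals the supremum of $\|f\dot\otimes G\|_{B(H^{\otimes m-1})\otimes_{\min}(L_2)_{oh}}$ over all $G=g_2\dot\otimes\cdots\dot\otimes g_m$ with normalized $g_j$ (the sup being attained at $g_j=\lambda f$), and then embeds $L_p(\mu)$ concretely as $f\mapsto\bigoplus_G f\dot\otimes G$ into a direct sum of the operator spaces $B(\ell_2^{\otimes m-1})\otimes_{\min}(L_2)_{oh}$, exactly as $OH$ is built in \cite{P3}. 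That realization makes the norm property and Ruan's axioms automatic (they are inherited from the target), so the paper never has to prove subadditivity; it also produces, as a by-product, the complete contractivity of the multiplication maps $f\mapsto f\dot\otimes G$ into $(L_2)_{oh}$, which is reused later. Your abstract route buys self-containedness at the price of the $4^m$-term expansion; note, though, that once you have homogeneity together with $N(\alpha x\beta)\le\|\alpha\|N(x)\|\beta\|$ and $N(f\oplus g)\le\max(N(f),N(g))$, subadditivity is automatic (write $f+g=\alpha\,\mathrm{diag}(f N(f)^{-1},g N(g)^{-1})\,\alpha^*$ with $\alpha=(N(f)^{1/2},N(g)^{1/2})$), so your Vandermonde computation, while correct, is dispensable. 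Two small points to tighten: (i) your reduction "any $f\in B(H)\otimes L_p(\mu)$ lies in $B(H_0)\otimes L_p(\mu)$ for some finite dimensional $H_0$" is false as stated (take $f=I_H\otimes f_1$ with $H$ infinite dimensional); what is true, and suffices, is that $N_H(f)$ and $\|f\|_{B(H)\otimes_{\min}\Lambda_p}$ are both the supremum of their compressions by $P\otimes\cdots\otimes P$ over finite-rank projections $P$ on $H$, since any finite-dimensional subspace of $H^{\otimes m}\otimes\ovl H^{\otimes m}$ sits inside some $H_0^{\otimes m}\otimes\ovl H_0^{\otimes m}$; (ii) when you invoke Lemma~\ref{lem2.1} for mixed entries $F_i\in\{f,g,\bar f,\bar g\}$ acting on different spaces, you are really using the form of the estimate established in its proof (the constant $C$ there already ranges over conjugated entries), which is fine but worth saying.
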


\begin{proof} Let $B=B(H)$. With
the notation in Lemma~\ref{lem2.1} we have
\begin{equation}\label{eq2.4}
 \left\|\int f^{\dot\otimes m} \dot\otimes \bar f^{\dot\otimes m}\ d\mu\right\|^{\frac1{2m}} = \max\{ I(f,\bar f, g_2,\bar g_2,\ldots, g_m,\bar g_m) ^{1/2}\}
\end{equation}
where the supremum runs over all $g_2,\ldots, g_m$ in $B\otimes L_{2m}$ such that $\big\|\int g^{\dot\otimes m}_k \dot\otimes \bar g^{\dot\otimes m}_k\ d\mu\big\| \le 1$ for any $k=1,\ldots, m$.\\
Indeed, it follows easily from \eqref{eq2.3} that the latter maximum is attained for the choice of $g_2 = \cdots= g_m = \lambda f$ with $\lambda = \big\|\int f^{\dot\otimes m} \dot\otimes \bar f^{\dot\otimes m}\ d\mu\big\|^{-1/2m}$. Thus we can proceed as in \cite{P3} for the case $m=1$: \ We assume $H = \ell_2$ to fix ideas. Let ${\cl S}$ denote the collection of all $G = g_2\dot\otimes\cdots\dot\otimes g_m$ where $(g_2,\ldots, g_m)$ runs over the set appearing in \eqref{eq2.4}. Note that by \eqref{eq2.3} we know that for any $f$ in $B\otimes L_p$ we have $f\dot\otimes G \in B(H^{\otimes m})\otimes L_2$. Then for any $G$ in ${\cl S}$ we introduce the linear map
\[
 u_G\colon \ L_p(\mu)\to B(H^{\otimes m}) \otimes_{\min} (L_2)_{oh}
\]
defined by
\[
 u_G(f) = f\dot\otimes G.
\]
Then \eqref{eq2.3} and \eqref{eq0.4} imply $\|u_G\|\le 1$. We then define the embedding
\[
 u\colon \ L_p(\mu)\to \bigoplus_{G\in {\cl S}} B(H^{\otimes m-1}) \otimes_{\min} (L_2)_{oh}
\]
by setting
\[
 u(f) = \bigoplus_G u_G(f).
\]
Then \eqref{eq2.4} gives us that for any $f$ in $B\otimes L_p$ we have
\[
 \|(id\otimes u)(f)\| = \left\|\int f^{\dot\otimes m} \dot\otimes \bar f^{\dot\otimes m}\ d\mu\right\|^{\frac1{2m}}.
\]
Thus the embedding $u$ has the required properties.
\end{proof}

\begin{defn}\label{defn2.3}
We denote by $\Lambda_p(\mu)$ $(p=2m)$ the operator space appearing in Proposition~\ref{pro2.2}. Note that $\Lambda_p(\mu)$ is isometric to $L_p(\mu)$ and for any $H$ and for any $f\in B(H)\otimes\Lambda_p(\mu)$ we have
\begin{equation}\label{eq2.4+}
 \|f\|_{B(H)\otimes_{\min}\Lambda_p(\mu)}  = \left\|\int f^{\dot\otimes m} \dot\otimes \bar f^{\dot\otimes m}\ d\mu\right\|^{\frac1{2m}}.
\end{equation}
\end{defn}

\begin{pro}\label{pro2.4}
Let $f$ be as in Definition~\ref{defn2.3} with $p=2m$. Then
\begin{equation}\label{eq2.4++}
 \|f\|_{B(H)\otimes_{\min} \Lambda_p} = \|f^{\dot\otimes m}\|^{1/m}_{B(H^{\otimes m})\otimes_{\min} L_2} = \|f^{\dot\otimes m} \dot\otimes \bar f^{\dot\otimes m}\|^{\frac1{2m}}_{B(H^{\otimes m} \otimes {\bar H}^{\otimes m}) \otimes_{\min}L_1}
\end{equation}
where $L_2$ and $L_1$ are equipped respectively with their $oh$ and maximal operator space structure. So if we make the convention
that $\Lambda_1=L_1$ equipped with its maximal o.s.s. then we have
$$\|f\|_{B(H)\otimes_{\min} \Lambda_{2m}} =
\|(f\dot\otimes \bar f)^{\dot\otimes m}\|_{B( {(H  \otimes {\bar H})}^{\otimes m})\otimes_{\min} \Lambda_1}^{\frac1{2m}}.$$
\end{pro}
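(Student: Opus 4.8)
The plan is to derive \eqref{eq2.4++} directly from the defining formula \eqref{eq2.4+} together with the known operator space descriptions of $L_1$ (maximal o.s.s.) and $L_2$ (the $oh$ structure), and then to read off the final reformulation purely formally. First I would recall that by \eqref{eq0.4} the $oh$-structure on $L_2$ is characterized by
\[
\|g\|_{B(K)\otimes_{\min}(L_2)_{oh}} = \left\|\int g\dot\otimes\bar g\,d\mu\right\|^{1/2}_{B(K\otimes_2\ovl K)}
\]
for any $g\in B(K)\otimes L_2$. Applying this with $K = H^{\otimes m}$ and $g = f^{\dot\otimes m}$ — which does lie in $B(H^{\otimes m})\otimes L_2$ by Lemma~\ref{lem2.1} (the case $f_1=\cdots=f_{2m}=f$ of \eqref{eq2.3} shows $f^{\dot\otimes m}\dot\otimes\bar f^{\dot\otimes m}\in B(H)^{\otimes 2m}\otimes L_1$, equivalently $f^{\dot\otimes m}\in B(H^{\otimes m})\otimes L_2$) — gives
\[
\|f^{\dot\otimes m}\|^{1/m}_{B(H^{\otimes m})\otimes_{\min}L_2} = \left\|\int f^{\dot\otimes m}\dot\otimes\bar f^{\dot\otimes m}\,d\mu\right\|^{\frac1{2m}} = \|f\|_{B(H)\otimes_{\min}\Lambda_{2m}},
\]
the last equality being exactly \eqref{eq2.4+}. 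This settles the first equality in \eqref{eq2.4++}.

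For the middle term, I would use the identification of $L_1$ with its maximal operator space structure. The point is that for a scalar-valued $L_1$-function (or rather an element $F\in B(\cl N)\otimes L_1$), the maximal o.s.s.\ norm is computed by \eqref{eq0.2}: $\|F\|_{B(\cl N)\otimes_{\min}(L_1)_{\max}} = \sup\|\int F(t)\otimes\psi(t)\,d\mu(t)\|$ over $\psi$ in the unit ball of $B(K)\otimes_{\min}L_\infty$. Applying this to $F = f^{\dot\otimes m}\dot\otimes\bar f^{\dot\otimes m}\in B(H^{\otimes m}\otimes\ovl H^{\otimes m})\otimes L_1$ and combining with the variant \eqref{eq2.2bis} of Haagerup's inequality (which says precisely that testing against such $\psi$ cannot increase the norm beyond $\|\int f^{\dot\otimes m}\dot\otimes\bar f^{\dot\otimes m}\|$ with no $\psi$, i.e.\ the $B(H^{\otimes m}\otimes\ovl H^{\otimes m})$-norm), one checks that the supremum over $\psi$ is attained at $\psi=1$, so the $(L_1)_{\max}$-norm of $F$ coincides with $\|\int f^{\dot\otimes m}\dot\otimes\bar f^{\dot\otimes m}\,d\mu\|_{B(H^{\otimes m}\otimes\ovl H^{\otimes m})}$. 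Raising to the power $\frac1{2m}$ gives the second equality in \eqref{eq2.4++}. The final displayed identity is then just a matter of regrouping factors: $f^{\dot\otimes m}\dot\otimes\bar f^{\dot\otimes m}$ and $(f\dot\otimes\bar f)^{\dot\otimes m}$ are the same element of $B(H)^{\otimes 2m}\otimes L_1$ after applying the permutation $\sigma$ that moves $H\otimes\ovl H\otimes\cdots\otimes H\otimes\ovl H$ to $H^{\otimes m}\otimes\ovl H^{\otimes m}$, and under the convention $\Lambda_1 = (L_1)_{\max}$ the min-tensor norm is permutation-invariant, so both sides agree.

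The main obstacle I anticipate is the careful handling of the maximal operator space structure on $L_1$ in the middle equality — specifically, verifying that the supremum in \eqref{eq0.2} defining the $(L_1)_{\max}$-norm is genuinely achieved by the constant function $\psi=1$. This is where \eqref{eq2.2bis} does the real work: it is the statement that the product map $L_2\times L_2\to L_1$ is jointly completely contractive, which is exactly what rules out any gain from a nontrivial multiplier $\psi$. One direction (that $\psi=1$ gives a lower bound, namely the $B(H^{\otimes m}\otimes\ovl H^{\otimes m})$-norm) is immediate since $1$ lies in the unit ball of $B(\C)\otimes_{\min}L_\infty$; the reverse inequality is precisely \eqref{eq2.2bis} applied with $f,g$ both equal to $f^{\dot\otimes m}$ (and $\psi$ arbitrary of norm $\le 1$). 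Once that equivalence is in hand, everything else is bookkeeping with permutations of tensor legs, for which the conventions of \S\ref{sec2} — in particular \eqref{eq1.1-}, the permutation-invariance of $\|\cdot\|_{\min}$ — guarantee consistency.
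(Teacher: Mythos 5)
Your proposal is correct and follows essentially the same route as the paper: the first equality is read off from the $oh$-characterization \eqref{eq0.4} applied to $g=f^{\dot\otimes m}$ together with \eqref{eq2.4+}, and the second comes from the $L_1$ duality formula \eqref{eq0.2} combined with \eqref{eq2.2bis} for the upper bound and $\psi\equiv 1$ for the matching lower bound. The only cosmetic point is that in invoking \eqref{eq2.2bis} one should take $g=\bar f^{\dot\otimes m}$ (or note that replacing $g$ by $\bar g$ leaves the right-hand side unchanged), which is exactly how the paper handles it.
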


\begin{proof}
The first equality is immediate since it is easy to check that for any $g\in B(H)\otimes L_2$ we have
\[
 \|g\|_{\min} = \left\|\int g\dot\otimes \bar g \ d\mu\right\|^{1/2}_{B(H\otimes\ovl H)}.
\]
For the second one, we use \eqref{eq0.2}: for any $\varphi$ in $B(H)\otimes L_1$
\[
 \|\varphi\|_{B(H)\otimes_{\min} L_1} = \sup\left\{\left\|\int \varphi\dot\otimes \psi \ d\mu\right\|\right\}
\]
where the supremum runs over all $\psi$ in $B(\ell_2)\otimes L_\infty$ with $\|\psi\|_{\min} \le 1$. Applying this to $\varphi(t) = f(t)^{\otimes m} \otimes \ovl{f(t)}^{\otimes m}$ we find using 
  \eqref{eq2.2bis} 
\[
 \|f^{\otimes m}\otimes \bar f^{\otimes m}\|_{B(H)\otimes_{\min} L_1} \le  \|f\|^{2m}_{B(H)\otimes_{\min} \Lambda_p} ,
\]
and the choice of $\psi\equiv 1$ shows that this inequality is actually an equality.
\end{proof}

\n{\bf Notation:} For any $f\in B(H)\otimes L_p(\mu)$ (or equivalently $f\in B(H)\otimes\Lambda_p(\mu)$), we denote
$$\|f\|_{(p)}= \|f\|_{B(H)\otimes_{\min}\Lambda_p(\mu)} .$$
The preceding Proposition shows that if $p=2m$, we have
$$\|f\|_{(p)}=\| f^{\dot\otimes {p/2}}\|_{(2)}^{2/p}=\|(f\dot\otimes \bar f)^{\dot\otimes {p/2}}\|_{(1)}^{1/p}.$$
Note that by \eqref{eq1.7}, if $g,f\in B(H)\otimes L_p(\mu)$ and $p=2m$
\begin{equation}\label{eq3.as}\left( g(\omega) \dot\otimes \bar g(\omega)\prec f(\omega) \dot\otimes \bar f (\omega)    \quad\forall_{a.s} \ \omega\in \Omega\right)\Rightarrow \|g\|_{(p)} \le \|f\|_{(p)}.
\end{equation}
\begin{cor} The conditional expectation ${\bb E}^{\cl B}$ is  completely contractive on $\Lambda_p$ for any $p\in 2\N$.
\end{cor}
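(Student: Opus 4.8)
The plan is to exploit that $\Lambda_p(\mu)$ is, as a Banach space, nothing but $L_p(\mu)$ (Definition~\ref{defn2.3}), and that the defining norm \eqref{eq2.4+} is manifestly invariant under measure-preserving transformations of $(\Omega,\mu)$; then $\EE^{\cl B}$, being an average of such transformations, will be completely contractive by convexity. By the viewpoint recalled in \S\ref{sec1} it suffices to prove that $\|\EE^{\cl B}f\|_{(p)}\le\|f\|_{(p)}$ for every Hilbert space $H$ and every $f\in B(H)\otimes L_p(\mu)$ (with $p=2m\in2\N$), where $\EE^{\cl B}$ is understood to act as $\mathrm{id}_{B(H)}\otimes\EE^{\cl B}$.

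First I would record the invariance. For a measure-preserving $g\colon(\Omega,\mu)\to(\Omega,\mu)$ let $R_g$ be the substitution operator $\varphi\mapsto\varphi\circ g$ on $L_p(\mu)$. Since $((\mathrm{id}\otimes R_g)f)^{\dot\otimes m}=f^{\dot\otimes m}\circ g$, changing variables in \eqref{eq2.4+} gives
\[
\|(\mathrm{id}\otimes R_g)f\|_{(p)}^{2m}=\Bigl\|\int (f\circ g)^{\dot\otimes m}\dot\otimes\ovl{(f\circ g)^{\dot\otimes m}}\,d\mu\Bigr\|=\Bigl\|\int f^{\dot\otimes m}\dot\otimes\ovl{f^{\dot\otimes m}}\,d\mu\Bigr\|=\|f\|_{(p)}^{2m},
\]
so each $R_g$ is a complete isometry of $\Lambda_p(\mu)$. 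Consequently any average $\int_K R_{g(k)}\,d\nu(k)$ of such operators (finite convex combinations, or Haar integrals over a compact group) is completely contractive, by Jensen's inequality for the norm $\|\cdot\|_{B(H)\otimes_{\min}\Lambda_p}$.

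It remains to realize $\EE^{\cl B}$ as such an average. When $\cl B$ is generated by a finite measurable partition $\{B_1,\dots,B_N\}$ whose non-atomic blocks are identified measure-theoretically with circles, one has $\EE^{\cl B}=\int_K R_{g(\theta)}\,d\theta$ where $K$ is the compact group of simultaneous rotations of those blocks with its Haar measure (atomic blocks contribute the identity); hence $\|\EE^{\cl B}f\|_{(p)}\le\|f\|_{(p)}$ in that case. For a general sub-$\sigma$-algebra $\cl B$ I would reduce to the finite case: a fixed $f$ has only finitely many $B(H)$-coefficients, so $\EE^{\cl B}f=\EE^{\cl B_0}f$ for a countably generated $\cl B_0\subset\cl B$, and therefore $\EE^{\cl B}f=\lim_n\EE^{\cl B_n}f$ in $L_p(\mu)$ along an increasing sequence of finite sub-$\sigma$-algebras (martingale convergence). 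Since convergence of each $L_p$-coefficient forces convergence in $B(H)\otimes_{\min}\Lambda_p$ and the norm there is continuous, $\|\EE^{\cl B}f\|_{(p)}=\lim_n\|\EE^{\cl B_n}f\|_{(p)}\le\|f\|_{(p)}$. As this holds for every $H$, $\EE^{\cl B}$ is completely contractive on $\Lambda_p$.

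The argument is essentially soft, so there is no single hard obstacle; the only points requiring care are (i) writing $\EE^{\cl B}$ as a limit of averages of measure-preserving substitutions, and (ii) verifying that the $\Lambda_p$-norm, being the $L_p$-norm on a fixed underlying Banach space, is continuous along those limits, so that completely contractive averages pass to the limit. I would deliberately avoid the more hands-on route, namely trying to push the ordering $\prec$ through as in \eqref{eq1.8}: the pointwise inequality $((\EE^{\cl B}f)\dot\otimes\ovl{\EE^{\cl B}f})^{\dot\otimes m}\prec\EE^{\cl B}\bigl((f\dot\otimes\bar f)^{\dot\otimes m}\bigr)$ amounts to an order version of \eqref{eq1.8} beyond tensor powers $2^k$ and is not transparent for general even $p$, whereas the invariance argument costs nothing and works for all $p\in2\N$ at once.
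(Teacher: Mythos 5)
Your soft ingredients are all sound: the substitution operators $R_g$ associated to measure-preserving transformations are indeed complete isometries of $\Lambda_p(\mu)$ by \eqref{eq2.4+}, Bochner averages of complete contractions are complete contractions, and the reduction of a general $\cl B$ to an increasing sequence of finite sub-$\sigma$-algebras via martingale convergence (using that a fixed $f$ has finitely many $L_p$-coefficients and that the $\Lambda_p$-norm is the $L_p$-norm at the Banach space level) is legitimate. The genuine gap is the pivotal step: the identity $\EE^{\cl B}=\int_K R_{g(\theta)}\,d\theta$ is simply false for a general finite partition of a general measure space. Your parenthetical ``atomic blocks contribute the identity'' assumes each block of $\cl B$ is either non-atomic or a single atom, but a block may contain several atoms of unequal mass (or a mixture of an atomic and a non-atomic part). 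Already for $\Omega=\{1,2\}$ with $\mu(\{1\})=1/3$, $\mu(\{2\})=2/3$ and $\cl B$ trivial, the only measure-preserving bijection is the identity, so no average (or limit of averages) of substitution operators can produce the averaging operator $\EE^{\cl B}$; the same obstruction persists for any finite non-uniform probability space, which is hardly an exotic case for this corollary. The statement you need is a Choquet-type representation of Markov operators by point transformations, and that is only available on non-atomic standard spaces. The argument can be repaired — e.g.\ embed $\Lambda_p(\Omega,\mu)$ completely isometrically into $\Lambda_p(\Omega\times[0,1],\mu\times\text{Leb})$ via $f\mapsto f\otimes 1$ and condition on $\cl B\otimes\{\emptyset,[0,1]\}$, whose blocks are non-atomic — but as written the step fails, and you should also say a word about why a countably generated non-atomic block can be identified with a circle (this is a measure-algebra isomorphism theorem, not a triviality).

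It is worth comparing with the paper's proof, which is two lines and needs none of this machinery: set $g=\EE^{\cl B}f$ and use the self-adjointness/module property of conditional expectation, $\int (\EE^{\cl B}f)\,h\,d\mu=\int f\,h\,d\mu$ for $\cl B$-measurable $h$, to replace a single factor $g$ by $f$ in $\|g\|_{(p)}^p=\|\int g\dot\otimes\bar g\dot\otimes\cdots\,d\mu\|$; the H\"older inequality \eqref{eq2.3} then gives $\|g\|_{(p)}^p\le\|f\|_{(p)}\|g\|_{(p)}^{p-1}$, i.e.\ $\|g\|_{(p)}\le\|f\|_{(p)}$, for every even $p$ at once and for arbitrary $(\Omega,\mu)$ and $\cl B$. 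You correctly identified that pushing the order relation $\prec$ through as in \eqref{eq1.8} only works cleanly for $p=2^k$, but the alternative you chose is both heavier and, in its present form, incorrect; the missing idea is the H\"older trick above.
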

\begin{proof} The case $p=2$ is clear by the homogeneity of the spaces $OH$. Using \eqref{eq1.2}
one easily passes from $p$ to $2p$. By induction, this proves the Corollary for 
any $p$ of the form $p=2^m$. For $p$ equal to an arbitrary even integer, we need
a different argument. Let $f\in B(H)\otimes \Lambda_p$ and  $g={\bb E}^{\cl B} f$. By the 
classical property of conditional expectations,
we have
$$\|g\|^p_{(p)}=\|\int  g\dot\otimes \bar g \dot\otimes   g\dot\otimes \bar g\cdots d\mu\|= \|  \int  f\dot\otimes \bar g \dot\otimes   g\dot\otimes \bar g\cdots d\mu \|$$
and hence by \eqref{eq2.3}
$\|g\|^p_{(p)}\le  \|f\|_{(p)}\|g\|^{p-1}_{(p)}$ or equivalently $\|g\|_{(p)}\le  \|f\|_{(p)}$.
\end{proof}

With the above definition, Lemma \ref{lem2.1} together with \eqref{eq2.2bis} immediately implies
(see the beginning of \S \ref{sec7} for the definition of jointly completely contractive multilinear maps):
\begin{cor}\label{cor2.4-} Let $p=2m$.
The product mapping $(f_1,\ldots, f_{2m})\mapsto f_1\times \ldots \times f_{2m}$
is a   jointly completely contractive multilinear map from
$\Lambda_p(\mu)^{2m}$ to $\Lambda_1(\mu)=L_1(\mu)$.
\end{cor}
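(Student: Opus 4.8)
The plan is to unfold the definition of joint complete contractivity from \S\ref{sec1} and then reduce the statement to Lemma~\ref{lem2.1}, using \eqref{eq2.2bis} only once, at the very start, in order to absorb the auxiliary $L_\infty$-valued function that witnesses the maximal operator space structure of $L_1=\Lambda_1$. Restricting as we may to finite-dimensional Hilbert spaces $H_1,\dots,H_{2m}$, put $B_j=B(H_j)$ and $\cl H=H_1\otimes\cdots\otimes H_{2m}$, so that $B_1\otimes_{\min}\cdots\otimes_{\min}B_{2m}=B(\cl H)$, and let $f_j\in B_j\otimes\Lambda_p(\mu)$ with $p=2m$. By H\"older's inequality the pointwise product $F=f_1\dot\otimes\cdots\dot\otimes f_{2m}$ lies in $B(\cl H)\otimes L_1(\mu)$, and what has to be shown is that $\|F\|_{B(\cl H)\otimes_{\min}\Lambda_1(\mu)}\le\prod_{j=1}^{2m}\|f_j\|_{(p)}$. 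Since $\Lambda_1(\mu)=L_1(\mu)$ with its maximal o.s.s., formula \eqref{eq0.2} (as recalled in the proof of Proposition~\ref{pro2.4}) rewrites this norm as $\sup_\psi\big\|\int F\dot\otimes\psi\,d\mu\big\|$, the supremum running over all $\psi$ in the unit ball of $B(\ell_2)\otimes L_\infty(\mu)$. Hence it suffices to prove, for each such $\psi$, that
\[
\Big\|\int f_1\dot\otimes\cdots\dot\otimes f_{2m}\dot\otimes\psi\,d\mu\Big\|\ \le\ \prod_{j=1}^{2m}\|f_j\|_{(p)}.
\]

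To get this, I would split the product into its first $m$ and last $m$ factors: set $u=f_1\dot\otimes\cdots\dot\otimes f_m$ and $v=f_{m+1}\dot\otimes\cdots\dot\otimes f_{2m}$, so that $F=u\dot\otimes v$ and, by H\"older's inequality, $u\in B(H_1\otimes\cdots\otimes H_m)\otimes L_2(\mu)$ and $v\in B(H_{m+1}\otimes\cdots\otimes H_{2m})\otimes L_2(\mu)$. Applying \eqref{eq2.2bis} to the pair $(u,v)$ and the multiplier $\psi$ gives
\[
\Big\|\int u\dot\otimes v\dot\otimes\psi\,d\mu\Big\|\ \le\ \Big\|\int u\dot\otimes\bar u\,d\mu\Big\|^{1/2}\Big\|\int v\dot\otimes\bar v\,d\mu\Big\|^{1/2}.
\]
Now $\int u\dot\otimes\bar u\,d\mu=\int f_1\dot\otimes\cdots\dot\otimes f_m\dot\otimes\bar f_1\dot\otimes\cdots\dot\otimes\bar f_m\,d\mu$ (after the harmless identification $\overline{H_1\otimes\cdots\otimes H_m}=\bar H_1\otimes\cdots\otimes\bar H_m$), so Lemma~\ref{lem2.1} applied to the $2m$-tuple $(f_1,\dots,f_m,\bar f_1,\dots,\bar f_m)$ bounds its norm by $\prod_{j=1}^m\|f_j\|_{(p)}\|\bar f_j\|_{(p)}$; and $\|\bar f_j\|_{(p)}=\|f_j\|_{(p)}$ because conjugation merely permutes the tensor factors in the integrand of \eqref{eq2.4+} and $\|\cdot\|_{\min}$ is commutative. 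Thus $\big\|\int u\dot\otimes\bar u\,d\mu\big\|^{1/2}\le\prod_{j=1}^m\|f_j\|_{(p)}$, likewise $\big\|\int v\dot\otimes\bar v\,d\mu\big\|^{1/2}\le\prod_{j=m+1}^{2m}\|f_j\|_{(p)}$, and multiplying the two yields the displayed estimate; taking the supremum over $\psi$ finishes the proof.

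Two small bookkeeping matters remain, and I expect neither to be a real obstacle. First, Lemma~\ref{lem2.1} is stated with a single ambient Hilbert space, whereas here the entries $f_j$, $\bar f_j$ carry the distinct spaces $B(H_j)$, $B(\bar H_j)$; one simply notes that the proof of Lemma~\ref{lem2.1} uses only \eqref{eq2.2}, which holds for arbitrary Hilbert spaces, together with the permutation invariance of the quantity denoted $I(\cdot)$ there, so nothing changes (alternatively one may embed each $B(H_j)$ into $B(\bigoplus_j H_j)$ from the outset). Second, one must keep straight the permutations $\sigma$ implicit in the notation $\dot\otimes$ so as to identify $\int u\dot\otimes\bar u\,d\mu$ with the object to which Lemma~\ref{lem2.1} applies; since the minimal tensor norm is commutative this is routine. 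The only genuinely essential point — and the reason the corollary invokes \eqref{eq2.2bis} and not merely Lemma~\ref{lem2.1} — is the single use of \eqref{eq2.2bis} in place of \eqref{eq2.2} at the first splitting, which is exactly what allows the $L_\infty$ multiplier $\psi$ to pass through.
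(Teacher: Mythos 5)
Your proposal is correct and follows essentially the same route as the paper: the paper's (very terse) proof likewise reduces, via the dual description of the $\Lambda_1=L_1$ norm, to bounding $\bigl\|\int f_1\dot\otimes\cdots\dot\otimes f_{2m}\dot\otimes\psi\,d\mu\bigr\|$ for $\psi$ in the unit ball of $B(\ell_2)\otimes L_\infty$, and obtains that bound by combining \eqref{eq2.2bis} with Lemma~\ref{lem2.1} exactly as you do. Your write-up merely makes explicit the split into the two $L_2$-valued halves $u,v$ and the identity $\|\bar f_j\|_{(p)}=\|f_j\|_{(p)}$, both of which are left implicit in the paper.
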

\begin{proof} By Lemma \ref{lem2.1} and \eqref{eq2.2bis}
we have  for any $\psi \in B(\ell_2)\otimes L_\infty$ with
norm $\|\psi\|_{\min}\le 1$ 
$$\|\int   f_1\dot\otimes\cdots\dot\otimes f_{p}\dot\otimes\psi\|
 \le \prod_1^p\|f_j\|_{(p)}.$$
Taking the supremum over all such $\psi$ we obtain
$$\|  f_1\dot\otimes\cdots\dot\otimes f_{p}\|_{(1)} \le \prod_1^p\|f_j\|_{(p)},$$
which is nothing but a reformulation of the assertion of this corollary.
\end{proof}
\begin{cor}\label{cor2.4}
Let $p\ge q\ge 2$ be even integers. If $\mu$ is a probability, the inclusion
$L_p(\mu) \subset L_q(\mu) $ is a complete contraction from
$\Lambda_p(\mu)$ to $\Lambda_q(\mu)$.
\end{cor}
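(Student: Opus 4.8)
The plan is to read the asserted inclusion inequality as the operator-space incarnation of the classical Hölder bound ``$\|g\|_q\le\mu(\Omega)^{1/q-1/p}\|g\|_p$'', the point being that, once $\mu(\Omega)=1$, one may pad a $p$-fold product by constant functions without cost. Write $p=2m$ and $q=2\ell$ with $m\ge\ell\ge1$, fix an arbitrary Hilbert space $H$ and $f\in B(H)\otimes\Lambda_p(\mu)=B(H)\otimes L_p(\mu)$; since $H$ is arbitrary it suffices to prove $\|f\|_{(q)}\le\|f\|_{(p)}$. By \eqref{eq2.4+} (equivalently the Notation following Proposition~\ref{pro2.4}), up to a permutation of tensor legs that leaves the minimal norm unchanged,
\[
\|f\|_{(q)}^{q}=\Big\|\int f^{\dot\otimes\ell}\dot\otimes\bar f^{\dot\otimes\ell}\,d\mu\Big\|_{B(H^{\otimes\ell}\otimes\bar H^{\otimes\ell})}.
\]

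Next I would bring in the constant function $\mathbf 1:=1_{B(H)}\otimes 1\in B(H)\otimes L_p(\mu)$, which is legitimate precisely because $\mu$ is finite. By \eqref{eq2.4+}, $\mathbf 1^{\dot\otimes m}\dot\otimes\bar{\mathbf 1}^{\dot\otimes m}$ is the constant function equal to the unit of $B(H^{\otimes m}\otimes\bar H^{\otimes m})$, so $\|\mathbf 1\|_{(p)}=\mu(\Omega)^{1/p}=1$; likewise $\|\bar f\|_{(p)}=\|f\|_{(p)}$ (once more a permutation of legs in \eqref{eq2.4+}, using $\overline{\bar f}=f$). Now apply Corollary~\ref{cor2.4-}, i.e.\ the joint complete contractivity of the product map $\Lambda_p(\mu)^{2m}\to\Lambda_1(\mu)$, to the $p=2m$ functions consisting of $\ell$ copies of $f$, $\ell$ copies of $\bar f$ and $p-q=2(m-\ell)$ copies of $\mathbf 1$ (identifying $\overline{B(H)}$ with $B(\bar H)$, and, if one prefers to remain literally inside the statement of Corollary~\ref{cor2.4-}, $\bar H$ with $H$ via a fixed orthonormal basis). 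This yields
\[
\big\|f^{\dot\otimes\ell}\dot\otimes\bar f^{\dot\otimes\ell}\dot\otimes\mathbf 1^{\dot\otimes 2(m-\ell)}\big\|_{(1)}\ \le\ \|f\|_{(p)}^{\ell}\,\|\bar f\|_{(p)}^{\ell}\,\|\mathbf 1\|_{(p)}^{2(m-\ell)}\ =\ \|f\|_{(p)}^{2\ell}\ =\ \|f\|_{(p)}^{q}.
\]

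Finally I would read off the left-hand side. Taking the test function to be $1$ in \eqref{eq0.2} (as in the proof of Proposition~\ref{pro2.4}) gives $\|\int\varphi\,d\mu\|\le\|\varphi\|_{(1)}$ for any $L_1$-valued $\varphi$; applied with $\varphi=f^{\dot\otimes\ell}\dot\otimes\bar f^{\dot\otimes\ell}\dot\otimes\mathbf 1^{\dot\otimes 2(m-\ell)}$, and using that $\int\varphi\,d\mu=\big(\int f^{\dot\otimes\ell}\dot\otimes\bar f^{\dot\otimes\ell}\,d\mu\big)\otimes 1$ has the same operator norm as $\int f^{\dot\otimes\ell}\dot\otimes\bar f^{\dot\otimes\ell}\,d\mu$, this produces $\|f\|_{(q)}^{q}\le\|f\|_{(p)}^{q}$. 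Taking $q$-th roots gives $\|f\|_{(q)}\le\|f\|_{(p)}$ for every $H$, which is exactly complete contractivity of the inclusion $\Lambda_p(\mu)\to\Lambda_q(\mu)$.

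I do not anticipate a genuine obstacle: all the substance is already contained in Corollary~\ref{cor2.4-} (equivalently in Lemma~\ref{lem2.1} together with \eqref{eq2.2bis}), and what is left is the bookkeeping that $\|\mathbf 1\|_{(p)}=1$ — the single point at which $\mu(\Omega)=1$ is used — together with the stability of $\|\cdot\|_{\min}$ under permutation of tensor legs and under tensoring by identity operators. If one wished to avoid the product map entirely, one could instead iterate the order relations \eqref{eq1.7}, \eqref{eq1.7bbis} and \eqref{eq1.1++}, but the argument above seems the most economical.
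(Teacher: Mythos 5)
Your proof is correct and follows essentially the same route as the paper: pad the $p$-fold product with $p-q$ copies of the constant function $\mathbf 1$ (where $\mu(\Omega)=1$ gives $\|\mathbf 1\|_{(p)}=1$) and apply the Hölder inequality of Lemma~\ref{lem2.1}, here in the packaged form of Corollary~\ref{cor2.4-}. The only cosmetic difference is that the paper invokes \eqref{eq2.3} directly on the integral $\int f^{\dot\otimes\ell}\dot\otimes\bar f^{\dot\otimes\ell}\dot\otimes\mathbf 1^{\dot\otimes(p-q)}\,d\mu$, whereas you pass through the $\Lambda_1$-norm and then test against $\psi\equiv 1$; both are immediate from the same lemma.
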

\begin{proof} Take $p=2m$, $q=2n$ and $r=2m-2n$ with $n< m$.
Let $f\in B(H)\otimes L_p(\mu)$ and $g\in B(H)\otimes L_p(\mu)$.
Consider  
$f_1\otimes\cdots\otimes f_{2m}=(f\otimes \bar f)^{n}\otimes {g}^{\otimes   r}$,
and let us choose for $g$ the constant function that is identically equal to the identity operator on $H$.
Then, using \eqref{eq2.4+},  \eqref{eq2.3} yields
$$  \|f\|_{B(H)\otimes_{\min}\Lambda_q(\mu)}^q  \le   \|f\|_{B(H)\otimes_{\min}\Lambda_p(\mu)}^{q}\times 1^{p-q} $$
and hence $  \|f\|_{B(H)\otimes_{\min}\Lambda_q(\mu)}   \le   \|f\|_{B(H)\otimes_{\min}\Lambda_p(\mu)}$.
\end{proof}

\begin{rem}
Let $f\in B(H) \otimes L_\infty(\mu)$. We may view $f$ as an essentially bounded $B(H)$-valued function. Then if $\mu$ is finite we have
\begin{equation}\label{eq3.9}
 \|f\|_{L_\infty(\mu;B(H))} = \lim_{p\to\infty}\|f\|_{(p)}.
\end{equation}
Indeed, we may assume $\mu(\Omega)=1$ and it clearly suffices to prove that for any measurable subset $A\subset\Omega$ with $\mu(A)>0$ we have
\begin{equation}\label{eq3.10}
 \left\|\mu(A)^{-1} \int\limits_A f\ d\mu\right\| \le\lim_{p\to\infty} \|f\|_{(p)}.
\end{equation}
To verify this let $f_A = \mu(A)^{-1} \int\limits_A f\ d\mu$. By \eqref{eq1.8} with $p=2^k$ we have
\begin{align*}
 \|f_A\| = \|(f_A\otimes \bar f_A)^{\otimes p}\|^{\frac1{2p}} &\le \left\| \mu(A)^{-1} \int\limits_A (f\dot\otimes \bar f)^{\dot\otimes p}\right\|^{\frac1{2p}}\\
&\le \mu(A)^{-\frac1{2p}} \|f\|_{(2p)}
\end{align*}
where we also use \eqref{eq1.1+++} and Lemma \ref{lem1.1}, 
and hence letting $p\to\infty$ we obtain \eqref{eq3.10}.
\end{rem}

\section{Martingale inequalities in $\pmb{\Lambda_p}$}\label{sec4}

\indent 

Our main result is the following one. This is an operator valued version of Burkholder's martingale inequalities. Although our inequality seems very different from the one appearing in \cite{PX1}, the method used to prove it is rather similar.

Throughout this section $H$ is an arbitrary Hilbert space and we set $B=B(H)$. We give ourselves a probability space $(\Omega, {\cl A}, {\bb P})$ and we set $L_p = L_p(\Omega, {\cl A}, {\bb P})$. We also give ourselves a filtration $({\cl A}_n)_{n\ge 0}$ on $(\Omega, {\cl A}, {\bb P})$. We assume that ${\cl A}_0$ is trivial and that ${\cl A}_\infty = \sigma\big(\bigcup_{n\ge 0} {\cl A}_n\big)$ is equal to ${\cl A}$.

We may view any $f$ in $B\otimes L_p$ as a $B$-valued random variable $f\colon \ \Omega\to B$. We denote ${\bb E}_nf = {\bb E}^{{\cl A}_n}f$, $d_0 = {\bb E}_0f$ and $d_n = {\bb E}_nf-  {\bb E}_{n-1}f$ for any $n\ge 1$. We will say that $f\in B\otimes L_p$ is a test function if it is ${\cl A}_N$-measurable for some $N\ge 1$, or equivalently if $f$ can be written as a \emph{finite} sum $f = \sum^\infty_0\ d_n$. We then denote
\[
 S(f) = \sum\nolimits^\infty_0 d_n \dot\otimes \bar d_n.
\]
We also denote
\[
 \sigma(f) = d_0 \dot\otimes \bar d_0  + \sum\nolimits^\infty_1 {\bb E}_{n-1}(d_n \dot\otimes \bar d_n).
\]
We recall the notation
\[
 x^{\dot\otimes m} = x\dot\otimes \cdots\dot\otimes x
\]
where $x$ is repeated $m$ times. Note that $\sigma(f)\in B\otimes \ovl B\otimes  L_{p/2}$ and also that
\[
 (d_n\dot\otimes \bar d_n)^{\dot\otimes p/2} \in (B\otimes\ovl B)^{\otimes p/2} \otimes L_1.
\]

\begin{thm}\label{thm3.1}
 For any $p\ge 2$ of the form $p=2^k$ for some $k\ge 1$ there are positive constants $C_1(p), C_2(p) $ such that for any test function $f$ in $B\otimes L_p$ we have
\begin{equation}\label{eq3.1+}
 C_1(p)^{-1}\|S(f)\|^{1/2}_{B\otimes_{\min} \ovl B\otimes_{\min} \Lambda_{p/2}} \le \|f\|_{B \otimes_{\min} \Lambda_p} \le C_2(p)\|S(f)\|^{1/2}_{B\otimes_{\min} \ovl B\otimes_{\min} \Lambda_{p/2}}.
\end{equation}

\end{thm}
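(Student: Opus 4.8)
The plan is to prove the two inequalities in \eqref{eq3.1+} separately, by induction on $k$, following the pattern of \cite{PX1} but adapted to the $\prec$-ordering machinery of \S\ref{sec2}--\S\ref{sec3}. The base case $p=2$ is the content of the non-commutative Khintchine / Burkholder inequality for $OH=(L_2)_{oh}$: since for a test function $f=\sum d_n$ one has $\|f\|_{(2)}^2 = \|\int f\dot\otimes\bar f\,d\mu\|$, and by orthogonality of the martingale differences $\int f\dot\otimes\bar f\,d\mu = \sum_n \int d_n\dot\otimes\bar d_n\,d\mu = \|S(f)\|_{(1)}$ (using \eqref{eq2.03} and the convention $\Lambda_1=L_1$), the case $k=1$ holds in fact with constant $1$ on both sides. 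For the inductive step, suppose \eqref{eq3.1+} holds for $p$; I want to deduce it for $2p$. The key identity to exploit is $\|g\|_{(2p)} = \|g\dot\otimes\bar g\|_{(p)}^{1/2}$ from Proposition \ref{pro2.4} (more precisely $\|f\|_{(2m)}=\|(f\dot\otimes\bar f)^{\dot\otimes m}\|_{(1)}^{1/(2m)}$, which lets one square and pass to $\Lambda_p$). So I would write $f\in B\otimes\Lambda_{2p}$, form $F = f\dot\otimes\bar f \in B\otimes\ovl B\otimes L_p$, and try to relate the martingale structure of $F$ (with respect to the same filtration) to that of $f$.

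The main point, and the heart of the PX1-style argument, is the algebraic expansion of $F=f\dot\otimes\bar f$ in terms of martingale differences. Writing $f=\sum_j d_j$, we get $F = \sum_{i,j} d_i\dot\otimes\bar d_j$, and grouping by $\max(i,j)$ gives a decomposition $F = \sum_n D_n$ where $D_n = d_n\dot\otimes\overline{(\EE_n f)} + \overline{(\EE_{n-1}f)}\,\dot\otimes\, \bar d_n$ — wait, more carefully, $D_n = d_n \dot\otimes \overline{(\EE_{n-1}f)} + (\EE_{n}f)\dot\otimes \bar d_n$ (the "off-diagonal" part) $+\ d_n\dot\otimes\bar d_n$ (the "diagonal" part). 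The diagonal part summed over $n$ is exactly $S(f)$; the off-diagonal parts are martingale differences for the filtration $({\cl A}_n)$ because $\EE_{n-1}(d_n\dot\otimes\overline{\EE_{n-1}f}) = 0$ and similarly for the other term. Applying the induction hypothesis to $F$ as an element of $B\otimes\ovl B\otimes\Lambda_p$, one controls $\|F\|_{(p)}=\|f\|_{(2p)}^2$ by $\|S(F)\|_{(p/2)}^{1/2}$ (up to $C_i(p)$), and then $S(F) = \sum_n D_n\dot\otimes\bar D_n$ must be compared, using the $\prec$-order tools — especially order-convexity \eqref{eq1.1}, \eqref{eq1.3}, the tensor monotonicity \eqref{eq1.7bbis}, \eqref{eq1.7}, and the conditional-expectation inequalities \eqref{eq1.2}, \eqref{eq1.8} — to an expression built from $S(f)$ alone. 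The diagonal–diagonal terms give $\sum_n (d_n\dot\otimes\bar d_n)\dot\otimes\overline{(d_n\dot\otimes\bar d_n)}$, i.e. $S$ of the $\Lambda_{p/2}$-valued sequence; the cross terms involving $\EE_{n-1}f$ are handled by a Stein-type / dual-Doob estimate (the paper explicitly says it uses its $\Lambda_p$-version of Junge's dual Doob inequality and a Stein inequality, announced for \S\ref{sec4}), bounding $\sum_n \EE_{n-1}(\cdots)$-type square functions by $\|f\|_{(2p)}$ times $\|S(f)\|^{1/2}$.

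For the upper bound on $\|f\|_{(2p)}$ in terms of $\|S(f)\|^{1/2}$, the strategy is the same decomposition read in the opposite direction: bound $\|f\|_{(2p)}^2 = \|F\|_{(p)} = \|\sum D_n\|_{(p)}$ by the induction hypothesis applied to the martingale $\sum D_n$, namely $\|\sum_{n}(\text{off-diagonal }D_n)\|_{(p)}\le C_2(p)\|\sum D_n\dot\otimes\bar D_n\|_{(p/2)}^{1/2}$ plus a triangle-inequality contribution from the diagonal part $S(f)$ itself (which lies in $B\otimes\ovl B\otimes\Lambda_p$ with $\|S(f)\|_{(p)}\le \|f\|_{(2p)}^2$ by order-convexity, but we want it the other way, so instead one keeps $S(f)$ as the target and absorbs it). A cleaner route, and the one I would actually try first, is to run the induction entirely through the identity $\|f\|_{(2p)}^2=\|f\dot\otimes\bar f\|_{(p)}$ and an inequality of the form $S(f\dot\otimes\bar f) \prec (\text{polynomial in the }d_n\text{'s and }\EE_{n-1}f) $, then close the loop with a self-improving argument (as in the proof of Lemma \ref{lem2.1}, where $C\le C^{1-\theta}$ forces $C\le 1$) to avoid the constants blowing up too fast. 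The hard part will be precisely the cross terms: getting the off-diagonal pieces of $S(F)$ to be dominated — in the $\prec$ sense, so that Lemma \ref{lem1.1} / \eqref{eq1.1-} applies — by a bounded combination of $S(f)^{\dot\otimes 2}$-type terms and conditioned square functions, which is exactly where the Stein and dual-Doob inputs enter and where the restriction $p=2^k$ (needed so that \eqref{eq1.8} can be iterated dyadically) is genuinely used; the one-sided inequality that survives for general even $p$ in \S\ref{sec11} is the one not requiring this dyadic iteration.
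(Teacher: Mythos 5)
Your strategy coincides with the paper's: decompose $f\dot\otimes\bar f - S=a+b$ with $a=\sum d_n\dot\otimes\bar f_{n-1}$, $b=\sum f_{n-1}\dot\otimes\bar d_n$, observe that $a$ and $b$ are sums of martingale differences, run the induction through the identity $\|f\|_{(2p)}^2=\|f\dot\otimes\bar f\|_{(p)}$, control the cross terms by dual--Doob/Stein estimates in the $\prec$-order, and close with the quadratic absorption $|x^2-y^2|\le Cxy$. The base case $p=2$ with constant $1$ is exactly right. (Minor algebra slip: both off-diagonal pieces of your $D_n$ should involve $f_{n-1}={\bb E}_{n-1}f$; writing ${\bb E}_nf$ in the second one double-counts the diagonal term $d_n\dot\otimes\bar d_n$.)

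The genuine gap is at the step you defer as ``the hard part.'' To make the induction $\eqref{eq3.1+}_p\Rightarrow\eqref{eq3.1+}_{2p}$ close, one needs the quantitative cross-term estimate
\[
\Big\|\sum\nolimits_n d_n\dot\otimes\bar d_n\dot\otimes f_{n-1}\dot\otimes\bar f_{n-1}\Big\|_{(p/2)}\le C\,\|S\|_{(p)}\,\|f\|_{(2p)}^2 ,
\]
and this is not a direct consequence of the dual Doob and Stein inequalities: it is the content of Lemma \ref{lem4.10}, whose proof is itself an induction on $k$ in which the statement must be strengthened to all dyadic powers $(d_n\dot\otimes\bar d_n)^{\dot\otimes m}\dot\otimes(f_{n-1}\dot\otimes\bar f_{n-1})^{\dot\otimes m}$, $m=2^\ell$, even though only $m=1$ is ultimately used; the inductive step splits $x_n\dot\otimes y_n$ into a predictable part (handled by the dual Doob inequality \eqref{eq3.4}) and a genuine martingale-difference part, to which one must apply the \emph{right-hand side} of $\eqref{eq3.1+}_p$ with $m$ doubled. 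Consequently the logical order matters: the upper bound in \eqref{eq3.1+} has to be established for all $p=2^k$ by a separate prior induction (with its own, more elementary, treatment of $S(a)$ via $\bar f_{n-1}\dot\otimes f_{n-1}\prec 2(\bar f_n\dot\otimes f_n+\bar d_n\dot\otimes d_n)$ and \eqref{eq1.2}), then the cross-term lemma, and only then the two-sided inequality. As sketched, your single induction does not close: proving $\eqref{eq3.1+}_{2p}$ requires a cross-term bound whose proof at level $p$ already presupposes the one-sided inequality at level $p$ together with a strengthened hypothesis (all dyadic $m$) that you have not formulated. Your diagnosis that the restriction $p=2^k$ enters through iterating \eqref{eq1.8} and through the cross terms is, however, correct, as is your remark that the one-sided inequality surviving for general even $p$ is the one proved by $p$-orthogonality in \S\ref{sec11}.
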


\noindent{\it First part of the proof.}
We start by the case $p=4$. Let $f = \sum d_n$, $S=S(f)$ and $\sigma=\sigma(f)$. Then
\[
 f\dot\otimes \bar f = S + a + b
\]
where $a = \sum d_n \dot\otimes \bar f_{n-1}$ and $b = \sum f_{n-1}\dot\otimes \bar d_n$. Let $g=a+b$ so that $f\dot\otimes \bar f - S=g$. Note that by \eqref{eq1.1} applied pointwise (or, say, alomst surely) 
\[
 g\dot\otimes \bar g \prec 2(a\dot\otimes \bar a + b\dot\otimes \bar b)
\]
and hence ${\bb E}(g\dot\otimes \bar g) \prec 2{\bb E}(a\dot\otimes \bar a) + 2{\bb E}(b\dot\otimes \bar b)$. By Lemma \ref{lem1.1} we have
\begin{equation}\label{eq3.1}
 \|{\bb E}(g\dot\otimes \bar g)\| \le 2\|{\bb E}(a\dot\otimes \bar a)\| + 2{\bb E}(b\dot\otimes \bar b)\|.
\end{equation}
 Note that for any $f$
\[
 \|{\bb E}(f\dot\otimes \bar f)\|^{1/2} = \|f\|_{B\otimes_{\min} \Lambda_2}.
\]
By orthogonality ${\bb E}(a\dot\otimes \bar a) ={\bb E}( \sum d_n \dot\otimes \bar f_{n-1} \dot\otimes \bar d_n \dot\otimes f_{n-1})$. By \eqref{eq1.2}   we have
\[
 \bar f_{n-1} \dot\otimes f_{n-1} \prec {\bb E}_{n-1}(\bar f\dot\otimes f)
\]
therefore by \eqref{eq-or}
\[
 d_n \dot\otimes \bar d_n \dot\otimes \bar f_{n-1} \dot\otimes f_{n-1} \prec d_n \dot\otimes \bar d_n \dot\otimes {\bb E}_{n-1}(\bar f\dot\otimes f)
\]
and hence
\[
{\bb E}(d_n\dot\otimes \bar d_n \dot\otimes \bar f_{n-1} \dot\otimes f_{n-1}) \prec {\bb E}({\bb E}_{n-1}(d_n \dot\otimes \bar d_n) \dot\otimes \bar f\dot\otimes f)
\]
which yields (after a suitable permutation) by Lemma \ref{lem1.1}
\[
 \|{\bb E}(a\dot\otimes \bar a)\| \le \|{\bb E}(\sigma\dot\otimes \bar f\dot\otimes f)\|,
\]
and hence by Haagerup's Cauchy--Schwarz inequality
\begin{equation}\label{eq3.2}
\|{\bb E}(a\dot\otimes \bar a)\| \le  \|{\bb E}(\sigma\dot\otimes \bar \sigma)\|^{1/2} \|{\bb E}(\bar f\dot\otimes f\dot\otimes f\dot\otimes \bar f)\|^{1/2}.
\end{equation}
Similarly we find
\begin{equation}\label{eq3.3}
 \|{\bb E}(b\dot\otimes \bar b)\| \le  \|{\bb E}(\sigma\dot\otimes \bar \sigma)\|^{1/2} \|{\bb E}(f\dot\otimes \bar  f\dot\otimes \bar f\dot\otimes  f)\|^{1/2}.
\end{equation}
We now claim that
\[
 \|{\bb E}(\sigma\dot\otimes\bar\sigma)\|^{1/2} \le 2\|{\bb E}(S\dot\otimes\ovl S)\|^{1/2}.
\]
Using this claim the conclusion is easy:\ By \eqref{eq3.1}, \eqref{eq3.2}, \eqref{eq3.3} we have
\[
 \|{\bb E}(g\dot\otimes \bar g)\| \le 8\|{\bb E}(S\dot\otimes \ovl S)\|^{1/2} \|{\bb E}(f\dot\otimes \bar f\dot\otimes f\dot\otimes \bar f)\|^{1/2}.
\]
But now $g\to \|{\bb E}(g\dot\otimes \bar g)\|^{1/2} = \|g\|_{B\otimes_{\min} L_2}$ is a norm so by its subadditivity, recalling $g=f\dot\otimes \bar f -S$, we have
\[
 \big|\|f\dot\otimes \bar f\|_{B\otimes_{\min} L_2} - \|S\|_{B\otimes_{\min} L_2}\big|\le \|g\|_{B\otimes_{\min} L_2} \le 8^{1/2}{xy}
\]
where $x^2 = \|f\dot\otimes\bar f\|_{B_{\min}\otimes L_2}$ and $y^2 = \|S\|_{B\otimes_{\min} L_2}$. Equivalently, we have
\[
 |x^2-y^2|\le 8^{1/2}{xy}
\]
from which it immediately follows that
\[
 \max\left\{\frac{x}y, \frac{y}x\right\} \le \sqrt 2 + \sqrt 3.
\]
 Thus, modulo our claim, we obtain the announced inequality 
\eqref{eq3.1+} for $p=4$. To prove the claim we note that it is a particular case of the ``dual Doob inequality'' appearing in the next Lemma.
\qed

\begin{lem}\label{lem3.2}
Let $\theta_1,\ldots, \theta_N$ be arbitrary in $B\otimes L_4$. Let $\alpha = \sum^N_1 {\bb E}_n(\theta_n \dot\otimes \bar\theta_n)$ and $\beta = \sum^N_1 \theta_n\dot\otimes \bar\theta_n$. Then
\[
 \|{\bb E}(\alpha\dot\otimes\bar\alpha)\|^{1/2} \le 2\|{\bb E}(\beta\dot\otimes\bar \beta)\|^{1/2}.
\]
\end{lem}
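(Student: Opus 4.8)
This is a "dual Doob" inequality in the spirit of Junge's dual form of Doob's maximal inequality, transplanted to the $\Lambda_p$-ordering framework. The quantity $\|{\bb E}(\alpha\dot\otimes\bar\alpha)\|^{1/2}$ is $\|\alpha\|_{B\otimes_{\min}\Lambda_4}$ applied to $\alpha\in B\otimes\ovl B\otimes L_2$, so reading the left-hand side through Proposition~\ref{pro2.2} with $m=2$, we must bound $\|\int \alpha\dot\otimes\bar\alpha\dot\otimes\alpha\dot\otimes\bar\alpha\,d{\bb P}\|$ by $4\|\int\beta\dot\otimes\bar\beta\dot\otimes\beta\dot\otimes\bar\beta\,d{\bb P}\|$. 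The plan is to expand $\alpha\dot\otimes\bar\alpha$ as a double sum $\sum_{m,n}{\bb E}_m(\theta_m\dot\otimes\bar\theta_m)\dot\otimes\ovl{{\bb E}_n(\theta_n\dot\otimes\bar\theta_n)}$, split into the diagonal-plus-upper/lower-triangular pieces $m=n$, $m<n$, $m>n$, and treat the triangular part by peeling off one conditional expectation and comparing it, via \eqref{eq1.2} and the monotonicity rule \eqref{eq-or}, to $\beta$; a Haagerup--Cauchy--Schwarz step (in the form \eqref{eq2.02}) then closes a quadratic inequality of the type $|x^2-y^2|\le Cxy$, exactly as in the $p=4$ argument just given.

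**Key steps.** First I would write $\gamma_n={\bb E}_n(\theta_n\dot\otimes\bar\theta_n)$, so $\alpha=\sum_n\gamma_n$ and $\gamma_n\succ0$ by \eqref{eq1.1+}. Then ${\bb E}(\alpha\dot\otimes\bar\alpha)=\sum_{m,n}{\bb E}(\gamma_m\dot\otimes\bar\gamma_n)$. For the terms with $n>m$, using the martingale property and the tower property, ${\bb E}(\gamma_m\dot\otimes\bar\gamma_n)={\bb E}(\gamma_m\dot\otimes{\bb E}_{n-1}\bar\gamma_n)$ — but in fact the cleaner route is to observe $\sum_n\gamma_n\dot\otimes\overline{\sum_{m\le n}\gamma_m}$ is what appears, and to handle the "past" factor $\sum_{m\le n}\gamma_m={\bb E}_?\cdots$ — so second I would reorganize by writing $\alpha\dot\otimes\bar\alpha=S^\flat+a^\flat+b^\flat$ where $S^\flat=\sum_n\gamma_n\dot\otimes\bar\gamma_n$, $a^\flat=\sum_n\gamma_n\dot\otimes\overline{\Gamma_{n-1}}$, $b^\flat=\sum_n\Gamma_{n-1}\dot\otimes\bar\gamma_n$ with $\Gamma_{n-1}=\sum_{m<n}\gamma_m$. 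Third, on $a^\flat$: since $\theta_n\dot\otimes\bar\theta_n$ is a single positive term, ${\bb E}_n(\theta_n\dot\otimes\bar\theta_n)\prec{\bb E}(\cdots\mid\text{more info})$ fails, but one does have $\gamma_n\prec\theta_n\dot\otimes\bar\theta_n$ is false too — rather the correct inequality is $\gamma_n={\bb E}_n(\theta_n\dot\otimes\bar\theta_n)$ and $\sum_n{\bb E}(\gamma_n\dot\otimes\overline{\Gamma_{n-1}}\dot\otimes\bar\gamma_n\dot\otimes\Gamma_{n-1})$: apply \eqref{eq1.2} to pull $\overline{\Gamma_{n-1}}\dot\otimes\Gamma_{n-1}\prec{\bb E}_{n-1}(\bar\alpha\dot\otimes\alpha)$ out (valid since $\Gamma_{n-1}$ is ${\cl A}_{n-1}$-measurable and $\Gamma_{n-1}\prec\alpha$ pointwise in the needed sense), then use \eqref{eq-or} and a permutation and Lemma~\ref{lem1.1} to get $\|{\bb E}(a^\flat\dot\otimes\bar a^\flat)\|\le\|{\bb E}(\alpha^\flat\dot\otimes\bar\alpha\dot\otimes\alpha)\|$ where $\alpha^\flat=\sum_n{\bb E}_{n-1}(\gamma_n)$, and finally Haagerup--Cauchy--Schwarz gives a factor $\|{\bb E}(\alpha^\flat\dot\otimes\bar\alpha^\flat)\|^{1/2}\|{\bb E}(\alpha\dot\otimes\bar\alpha\dot\otimes\cdots)\|^{1/2}$; iterating (or using that $\alpha^\flat\prec\beta$-type bound in one more step) reduces the $\alpha^\flat$ factor to a $\beta$ factor, and a last Cauchy--Schwarz relates $\|{\bb E}(S^\flat\dot\otimes\bar S^\flat)\|$ to $\|{\bb E}(\beta\dot\otimes\bar\beta)\|$ since $S^\flat\prec\beta\dot\otimes\bar\beta$-type domination holds diagonally. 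Fourth, assemble: with $X^2=\|{\bb E}(\alpha\dot\otimes\bar\alpha)\|$ and $Y^2=\|{\bb E}(\beta\dot\otimes\bar\beta)\|$ one gets $|X^2-\|S^\flat\|\,|\le(\text{const})XY$ with $\|S^\flat\|\le Y^2$, yielding $X\le2Y$ after solving the quadratic.

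**Main obstacle.** The delicate point is controlling the "off-diagonal" cross terms $a^\flat,b^\flat$ correctly through the $\prec$-ordering: one must verify that the partial sums $\Gamma_{n-1}=\sum_{m<n}\gamma_m$, which are ${\cl A}_{n-1}$-measurable and $\prec$-positive, can legitimately be compared to ${\bb E}_{n-1}(\alpha)$ (this uses $\Gamma_{n-1}\prec\alpha$ via $\gamma_m={\bb E}_m(\theta_m\dot\otimes\bar\theta_m)$ with $m<n$ implies $\gamma_m={\bb E}_{n-1}\gamma_m$, so $\Gamma_{n-1}={\bb E}_{n-1}(\sum_{m<n}\gamma_m)\prec{\bb E}_{n-1}(\sum_{\text{all }m}\gamma_m)={\bb E}_{n-1}\alpha$, using \eqref{eq1.1+} to discard the tail which is $\prec$-positive), and then that the resulting triangular-array orthogonality relations survive taking ${\bb E}$. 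Keeping track of which permutation $\sigma$ is in force across the repeated Haagerup--Cauchy--Schwarz applications — so that the "positive" terms one adds stay positive for the \emph{same} $\sigma$, per the convention fixed in \S\ref{sec2} — is where the bookkeeping is genuinely at risk, and I would be careful to fix $\sigma$ once at the outset of the triangular-term estimate and never change it until the quadratic inequality is extracted.
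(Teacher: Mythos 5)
There is a genuine gap: your argument never introduces the factor $\beta$ into the off-diagonal estimate, and the devices you substitute for that step do not hold up. The paper's proof is much shorter than what you outline. Writing $\alpha_n={\bb E}_n(\theta_n\dot\otimes\bar\theta_n)$ and $\beta_n=\theta_n\dot\otimes\bar\theta_n$, it splits ${\bb E}(\alpha\dot\otimes\bar\alpha)=\sum_{n,k}{\bb E}(\alpha_n\dot\otimes\bar\alpha_k)$ into the parts $n\le k$ and $n>k$, and for $n\le k$ uses the self-adjointness of ${\bb E}_k$ together with the ${\cl A}_k$-measurability of $\alpha_n$ to get ${\bb E}(\alpha_n\dot\otimes\ovl{{\bb E}_k\beta_k})={\bb E}(\alpha_n\dot\otimes\bar\beta_k)$ --- this is where $\beta$ enters. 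Since each such term is $\succ 0$ (by \eqref{eq1.1+} and \eqref{eq1.7bbis}), the triangular sum is $\prec$ the full rectangle ${\bb E}(\alpha\dot\otimes\bar\beta)$, and one application of \eqref{eq2.2} plus division by $\|{\bb E}(\alpha\dot\otimes\bar\alpha)\|^{1/2}$ gives the constant $2$. You actually write down a cousin of the key identity, ${\bb E}(\gamma_m\dot\otimes\bar\gamma_n)={\bb E}(\gamma_m\dot\otimes{\bb E}_{n-1}\bar\gamma_n)$, but it goes the wrong way (it adds conditioning instead of stripping ${\bb E}_n$ off $\gamma_n$ to recover $\beta_n$), and you then abandon it for a decomposition $S^\flat+a^\flat+b^\flat$ modeled on the square-function argument of Theorem \ref{thm3.1}.

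That route fails at several concrete points. First, the plan misreads the statement: $\|{\bb E}(\alpha\dot\otimes\bar\alpha)\|^{1/2}$ is the $\Lambda_2$-norm of $\alpha\in B\otimes\ovl B\otimes L_2$, not a $\Lambda_4$-norm, so no fourth tensor power $\alpha\dot\otimes\bar\alpha\dot\otimes\alpha\dot\otimes\bar\alpha$ is involved; the lemma is a statement about a single Haagerup square. Second, when you pass to $\|{\bb E}(a^\flat\dot\otimes\bar a^\flat)\|=\|\sum_n{\bb E}(\gamma_n\dot\otimes\ovl{\Gamma_{n-1}}\dot\otimes\bar\gamma_n\dot\otimes\Gamma_{n-1})\|$ you are implicitly using orthogonality of $\{\gamma_n\dot\otimes\ovl{\Gamma_{n-1}}\}$; this fails because $\gamma_n={\bb E}_n\beta_n$ is not a martingale difference (unlike $d_n$ in the proof of Theorem \ref{thm3.1}), so the cross terms do not vanish. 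Third, the proposed ``$\alpha^\flat\prec\beta$-type bound in one more step'' is precisely the pointwise domination of a sum of conditional expectations by the unconditioned sum that is false in general --- circumventing its failure is the entire content of this dual Doob inequality, so invoking it is circular. Without the adjointness trick, positivity alone only yields $\|{\bb E}(a^\flat)\|\le\|{\bb E}(\alpha\dot\otimes\bar\alpha)\|$, which closes no quadratic inequality (and even granting your assembled estimate $X^2\le Y^2+CXY$, it would not produce the constant $2$).
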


\begin{proof}
Let $\alpha_n = {\bb E}_n(\theta_n\dot\otimes \bar\theta_n)$ and $\beta_n = \theta_n\dot\otimes \bar\theta_n$. Note that $\beta_n\succ 0$ and $\alpha_n\succ 0$. We have $\alpha \dot\otimes \bar\alpha = \sum_{n,k} \alpha_n \dot\otimes \bar\alpha_k$ and hence

\begin{align*}
 {\bb E}(\alpha\dot\otimes\bar\alpha) &= {\bb E}\left(\sum_{n\le k} {\bb E}_n\beta_n \dot\otimes \ovl{{\bb E}_k\beta_k}\right) + {\bb E}\left(\sum_{n>k} {\bb E}_n \beta_n \dot\otimes \ovl{{\bb E}_k\beta_k}\right)\\
&= {\bb E} \left(\sum\nolimits_{n\le k} ({\bb E}_n\beta_n)\dot\otimes \bar\beta_k\right) + {\bb E}\left(\sum\nolimits_{n>k} \beta_n \dot\otimes (\ovl{{\bb E}_k\beta_k})\right)\\
&= \text{I} + \text{II.}
\end{align*}
Using a suitable permutation (as explained before \eqref{eq-or}) we have by \eqref{eq-or} or \eqref{eq1.7bbis}
\[
 ({\bb E}_n\beta_n) \dot\otimes \bar\beta_k\succ 0\quad {\rm and}\quad \beta_n \dot\otimes (\ovl{{\bb E}_k\beta_k})\succ 0.\]
 Therefore, using \eqref{eq1.1+},  it follows from Lemma \ref{lem1.1}  that
\[
 \|\text{I}\|\le \left\|\sum\nolimits_{n,k} {\bb E}(({\bb E}_n\beta_n) \dot\otimes \bar \beta_k)\right\| = \|{\bb E}(\alpha \dot\otimes \bar\beta)\|,
\]
and hence by \eqref{eq2.2}
\[
 \|\text{I}\| \le \|{\bb E}(\alpha\dot\otimes \bar\alpha)\|^{1/2} \|{\bb E}(\beta \dot\otimes \bar\beta)\|^{1/2}.
\]
A similar bound holds for $\|\text{II}\|$. Thus we obtain 
\[
 \|{\bb E}(\alpha\dot\otimes\bar\alpha)\| \le \|\text{I}\| + \|\text{II}\| \le 2 \|{\bb E}(\alpha\dot\otimes\bar\alpha)\|^{1/2} \|{\bb E}(\beta\dot\otimes\bar\beta)\|^{1/2}.
\]
After division by $\|{\bb E}(\alpha\dot\otimes \bar\alpha)\|^{1/2}$ we find the inequality in Lemma~\ref{lem3.2}.
\end{proof}

We will need to extend Lemma~\ref{lem3.2} as follows:

\begin{lem}\label{lem3.3} Let  $m\ge 1$ be any integer. Let $\theta_1,\ldots, \theta_N$ be arbitrary in $B\otimes L_{2m}$.
Let $\alpha,\beta$ be as in the preceding lemma. Then  
\begin{equation}\label{eq3.4}
\|{\bb E}(\alpha ^{\dot\otimes m})\|\le m^m\|{\bb E}(\beta^{\dot\otimes m})\|.
\end{equation}
\end{lem}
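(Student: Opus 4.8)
The plan is to deduce \eqref{eq3.4} from a ``martingale step'' that costs a factor $m$, followed by a Hölder-type inequality for positive functions and a one-line endgame; this extends the proof of Lemma~\ref{lem3.2}, which is the case $m=2$ (and $m=1$ is trivial since $\EE\alpha=\EE\beta$). Throughout, write $\alpha_n=\EE_n(\theta_n\dot\otimes\bar\theta_n)$, $\beta_n=\theta_n\dot\otimes\bar\theta_n$, so $\alpha_n\succ0$ and $\beta_n\succ0$ pointwise and $\alpha=\sum_n\alpha_n$, $\beta=\sum_n\beta_n$.

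For the martingale step, expand
\[
 \EE(\alpha^{\dot\otimes m})=\sum_{n_1,\dots,n_m}\EE(\alpha_{n_1}\dot\otimes\cdots\dot\otimes\alpha_{n_m}).
\]
If the maximum of a multi-index $(n_1,\dots,n_m)$ is attained first at position $j$, then every factor $\alpha_{n_i}$ with $i\neq j$ is ${\cl A}_{n_i}$-measurable, hence ${\cl A}_{n_j}$-measurable; since $\alpha_{n_j}=\EE_{n_j}\beta_{n_j}$, conditioning on ${\cl A}_{n_j}$ gives
\[
 \EE(\alpha_{n_1}\dot\otimes\cdots\dot\otimes\alpha_{n_j}\dot\otimes\cdots\dot\otimes\alpha_{n_m})=\EE(\alpha_{n_1}\dot\otimes\cdots\dot\otimes\beta_{n_j}\dot\otimes\cdots\dot\otimes\alpha_{n_m}),
\]
with $\beta$ now in position $j$. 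Summing over $j$ and, for each $j$, over the multi-indices whose maximum is first attained at $j$, and using that every term $\alpha_{n_1}\dot\otimes\cdots\dot\otimes\beta_{n_j}\dot\otimes\cdots\dot\otimes\alpha_{n_m}$ is $\succ0$ after the natural permutation (by \eqref{eq-or}, \eqref{eq1.7bbis} and \eqref{eq1.1+}), I can bound each restricted sum in the $\prec$-order by the corresponding full sum, whence
\[
 \EE(\alpha^{\dot\otimes m})\prec\sum_{j=1}^m\EE\big(\alpha^{\dot\otimes(j-1)}\dot\otimes\beta\dot\otimes\alpha^{\dot\otimes(m-j)}\big).
\]
By Lemma~\ref{lem1.1} (in the form \eqref{eq1.1-}), the triangle inequality and permutation-invariance of the minimal norm, this gives $\|\EE(\alpha^{\dot\otimes m})\|\le m\,\|\EE(\beta\dot\otimes\alpha^{\dot\otimes(m-1)})\|$.

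The second step is the Hölder inequality: for pointwise $\succ0$ functions $F_1,\dots,F_m$ in $B\otimes\ovl B\otimes L_m$,
\[
 \big\|\EE(F_1\dot\otimes\cdots\dot\otimes F_m)\big\|\le\prod_{k=1}^m\big\|\EE(F_k^{\dot\otimes m})\big\|^{1/m}.
\]
For $m$ even this is immediate from Lemma~\ref{lem2.1} applied with parameter $m/2$ in place of $m$ (so it concerns $m$-fold products $f_1\dot\otimes\cdots\dot\otimes f_m$ of functions in $L_m$) to $f_k=F_k$: since each $F_k\succ0$, one has $F_k^{\dot\otimes(m/2)}\dot\otimes\ovl{F_k^{\dot\otimes(m/2)}}=F_k^{\dot\otimes m}$ up to a permutation of factors, so $\|\int F_k^{\dot\otimes(m/2)}\dot\otimes\ovl{F_k^{\dot\otimes(m/2)}}\|=\|\EE(F_k^{\dot\otimes m})\|$. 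Taking $F_1=\beta$, $F_2=\dots=F_m=\alpha$ yields $\|\EE(\beta\dot\otimes\alpha^{\dot\otimes(m-1)})\|\le\|\EE(\beta^{\dot\otimes m})\|^{1/m}\|\EE(\alpha^{\dot\otimes m})\|^{(m-1)/m}$. Combining with the martingale step, and writing $A=\|\EE(\alpha^{\dot\otimes m})\|$, $B=\|\EE(\beta^{\dot\otimes m})\|$ (both finite, being $L_1$-norms of products of $m$ functions in $L_m$), we get $A\le m\,B^{1/m}A^{(m-1)/m}$, hence $A^{1/m}\le m\,B^{1/m}$, i.e.\ $A\le m^m B$, which is \eqref{eq3.4}.

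The main point to pin down is the Hölder step. For $m$ even it is the clean reduction above; for $m$ odd (not needed for Theorem~\ref{thm3.1}, where $m=2^{k-1}$) Lemma~\ref{lem2.1} does not apply verbatim, and I would instead re-run the iteration of Haagerup's Cauchy--Schwarz inequality \eqref{eq2.2} from the proof of Lemma~\ref{lem2.1} directly on the positive functions $F_k$, splitting the $m$-fold product into unequal blocks and using positivity to pass freely between $F_k$ and $\bar F_k$ inside $\|\EE(\cdot)\|$; equivalently, one may realize each $F_k$ as $g_k\dot\otimes\bar g_k$ for a single operator-valued function $g_k$ on an enlarged Hilbert space (the row/amplification device of \cite{P3}) and invoke \eqref{eq2.3}. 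By contrast the martingale step is a routine extension of the $m=2$ argument in Lemma~\ref{lem3.2}, so I expect the bookkeeping around the Hölder inequality (and the odd case in particular) to be the only place requiring care.
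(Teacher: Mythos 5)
Your proof is correct and follows essentially the same route as the paper's: decompose the multi-index sum according to the position where the maximum is first attained, use the self-adjointness of ${\bb E}_{n_j}$ to replace $\alpha_{n_j}$ by $\beta_{n_j}$ in that slot, bound each restricted (pointwise $\succ 0$) sum by the full sum in the $\prec$-order, apply the H\"older inequality \eqref{eq2.3} to $\|{\bb E}(\beta\dot\otimes\alpha^{\dot\otimes (m-1)})\|$, and divide by $\|{\bb E}(\alpha^{\dot\otimes m})\|^{(m-1)/m}$. Your remark that the literal invocation of Lemma~\ref{lem2.1} presupposes $m$ even (a point the paper passes over silently, and which is harmless for the applications where $m$ is a power of $2$) is a legitimate refinement but does not change the fact that the argument coincides with the paper's.
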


\begin{proof} 
Note that up to a permutation of factors $(\alpha )^{\dot\otimes 2}$
and $\alpha\dot\otimes\bar\alpha$ are the same. In any case
$\|{\bb E}((\alpha )^{\dot\otimes 2})\|=\|{\bb E}(\alpha\dot\otimes\bar\alpha)\|$, so the case $m=2$ follows from the preceding lemma (and $m=1$ is trivial).\\
We use the same notation as in the preceding proof. We can write
\[
 (\alpha )^{\dot\otimes m} = \sum_{n(1),\ldots, n(m)} \alpha_{n(1)} \dot\otimes  \alpha_{n(2)} \dot\otimes\cdots\dot\otimes  \alpha_{n(m)}.
\]
We can partition the set of $m$-tuples $n = (n(1),\ldots, n(m))$ into subsets $S_1,\ldots, S_m$ so that we have 
\begin{align}
 n(1) &=\max_j n(j), \tag*{$\forall n\in S_1$}\\
 n(2) &= \max_j n(j), \tag*{$\forall n\in S_2$}
\end{align}
and so on. Let then
$T(j) = {\bb E} \left(\sum_{n\in S_j} \alpha_{n(1)} \dot\otimes\cdots \dot\otimes \bar\alpha_{n(m)}\right)$. Consider $j=1$ for simplicity, arguing as in the preceding proof, we have
\begin{align*}
T(1) &= \sum_{n\in S_1} {\bb E}(\alpha_{n(1)} \dot\otimes\cdots \dot\otimes  \alpha_{n(m)})\\
&= \sum_{n\in S_1} {\bb E} (\beta_{n(1)} \dot\otimes  \alpha_{n(2)}\cdots \dot\otimes  \alpha_{n(m)})\\
&\prec \sum_{n(1),\ldots, n(m)} {\bb E}(\beta_{n(1)} \dot\otimes \alpha_{n(2)} \cdots \dot\otimes \alpha_{n(m)}) = {\bb E} (\beta\dot\otimes   \alpha^{\dot\otimes m-1})
\end{align*}
and hence by \eqref{eq2.3}
\[
 \|{\bb E}T(1)\|\le \|{\bb E}(\beta  \dot\otimes  \alpha^{\dot\otimes m-1}) \| \le \|{\bb E}(\beta^{\dot\otimes m})\|^{\frac1m} \|{\bb E}(\alpha^{\dot\otimes m})\|^{\frac{m-1}{m}}.
\]
A similar bound holds for each $T(j)$ $(j=1,\ldots, m)$. Thus we find
\[
 \|{\bb E}(\alpha^{\dot\otimes m})\| = \left\|\sum\nolimits^{m}_1 {\bb E} T(j)\right\| \le m \|{\bb E}(\beta^{\dot\otimes m})\|^{\frac1m} \|{\bb E}(\alpha^{\dot\otimes m}\|^{\frac{m-1}{m}},
\]
and again after a suitable division we obtain \eqref{eq3.4}.
\end{proof}

\begin{proof}[Proof of Theorem \ref{thm3.1} in the dyadic case.] Assume that $d_n\dot\otimes \bar d_n$ is ${\cl A}_{n-1}$-measurable. Note that this holds in the dyadic case when $\Omega =\{-1,1\}^{\bb N}$ as well as for the filtration naturally associated to the Haar orthonormal system. In that case we can give a short proof of the following inequality for any $m\ge 1$
\begin{equation}\label{eq3.8}
\|{\bb E}(S(a)^{\dot\otimes m})\|^{\frac1m} \le m \|{\bb E}(S^{\dot\otimes 2m})\|^{\frac1{2m}} \|{\bb E}((f\dot\otimes \bar f)^{\dot\otimes 2m})\|^{\frac1{2m}},
\end{equation}
 and a similar bound for $\|{\bb E}(S(b)^{\dot\otimes m})\|^{\frac1m}$.\\
Indeed, recall $a = \sum d_n \dot\otimes \bar f_{n-1}$. Note that $S(a)$ is up to permutation the same as 
\[
 S_1 = \sum d_n \dot\otimes \bar f_{n-1} \dot\otimes f_{n-1} \dot\otimes \bar d_n.
\]
By \eqref{eq1.2} and \eqref{eq-or}, we have
\[
 S_1 \prec \sum d_n \dot\otimes {\bb E}_{n-1}(\bar f\dot\otimes f) \dot\otimes \bar d_n = \sum {\bb E}_{n-1}(d_n \dot\otimes \bar f \dot\otimes f   \dot\otimes \bar d_n)
\]
where the last equality holds because $d_n\dot\otimes \bar d_n$ is assumed $(n-1)$-measurable.\\
By \eqref{eq1.7} this implies
\[
 S^{\dot\otimes m}_1 \prec \left(\sum {\bb E}_{n-1}(d_n \dot\otimes \bar f \dot\otimes f \dot\otimes \bar d_n)\right)^{\dot\otimes m}
\]
and hence by \eqref{eq3.4} (recall $S=S(f)$ and the permutation invariance of the  norm)
\begin{align*}
 \|{\bb E}(S^{\dot\otimes m}_1)\| &\le m^m\left\|{\bb E}\left(\left(\sum d_n \dot\otimes \bar f \dot\otimes f\dot\otimes \bar d_n\right)^{\dot\otimes m}\right)\right\|\\
&= m^m\|{\bb E}((S\dot\otimes \bar f \dot\otimes f)^{\dot\otimes m})\|\\
\intertext{and hence by Lemma \ref{lem2.1}}
&\le m^m\|{\bb E}(S^{\dot\otimes 2m})\|^{1/2} \|{\bb E}((f\dot\otimes \bar f)^{\dot\otimes 2m})\|^{1/2}.
\end{align*}
Thus we obtain \eqref{eq3.8} as announced:
\[
 \|{\bb E}(S(a)^{\dot\otimes m})\| = \|{\bb E}(S^{\dot\otimes m}_1)\| \le m^m \|{\bb E}(S^{\dot\otimes 2m})\|^{1/2} \|{\bb E}((f\dot\otimes \bar f)^{\dot\otimes 2m})\|^{1/2}.
\]
The proof with $b$ in place of $a$ is identical. 

Using \eqref{eq3.8} and the analogue for $b$, it is easy to show (assuming
that $d_n\dot\otimes \bar d_n$ is ${\cl A}_{n-1}$-measurable) that the validity of \eqref{eq3.1+} for $p=2m$ implies its validity for $p=4m$. Indeed, let us assume \eqref{eq3.1+} for $p=2m$ and let $C =C_2(2m)$. We find, recalling $f\dot\otimes \bar f-S=a+b$
\begin{align*}
 \|f\dot\otimes\bar f-S\|_{B\otimes_{\min} \Lambda_{2m}} &\le \|a\|_{B\otimes_{\min} \Lambda_{2m}} + \|b\|_{B\otimes_{\min} \Lambda_{2m}}\\
&\le C(\|S(a)\|^{1/2}_{B\otimes_{\min} \Lambda_m} + \|S(b)\|^{1/2}_{B\otimes_{\min} \Lambda_m})\\
\intertext{and hence by \eqref{eq3.8}}
&\le 2Cm^{1/{2}} \|S\|^{1/2}_{B\otimes_{\min}\Lambda_{2m}} \|f\dot\otimes \bar f\|^{1/2}_{B\otimes_{\min} \Lambda_{2m}}.
\end{align*}
Therefore, again setting $x =\|f\dot\otimes \bar f\|_{B\otimes_{\min}\Lambda_{2m}}$ and $y = \|S\|_{B\otimes_{\min}\Lambda_{2m}}$ we find
\[
 |x-y| \le 2Cm^{1/{2}} \sqrt{xy},
\]
and we conclude as before that $x$ and $y$ are comparable, so that  
\eqref{eq3.1+}  holds for $p=4m$. 
\end{proof}

The next corollary is now immediate from the dyadic case. However,
we will later show that it is valid for any $p$ in $2\bb N$ (see Corollary \ref{cor9.1b}).
\begin{cor}  Assume
$p\ge 2$ of the form $p=2^k$ for some $k\ge 1$.
If $\Omega=\{-1,+1\}^{\bb N}$, the closed span of 
the coordinates $(\vp_n)$ (or equivalently, of the Rademacher functions
on $\Omega=[0,1]$) in $\Lambda_p$ 
 is completely isomorphic to the space $OH$, i.e. to $\ell_2$ equipped
 with the   o.s.s. of OH. Moreover, the orthogonal projection $P$ onto it  is c.b. on $\Lambda_p$.
\end{cor}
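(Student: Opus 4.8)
\emph{Proof plan.} The plan is to reduce both assertions to Theorem~\ref{thm3.1} applied to a finite Rademacher sum, after one explicit square–function computation. Fix $a_1,\dots,a_N\in B=B(H)$ and put $f=\sum_n a_n\otimes\vp_n\in B\otimes\Lambda_p$. With respect to the natural filtration $\cl A_n=\sigma(\vp_1,\dots,\vp_n)$ this $f$ is a finite martingale with $d_0=0$ and $d_n=a_n\otimes\vp_n$ for $n\ge1$. Since each $\vp_n$ is real–valued with $\vp_n^2\equiv1$, the product $d_n\dot\otimes\bar d_n$ is the \emph{constant} function $a_n\otimes\bar a_n$; in particular $d_n\dot\otimes\bar d_n$ is $\cl A_{n-1}$–measurable, so the dyadic case of Theorem~\ref{thm3.1} applies, and $S(f)=\sum_n d_n\dot\otimes\bar d_n$ is the constant function with value $c:=\sum_n a_n\otimes\bar a_n\in B\otimes_{\min}\ovl B=B(H\otimes_2\ovl H)$.

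The key step is then to evaluate $\|S(f)\|_{B\otimes_{\min}\ovl B\otimes_{\min}\Lambda_{p/2}}$. Viewing $S(f)$ as the constant $B(K)$–valued function equal to $c$ with $K=H\otimes_2\ovl H$, formula \eqref{eq2.4+} (applied with $K$ as ground Hilbert space and using $\bb P(\Omega)=1$) rewrites this norm as $\|c^{\otimes p/4}\otimes\bar c^{\otimes p/4}\|^{2/p}_{\min}$; by the identity $\|X\otimes\bar X\|_{\min}=\|X\|^2$ (the single–term instance of \eqref{eq0.3}) together with the multiplicativity of the operator norm under tensor powers this equals $\|c\|_{\min}=\big\|\sum_n a_n\otimes\bar a_n\big\|_{\min}$, which by \eqref{eq0.3} is precisely $\big\|\sum_n a_n\otimes e_n\big\|^2_{B\otimes_{\min}OH}$. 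Substituting into \eqref{eq3.1+} gives, for every Hilbert space $H$,
\[
 C_1(p)^{-1}\Big\|\sum_n a_n\otimes e_n\Big\|_{B\otimes_{\min}OH}\ \le\ \Big\|\sum_n a_n\otimes\vp_n\Big\|_{B\otimes_{\min}\Lambda_p}\ \le\ C_2(p)\Big\|\sum_n a_n\otimes e_n\Big\|_{B\otimes_{\min}OH},
\]
which says exactly that $e_n\mapsto\vp_n$ extends to a complete isomorphism from $OH$ onto the closed span of $(\vp_n)$ in $\Lambda_p$, with the c.b.\ norms of this map and of its inverse at most $C_2(p)$ and $C_1(p)$. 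The only genuinely non–formal point is this identification of $\|S(f)\|_{\Lambda_{p/2}}$, and the one thing to watch is the placement of the conjugations when specialising \eqref{eq2.4+} to a constant function; once one notices that $S(f)$ is constant it collapses to the multiplicativity of $\|\cdot\|_{\min}$.

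For the projection, let $P$ be the $L_2(\bb P)$–orthogonal projection onto $\ovl{\mathrm{span}}\{\vp_n\}$, so that on a test function $h$ one has $Ph=\sum_n\EE(h\vp_n)\otimes\vp_n$. I would bound $\|Ph\|_{B\otimes_{\min}\Lambda_p}$ by composing four complete contractions/isomorphisms, valid for every $H$: by the isomorphism just established, $\|Ph\|_{(p)}\le C_2(p)\big\|\sum_n\EE(h\vp_n)\otimes e_n\big\|_{B\otimes_{\min}OH}$; by \eqref{eq0.4} and the orthonormality of $(\vp_n)$ in $L_2(\bb P)$ the right–hand side equals $\|Ph\|_{(2)}$; since $P$ is a complete contraction on $\Lambda_2=(L_2)_{oh}$ by \eqref{eq1.9}, $\|Ph\|_{(2)}\le\|h\|_{(2)}$; and by Corollary~\ref{cor2.4} (as $\bb P$ is a probability and $p\ge2$) $\|h\|_{(2)}\le\|h\|_{(p)}$. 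Hence $\|Ph\|_{(p)}\le C_2(p)\|h\|_{(p)}$ on test functions, and extending by density gives that $P$ is c.b.\ on $\Lambda_p$ with $\|P\|_{cb}\le C_2(p)$, completing the proof. Thus the only real obstacle sits in the square–function computation of the second paragraph; everything else is bookkeeping with results already in hand.
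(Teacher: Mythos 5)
Your proof is correct and follows essentially the same route as the paper: the isomorphism with $OH$ comes from Theorem~\ref{thm3.1} (dyadic case, using that $d_n\dot\otimes\bar d_n=a_n\otimes\bar a_n$ is constant) together with the observation that the $\Lambda_{p/2}$-norm of the constant square function $\sum a_n\otimes\bar a_n$ collapses to its minimal norm, i.e.\ to the $OH$-norm of the coefficients. Your complementation argument (factoring $P$ through $\Lambda_2$ via the inclusion $\Lambda_p\to\Lambda_2$ and the established equivalence on the span) is exactly the one the paper defers to and carries out in Proposition~\ref{probuch}.
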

\begin{proof} Let $f=\sum x_n \vp_n$ ($x_n\in B(H)$).
By  \eqref{eq3.1+} , $\|f\|_{(p)}$ is equivalent to
$\|\sum x_n \otimes \bar x_n\|^{ 1/2}$ and the latter is equal
to the norm of $\sum e_n\otimes x_n$ in $OH\otimes_{\min} B$
where $(e_n)$ is any orthonormal basis of $OH$. Thus the closed span
of $(\vp_n)$ in $\Lambda_p$ is isomorphic to $OH$.
We skip the proof of the complementation because we give
the details for that  in the proof of Proposition \ref{probuch} below.
\end{proof}
\begin{proof}[Proof of the right hand side of \eqref{eq3.1+}]
We will use induction on $k$. The case $k=1$ is clear. Assume that the right hand side of \eqref{eq3.1+} holds for $p=m$, we will show it for $p=2m$. 
With the preceding notation, recall $g=a+b$ and hence our assumption yields
\begin{equation}
 \|g\|_{B\otimes\ovl B\otimes\Lambda_m}\le \|a\|_{B\otimes\ovl B\otimes\Lambda_m} +\|b\|_{B\otimes\ovl B\otimes\Lambda_m}  \le C_2(m) (\|S(a)\|^{1/2}_{\bullet} + \|S(b)\|^{1/2}_\bullet)
\end{equation}
where the dot stands for $B\otimes_{\min} \ovl B \otimes_{\min} \ovl B \otimes_{\min} B \otimes_{\min}  \Lambda_{{m}/2}$. Since by  \eqref{eq1.1}
 $$\bar f_{n-1}\dot\otimes f_{n-1} \prec 2(\bar f_n  \dot\otimes f_n + \bar d_n \dot\otimes d_n)$$ 
 we have using \eqref{eq-or} and
   \eqref{eq1.1-} (in a suitable permutation)
\[
 \|S(a)\|_{\bullet}^{1/2} = \left\|\sum d_n \dot\otimes \bar f_{n-1} \dot\otimes \bar d_n \dot\otimes f_{n-1} \right\|^{1/2}_\bullet \le \text{I} + \text{II}
\]
where
\[
2^{-1/2} \text{I} = \left\|\sum d_n\dot\otimes \bar f_n \dot\otimes \bar d_n \dot\otimes f_n\right\|^{\frac12}_\bullet \quad \text{and}\quad 2^{-1/2}\text{II} = \left\|\sum d_n \dot\otimes \bar d_n \dot\otimes \bar d_n \dot\otimes d_n \right\|^{\frac12}_\bullet.
\]
Note that for any $F$ in $B\otimes \ovl B \otimes \ovl B \otimes B \otimes \Lambda_{m/2}$
we have $ \|F\|_\bullet = \|{\bb E}((F\dot\otimes\ovl F)^{\dot\otimes m/4})\|^{\frac2m}$. \\
Recall that, by \eqref{eq1.2}, $f_n\dot\otimes \bar f_n \prec {\bb E}_n(f\dot \otimes \bar f)$. Thus we have by 
\eqref{eq-or} \eqref{eq1.1-}  and \eqref{eq1.7} 
\begin{align*}
2^{-1/2} I &\le \left\|{\bb E}\left(\left(\sum d_n \dot\otimes \bar d_n \dot\otimes {\bb E}_n(f \dot\otimes \bar f)\right)^{\dot\otimes m/2}\right)\right\|^{\frac1m}\\
&= \left\|{\bb E}\left(\left(\sum {\bb E}_n(d_n\dot\otimes \bar d_n \dot\otimes f \dot\otimes \bar f)\right)^{\dot\otimes m/2}\right)\right\|^{\frac1m}\\
\intertext{and hence by \eqref{eq3.4} and \eqref{eq2.3} (or actually \eqref{eq2.2})}
&\le (m/2)^{1/2}\|{\bb E}((S\dot\otimes f\dot\otimes \bar f)^{\dot\otimes m/2})\|^{\frac1m}\\
&\le (m/2)^{1/2}\|{\bb E}(S^{\dot\otimes m})\|^{\frac1{2m}} \|{\bb E}((f\dot\otimes \bar f)^{\dot\otimes m})\|^{\frac1{2m}}.
\end{align*}
Moreover, recalling \eqref{diag}, we have obviously 
$0 \prec d_n \otimes \bar d_n \otimes \bar d_k \otimes  d_k $ for all $n,k$ and hence
$\sum d_n\dot\otimes \bar d_n \dot\otimes \bar d_n \dot\otimes d_n \prec S \dot\otimes \ovl S$. Therefore, again by \eqref{eq1.1-} and \eqref{eq1.7}  
\[
2^{-1/2} \text{II} \le \|{\bb E}((S\dot\otimes\ovl S \dot\otimes \ovl S \dot\otimes S)^{m/4})\|^{1/m} = \|{\bb E}(S^{\dot\otimes m})\|^{\frac1m}.
\]
Let $x = \|{\bb E}(S^{\dot\otimes m})\|^{\frac1m}$ and $y = \|{\bb E}(f\dot\otimes \bar f)^{\dot\otimes m}\|^{\frac1m}$. This yields
\[
 \|S(a)\|_\bullet^{\frac12} \le \sqrt{m} \sqrt{xy} +\sqrt{2}  x
\]
and a similar bound for $S(b)$. Thus we obtain
\[
 \|g\|_{B\otimes\ovl B \otimes\Lambda_m} \le 2 C_2(m)(\sqrt{m} \sqrt{xy} +\sqrt{2}  x).
\]
Since $g=f\dot\otimes\bar f-S$ we have
\[
 \big|\|f\dot\otimes \bar f\|_{B\otimes \ovl B\otimes\Lambda_m} - \|S\|_{B\otimes\ovl B\otimes\Lambda_m}\big| \le \|g\|_{B\otimes\ovl B\otimes\Lambda_m}
\]
and hence we obtain
\[
 |y-x|\le 2 C_2(m)(\sqrt{m} \sqrt{xy} +\sqrt{2}  x)
\]
From the latter it is clear that there is a constant $C_2(2m)$ such that 
\[
 \sqrt y \le C_2(2m)\sqrt x
\]
and this is the right hand side of \eqref{eq3.1+} for $p=2m$,
\end{proof} 

To prove the general case of both sides of \eqref{eq3.1+},
  the following Lemma will be crucial. We will use this only for $m=1$, but
  the inductive argument curiously requires to prove it for all dyadic $m$.
\begin{lem}\label{lem4.10}
Let $f\in B(H)\otimes L_{4mp}$ be a test function. As before we set $f_n = {\bb E}_nf$ and $d_n = f_n-f_{n-1}$ for  all $n\ge 1$. Let $p=2^k$ for some integer $k\ge 0$. Then, for any integer $m\ge 1$
of the form $m=2^\ell$ for some $\ell\ge 0$,  there is a constant $C=C(m,p)$ such that
\begin{equation}\label{eq4.50}
 \left\|\sum\nolimits^\infty_1(d_n\dot\otimes \bar d_n)^{\dot\otimes m} \dot\otimes (f_{n-1} \dot\otimes \bar f_{n-1})^{\dot\otimes m}\right\|_{(p)} \le C\|S\|^m_{(2mp)} \|f\|^{2m}_{(4mp)}.
\end{equation}
\end{lem}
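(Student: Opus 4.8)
The plan is to mimic the structure of the dyadic‐case argument already used for Theorem \ref{thm3.1}, estimating the left side by repeatedly pulling conditional expectations outward via \eqref{eq1.2} and then applying the dual Doob inequality (Lemma \ref{lem3.3}) and the Hölder inequality \eqref{eq2.3}. First I would rewrite the target quantity in terms of conditional expectations using Proposition \ref{pro2.4}: since $p=2^k$, the $\Lambda_p$-norm of an element $F\in B\otimes_{\min}\cdots\otimes L_p$ is $\|{\bb E}((F\dot\otimes\bar F)^{\dot\otimes p/2})\|^{1/p}$ (after a suitable permutation to put it in the $B(\mathcal H)\otimes\overline{B(\mathcal H)}$ form, so that $\prec$ makes sense). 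Thus the left side of \eqref{eq4.50} becomes, up to permutation, the $p/2$-th tensor power of $\sum_n (d_n\dot\otimes\bar d_n)^{\dot\otimes m}\dot\otimes(f_{n-1}\dot\otimes\bar f_{n-1})^{\dot\otimes m}$, with ${\bb E}$ applied and the $1/p$ power taken.

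Next I would use the key order estimate: by \eqref{eq1.2} applied pointwise, $f_{n-1}\dot\otimes\bar f_{n-1}\prec {\bb E}_{n-1}(f\dot\otimes\bar f)$, hence by \eqref{eq1.7} (or \eqref{eq1.7bbis}) $(f_{n-1}\dot\otimes\bar f_{n-1})^{\dot\otimes m}\prec {\bb E}_{n-1}((f\dot\otimes\bar f)^{\dot\otimes m})$, and then by \eqref{eq-or}
\[
(d_n\dot\otimes\bar d_n)^{\dot\otimes m}\dot\otimes(f_{n-1}\dot\otimes\bar f_{n-1})^{\dot\otimes m}\prec (d_n\dot\otimes\bar d_n)^{\dot\otimes m}\dot\otimes {\bb E}_{n-1}((f\dot\otimes\bar f)^{\dot\otimes m}).
\]
The obstacle here is that, unlike in the special dyadic subcase treated earlier, $d_n\dot\otimes\bar d_n$ need not be ${\cl A}_{n-1}$-measurable, so I cannot simply write the right side as ${\bb E}_{n-1}$ of a single product. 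Instead I would split $(d_n\dot\otimes\bar d_n)^{\dot\otimes m}={\bb E}_{n-1}((d_n\dot\otimes\bar d_n)^{\dot\otimes m})+\big[(d_n\dot\otimes\bar d_n)^{\dot\otimes m}-{\bb E}_{n-1}((d_n\dot\otimes\bar d_n)^{\dot\otimes m})\big]$; the first piece combines with ${\bb E}_{n-1}((f\dot\otimes\bar f)^{\dot\otimes m})$ to give a genuine ${\bb E}_{n-1}$ of a product, to which Lemma \ref{lem3.3} (with the tensor parameter $p/2$ and the ``$\theta_n$'' being the relevant products) and then \eqref{eq2.3} apply; the second piece is a martingale-difference-type term whose $\Lambda_p$ norm I would bound, using the square-function-type bound from Theorem \ref{thm3.1}/\eqref{eq3.8} applied to the auxiliary martingale with differences $(d_n\dot\otimes\bar d_n)^{\dot\otimes m}$, in terms of $\|S\|_{(2mp)}$ and $\|f\|_{(4mp)}$. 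Carrying out the bookkeeping, each application of \eqref{eq2.3} splits a product norm into a geometric mean of two factors, one converging back to $\|{\bb E}((\text{LHS})^{\dot\otimes p/2})\|^{\text{power}}$ and one to a product of $\|S\|_{(2mp)}$ and $\|f\|_{(4mp)}$ powers, so an absorption argument of the form $X\le A X^{1-\eta}+B$ closes the estimate with an explicit $C(m,p)$.

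The main obstacle I anticipate is precisely the non-measurability of $d_n\dot\otimes\bar d_n$ with respect to ${\cl A}_{n-1}$, which forces the decomposition above and requires invoking the already-proven martingale inequality for a \emph{derived} martingale (whose differences are $2m$-fold tensor products of the original differences), living in $\Lambda$ at a higher exponent — this is exactly why the statement must be proved for all dyadic $m$ simultaneously by induction on $k$ (or on $k+\ell$), even though only $m=1$ is needed downstream. The secondary technical nuisance is keeping the permutations straight so that every $\prec$ is interpreted with a fixed $\sigma$ throughout each computation, as warned after \eqref{eq-or}; I would fix, once and for all in the proof, the permutation grouping all unbarred factors before all barred ones.
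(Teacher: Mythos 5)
Your plan is essentially the paper's own proof: the base case $p=1$ via \eqref{eq1.2}, Cauchy--Schwarz and the dual Doob inequality, then for general $p$ the split of $\sum (d_n\dot\otimes \bar d_n)^{\dot\otimes m}\dot\otimes (f_{n-1}\dot\otimes\bar f_{n-1})^{\dot\otimes m}$ into a predictable part (handled by Lemma \ref{lem3.3} and H\"older) plus a martingale-difference part whose square function reduces to the induction hypothesis at $(2m,p/2)$ --- exactly the mechanism that forces the simultaneous induction over all dyadic $m$. Two small corrections: the martingale part must be controlled by the right-hand side of \eqref{eq3.1+} (established independently for all $p=2^k$ just before this lemma), not by \eqref{eq3.8}, whose derivation assumes $d_n\dot\otimes\bar d_n$ is ${\cl A}_{n-1}$-measurable; and no absorption step is needed, since the induction closes directly.
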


\begin{proof}
We use induction on $k$ starting from $p=1$.
We may assume $d_0=0$ for simplicity. Let $$I(m,p) = \big\|\sum^\infty_1 (d_n\dot\otimes \bar d_n)^{\dot\otimes m} \dot\otimes (f_{n-1} \dot\otimes \bar f_{n-1})^{\dot\otimes m}\big\|_{(p)}.$$ By \eqref{eq1.8} and \eqref{eq-or} (and by the self-adjointness of ${\bb E}_{n-1}$) we have
\begin{align*}
I(m,1) &\le \left\|{\bb E}\left(\sum\nolimits^\infty_1 (d_n\dot\otimes \bar d_n)^{\dot\otimes m} \dot\otimes {\bb E}_{n-1} ((f\dot\otimes \bar f)^{\dot\otimes m})\right) \right\|\\
&= \left\|{\bb E}\left(\sum\nolimits^\infty_1 {\bb E}_{n-1}((d_n\dot\otimes \bar d_n)^{\dot\otimes m}) \dot\otimes (f\dot\otimes \bar f)^{\dot\otimes m}\right)\right\|,\\
\intertext{and hence by \eqref{eq2.2}}
&\le \|{\bb E}(\sigma_m\dot\otimes\bar\sigma_m)\|^{1/2} \|{\bb E}((f\dot\otimes \bar f)^{\dot\otimes 2m})\|^{1/2}
\end{align*}
where we have set
\[
 \sigma_m = \sum\nolimits^\infty_1 {\bb E}_{n-1}((d_n\dot\otimes \bar d_n)^{\dot\otimes m}).
\]
Note that by Lemma~\ref{lem3.2}
\[\|\sigma_m \|_{(2)} \le 2 \|\sum\nolimits^\infty_1 (d_n\dot\otimes \bar d_n)^{\dot\otimes m} \|_{(2)} 
\]
but obviously (recalling \eqref{diag}) $\sum(d_n\dot\otimes \bar d_n)^{\dot\otimes m} \prec (\sum d_n\dot\otimes \bar d_n)^{\dot\otimes m}$ and hence (recalling \eqref{eq1.7})
\begin{equation}\label{eq4.51}
\left(\sum (d_n\dot\otimes \bar d_n)^{\dot\otimes m} \right)^{\dot\otimes 2}\prec S^{\dot\otimes 2m}
\end{equation}
so we obtain
\[
 \|{\bb E}(\sigma_m\dot\otimes \bar\sigma_m)\|^{1/2} \le 2\|S\|^m_{(2m)}.
\]
Thus we find
\[
 I(m,1) \le 2\|S\|^m_{(2m)} \|f\|^{2m}_{(4m)},
\]
so that \eqref{eq4.50} holds for $p=1$ and any $m\ge 1$ with $C(m,1)=2$. 

Let us now denote by \eqref{eq4.50}$_p$ the inequality \eqref{eq4.50} meant for a given fixed $p$ but for any $m\ge 1$. We will show that for any $p\ge 2$
\[
 \eqref{eq4.50}_{p/2} \Rightarrow \eqref{eq4.50}_{p}.
\]
Assuming that $m\ge 1$ is fixed, let $x_n = (d_n\dot\otimes \bar d_n)^{\dot\otimes m}$ and $y_n = (f_{n-1} \dot\otimes \bar f_{n-1})^{\dot\otimes m}$. We write
\begin{align*}
 \sum x_n \dot\otimes y_n &= a+b\\
\text{with}\qquad a = \sum {\bb E}_{n-1}(x_n) \dot\otimes y_n \quad&\text{and}\quad
b = \sum(x_n - {\bb E}_{n-1}(x_n)) \dot\otimes y_n.
\end{align*}
We have $I(m,p) = \|\sum x_n\dot\otimes y_n\|_{(p)}$ and hence
\begin{equation}\label{eq4.52}
 I(m,p) \le \|a\|_{(p)} + \|b\|_{(p)},
\end{equation}
so it suffices to majorize $a$ and $b$ separately. We have by \eqref{eq1.8}
\[
 a \prec \sum {\bb E}_{n-1}(x_n) \dot\otimes {\bb E}_{n-1}((f\dot\otimes \bar f)^{\dot\otimes m}) = \sum {\bb E}_{n-1}({\bb E}_{n-1}(x_n) \dot\otimes (f\dot\otimes \bar f)^{\dot\otimes m}))
\]
and hence by \eqref{eq3.4} and \eqref{eq2.2}
\begin{align*}
 \|a\|_{(p)} &\le p \left\|\sum {\bb E}_{n-1}(x_n) \dot\otimes (f\dot\otimes \bar f)^{\dot\otimes m}\right\|_{(p)}\\
&\le p \|\sigma_m \dot\otimes (f\dot\otimes \bar f)^{\dot\otimes m}\|_{(p)}\\
&= p \|\sigma^{\dot\otimes p}_m \dot\otimes (f\dot\otimes \bar f)^{\dot\otimes mp}\|^{1/p}_{(1)}\\
&\le p\|\sigma^{\dot\otimes p}_m\|^{\frac1{p}}_{(2)} \|(f\dot\otimes \bar f)^{\dot\otimes mp}\|^{\frac1{p}}_{(2)}\\
&= p \|\sigma_m\|_{(2p)} \|f\|^{2m}_{(4mp)}.
\end{align*}
But now by \eqref{eq3.4} again
\begin{align*}
 \|\sigma_m\|_{(2p)} &\le 2p \left\|\sum (d_n\dot\otimes \bar d_n)^{\dot\otimes m}\right\|_{(2p)}\\
\intertext{and hence by \eqref{eq4.51}}
\|\sigma_m\|_{(2p)} &\le 2p\|S\|^m_{(2mp)}.
\end{align*}
Thus we obtain
\begin{equation}\label{eq4.53}
 \|a\|_{(p)} \le p (2p) \|S\|^m_{(2mp)} \|f\|^{2m}_{(4mp)}.
\end{equation}
We now turn to $b$. Note that since $y_n$ is ``predictable'' $\{(x_n-{\bb E}_{n-1}(x_n)) \dot\otimes y_n\}$ is a martingale difference sequence. We will apply the right hand side of \eqref{eq3.1+} to $b$. Note that
\[
 S(b) \approx \sum (x_n - {\bb E}_{n-1}(x_n)) \dot\otimes \ovl{(x_n - {\bb E}_{n-1}(x_n))} \dot\otimes y_n \dot\otimes \bar y_n,
\]
and hence by \eqref{eq1.1}
\[
 \frac12 S(b) \prec \sum x_n \dot\otimes \ovl x_n \dot\otimes y_n \dot\otimes \ovl y_n + \sum {\bb E}_{n-1} (x_n) \dot\otimes {\bb E}_{n-1}(\ovl x_n) \dot\otimes y_n \dot\otimes \ovl y_n.
\]
By \eqref{eq1.2} we get (since $y_n$ is predictable)
\[
 \frac12 S(b) \prec \sum x_n \dot\otimes \ovl x_n \dot\otimes y_n \dot\otimes \ovl y_n + \sum {\bb E}_{n-1} (x_n \dot\otimes \ovl x_n \dot\otimes y_n \dot\otimes \ovl y_n)
\]
and hence
\[
 \frac12\|S(b)\|_{(p/2)} \le \left\|\sum x_n \dot\otimes \ovl x_n \dot\otimes y_n \dot\otimes \ovl y_n\right\|_{(p/2)} + \left\|\sum {\bb E}_{n-1}(x_n\dot\otimes \ovl x_n \dot\otimes y_n \dot\otimes \ovl y_n)\right\|_{(p/2)}.
\]
By \eqref{eq3.4} this yields
\[
 \|S(b)\|_{(p/2)} \le 2(1+(p/2)) \left\|\sum x_n \dot\otimes \ovl x_n \dot\otimes y_n \dot\otimes \ovl y_n\right\|_{(p/2)}.
\]
But since
\[
 \sum x_n \dot\otimes \ovl x_n \dot\otimes y_n \dot\otimes \ovl y_n \approx \sum (d_n \dot\otimes \bar d_n)^{\dot\otimes 2m} \dot\otimes (f_{n-1} \dot\otimes \bar f_{n-1})^{\dot\otimes 2m}
\]
we may use the induction hypothesis \eqref{eq4.50}$_{p/2}$
 (with $m$ replaced by $2m$) and we obtain
\[
 \|S(b)\|_{(p/2)} \le 2(1+(p/2)) C(2m, p/2) \|S\|^{2m}_{(2mp)} \|f\|^{4m}_{(4mp)}.
\]
By the right  hand side of \eqref{eq3.1+}$_p$ we then find
\begin{align*}
 \|b\|_{(p)} &\le C_2(p) \|S(b)\|^{1/2}_{(p/2)}\\
&\le C'(m,p) \|S\|^m_{(2mp)} \|f\|^{2m}_{(4mp)}
\end{align*}
for some constant $C'(m,p)$. Thus we conclude by \eqref{eq4.52} and \eqref{eq4.53}
\[
 I(m,p) \le (p (2p)  + C'(m,p)) \|S\|^m_{(2mp)} \|f\|^{2m}_{4mp}.
\]
In other words we obtain \eqref{eq4.50}$_p$. This completes the proof of \eqref{eq4.50}$_p$ for $p=2^k$ by induction on $k$.
\end{proof}

\begin{proof}[Proof of Theorem \ref{thm3.1} (General case)]
We will show that \eqref{eq3.1+}$_p$ $\Rightarrow$ \eqref{eq3.1+}$_{2p}$. We again start from
\[
 f\dot\otimes \bar f - S =a+b
\]
where $a  = \sum d_n \dot\otimes \bar f_{n-1}$ and $b = \sum f_{n-1}\dot\otimes \bar d_n$. By the right  hand side of \eqref{eq3.1+}$_p$ we have
\begin{align*}
 \|f\dot\otimes \bar f - S\|_{(p)} &\le 2C_2(p) \left\|\sum d_n\dot\otimes \bar d_n \dot\otimes f_{n-1} \dot\otimes \bar f_{n-1}\right\|^{1/2}_{(p/2)}\\
\intertext{and hence by \eqref{eq4.50}}
&\le 2C_2(p) C(1,p/2)^{1/2} \|S\|^{1/2}_{(p)} \|f\|_{(2p)}.
\end{align*}
Thus we find a fortiori setting $C'' = 2C_2(p) C(1,p/2)^{1/2}$
\[
 \left|\|f\dot\otimes \bar f\|_{(p)} - \|S\|_{(p)}\right| \le C'' \|S\|^{1/2}_{(p)} \|f\|_{(2p)}.
\]
Thus setting again $x = \|f\|_{(2p)}$, $ y = \|S\|^{1/2}_{(p)}$ we find
\[
 |x^2-y^2| \le C'' xy
\]
and we conclude that $x$ and $y$ must be equivalent quantities, or equivalently that \eqref{eq3.1+}$_{2p}$ holds. By induction this completes the proof.
\end{proof}

\section{Burkholder-Rosenthal inequality}\label{sec5}

Let $2<p<\infty$ be fixed. The usual form of the Burkholder-Rosenthal inequality expresses the equivalence, for scalar valued martingales, of  $\|\sum d_n\|_p$ and
\begin{equation}\label{eq5.10}
  {BR}_{\infty}=\|\sigma\|_p + \|\sup |d_n|\|_p.
\end{equation}
It is easy to deduce from that the equivalence of that same norm with 
\begin{equation}\label{eq5.11}
  {BR}_{q}= \|\sigma\|_p + \left\|\left(\sum |d_n|^q\right)^{1/q}\right\|_p
\end{equation}
for any $q$ such that $2<q\le \infty$. \\ 
Indeed, we have obviously ${BR}_{\infty} \le {BR}_{q}$. Conversely,
using (here $\frac1q = \frac{1-\theta}2 + \frac\theta\infty$)
\[
 \left\|\left(\sum |d_n|^q\right)^{1/q}\right\|_p \le \|S\|^{1-\theta}_p \|\sup|d_n|\|^\theta_p
\]
and the equivalence $\|S\|_p \simeq \|\sum d_n\|_p$, one can easily deduce that
there is a constant $C'$ such that 
\[ {BR}_{q} \le C'   \|\sum d_n\|^{1-\theta}_p {BR}^\theta_{\infty } .
\]
Thus 
 an inequality of the form
 \[
\left\|\sum d_n\right\|_p   \le C {BR}_{\infty}
\]
implies ``automatically'' 
 \[
{BR}_{\infty} \le {BR}_{q} \le C'  C^{1-\theta} {BR}_{\infty}.
\]
Similarly,  
 \[
\left\|\sum d_n\right\|_p   \le C {BR}_{q}\Rightarrow
 \left\|\sum d_n\right\|_p   \le C  C'   \|\sum d_n\|^{1-\theta}_p {BR}^\theta_{\infty }\Rightarrow
 \left\|\sum d_n\right\|_p   \le (C  C')^{1/\theta} {BR}_{\infty }.
\] 
Thus, modulo simple manipulations, the Burkholder-Rosenthal inequality reduces to the equivalence
for some $q$ such that $2<q\le \infty$ of  $\|\sum d_n\|_p$ and ${BR}_{q}$.

Note that the one sided inequality expressing that ${BR}_{q} =\|\sigma\|_p + \|(\sum |d_n|^q)^{1/q}\|_p$ is dominated by $\|\sum d_n\|_p$ reduces obviously to
\[
 \|\sigma\|_p \le C\left\|\sum|d_n|^2\right\|^{1/2}_{p/2}
\]
that holds for $p\ge 2$ by Burkholder--Davis--Gundy dualization of Doob's inequality. Therefore,
the novelty of the Burkholder-Rosenthal inequality is the fact that there
is a constant $C'''$ such that
$$\|\sum d_n\|_p \le C''' {BR}_{q}.$$
In the original Rosenthal inequality, restricted to sums of independent $d_n$'s,
or in the non-commutative version of  \cite{JX0,JX2},
the value $q=p$ is the most interesting choice. In the inequalities below,
for $p=2^k\ge 4$, we 
will work with $q=4$.\\

We will use the following extension of \eqref{eq2.2}.

\begin{pro}\label{pro3.4}
For any integer $m\ge 1$ and finite sequences $(a_k)$,  $(b_k)$ in $B(H) \otimes L_{2m}$ we have
\begin{equation}\label{eq3.9-}
 \left\|{\bb E}\left(\left(\sum a_k\dot\otimes b_k\right)^{\dot\otimes m}\right)\right\|\le \left\|{\bb E}\left(\left(\sum a_k\dot\otimes \bar a_k\right)^{\dot\otimes m}\right)\right\|^{1/2} \left\|{\bb E}\left(\left(\sum b_k\dot\otimes \bar b_k\right)^{\dot\otimes m}\right)\right\|^{1/2}.
\end{equation}
More generally, consider finite sequences $(a^{(j)}_k), (b^{(j)}_k)$ in $B(H)\otimes L_{2m}$ for $j=1,\ldots, m$. \\ Let $T_j = \sum_k a^{(j)}_k\dot{\otimes}\, b^{(j)}_k$ and let $\alpha_j = \sum_k a^{(j)}_k \otimes \ovl{a^{(j)}_k}$ and $\beta_j =  \sum_k b^{(j)}_k\otimes \ovl{b^{(j)}_k}$. We have then 
\begin{equation}\label{eq3.9+}
\|{\bb E}(T_1\dot{\otimes}\cdots \dot{\otimes}\, T_m)\| \le \|{\bb E}(\alpha_1\dot{\otimes} \cdots\dot{\otimes}\, \alpha_m)\|^{1/2} \|{\bb E}(\beta_1\otimes\cdots\otimes \beta_m)\|^{1/2}.
\end{equation}

\end{pro}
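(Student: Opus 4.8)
The plan is to deduce both \eqref{eq3.9-} and \eqref{eq3.9+} from a single application of Haagerup's Cauchy--Schwarz inequality in the reformulated form \eqref{eq2.02}, taking the auxiliary Hilbert space there to be $L_2(\Omega,{\bb P})$. First, \eqref{eq3.9-} is merely the case of \eqref{eq3.9+} in which $a^{(j)}_k=a_k$ and $b^{(j)}_k=b_k$ for every $j$, so I would only prove \eqref{eq3.9+}. As a preliminary remark I would observe that on any tensor product of copies of $B(H)$ the minimal norm is unchanged if some of the factors are reinterpreted as copies of $\ovl{B(H)}$ via the $*$-isomorphism $x\mapsto\bar x$; hence $\|{\bb E}(T_1\dot\otimes\cdots\dot\otimes T_m)\|$ equals the norm obtained after replacing each $T_j=\sum_k a^{(j)}_k\dot\otimes b^{(j)}_k$ by the $B(H)\otimes\ovl{B(H)}$-valued function $\sum_k a^{(j)}_k\dot\otimes\ovl{b^{(j)}_k}$. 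I then fix once and for all a permutation $\sigma$ that rearranges $B(H)^{\otimes 2m}$ into $B(H^{\otimes m})\otimes\ovl{B(H^{\otimes m})}$ by gathering the $m$ ``$a$-slots'' first and the $m$ conjugated ``$b$-slots'' last, and work with that identification throughout; the same $\sigma$ carries $\alpha_1\dot\otimes\cdots\dot\otimes\alpha_m$ and $\beta_1\dot\otimes\cdots\dot\otimes\beta_m$ into $B(H^{\otimes m})\otimes\ovl{B(H^{\otimes m})}$ as well.

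Next, for each multi-index $\vec k=(k_1,\dots,k_m)$ the pointwise product $a^{(1)}_{k_1}\dot\otimes\cdots\dot\otimes a^{(m)}_{k_m}$ lies in $B(H^{\otimes m})\otimes L_2$ by H\"older's inequality (each factor is in $B(H)\otimes L_{2m}$ and $m\cdot\frac1{2m}=\frac12$); I would let $f_{\vec k}\in L_2(\Omega,{\bb P})\otimes B(H^{\otimes m})$ be the element corresponding to it under the identification $B(H^{\otimes m})\otimes L_2\cong L_2\otimes B(H^{\otimes m})$, and define $g_{\vec k}$ from the $b$'s in the same way. Expanding the definition of $\langle\langle\,\cdot\,,\,\cdot\,\rangle\rangle$ (using $\langle\phi,\psi\rangle_{L_2}=\int\phi\bar\psi\,d{\bb P}$ and $\ovl{xy}=\bar x\,\bar y$ leg by leg), one checks that, up to the permutation $\sigma$,
\[ \textstyle\sum_{\vec k}\langle\langle f_{\vec k},g_{\vec k}\rangle\rangle={\bb E}(T_1\dot\otimes\cdots\dot\otimes T_m),\qquad \sum_{\vec k}\langle\langle f_{\vec k},f_{\vec k}\rangle\rangle={\bb E}(\alpha_1\dot\otimes\cdots\dot\otimes\alpha_m), \]
and symmetrically $\sum_{\vec k}\langle\langle g_{\vec k},g_{\vec k}\rangle\rangle={\bb E}(\beta_1\dot\otimes\cdots\dot\otimes\beta_m)$, the point being that the contraction against $L_2(\Omega,{\bb P})$ built into $\langle\langle\,\cdot\,,\,\cdot\,\rangle\rangle$ reproduces precisely the operation ``form the pointwise product of the $m$ factors, then integrate''. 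Since the $\vec k$'s range over a finite set, \eqref{eq2.02} applies and gives $\|\sum_{\vec k}\langle\langle f_{\vec k},g_{\vec k}\rangle\rangle\|\le\|\sum_{\vec k}\langle\langle f_{\vec k},f_{\vec k}\rangle\rangle\|^{1/2}\|\sum_{\vec k}\langle\langle g_{\vec k},g_{\vec k}\rangle\rangle\|^{1/2}$, which is exactly \eqref{eq3.9+}; for $m=1$ it is just \eqref{eq2.01}/\eqref{eq2.2}.

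I expect the only genuine work to be the identification displayed above, that is, carefully tracking the conjugations and the permutation $\sigma$ while verifying that $\langle\langle f_{\vec k},g_{\vec k}\rangle\rangle$ and $\langle\langle f_{\vec k},f_{\vec k}\rangle\rangle$ collapse to the claimed conditional-expectation expressions; this is routine but bookkeeping-heavy, and is the step I would write out in full detail. An alternative would be to iterate \eqref{eq2.2} in the style of the proof of Lemma~\ref{lem2.1}, but folding both $L_2(\Omega,{\bb P})$ and the sum over $\vec k$ into the Hilbert space appearing in \eqref{eq2.02} seems to give the shortest route.
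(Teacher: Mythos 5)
Your proof is correct and follows essentially the same route as the paper: regroup the $2m$ tensor legs by a permutation so that the $a$-slots precede the (conjugated) $b$-slots, and apply Haagerup's Cauchy--Schwarz in the family form \eqref{eq2.02} (with ${\cl H}=L_2(\Omega,{\bb P})$) to the $m$-fold products indexed by multi-indices $(k_1,\ldots,k_m)$. The only organizational difference is that you prove \eqref{eq3.9+} directly and read off \eqref{eq3.9-} as the special case $a^{(j)}_k=a_k$, $b^{(j)}_k=b_k$, whereas the paper proves \eqref{eq3.9-} first by exactly this multi-index Cauchy--Schwarz and then deduces \eqref{eq3.9+} by bundling $a^{(1)}_{k_1}\dot\otimes\cdots\dot\otimes a^{(m)}_{k_m}$ into a single ``$a_{\vec k}$'' and invoking the $m=1$ case; the substance is identical.
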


\begin{proof}
Up to permutation, ${\bb E}((\sum a_k\dot\otimes b_k)^{\dot\otimes m})$ is the same as
\[
 {\bb E}\left(\sum_{k(1),\ldots, k(m)} a_{k(1)} \dot\otimes\cdots\dot\otimes a_{k(m)} \dot\otimes b_{k(1)} \dot\otimes\cdots\dot\otimes b_{k(m)}\right).
\]
Therefore, by \eqref{eq2.2} we have 
\begin{align*}
 \left\|{\bb E}\left(\left( \sum a_k\dot\otimes b_k\right)^{\dot\otimes m}\right)\right\| \le &\left\|{\bb E}\sum_{k(1),\ldots, k(m)} a_{k(1)} \ldots a_{k(m)} \dot\otimes \bar a_{k(1)} \ldots \bar a_{k(m)}\right\|^{\frac12}\\
\times &\left\|{\bb E}\sum_{k(1),\ldots, k(m)} b_{k(1)} \ldots b_{k(m)} \dot\otimes \bar b_{k(1)}\ldots \bar b_{k(m)}\right\|^{\frac12}\\
= &\left\|{\bb E}\left(\left(\sum a_k\dot\otimes \bar a_k\right)^{\dot\otimes m}\right) \right\|^{\frac12} \left\|{\bb E}\left(\left(\sum b_k\dot\otimes \bar b_k\right)^{\dot\otimes m}\right)\right\|^{\frac12}.
\end{align*}
Up to permutation $T_1\dot{\otimes}\cdots\dot{\otimes}\, T_m$ is the same as
\[
 \sum_{k(1),\ldots, k(m)} a^{(1)}_{k(1)} \dot{\otimes} \cdots\dot{\otimes}\, a^{(m)}_{k(m)} \dot{\otimes}\, b^{(1)}_{k(1)} \dot{\otimes}\cdots \dot{\otimes}\, b^{(m)}_{k(m)},
\]
 which can be written as $\sum_k a_k\dot{\otimes}\, b_k$ with $k=(k(1),\ldots, k(m))$, $a_k = a^{(1)}_{k(1)} \dot{\otimes}\cdots\dot{\otimes}\, a^{(m)}_{k(m)}$ and $b_k = b^{(1)}_{k(1)} \dot{\otimes}\cdots\dot{\otimes}\, b^{(m)}_{k(m)}$. Therefore \eqref{eq3.9+} follows from the  $m=1$ case  of \eqref{eq3.9-}.

\end{proof}

\begin{rem}
Let $H=\ell_2$ and $B=B(H)$. The preceding Proposition shows that
\begin{equation}\label{eq4.20}
\left\|\sum a_k \dot\otimes \bar a_k\right\|^{\frac12}_{(m)} = \sup\left\{\left\|\sum a_k \dot\otimes b_k\right\|_{(m)}\right\}
\end{equation}
where the supremum runs over the set $\cl D$ of all finite sequences $(b_k)$ in $B\otimes L_{2m}$ such that $\|\sum b_k\dot\otimes b_k\|_{(m)}\le 1$. (Indeed the sup is attained for $b_k = \bar a_k$, suitably normalized.) Thus \eqref{eq4.20} allows us to define an o.s.s.\ on the space $L_{2m}(\Omega, \mu; \ell_2)$, corresponding to ``$\Lambda_{2m}$ with values in $OH$''. Indeed, we can proceed as before for $\Lambda_p$: \ we consider the subspace $E_0 \subset L_{2m}\otimes \ell_2$ formed of all finite sums $\sum a_k\otimes e_k$ $(a_k\in L_{2m})$ and we define 
\[
 J\colon \ E_0 \longrightarrow \bigoplus_{(b_k)\in \cl D} (\Lambda_m \otimes_{\min} B)
\]
by
\[
 J\left(\sum a_k\otimes e_k\right) = \bigoplus_{(b_k)\in \cl D} \sum a_k \dot\otimes b_k.
\]
This produces an o.s.s.\ on $L_{2m}(\mu;\ell_2)$. It is easy to see that if $a\in L_{2m}$ is fixed in the unit sphere, the restriction of $J$ to $a\otimes\ell_2$ induces on $\ell_2$ the o.s.s.\ of $OH$ while if $x\in \ell_2$ is fixed in the unit sphere, restricting $J$ to $L_{2m}\otimes x$ induces on $L_{2m}$ the o.s.s.\ of $\Lambda_{2m}$.\\
Note in passing that, in sharp contrast with \cite{P4}, except for the preceding special case, we do not have any reasonable definition to propose for the ``vector valued"
analogue of the $\Lambda_p$ spaces.
\end{rem}

As a consequence we find an analogue of Stein's inequality (here we could obviously replace ${\bb E}_{n-1}$
by ${\bb E}_{n}$):

\begin{cor}\label{cor3.5} Let $x_n$ be an arbitrary
finite sequence in $B(H)\otimes L_{4m}$. \\
Let 
$v = \sum {\bb E}_{n-1}(x_n\dot\otimes \bar x_n) \dot\otimes {\bb E}_{n-1}(\bar x_n \dot\otimes x_n)$ and $\delta = \sum x_n \dot\otimes \bar x_n \dot\otimes \bar x_n \dot\otimes x_n$. Then for any integer $m\ge 1$
\begin{equation}\label{eq3.11}
 \|{\bb E}(v^{\dot\otimes m})\| \le m^{m}\|{\bb E}(\delta^{\dot\otimes m})\|.
\end{equation}
\end{cor}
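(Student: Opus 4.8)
The plan is to deduce this from (the ``predictable'' version of) Lemma~\ref{lem3.3}, following exactly the way Stein's inequality is classically obtained from the dual Doob inequality. First I would set $\theta_n = x_n\dot\otimes\bar x_n$, viewed as an element of $B(H\otimes\ovl H)\otimes L_{2m}$, and $\gamma_n={\bb E}_{n-1}(\theta_n)$. Since the conditional expectation commutes with the bar operation, $\ovl{\gamma_n}={\bb E}_{n-1}(\bar x_n\dot\otimes x_n)$, so that $v=\sum_n\gamma_n\dot\otimes\ovl{\gamma_n}$ and $\delta=\sum_n\theta_n\dot\otimes\ovl{\theta_n}$ (the orders of the four factors of $\delta$ match $\theta_n\dot\otimes\ovl{\theta_n}$ exactly, up to the permutation of tensor legs we always allow). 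Because ${\bb P}$ is a probability we have $\theta_n\in B(H\otimes\ovl H)\otimes L_2$, so \eqref{eq1.2} applies to each $\theta_n$ with the conditioning ${\cl B}={\cl A}_{n-1}$ and gives, almost surely,
\[
\gamma_n\dot\otimes\ovl{\gamma_n}=({\bb E}_{n-1}\theta_n)\dot\otimes\ovl{({\bb E}_{n-1}\theta_n)}\prec{\bb E}_{n-1}(\theta_n\dot\otimes\ovl{\theta_n}).
\]

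Summing over $n$ and using that $C_+$ is a cone, I obtain $0\prec v\prec\alpha$ pointwise almost surely, where $\alpha=\sum_n{\bb E}_{n-1}(\theta_n\dot\otimes\ovl{\theta_n})$. Then \eqref{eq1.7} yields $0\prec v^{\dot\otimes m}\prec\alpha^{\dot\otimes m}$ pointwise; taking expectations preserves $\prec$ (by Lemma~\ref{lem1.0}, equivalently \eqref{eq1.1++}, $\mathbb E$ maps $C_+$-valued functions into $C_+$), and Lemma~\ref{lem1.1} then gives $\|{\bb E}(v^{\dot\otimes m})\|\le\|{\bb E}(\alpha^{\dot\otimes m})\|$. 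It only remains to bound $\|{\bb E}(\alpha^{\dot\otimes m})\|$ by $m^m\|{\bb E}(\delta^{\dot\otimes m})\|$, which is exactly Lemma~\ref{lem3.3} applied to the sequence $(\theta_n)$ in $B(H\otimes\ovl H)\otimes L_{2m}$, except that in that lemma $\alpha$ is built from ${\bb E}_n$ rather than from ${\bb E}_{n-1}$.

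Hence the one thing to check is that Lemma~\ref{lem3.3} (and its ingredient Lemma~\ref{lem3.2}) hold verbatim with ${\bb E}_{n-1}$ in place of ${\bb E}_n$; this is the only place where a word of justification is needed, and it is already anticipated in the remark preceding the corollary. Inspecting those proofs, the sole property used is that, when $n(1)=\max_j n(j)$, each factor $\alpha_{n(j)}$ ($j\ge2$) is measurable with respect to a $\sigma$-algebra contained in the one used to condition $\beta_{n(1)}$; with all indices shifted by one this remains true, since $n(1)\ge n(j)$ implies ${\cl A}_{n(j)-1}\subset{\cl A}_{n(1)-1}$, so the pull-out and tower-property steps are unchanged, and likewise in Lemma~\ref{lem3.2} one uses ${\cl A}_{n-1}\subset{\cl A}_{k-1}$ for $n\le k$. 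Granting this, the two displayed bounds chain to $\|{\bb E}(v^{\dot\otimes m})\|\le\|{\bb E}(\alpha^{\dot\otimes m})\|\le m^m\|{\bb E}(\delta^{\dot\otimes m})\|$, which is \eqref{eq3.11}. There is no serious obstacle here; the proof is essentially a bookkeeping combination of \eqref{eq1.2}, \eqref{eq1.7}, Lemma~\ref{lem1.1} and Lemma~\ref{lem3.3}.
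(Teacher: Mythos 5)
Your proposal is correct and follows essentially the same route as the paper: the paper's proof also introduces $w=\sum {\bb E}_{n-1}(x_n\dot\otimes\bar x_n\dot\otimes\bar x_n\dot\otimes x_n)$ (your $\alpha$), gets $v\prec w$ from \eqref{eq1.2}, and then chains \eqref{eq1.7}, Lemma~\ref{lem1.1} and \eqref{eq3.4} exactly as you do. Your extra remark about replacing ${\bb E}_n$ by ${\bb E}_{n-1}$ in Lemmas~\ref{lem3.2} and~\ref{lem3.3} is a legitimate point that the paper handles only by the parenthetical comment in the statement, and your justification of it is correct.
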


\begin{proof} Let $w = \sum {\bb E}_{n-1}( x_n \dot\otimes \bar x_n \dot\otimes \bar x_n \dot\otimes  x_n)$.
By \eqref{eq1.2} we have $v\prec w$. Then by \eqref{eq1.7}, \eqref{eq1.1++}, Lemma \ref{lem1.1} and \eqref{eq3.4},
we have
$$\|{\bb E}(v^{\dot\otimes m})\| \le \|{\bb E}(w^{\dot\otimes m})\| \le  m^{m}\|{\bb E}(\delta^{\dot\otimes m})\|.$$
\end{proof}

\begin{lem}\label{lem3.7}
Let $p=2^k\ge 4$ as before. Let
$  \delta = \sum d_n \dot\otimes \bar d_n \dot\otimes \bar d_n \dot\otimes d_n$.  There is a constant $C_4(p)$ such  that
\begin{equation}\label{eq3.12}
 \|{\bb E}(S^{\dot\otimes p/2})\|^{1/p} \le C_4(p) [\|{\bb E}(\sigma^{\dot\otimes p/2})\|^{\frac1p} + \|{\bb E}(\delta^{\dot\otimes p/4})\|^{\frac1p}].
\end{equation}
\end{lem}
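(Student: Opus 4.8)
The plan is to decompose $S$ telescopically and absorb the "error" into $\sigma$ and $\delta$, closely imitating the usual Burkholder argument already carried out for $p=4$ in \S\ref{sec4}. Recall $S = \sum d_n\dot\otimes\bar d_n$ and $\sigma = \sum \EE_{n-1}(d_n\dot\otimes\bar d_n)$. Write $e_n = d_n\dot\otimes\bar d_n - \EE_{n-1}(d_n\dot\otimes\bar d_n)$, so $S = \sigma + E$ with $E = \sum e_n$, and note that $(e_n)$ (viewed in $B\otimes\ovl B\otimes L_{p/2}$) is a martingale difference sequence. The triangle inequality in the norm $\|\cdot\|_{(p/2)}^{1/2}$ on $B\otimes\ovl B\otimes\Lambda_{p/2}$ gives $\|{\EE}(S^{\dot\otimes p/2})\|^{1/p} \le \|{\EE}(\sigma^{\dot\otimes p/2})\|^{1/p} + \|E\|_{(p/2)}^{1/2}$, so everything reduces to estimating the martingale $E$ by its own square function $S(E) = \sum e_n\dot\otimes\bar e_n$ and then bounding $S(E)$ in terms of $\sigma$ and $\delta$.

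First I would apply the already-proved right-hand side of \eqref{eq3.1+}, with $p/2$ in place of $p$ (legitimate since $p/2 = 2^{k-1}$ is still a dyadic power $\ge 2$; if $p=4$ then $p/2=2$ and the inequality is the trivial $OH$-homogeneity statement), to get $\|E\|_{(p/2)} \le C_2(p/2)\,\|S(E)\|_{(p/4)}^{1/2}$ in the appropriate tensor norm. Then I expand $e_n\dot\otimes\bar e_n$. Since $e_n = d_n\dot\otimes\bar d_n - \EE_{n-1}(d_n\dot\otimes\bar d_n)$, the order-convexity inequality \eqref{eq1.1} applied pointwise yields
\[
e_n\dot\otimes\bar e_n \prec 2\big(d_n\dot\otimes\bar d_n\dot\otimes\bar d_n\dot\otimes d_n\big) + 2\big(\EE_{n-1}(d_n\dot\otimes\bar d_n)\dot\otimes\EE_{n-1}(\bar d_n\dot\otimes d_n)\big),
\]
up to a permutation of tensor factors. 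Summing over $n$ and using \eqref{eq1.1-} together with \eqref{eq1.7} to pass to the $\dot\otimes p/4$-th power inside the norm, the first sum contributes $\delta = \sum d_n\dot\otimes\bar d_n\dot\otimes\bar d_n\dot\otimes d_n$ and the second contributes exactly the term $v$ of Corollary \ref{cor3.5} with $x_n = d_n$. Corollary \ref{cor3.5} then bounds $\|\EE(v^{\dot\otimes p/4})\|$ by $(p/4)^{p/4}\|\EE(\delta^{\dot\otimes p/4})\|$, so both pieces are controlled by $\|\EE(\delta^{\dot\otimes p/4})\|$. Collecting the constants gives $\|S(E)\|_{(p/4)}^{1/2} \le C\,\|\EE(\delta^{\dot\otimes p/4})\|^{1/p}$, and feeding this back through the two triangle-type inequalities produces \eqref{eq3.12} with some $C_4(p)$ depending only on $p$.

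The main obstacle I anticipate is purely bookkeeping rather than conceptual: keeping the permutations of the tensor legs consistent so that the partial order $\prec$ is always taken with respect to one fixed permutation within each calculation (the paper explicitly warns about this after \eqref{eq-or}), and making sure that when I invoke \eqref{eq3.1+} for the martingale $(e_n)$ the square function $S(E)$ is genuinely computed in the right space $B\otimes_{\min}\ovl B\otimes_{\min}\cdots\otimes_{\min}\Lambda_{p/4}$, with the factors $\bar d_n\dot\otimes d_n$ arranged so that \eqref{eq1.1+}, \eqref{eq1.1-} and \eqref{eq1.7} all apply. One mild subtlety worth checking: when $p=4$, $S(E)$ lives over $\Lambda_1 = L_1$, and the estimate degenerates to the claim already obtained in \S\ref{sec4}; I would remark that the argument is uniform and simply specializes there.
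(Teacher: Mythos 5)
Your proposal is correct and follows essentially the same route as the paper: the same decomposition $S-\sigma=\sum c_n$ into martingale differences, the same application of the right-hand side of \eqref{eq3.1+} at level $p/2$, the same order-convexity bound $\frac12 c_n\dot\otimes\bar c_n\prec d_n\dot\otimes\bar d_n\dot\otimes\bar d_n\dot\otimes d_n+\EE_{n-1}(d_n\dot\otimes\bar d_n)\dot\otimes\EE_{n-1}(\bar d_n\dot\otimes d_n)$, and the same appeal to Corollary \ref{cor3.5} to absorb the conditioned term $v$ into $\delta$. No gaps.
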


\begin{proof} 
Note that
\[
 S-\sigma = \sum dc_n
\]
where $c_n = d_n\dot\otimes\bar d_n - {\bb E}_{n-1}(d_n\dot\otimes\bar d_n)$. Thus by the right hand side of \eqref{eq3.1+} we have
\[
 \|S-\sigma\|_{B\otimes_{\min}\ovl B \otimes_{\min} \Lambda_{p/2}} \le C_2(p/2) \|S(c)\|^{1/2}_{B \otimes_{\min} \ovl B \otimes_{\min} \ovl B \otimes_{\min}  B\otimes_{\min} \Lambda_{p/4}}.
\]
By \eqref{eq1.1}
\[
 \frac12 dc_n \dot\otimes d\bar c_n \prec d_n \dot\otimes \bar d_n \dot\otimes \bar d_n \dot\otimes d_n + {\bb E}_{n-1}(d_n\dot\otimes \bar d_n) \dot\otimes {\bb E}_{n-1}(\bar d_n\dot\otimes d_n)
\]
therefore 
\begin{align*}
 \frac12 S(c) \prec  \delta + v,
\end{align*}
where we now set $v=\sum {\bb E}_{n-1}(d_n\dot\otimes \bar d_n) \dot\otimes {\bb E}_{n-1}(\bar d_n\dot\otimes d_n)$.
Thus we find
\begin{align*}
 \Big|\|S\|_{B\otimes_{\min} \ovl B\otimes_{\min}\Lambda_{p/2}} - \|\sigma\|_{B\otimes_{\min}\ovl B \otimes_{\min} \Lambda_{p/2}}\Big| &\le ||S-\sigma\|_{B\otimes_{\min} \ovl B\otimes_{\min} \Lambda_{p/2}}\\
&\le C_2(p/2) \sqrt 2(\|\delta\|^{\frac12}_\bullet + \|v\|^{\frac12}_\bullet)
\end{align*}
where $\|~~\|_\bullet$ is the norm in $B \otimes_{\min} \ovl B \otimes_{\min}  \ovl B \otimes_{\min} B \otimes_{\min} \Lambda_{p/4}$. By \eqref{eq3.11} we have
\[
 \|v\|_\bullet \le (p/4) \|\delta\|_\bullet
\]
and hence
\[
 \|S\|_{B\otimes \ovl B\otimes \Lambda_{p/2}} \le \|\sigma\|_{B\otimes\ovl B\otimes \Lambda_{p/2}} +  \ C_2(p/2) \sqrt 2 (1+(p/4)^{\frac12}) \|\delta\|_\bullet ^{\frac12}.
\]
Taking the square root of the last inequality we obtain \eqref{eq3.12}.
\end{proof}

\bigskip\bigskip
We now give a version (corresponding to
${BR}_{q}$ with $q=4$) for $\Lambda_p$ of the Burkholder-Rosenthal inequality :

\begin{thm}\label{thm3.1+}
 For any $p\ge 4$ of the form $p=2^k$ for some $k\ge 1$ there are positive constants $ C'_1(p)$, and $C'_2(p)$ such that for any test function $f$ in $B\otimes L_p$ we have
\begin{equation}\label{eq3.1++}
 C'_1(p)^{-1}[f]_p \le \|f\|_{B\otimes_{\min} \Lambda_p} \le C'_2(p)[f]_p
\end{equation}
where
\[
 [f]_p= \|\sigma(f)\|^{1/2}_{B\otimes_{\min} \ovl B\otimes_{\min} \Lambda_{p/2}} + \left\|  {\bb E} ( \sum d_n\dot\otimes \bar d_n\dot\otimes d_n\dot\otimes \bar d_n)^{\dot\otimes p/4}\right\|^{1/p}.
\]
\end{thm}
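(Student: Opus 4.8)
The inequality \eqref{eq3.1++} follows almost formally from the square function estimate \eqref{eq3.1+} of Theorem~\ref{thm3.1} combined, for the upper bound, with Lemma~\ref{lem3.7}, and, for the lower bound, with the ``dual Doob'' Lemma~\ref{lem3.3} and the order convexity of $\Phi$; the genuine analytic content has been pushed into those statements. The one preliminary is a bookkeeping identification: viewing $S=S(f)$ and $\sigma=\sigma(f)$ as elements of $(B\otimes_{\min}\ovl B)\otimes\Lambda_{p/2}(\mu)$ and using Proposition~\ref{pro2.4} with $2m=p/2$ together with the commutativity of $\otimes_{\min}$ (so that $\overline{S^{\dot\otimes p/4}}=\bar S^{\dot\otimes p/4}$, and similarly for $\sigma$, agrees up to a permutation of the tensor factors---which changes neither ${\bb E}$ nor the $\min$-norm---with $S^{\dot\otimes p/4}$, resp. $\sigma^{\dot\otimes p/4}$), one obtains
\[
 \|S\|^{1/2}_{B\otimes_{\min}\ovl B\otimes_{\min}\Lambda_{p/2}}=\|{\bb E}(S^{\dot\otimes p/2})\|^{1/p},\qquad \|\sigma\|^{1/2}_{B\otimes_{\min}\ovl B\otimes_{\min}\Lambda_{p/2}}=\|{\bb E}(\sigma^{\dot\otimes p/2})\|^{1/p}.
\]
Writing $\delta=\sum_n d_n\dot\otimes\bar d_n\dot\otimes d_n\dot\otimes\bar d_n$ (which, up to a transposition of factors in each summand, is the $\delta$ of Lemma~\ref{lem3.7} and so has the same value of $\|{\bb E}(\delta^{\dot\otimes p/4})\|$), this lets us read $[f]_p=\|{\bb E}(\sigma^{\dot\otimes p/2})\|^{1/p}+\|{\bb E}(\delta^{\dot\otimes p/4})\|^{1/p}$.

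\emph{Upper bound.} By the right hand side of \eqref{eq3.1+}, $\|f\|_{B\otimes_{\min}\Lambda_p}\le C_2(p)\|{\bb E}(S^{\dot\otimes p/2})\|^{1/p}$, and Lemma~\ref{lem3.7} bounds $\|{\bb E}(S^{\dot\otimes p/2})\|^{1/p}$ by $C_4(p)\bigl(\|{\bb E}(\sigma^{\dot\otimes p/2})\|^{1/p}+\|{\bb E}(\delta^{\dot\otimes p/4})\|^{1/p}\bigr)=C_4(p)[f]_p$. Hence the right hand side of \eqref{eq3.1++} holds with $C'_2(p)=C_2(p)C_4(p)$.

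\emph{Lower bound.} It suffices to dominate each term of $[f]_p$ by a multiple of $\|f\|_{B\otimes_{\min}\Lambda_p}$. For the $\delta$-term, exactly as in the proof of the right hand side of \eqref{eq3.1+} one has $0\prec d_n\otimes\bar d_n\otimes\bar d_k\otimes d_k$ for all $n,k$ (by \eqref{diag}), whence $\delta\prec S\dot\otimes\ovl S$ pointwise; by \eqref{eq1.7} this gives $\delta^{\dot\otimes p/4}\prec(S\dot\otimes\ovl S)^{\dot\otimes p/4}$, which up to a permutation of factors is $S^{\dot\otimes p/2}$, so by \eqref{eq1.1++} and Lemma~\ref{lem1.1}, $\|{\bb E}(\delta^{\dot\otimes p/4})\|\le\|{\bb E}(S^{\dot\otimes p/2})\|$, and therefore, by the left hand side of \eqref{eq3.1+},
\[
 \|{\bb E}(\delta^{\dot\otimes p/4})\|^{1/p}\le\|{\bb E}(S^{\dot\otimes p/2})\|^{1/p}=\|S\|^{1/2}_{B\otimes_{\min}\ovl B\otimes_{\min}\Lambda_{p/2}}\le C_1(p)\|f\|_{B\otimes_{\min}\Lambda_p}.
\]
For the $\sigma$-term, the computation proving Lemma~\ref{lem3.3} applies verbatim with ${\bb E}_n$ replaced throughout by ${\bb E}_{n-1}$ (using ${\bb E}_{-1}:={\bb E}_0$ for the $d_0\dot\otimes\bar d_0$ summand, and the fact that ${\bb E}_{n-1}(d_n\dot\otimes\bar d_n)$ is ${\cl A}_{k-1}$-measurable when $n\le k$); taking $m=p/2$ and $\theta_n=d_n$ one gets $\alpha=\sigma$, $\beta=S$, hence $\|{\bb E}(\sigma^{\dot\otimes p/2})\|\le(p/2)^{p/2}\|{\bb E}(S^{\dot\otimes p/2})\|$, so $\|{\bb E}(\sigma^{\dot\otimes p/2})\|^{1/p}\le(p/2)^{1/2}C_1(p)\|f\|_{B\otimes_{\min}\Lambda_p}$ by the preceding display. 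Adding the two estimates yields the left hand side of \eqref{eq3.1++} with $C'_1(p)=C_1(p)\bigl(1+(p/2)^{1/2}\bigr)$.

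\emph{Main obstacle.} There is essentially none remaining: the substantive step---bounding $\|{\bb E}(S^{\dot\otimes p/2})\|$ in terms of the conditioned square function $\sigma$ and the ``$q=4$'' term $\delta$---is precisely Lemma~\ref{lem3.7}, resting in turn on the dual Doob inequality \eqref{eq3.4} and Lemma~\ref{lem4.10}. The only points to watch are the bookkeeping identifications of the $\Lambda_{p/2}$-norms with the quantities $\|{\bb E}(\cdot^{\dot\otimes\bullet})\|^{1/p}$ and the harmless use of the shifted conditional expectations ${\bb E}_{n-1}$ in the argument for $\sigma$. Obtaining \eqref{eq3.1++} for an arbitrary even $p$, rather than $p=2^k$, would instead require the still-unavailable general case of \eqref{eq3.1+}, cf. Corollary~\ref{cor9.1b}.
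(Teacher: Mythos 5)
Your proof is correct and follows essentially the same route as the paper's: the upper bound is the right side of \eqref{eq3.1+} combined with Lemma~\ref{lem3.7}, and the lower bound comes from $\delta\prec S\dot\otimes\ovl S$ (giving \eqref{eq3.7}) together with the dual Doob inequality \eqref{eq3.4} applied to $\sigma$, followed by the left side of \eqref{eq3.1+}. Your explicit observation that Lemma~\ref{lem3.3} must be rerun with ${\bb E}_{n-1}$ in place of ${\bb E}_n$ to handle $\sigma(f)$ is a detail the paper leaves implicit (relying on the remark before Corollary~\ref{cor3.5}), but it does not change the argument.
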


\begin{proof}
Note that $\ovl S$ and $S$ are the same after a transposition of the two factors, thus the same is true for $S\dot\otimes \ovl S$ and $S^{\dot\otimes 2}$, and we have
\[
 \|{\bb E}(S\dot\otimes \ovl S)\| = \|{\bb E}(S^{\dot\otimes 2})\|
\]
and similarly for any even  $m\ge 1$
\[
 {\bb E}((S\dot\otimes \ovl S)^{\dot\otimes m/2})\|= \|{\bb E}(S^{\dot\otimes m})\|.
\]
Recall that in a suitable permutation we may write $0 \prec d_n \dot\otimes \bar d_n \dot\otimes \bar d_k \dot\otimes d_k$ for all $n,k$ and hence
\[
 \sum d_n \dot\otimes \bar d_n \dot\otimes \bar d_n \dot\otimes d_n \prec  \sum\nolimits_{n,k} d_n \dot\otimes \bar d_n \dot\otimes \bar d_k \dot\otimes d_k = S\dot\otimes \ovl S,
\]
and hence for any even integer $m$
$$     (\sum d_n \dot\otimes \bar d_n \dot\otimes \bar d_n \dot\otimes d_n)^{\dot\otimes m/2} \prec   S^{\dot\otimes m}.$$
Therefore 
\begin{equation}\label{eq3.7}
 \left\|{\bb E}\left(\sum d_n\dot\otimes \bar d_n\dot\otimes d_n\dot\otimes \bar d_n\right)^{\dot\otimes m/2}\right\| \le \|{\bb E}(S^{\dot\otimes m})\|.
\end{equation}
Let $\sigma=\sigma(f)$.
Now if $p=2m$,    \eqref{eq3.4} implies
\begin{equation}\label{eq3.7+}
\|\sigma\|^{1/2}_{B\otimes_{\min} \ovl B\otimes_{\min} \Lambda_{p/2}} = \|{\bb E}(\sigma^{\dot\otimes m})\|^{\frac1 {2m}} \le m^{1/2}
 \|{\bb E}(S^{\dot\otimes m})\|^{\frac1 {2m}},
\end{equation}
and hence by \eqref{eq3.7} and \eqref{eq3.7+}
\begin{align*}
 [f]_p
&\le (m^{1/2} +1) \|{\bb E}(S^{\dot\otimes m})\|^{1/2m}\\
&= (m^{1/2} +1)\|S\|^{1/2}_{B\otimes_{\min}\ovl B\otimes_{\min}\Lambda_m}.
\end{align*}
Thus the left hand side of
\eqref{eq3.1++} follows from \eqref{eq3.1+}.
Since the converse inequality follows from Lemma \ref{lem3.7} and \eqref{eq3.1+}, this completes the proof.
\end{proof}

\section{Hilbert transform}\label{sec6}

\indent 

Consider the Hilbert transform on $L_p({\bb T},dm)$. We will show that this defines a completely bounded operator on $\Lambda_p({\bb T},m)$ again for $p\ge 2$ of the form $p=2^k$ with $k\in {\bb N}$. The proof is modeled on Marcel Riesz's proof as presented in Zygmund's classical treatise on trigonometric series. One of the first references using this trick in a broader context is Cotlar's paper \cite{Co}. Let $f$ be a trigonometric polynomial with coefficients in $B(H)$, i.e.\ $f = \sum_{n\in {\bb Z}} \hat f(n)e^{int}$ with $\hat f\colon \ {\bb Z}\to B(H)$ finitely supported. The Hilbert transform $Tf$ is defined by
\begin{equation}\label{eq5.0}
 Tf = \sum\nolimits_{n\in {\bb Z}} \varphi(n) \hat f(n) e^{int}
\end{equation}
where $\varphi(0)=0$ and
\begin{equation}
 \varphi(n) = -i \text{ sign}(n).\tag*{$\forall n\in {\bb Z}$}
\end{equation}
Note that $T^2 = -id$ on the subspace $\{f\mid \hat f(0)=0\}$. We will use the following classical identity valid for any pair $f,g$ of complex valued trigonometric polynomials
\begin{equation}\label{eq5.1}
 T(fg - (Tf)(Tg)) = fTg + (Tf)g.
\end{equation}
This can be checked easily as a property of $\varphi$ since it reduces to the case $f=z^n$, $g=z^m$ $(n,m\in {\bb Z})$. A less pedestrian approach is to recall that if $f$ is \emph{real valued}, $Tf$ is characterized as the unique real valued $v$, the ``conjugate function'', actually here also a trigonometric polynomial, such that $\hat v(0) = 0$ and $z\mapsto f(z) + iv(z)$ is the boundary value of an analytic function (actually a polynomial in $z$) inside the unit disc $D$. Then \eqref{eq5.1} boils down to the observation that since $(f+iTf)(g+iTg)$ is the product of two analytic functions on $D$, $f(Tg)+(Tf)g $ must be the ``conjugate'' of $fg - (Tf)(Tg)$.
The complex case follows from the real one:
 for a complex valued $f$, we define $Tf= T(\Re(f)) +i T(\Im (f))$ and \eqref{eq5.1} remains valid.
 From \eqref{eq5.1} in the ${\bb C}$-valued case, it is immediate to deduce that for any pair $f,g$ of $B(H)$-valued trigonometric polynomials we have
\begin{equation}\label{eq5.2}
T(f\dot\otimes g - (Tf) \dot\otimes (Tg)) = f\dot\otimes (Tg) + (Tf)\dot\otimes g
\end{equation}
where (as before) the notation $f\dot\otimes g$ stands  for the $B(H)\otimes B(H)$ valued function $z\to f(z) \otimes g(z)$ on ${\bb T}$, and where we  still denote by $T$
the mapping (that should be denoted by $T\otimes I$) taking 
$f\otimes b$ $(f\in L_2, b\in B(H))$ to $(Tf )\otimes b$. Now, it is a simple exercise to check that for any such $f$
\[
\ovl{Tf} = T(\bar f)
\]
(this is an equality between two $\ovl{B(H)}$ valued functions). Therefore we have also:
\begin{equation}\label{eq5.3}
 T(f\otimes\bar g - (Tf) \dot\otimes T(\bar g)) = f\otimes T(\bar g) + Tf\otimes\bar g.
\end{equation}

We can now apply the well known Riesz--Cotlar trick to our situation:

\begin{thm}\label{thm5.1}
 For any $p\ge 2$, of the form $p=2^k$ with $k\in {\bb N}$, the Hilbert transform $T$ is a c.b.\ mapping on $\Lambda_p({\bb T},m)$.
\end{thm}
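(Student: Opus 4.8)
The plan is to run Marcel Riesz's classical argument, presented through Cotlar's identity \eqref{eq5.3}, but at the level of the norms $\|\cdot\|_{(p)}$, and to induct on $k$. Since $T$ annihilates constants we may write $Tf=T(f-\EE_0f)$, where $\EE_0$ is the conditional expectation onto the constant functions on $({\bb T},m)$; as $\EE_0$ is completely contractive on $\Lambda_p$ (\S\ref{sec3}) we have $\|\mathrm{id}-\EE_0\|_{cb}\le 2$, so up to a factor $2$ it is enough to bound $\|Tf\|_{(p)}$ for $B(H)$-valued trigonometric polynomials $f$ with $\widehat f(0)=0$. The base case $p=2$ is immediate: $\Lambda_2=L_2({\bb T},m)_{oh}$ and $T$ acts on $L_2({\bb T},m)$ as a contraction (multiplication by $n\mapsto-i\,\mathrm{sign}(n)$, a partial isometry); since $OH$ is homogeneous (\cite{P3}), a bounded Hilbert space operator is automatically completely bounded with the same norm on the $oh$ structure, so $\|T\|_{cb,\Lambda_2}\le 1$.

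The one additional ingredient I will need is a two–variable H\"older inequality: for $B(H)$–valued trigonometric polynomials $f,g$ and any even integer $q=2m$, the function $f\dot\otimes\bar g\in B(H)\otimes\ovl{B(H)}\otimes L_m$ satisfies $\|f\dot\otimes\bar g\|_{(m)}\le\|f\|_{(q)}\|g\|_{(q)}$, where $\|\cdot\|_{(m)}$ is the norm of $B(H\otimes\ovl H)\otimes_{\min}\Lambda_m$. To prove it, I expand the left side by \eqref{eq2.4+}: up to a permutation of tensor factors (which preserves the minimal norm) and using $\overline{f\dot\otimes\bar g}=\bar f\dot\otimes g$,
\[
 \|f\dot\otimes\bar g\|_{(m)}^{m}=\Big\|\int f^{\dot\otimes m/2}\dot\otimes\bar f^{\dot\otimes m/2}\dot\otimes g^{\dot\otimes m/2}\dot\otimes\bar g^{\dot\otimes m/2}\,d\mu\Big\| .
\]
Reading the integrand as the pointwise product of the $q=2m$ functions given by $m/2$ copies each of $f,\bar f,g,\bar g$ (each lying in $\Lambda_q$), Corollary~\ref{cor2.4-} bounds its $\Lambda_1$–norm by the product of the corresponding $\Lambda_q$–norms; integrating (i.e.\ testing against $\psi\equiv 1$ in \eqref{eq0.2}) and using that conjugation is isometric on $\Lambda_q$ (since $\bar h\dot\otimes h$ is a permutation of $h\dot\otimes\bar h$) gives $\big(\|f\|_{(q)}\|g\|_{(q)}\big)^{m}$, as desired. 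Note also that, directly from \eqref{eq2.4+}, $\|h\dot\otimes\bar h\|_{(m)}=\|h\|_{(2m)}^{2}$ for every $h$.

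Now the induction step: suppose $\|T\|_{cb,\Lambda_p}\le C$ for $p=2^k$, and fix $f$ with $\widehat f(0)=0$. Put $w=f\dot\otimes\overline{Tf}+(Tf)\dot\otimes\bar f$. Because $\varphi(n)=-i\,\mathrm{sign}(n)$ is purely imaginary, Parseval gives $\widehat w(0)=0$; and because $|\varphi(n)|=1$ for $n\neq0$ and $\widehat f(0)=0$, Parseval also gives $\int f\dot\otimes\bar f\,d\mu=\int Tf\dot\otimes\overline{Tf}\,d\mu$, so $h:=f\dot\otimes\bar f-(Tf)\dot\otimes\overline{Tf}$ has zero mean. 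By \eqref{eq5.3} with $g=f$ we have $Th=w$; applying $T$ once more and using $T^{2}=-\mathrm{id}$ on mean–zero functions yields $(Tf)\dot\otimes\overline{Tf}=f\dot\otimes\bar f+Tw$. Taking $\|\cdot\|_{B(H\otimes\ovl H)\otimes_{\min}\Lambda_p}$, then applying the induction hypothesis and the H\"older estimate above (with $q=2p$, $m=p$), together with $\|Tf\|_{(2p)}^{2}=\|(Tf)\dot\otimes\overline{Tf}\|_{(p)}$,
\[
 \|Tf\|_{(2p)}^{2}\le\|f\|_{(2p)}^{2}+C\|w\|_{(p)}\le\|f\|_{(2p)}^{2}+2C\,\|f\|_{(2p)}\|Tf\|_{(2p)} .
\]
Solving this quadratic inequality gives $\|Tf\|_{(2p)}\le\big(C+\sqrt{C^{2}+1}\big)\|f\|_{(2p)}$; restoring the factor $2$ from the reduction to $\widehat f(0)=0$ yields $\|T\|_{cb,\Lambda_{2p}}\le 2\big(C+\sqrt{C^{2}+1}\big)$, and the induction on $k$ is complete. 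The step I expect to require the most care is the two–variable H\"older inequality, specifically checking that Corollary~\ref{cor2.4-} delivers it in the \emph{completely} bounded form for $B(H)$– and $\ovl{B(H)}$–valued functions and that the final integration is performed at the level of the $\Lambda_1$ operator space norm; the rest (the mean–zero reduction, the two Parseval identities ensuring $\widehat w(0)=0$ and that $h$ has zero mean, and the quadratic inequality) is routine.
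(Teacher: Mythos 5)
Your proof is correct and follows essentially the same route as the paper: the Riesz--Cotlar identity \eqref{eq5.3}--\eqref{eq5.4}, induction doubling $p=2^k$, a Cauchy--Schwarz/H\"older bound for the cross term $f\dot\otimes\ovl{Tf}$ (your two-variable H\"older inequality is exactly the paper's application of \eqref{eq2.3}, since $\|h\|_{(2m)}^2=\|h\dot\otimes\bar h\|_{(m)}$), and the concluding quadratic inequality in $x=\|Tf\|_{(2p)}$, $y=\|f\|_{(2p)}$. The only differences are cosmetic: you make explicit the reduction to $\widehat f(0)=0$ and the Parseval check that $f\dot\otimes\bar f-(Tf)\dot\otimes\ovl{Tf}$ has zero mean (needed to apply $T^2=-\mathrm{id}$), which the paper leaves implicit.
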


\begin{proof}
If we restrict (as we may) to functions such that $\hat f(0) =0$, we have $T^2=-id$ and hence \eqref{eq5.3} implies
\begin{equation}\label{eq5.4}
 Tf\dot\otimes T\bar f - f\dot\otimes \bar f = T(f\dot\otimes (\ovl{Tf}) + (Tf) \dot\otimes \bar f).
\end{equation}
We can then again use induction on $k$. Assume the result  known for $p$, i.e.\ that there is a constant $C$ such that
\[
 \|Tf\|_{B(H) \otimes_{\min} \Lambda_p} \le C\|f\|_{B(H)\otimes_{\min} \Lambda_p}.
\]
We will prove that the same holds for $2p$ in place of $p$ (with a different constant). Let $B=B(H)$.
By \eqref{eq5.4}, we have
\begin{align*}
 \|Tf\dot\otimes \ovl{Tf} - f\otimes\bar f\|_{B\otimes_{\min}\ovl B \otimes_{\min} \Lambda_p} &\le 2C \|f\otimes \ovl{Tf}\|_{B\otimes_{\min}\ovl B \otimes\Lambda_p}.\\
\intertext{By \eqref{eq2.3}, this term is}
&\le 2C\|f\otimes\bar f\|^{1/2}_{B\otimes_{\min} \ovl B \otimes_{\min} \Lambda_p} \|Tf \otimes \ovl{Tf}\|^{1/2}_{B\otimes_{\min} \ovl B \otimes \Lambda_p}.
\end{align*}
We have
\[ \|f\|^2_{B\otimes_{\min} \Lambda_{2p}} = \|f\dot\otimes \ovl{f}\|_{B\otimes_{\min} \ovl B \otimes_{\min} \Lambda_p} \ {\rm and} \ 
 \|Tf\|^2_{B\otimes_{\min} \Lambda_{2p}} = \|Tf\dot\otimes \ovl{Tf}\|_{B\otimes_{\min} \ovl B \otimes_{\min} \Lambda_p}.
\]
Therefore, denoting this time $x = \|Tf\|_{B\otimes_{\min}\Lambda_{2p}}$ and $y  = \|f\|_{B\otimes_{\min} \Lambda_{2p}}$,  and using 
$$| \  \|Tf\dot\otimes \ovl{Tf} \|_{(p)} - \| f\otimes\bar f\|_{(p)}    |\le  \|Tf\dot\otimes \ovl{Tf} - f\otimes\bar f\|_{(p)} $$
we find again
\[
 |x^2-y^2| \le 2C xy.
\]
Thus we conclude that $x$ and $y$ are ``equivalent,'' completing the proof with $2p$ in place of $p$.
\end{proof}

\section{Comparison with $\pmb{L_p}$}\label{sec7}

Let $B=B(H)$ with (say) $H=\ell_2$. 
Let $E_1,\cdots, E_m$ and $G$ be operator spaces. Recall that an $m$-linear mapping
$$u:\  E_1\times\cdots\times E_m\to G$$
  is called (jointly) completely bounded (j.c.b. in short)
if the associated $m$-linear mapping  from
$$\hat u\colon \  (B\otimes_{\min} E_1)\times\cdots\times (B\otimes_{\min} E_m)\to B\otimes_{\min}\cdots\otimes_{\min}B\otimes_{\min} G$$ is bounded. We set $\|u\|_{cb}=\|\hat u\|$, and we say that $u$
is  (jointly)  completely contractive if $\|u\|_{cb}\le 1$.
Note the obvious stability of these maps under composition: for instance if $F,L$ are operator spaces and
if $v\colon \ G \times F \to L$ is bilinear and  j.c.b. then the $(m+1)$-linear mapping
$w;\ E_1\times\cdots\times E_m\times F \to L$ defined by
$$w(x_1,\cdots,x_m,y)= v(u(x_1,\cdots,x_m),y)$$
is also  j.c.b. with $\|w\|_{cb}\le \|u\|_{cb}\|v\|_{cb}$.
Moreover, if in the above definition we   replace $B$ by the space $K$ of compact operators
on $\ell_2$, the definition and the value of $\|u\|_{cb}$  is unchanged. 
This allows to extend (following \cite{P4}) the complex interpolation theorem for multilinear mappings.
In particular, we have

\begin{lem}\label{lem4.1} Let $1\le p,q,r\le \infty$ be such that $1/p+1/q=1/r$.
Then 
the pointwise product  from $L_p\times L_q$   to $L_r$
is completely contractive. 
More generally,
if $1\le p_j\le \infty$ ($1\le j\le N$) are such that
$\sum 1/p_j=1/r$, the product map
$L_{p_1}\times \cdots\times L_{p_N}\to L_r$ is completely contractive. In particular, if $p$ is any positive integer,
the pointwise product $P_p$ from $L_p\times\cdots\times L_p$ ($p$-times) to $L_1$
is completely contractive.

\end{lem}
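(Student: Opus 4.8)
The plan is to reduce the whole family of inequalities, via the operator‑space version of the complex interpolation theorem for multilinear maps recalled above (following \cite{P4}), to the two trivial ``corner'' cases: $(p,q,r)=(\infty,\infty,\infty)$ and $(p,q,r)=(\infty,1,1)$ (together with its mirror $(1,\infty,1)$). First I would dispose of the corners directly. For $\varphi\colon L_\infty\times L_\infty\to L_\infty$: given $f_1\in B(H_1)\otimes L_\infty$, $f_2\in B(H_2)\otimes L_\infty$ (which we may take to have values in finite–dimensional subspaces), $\widehat\varphi(f_1,f_2)$ is the function $\omega\mapsto f_1(\omega)\otimes f_2(\omega)$ with values in the operator space $B(H_1)\otimes_{\min}B(H_2)$, so \eqref{eq0.1} gives $\|\widehat\varphi(f_1,f_2)\|_{\min}=\operatorname{ess\,sup}_\omega\|f_1(\omega)\|\,\|f_2(\omega)\|\le\|f_1\|_{\min}\|f_2\|_{\min}$, i.e.\ $\varphi$ is jointly completely contractive. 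For $\varphi\colon L_\infty\times L_1\to L_1$: applying \eqref{eq0.2} to $\widehat\varphi(f_1,f_2)$ with $f_1\in B(H_1)\otimes L_\infty$, $f_2\in B(H_2)\otimes L_1$,
\[
\big\|\widehat\varphi(f_1,f_2)\big\|_{(B(H_1)\otimes B(H_2))\otimes_{\min}L_1}=\sup_g\Big\|\int f_2(\omega)\otimes\big(f_1(\omega)\otimes g(\omega)\big)\,d\mu(\omega)\Big\|,
\]
the sup over $g$ in the unit ball of $B(K)\otimes_{\min}L_\infty$; the function $h(\omega)=f_1(\omega)\otimes g(\omega)$ lies in $(B(H_1)\otimes B(K))\otimes_{\min}L_\infty$ with norm $\le\|f_1\|_{\min}\|g\|_{\min}\le\|f_1\|_{\min}$ by \eqref{eq0.1}, so a second use of \eqref{eq0.2} bounds the right side by $\|f_2\|_{B(H_2)\otimes_{\min}L_1}\,\|f_1\|_{\min}$. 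By symmetry $L_1\times L_\infty\to L_1$ is jointly completely contractive as well.

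Next I would interpolate in two steps. \emph{Step one}: interpolating the two corner maps $L_\infty\times L_1\to L_1$ and $L_1\times L_\infty\to L_1$ along the compatible couples $(L_\infty,L_1)$, $(L_1,L_\infty)$, $(L_1,L_1)$ with parameter $\theta$ — using $B(H)\otimes_{\min}L_p=(B(H)\otimes_{\min}L_\infty,B(H)\otimes_{\min}L_1)_{1/p}$ and $(L_1,L_1)_\theta=L_1$ — shows that $\varphi\colon L_p\times L_{p'}\to L_1$ is jointly completely contractive for every conjugate pair $1/p+1/p'=1$ (with $1/p=\theta$). \emph{Step two}: given $1\le p,q,r\le\infty$ with $1/p+1/q=1/r$, set $\theta=1/r\in(0,1]$ and choose the conjugate pair $p_0,p_0'$ with $1/p_0=(1/p)/(1/r)$, $1/p_0'=(1/q)/(1/r)$; interpolating the corner $L_\infty\times L_\infty\to L_\infty$ against the Step‑one map $L_{p_0}\times L_{p_0'}\to L_1$ along the couples $(L_\infty,L_{p_0})$, $(L_\infty,L_{p_0'})$, $(L_\infty,L_1)$ at this $\theta$, and invoking the reiteration theorem for complex interpolation (which is available at the operator‑space level) so that $(L_\infty,L_{p_0})_\theta=L_p$, etc., yields that $\varphi\colon L_p\times L_q\to L_r$ is jointly completely contractive. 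One checks $1/p+1/q=\theta(1/p_0+1/p_0')=\theta=1/r$, so the exponent relation is preserved, and this construction reaches every admissible triple with $r\ge1$.

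Finally, the $N$‑fold product $L_{p_1}\times\cdots\times L_{p_N}\to L_r$ with $\sum 1/p_j=1/r$ and $r\ge1$ follows by iterating the bilinear case: every partial sum satisfies $\sum_{j\in S}1/p_j\le 1/r\le1$, so each partial product stays in some $L_s$ with $s\ge1$ and no exponent below $1$ is ever encountered, while composition of jointly completely contractive bilinear maps is jointly completely contractive (this is the composition stability noted at the start of \S\ref{sec7}). The statement for $P_p\colon L_p\times\cdots\times L_p\to L_1$ ($p$ factors) is then the special case $p_1=\cdots=p_p=p$. I expect the main obstacle to be bookkeeping rather than any single estimate: namely setting up the operator‑space‑valued multilinear interpolation correctly — compatibility of the couples, exhibiting a common dense core for $\varphi$ (simple integrable functions, which needs a brief remark when $\mu$ is infinite), and quoting reiteration — together with the one genuinely non‑formal point, the verification of the $L_\infty\times L_1\to L_1$ corner through the two successive applications of \eqref{eq0.2} above.
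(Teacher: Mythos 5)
Your proof is correct and follows essentially the same route as the paper: identify a few trivial endpoint cases and invoke the operator-space version of the multilinear complex interpolation theorem, then iterate and compose to get the $N$-fold statement. The only (cosmetic) difference is that the paper interpolates in one step from the three cases $L_p\times L_\infty\to L_p$, $L_\infty\times L_q\to L_q$ and $L_p\times L_{p'}\to L_1$ (the last being immediate from \eqref{eq0.2}), whereas you start from the extreme corners $(\infty,\infty,\infty)$ and $(\infty,1,1)$ and therefore need an extra interpolation step together with reiteration.
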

\begin{proof} The three cases either $q=\infty, p=r$ or  $p=\infty, q=r$ or  $q=p',r=1$ are obvious.
By interpolation and then exchanging the roles of $p$ and $q$, this implies the general case.
By the preceding remark, one can iterate
and the second assertion becomes clear. 
\end{proof}
\begin{thm}\label{thm4.1}
Let $p=2m$, $m\in {\bb N}$. The identity map $L_p\to \Lambda_p$ is completely contractive.
\end{thm}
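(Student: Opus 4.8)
The plan is to show that for any $f\in B(H)\otimes L_p$ with $p=2m$ we have $\|f\|_{B(H)\otimes_{\min}\Lambda_p}\le \|f\|_{B(H)\otimes_{\min}L_p}$, and since both spaces are isometrically $L_p(\mu)$ as Banach spaces, equality of the underlying norms already holds; the content is the operator-space inequality. First I would recall the defining formula \eqref{eq2.4+}, namely
\[
 \|f\|_{B(H)\otimes_{\min}\Lambda_p}=\left\|\int f^{\dot\otimes m}\dot\otimes\bar f^{\dot\otimes m}\,d\mu\right\|^{1/2m},
\]
so the point is to estimate the right-hand side by the ``natural'' o.s.s.\ norm $\|f\|_{B(H)\otimes_{\min}L_p}$.

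The key step is to view the map $f\mapsto \int f^{\dot\otimes m}\dot\otimes\bar f^{\dot\otimes m}\,d\mu$ as obtained by composing the pointwise product $L_p\times\cdots\times L_p\to L_1$ ($2m$ factors) with the integration functional $L_1\to\C$, and then to invoke Lemma~\ref{lem4.1}: the product map $P_{2m}\colon L_p\times\cdots\times L_p\to L_1$ is jointly completely contractive for the natural o.s.s.\ of these $L_p$'s. Feeding in the $2m$-tuple $(f,\ldots,f,\bar f,\ldots,\bar f)$ (after the harmless permutation identifying the ambient tensor factors, exactly as in the convention set up around \eqref{diag}), joint complete contractivity gives
\[
 \left\|\int f^{\dot\otimes m}\dot\otimes\bar f^{\dot\otimes m}\,d\mu\right\|_{B(H^{\otimes m}\otimes\bar H^{\otimes m})\otimes_{\min}L_1}\le \prod_{k=1}^{2m}\|f\|_{B(H)\otimes_{\min}L_p}=\|f\|_{B(H)\otimes_{\min}L_p}^{2m},
\]
using that $\|\bar f\|_{B(\bar H)\otimes_{\min}L_p}=\|f\|_{B(H)\otimes_{\min}L_p}$ (conjugation is a complete isometry on the natural $L_p$). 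Then integrating out $L_1$ (the functional $L_1\to\C$ is completely contractive, or one simply notes $\|\int\varphi\,d\mu\|\le\|\varphi\|_{B\otimes_{\min}L_1}$ directly from \eqref{eq0.2} with $\psi\equiv 1$) and taking $2m$-th roots yields $\|f\|_{(p)}\le \|f\|_{B(H)\otimes_{\min}L_p}$, which is the claim.

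The main obstacle, such as it is, is purely bookkeeping: one must be careful that the ``natural'' o.s.s.\ on $L_p$ used in Lemma~\ref{lem4.1} is the interpolation one, that complete contractivity of $P_{2m}$ there is genuinely the $B\otimes_{\min}$-level statement, and that the permutation relating $B(H)^{\otimes 2m}\otimes L_1$ to $B({\cl H})\otimes\ovl{B({\cl H})}\otimes L_1$ (with ${\cl H}=H^{\otimes m}$) does not affect min-norms, which is exactly \eqref{eq1.1-}. I would also note that this argument only uses one direction; the failure of the reverse inequality to be completely bounded is precisely the subject of \S\ref{sec7} and is not needed here. One last remark: an alternative, self-contained route avoiding Lemma~\ref{lem4.1} is to combine \eqref{eq2.4++} with \eqref{eq2.2bis} applied iteratively, but invoking Lemma~\ref{lem4.1} is cleaner and I would present that.
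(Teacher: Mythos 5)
Your proof is correct and follows essentially the same route as the paper: invoke Lemma~\ref{lem4.1} for the joint complete contractivity of the product map $P_p\colon L_p\times\cdots\times L_p\to L_1$, apply it to the tuple $(f,\ldots,f,\bar f,\ldots,\bar f)$, and conclude via the defining formula \eqref{eq2.4+}. The observation that conjugation is a complete isometry on the interpolated $L_p$ is also exactly the point the paper makes (in Remark~\ref{unic}), so nothing is missing.
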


\begin{proof}
By the preceding Lemma \ref{lem4.1}, $
 P_{p}\colon \ L_p\times\cdots\times L_p\to L_1
$
is completely contractive.
Therefore
\[
 \left\|\int f_1\dot\otimes\cdots\dot\otimes f_{p/2}\dot\otimes \bar f_1\dot\otimes\cdots \dot\otimes \bar f_{p/2}\right\| \le \left(\prod^{p/2}_1\|f_j\|_{B\otimes_{\min} L_p}\right)^2.
\]
Thus taking $f_1 =\cdots= f_{p/2}$ we get by \eqref{eq2.4+}

\[
 \|f\|^{p}_{B\otimes_{\min}\Lambda_p} \le \|f\|^{p}_{B\otimes_{\min} L_p}
\]
and we obtain $\|L_p\to \Lambda_p\|_{cb}=1$.
\end{proof}

\begin{rem}\label{unic} The preceding argument (together with Corollary \ref{cor2.4-}) shows that the o.s.s. on $\Lambda_p$ is essentially the minimal
one on $L_p$  such that $P_p\colon  \Lambda_p\times\cdots\times\Lambda_p\to L_1 $
is completely contractive. More precisely, assume $p\in \bb N$. Let $Q_p\colon\ L_p\times\cdots\times L_p\times
\bar L_p\times\cdots\times \bar L_p$ (where $L_p$ and  $\bar L_p$ are repeated $p/2$ times) be the $p$-linear mapping taking $(f_1,\cdots,f_{p/2},\bar g_1,\cdots,\bar g_{p/2})$ to
$\int f_1 \cdots f_{p/2}\bar g_1,\cdots,\bar g_{p/2} d\mu$.
Then
 if $X_p$ is  an o.s. isometric to $L_p$,
such that $Q_p\colon  X_p\times\cdots\times X_p \times
\bar X_p\times\cdots\times \bar X_p\to L_1 $ is completely contractive,
  the identity map $X_p\to \Lambda_p$ is completely contractive.\\
  In the case of $L_p$ itself with its interpolated o.s.s. we could consider $P_p$ instead of  $Q_p$   
  because the  map  $ f \mapsto \bar f$ is a completely isometric antilinear isomorphism 
  from $L_p$ to itself, and hence defines a completely isometric  {\it linear} isomorphism 
  from $L_p$ to $\bar L_p$. (This can be checked easily by interpolation
  starting from $p=\infty$ and-by duality-$p=1$.)
\end{rem}

This remark leads to:
\begin{cor} For any integer $p\ge 1$, we have a completely contractive inclusion
$$(\Lambda_p,L_\infty)_{1/2}\to \Lambda_{2p}.$$
\end{cor}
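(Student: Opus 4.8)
The plan is to use the universal property of $\Lambda_{2p}$ behind Remark~\ref{unic}, made operational through Proposition~\ref{pro2.4}, and then to obtain the required estimate by multilinear complex interpolation, with Corollary~\ref{cor2.4-} serving as the $\Lambda_p$-endpoint and the trivial product map on $L_\infty(\mu)$ as the other endpoint.

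Set $X:=(\Lambda_p,L_\infty)_{1/2}$. As a Banach space $X$ is isometric to $L_{2p}(\mu)$, since $\Lambda_p$ is isometric to $L_p$ and $(L_p,L_\infty)_{1/2}=L_{2p}$; we understand the statement for the values of $p$ for which $\Lambda_p$ has been defined, i.e.\ $p\in 2\N$ together with $p=1$. By Proposition~\ref{pro2.4}, applied with $2m=2p$ so that $m=p$, for every Hilbert space $H$ and every $f\in B(H)\otimes L_{2p}$ one has
\[
\|f\|_{B(H)\otimes_{\min}\Lambda_{2p}}=\bigl\|f^{\dot\otimes p}\bigr\|_{B(H^{\otimes p})\otimes_{\min}(L_2)_{oh}}^{1/p},
\]
the $p$-fold pointwise tensor power $f^{\dot\otimes p}$ lying in $B(H^{\otimes p})\otimes L_2$ by H\"older ($p/(2p)=1/2$). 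Hence the complete contractivity of the inclusion $X\to\Lambda_{2p}$ will follow once the $p$-linear product map
\[
\Pi\colon\ \underbrace{X\times\cdots\times X}_{p}\longrightarrow(L_2)_{oh},\qquad (f_1,\dots,f_p)\longmapsto f_1\dot\otimes\cdots\dot\otimes f_p,
\]
is shown to be jointly completely contractive: one then applies the associated map $\widehat{\Pi}$ to $(f,\dots,f)$ and uses $\widehat{\Pi}(f,\dots,f)=f^{\dot\otimes p}$.

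To see that $\Pi$ is jointly completely contractive I would appeal to the multilinear complex interpolation theorem in the operator-space category, available following \cite{P4} exactly as in the proof of Lemma~\ref{lem4.1}. At one endpoint the $p$-linear product map $\Lambda_p\times\cdots\times\Lambda_p\to\Lambda_1=L_1(\mu)$ is jointly completely contractive by Corollary~\ref{cor2.4-}, with the even integer $p$ in the role of ``$p=2m$'' there (when $p=1$ this endpoint is merely the identity $\Lambda_1\to\Lambda_1$). At the other endpoint the product map $L_\infty\times\cdots\times L_\infty\to L_\infty$ is completely contractive, being multiplication in the commutative $C^*$-algebra $L_\infty(\mu)$. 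Interpolating all $p$ slots at $\theta=1/2$ carries the source to $(\Lambda_p,L_\infty)_{1/2}=X$ and the target to $(\Lambda_1,L_\infty)_{1/2}$; and this interpolated target is exactly $(L_2)_{oh}$, because $\Lambda_1$ is $L_1(\mu)$ with its maximal operator space structure, which is the operator space predual of $L_\infty(\mu)$, so that $(\Lambda_1,L_\infty)_{1/2}=(L_\infty,L_1)_{1/2}$ is the ``natural'' operator space structure on $L_2(\mu)$, i.e.\ $(L_2)_{oh}$ by \cite{P4} (as recalled in \S\ref{sec1}). Thus $\Pi$ is jointly completely contractive, and the corollary follows; for $p=1$ the argument in fact yields the complete isometry $(\Lambda_1,L_\infty)_{1/2}=(L_2)_{oh}=\Lambda_2$.

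The delicate point is the identification of the interpolated target space: in the interpolation pair whose interpolant we want to be $(L_2)_{oh}$, the $L_1$-space must be given its maximal operator space structure (equivalently, $\Lambda_1$, equivalently the predual structure of $L_\infty(\mu)$) --- only then does complex interpolation return the self-dual structure $OH$ on $L_2$ rather than some other operator space structure. This rests on the theorem of \cite{P4} that the interpolated o.s.s.\ on $L_2$ is $OH$, together with the identification of $\Lambda_1$ with the operator space predual of $L_\infty(\mu)$ noted in \S\ref{sec3}; a more routine point is the legitimacy of multilinear complex interpolation in this setting, taken over from \cite{P4} as in Lemma~\ref{lem4.1}. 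Equivalently, one could route the whole argument through Remark~\ref{unic}, checking that $Q_{2p}$ (or $P_{2p}$, using that complex conjugation is completely isometric on $X$) is jointly completely contractive by factoring it as $\Pi$ followed by the product map $L_2\times L_2\to L_1$, the latter being jointly completely contractive by \eqref{eq2.2bis}.
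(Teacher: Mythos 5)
Your proof is correct, and it takes a route that differs in its mechanics from the one in the paper, although both arguments are powered by the same two ingredients: the joint complete contractivity of the product $\Lambda_p\times\cdots\times\Lambda_p\to\Lambda_1$ (Corollary \ref{cor2.4-}) and multilinear complex interpolation in the operator space category. The paper interpolates the \emph{$2p$-linear} form $P_{2p}$ with a \emph{fixed} target $L_1$, between the two ``crossed'' endpoints $(\Lambda_p)^p\times(\bar L_\infty)^p\to L_1$ and $(L_\infty)^p\times(\bar\Lambda_p)^p\to L_1$, and then concludes by the minimality property of $\Lambda_{2p}$ recorded in Remark \ref{unic}. You instead interpolate the \emph{$p$-linear} product with all slots uniform and let the \emph{target} vary, from $\Lambda_1$ to $L_\infty$, landing in $(\Lambda_1,L_\infty)_{1/2}=(L_2)_{oh}$, and then conclude by the explicit formula $\|f\|_{\Lambda_{2p}}=\|f^{\dot\otimes p}\|_{(L_2)_{oh}}^{1/p}$ of Proposition \ref{pro2.4}. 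Your version is shorter and avoids both the conjugate spaces and the appeal to Remark \ref{unic}; the price is the identification of the interpolated target, which --- as you correctly flag --- rests on $\Lambda_1=\max(L_1)$ coinciding with the operator space predual of the commutative $L_\infty(\mu)$ and on $(L_\infty,L_1)_{1/2}=(L_2)_{oh}$ from \cite{P4}; the paper's fixed-target route sidesteps this point entirely. Both arguments require $\Lambda_p$ to be defined, so the restriction to $p\in 2\N$ or $p=1$ that you make explicit is equally implicit in the paper.
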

\begin{proof} Let $X_{2p}=(\Lambda_p,L_\infty)_{1/2}$. We will argue    as in the proof of Lemma \ref{lem4.1},
applying complex interpolation to
  the product map $P_{2p}:\ (x_1,\cdots,x_{2p})\mapsto x_1\cdots x_{2p}$. Clearly,
  by Lemma \ref{lem4.1},  $P_{2p}$
  is completely contractive both as a map
  from $(\Lambda_p)^p\times (\bar L_\infty)^p $  to $L_1 $ and as one 
  from $(L_\infty)^p\times (\bar \Lambda_p)^p $
  to $L_1 $. Therefore, by interpolation,
 $P_{2p}\colon  X_{2p}\times\cdots\times X_{2p} \times
\bar X_{2p}\times\cdots\times \bar X_{2p}\to L_1 $ is completely contractive. The preceding remark 
(applied with $2p$ in place of $p$) then yields this corollary.\end{proof}

\begin{rem} We wish to compare here the operator spaces $L_p$ and $\Lambda_p$. We already know that they are different
since the Khintchine inequalities lead to two different operator spaces in both cases, but we can give a more precise 
quantitative estimate.

Let us denote by $L^n_p$ the space $L_p(\Omega_n,\mu_n)$ when $\Omega_n = [1,\ldots, n]$ and $\mu_n$ is the uniform probability measure on $\Omega$. We then set
\[
 \Lambda^n_p = \Lambda_p(\Omega_n,\mu_n).
\]
 We claim that for any even integer $p>2$, there is $\delta_p>0$ such that for any $n$ the identity map (denoted id) satisfies
\[
 \|id\colon \ \Lambda^n_p \to L^n_p\|_{cb} \ge \delta_p n^{\frac1p(\frac12-\frac1p)}.
\]
To prove this, we will use an adaptation (with $\Lambda_p$ instead of $L_p$) of the results in \cite{Har,P5}. Indeed, by 
  Corollary \ref{cor11.2} below, using the classical ``Rudin examples'' of $\Lambda(p)$-sets, one can show that the space $\Lambda^n_p$ contains a subspace $E_n\subset \Lambda^n_p$ with $\dim E_n = d(n) \ge n^{2/p}$ and  such that the inclusion $OH_{d(n)}\subset E_n$ satisfies  $\|OH_{d(n)}\to E_n\|_{cb}\le \chi_p$. 
  Moreover, there is a projection $P_n\colon \ \Lambda^n_p\to E_n$ with $\|P_n\|_{cb}\le \chi_p$.  Here $p$ is an even integer $>2$ and $\chi_p$ is a constant depending only on $p$.
In addition, by \cite{Har}, the same space $E_n$ considered in $L^n_p$ is (uniformly over $n$) completely isomorphic to $R_p(d(n)) \cap C_p(d(n))$ (intersection of row and column space in $S^{d(n)}_p$). In fact we use only the easy direction of this result, namely
that $\|E_n\to R_p(d(n)) \|_{cb}\le 1$ and $\|E_n\to C_p(d(n))\|_{cb}\le 1$.

\n It follows that there is a constant $\delta_p>0$ such that if $id$ denotes the identity map we have
\[
 \|id\colon\ \Lambda^n_p\to L^n_p\|_{cb}\ge \delta_p\|OH_{d(n)}\to C_p(d(n))\|_{cb}.
\]
Recall that  (see \cite[p. 219]{P6}) $\|OH(d)\to C_\infty(d)\|_{cb} = d^{1/4}$.
Thus by interpolation, we have for any $d$ if $\frac1p =\frac\theta2$
\[
 \|C_p(d) \to C_\infty(d)\|_{cb} \le \|OH(d)\to C_\infty(d)\|^\theta_{cb} = d^{\theta/4},
\]
also
\[
 \|OH(d)\to C_p(d)\|_{cb} \|C_p(d)\to C_\infty(d)\|_{cb} \ge \|OH(d) \to C_\infty(d)\|_{cb},
\]
therefore we find
\[
 \|OH(d)\to C_p(d)\|_{cb} \ge d^{1/4}d^{-\theta/4}
\]
and we conclude for some $\delta'_p >0$
\[
 \|id \colon\ \Lambda^n_p\to L^n_p\|_{cb} \ge \delta'_p(n^{2/p})^{\frac{1-\theta}4} = \delta'_p n^{\frac{\theta(1-\theta)}4}.
\]
A similar argument applies to compare $\Lambda^n_p$ with either $\min(L^n_p)$ or $\max(L^n_p)$. Using the projections $P_n$, we easily deduce that for some constant $\chi'_p>0$
\[
 \|\Lambda^n_p \to \max(L^n_p)\|_{cb}\ge \chi'_p  \|OH_{d(n)}\to \max(\ell^{d(n)}_2)\|_{cb}
\]
and
\[
 \|\min(L^n_p)\to \Lambda^n_p\|_{cb} \ge \chi'_p\|\min(\ell^{d(n)}_2) \to OH_{d(n)}\|_{cb}.
\]
But it is known (see \cite[p. 220]{P6}) that for any $d$
\[
 \|OH_d\to \max(\ell^d_2)\|_{cb} = \|\min(\ell^d_2)\to OH_d\|_{cb} \simeq cd^{1/2}
\]
where $c>0$ is independent of $d$. Thus we obtain
\[
 \|\Lambda^n_p\to \max(L^n_p)\|_{cb}\ge c \chi'_p d(n)^{1/2} \simeq c'n^{1/p}
\]
and similarly 
\[
 \|\min(L^n_p)\to \Lambda^n_p\|_{cb} \ge c'n^{1/p}.
\]
\end{rem}

\section{The non-commutative case}\label{sec8}

Let $\cl M$ be a von Neumann algebra equipped with a normal semi-finite faithful trace $\tau$, and let $L_p(\tau)$ be the associated ``non-commutative'' $L_p$-space. The preceding procedure works equally well in the non-commutative case, but requires a little more care. 
To define the o.s.s.\ on $L_p(\tau)$ that will be of interest to us we consider $f$ in $B(H)\otimes L_p(\tau)$ of the form $f = \sum^n_1 b_k\otimes x_k$ and we define $f^*\in \ovl{B(H)} \otimes L_p(\tau)$ by
\[
 f^* = \sum\nolimits^n_1 \bar b_k \otimes x^*_k.
\]
Consider $f= \sum^n_1 b_k\otimes x_k\in B(H)\otimes L_p(\tau)$ as above and $g=\sum c_j \otimes y_j\in B(K)\otimes L_q(\tau)$ $(p,q\ge 1)$. We denote 
by $f\dot\otimes g\in B(H)\otimes B(K)\otimes  L_r(\tau)$ $\big(r\ge 1, \frac1r = \frac1p +\frac1q\big)$ the element defined by
\[
 f\dot\otimes g = \sum\nolimits_{k,j} b_k\otimes c_j \otimes x_k y_j.
\]  
Given $f\in B(H) \otimes L_1(\tau)$ we denote $\hat\tau = id_{B(H)} \otimes\tau\colon \ B(H) \otimes L_1(\tau) \to B(H)$. More explicitly if $f$ is as above (here $p=1$) we set
\[
 \hat\tau(f) = \sum b_k \tau(x_k),
\]
and since the norm and the cb-norm coincide for linear forms, we have
\[
\| \hat\tau(f) \|\le \|f\|_{B\otimes_{\min} L_1(\tau)} .
\]
Then, by the trace property, if $r=1$, $\hat\tau (f\dot\otimes g)$  and $\hat\tau (g\dot\otimes f)$ are the same up
to transposition of the two factors, and hence have the same minimal norm. More generally,
given finite sequences  $f_\ell \in B(H)\otimes L_p(\tau)$ as above and $g_\ell\in B(K)\otimes L_q(\tau)$ ($r\ge 1,  1/r =  1/p + 1/q$), the same reasoning yields
\begin{equation}\label{eq8.1}\|\hat\tau(\sum\nolimits_\ell  f_\ell\dot\otimes g_\ell  )\|=\|\hat\tau( \sum\nolimits_\ell  g_\ell\dot\otimes f_\ell )  \|.
\end{equation}
This identity \eqref{eq8.1} will considerably facilitate the generalization of most of the preceding proofs
to the non-commutative case, in a rather easier fashion than for the corresponding steps in \cite{PX1}.
 
Now, Haagerup's version of the Cauchy--Schwarz inequality for the Hilbert space $\ell_2(L_2(\tau))$ becomes:
\begin{lem}\label{lem8.0}
 Let $f_k,g_k\in B\otimes L_2(\tau)$ $(k=1,\ldots, N)$. Then
\begin{equation}\label{eq8.0}
 \left\|\sum\nolimits^N_1 \widehat\tau(f^*_k\dot{\otimes} g_k)\right\|\le \left\|\sum \widehat\tau(f^*_k \dot{\otimes} f_k)\right\|^{1/2} \left\|\sum \widehat\tau \left(\sum g^*_k\dot{\otimes}g_k\right)\right\|^{1/2},
\end{equation}
and actually this is valid when $\tau$ is any (not necessarily tracial) state on $M$.
\end{lem}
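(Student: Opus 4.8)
The plan is to recognize that the two sides of \eqref{eq8.0} are instances of the bracket $\langle\langle\,\cdot\,,\,\cdot\,\rangle\rangle$ introduced in \S\ref{sec2}, so that the asserted inequality becomes a direct consequence of \eqref{eq2.02}; the only thing that needs care is the bookkeeping of adjoints and complex conjugations, and I expect that bookkeeping to be the only real obstacle.

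Concretely, I would first fix notation, writing $f_k=\sum_i a_{k,i}\otimes x_{k,i}$ and $g_k=\sum_j b_{k,j}\otimes y_{k,j}$ with $a_{k,i},b_{k,j}\in B=B(H)$ and $x_{k,i},y_{k,j}\in L_2(\tau)$. By definition $f_k^*=\sum_i\ovl{a_{k,i}}\otimes x_{k,i}^*$, so that
\[
 \widehat\tau(f_k^*\dot\otimes g_k)=\sum_{i,j}\tau(x_{k,i}^*y_{k,j})\,\ovl{a_{k,i}}\otimes b_{k,j}\ \in\ \ovl B\otimes B .
\]
Then I would endow the GNS space $\cl{H}=L_2(\tau)$ with the inner product $\langle x,y\rangle=\tau(xy^*)$ (this is a genuine inner product for \emph{any} state $\tau$, faithful or not, since $\tau(xx^*)\ge 0$; note that no trace property is used), and view $f_k^*$ and $g_k^*$, after permuting the two tensor legs, as the elements $\sum_i x_{k,i}^*\otimes\ovl{a_{k,i}}$ and $\sum_j y_{k,j}^*\otimes\ovl{b_{k,j}}$ of $\cl{H}\otimes\ovl B=\cl{H}\otimes B(\ovl H)$. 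Since $\langle x^*,y^*\rangle=\tau(x^*y)$ and $\ovl{\ovl{b}}=b$, the very definition of $\langle\langle\,\cdot\,,\,\cdot\,\rangle\rangle$ yields
\[
 \langle\langle f_k^*,g_k^*\rangle\rangle=\sum_{i,j}\tau(x_{k,i}^*y_{k,j})\,\ovl{a_{k,i}}\otimes b_{k,j}=\widehat\tau(f_k^*\dot\otimes g_k),
\]
and in the same way $\langle\langle f_k^*,f_k^*\rangle\rangle=\widehat\tau(f_k^*\dot\otimes f_k)$ and $\langle\langle g_k^*,g_k^*\rangle\rangle=\widehat\tau(g_k^*\dot\otimes g_k)$.

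With these identifications in hand, \eqref{eq8.0} is \emph{exactly} \eqref{eq2.02} applied with $\cl{H}=L_2(\tau)$ to the finite families $(f_k^*)_{k\le N}$ and $(g_k^*)_{k\le N}$ in $\cl{H}\otimes B(\ovl H)$ (the resulting norms all live in $B(\ovl H\otimes_2 H)$, which is consistent with the statement). Since \eqref{eq2.02} rests only on Haagerup's Cauchy--Schwarz inequality together with the Hilbert-space structure of $\cl{H}$ and the $B(H)$-structure of the coefficients, and never invokes the trace property of $\tau$, the argument is valid verbatim when $\tau$ is merely a state; for a non-tracial $\tau$ one first reduces, by density, to $x_{k,i},y_{k,j}$ in $M$ so that the products $x_{k,i}^*y_{k,j}$ lie in $M$ and $\widehat\tau$ is unambiguous. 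Thus the only work is the adjoint/conjugation bookkeeping carried out above; once the dictionary $\widehat\tau(f^*\dot\otimes g)=\langle\langle f^*,g^*\rangle\rangle$ is established there is nothing further to prove.
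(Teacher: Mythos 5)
Your proof is correct and is essentially the paper's own argument: the paper likewise takes $\cl H$ to be the (pre-)Hilbert space obtained from $M$ with the scalar product given by $\tau$ and observes that \eqref{eq8.0} is a particular case of \eqref{eq2.02}; you have simply written out the adjoint/conjugation bookkeeping that the paper leaves implicit.
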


\begin{proof}
Let ${\cl H}$ be the Hilbert space obtained (after quotient and completion) from $M$ equipped with the scalar product $\langle x,y\rangle = \tau(y^*x)$. Then this lemma appears as a particular case of \eqref{eq2.02}.
\end{proof}
We will use repeatedly  the identification
\[
 \ovl{B(H)} = B(\ovl H).
\]
 The operator space $\Lambda_p(\tau)$ will be defined as isometric to $L_p(\tau)$ but with an o.s.s.\ such that for any $f$ in $B(H)\otimes L_p(\tau)$ ($p$ an even integer) we have
\begin{equation}\label{eq7.1}
\|f\|_{B(H)\otimes_{\min} \Lambda_p(\tau)} = \|\hat\tau(f^*\dot\otimes f \otimes\cdots \dot\otimes f^*\dot\otimes f)\|^{\frac1p}_{B(\ovl H \otimes_2 H \otimes_2 \cdots \otimes_2 \ovl H \otimes_2 H)}
\end{equation}
where $f^*\dot\otimes f$ and $\ovl H\otimes_2 H$ are repeated $p/2$-times.

To prove that \eqref{eq7.1} really defines a norm (and an o.s.s.) on $B(H)\otimes L_p(\tau)$ we proceed exactly as in the commutative case by first establishing a H\"older type inequality:

\begin{lem}\label{lem7.1}
Let $p\ge 2$ be an even integer. Consider $f_j\in B(H_j) \otimes L_p(\tau)$. Let
\[
 \|f_j\|_{(p)} = \|\hat\tau(f^*_j\dot\otimes f_j\dot \otimes\cdots \dot\otimes f^*_j\dot \otimes f_j)\|^{1/p}_{B(\ovl H_j \otimes H_j \otimes\cdots\otimes \ovl H_j \otimes H_j)}
\]
where $f^*_j\dot\otimes f_j$ is repeated $p/2$ times. We have then
\begin{equation}\label{eq7.2}
\|\hat\tau(f_1 \dot\otimes \cdots\dot\otimes f_p)\| \le \prod^p_{j=1} \|f_j\|_{(p)}.
\end{equation}
\end{lem}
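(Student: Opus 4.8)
\emph{Plan.} I will mirror the proof of Lemma~\ref{lem2.1}, with Lemma~\ref{lem8.0} (Haagerup's inequality for the Hilbert space $\ell_2(L_2(\tau))$) playing the role of \eqref{eq2.2} and the tracial identity \eqref{eq8.1} playing the role of the full permutation invariance of $I$ used in the commutative case. By homogeneity one may normalise so that $\|f_j\|_{(p)}\le 1$ for all $j$, and must then show $\|\hat\tau(f_1\dot\otimes\cdots\dot\otimes f_p)\|\le 1$. For a word $w=(w_1,\dots,w_p)$ whose letters lie in $\{f_1,\dots,f_p,f_1^{*},\dots,f_p^{*}\}$ (the $B(H_j)$-factors being tensored in the matching order), write $N(w)=\|\hat\tau(w_1\dot\otimes\cdots\dot\otimes w_p)\|$ and set $C=\max_w N(w)$, the maximum being over the finitely many such words. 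Since any letter may be replaced by its adjoint, it is enough to prove $C\le 1$.

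Two elementary observations will be used repeatedly. First, by \eqref{eq8.1} and the commutativity of $\otimes_{\min}$ the number $N(w)$ is invariant under cyclic rotation of $w$, and since $\overline{\tau(x)}=\tau(x^{*})$ it is also invariant under the operation $w\mapsto\widehat w$ that reverses $w$ and replaces every letter by its adjoint. Secondly, if $w=YZ$ is cut into halves $Y,Z$ of length $p/2$, then by writing $\hat\tau(w_1\dot\otimes\cdots\dot\otimes w_p)$ as $\hat\tau(u^{*}\dot\otimes v)$, where $u^{*}$ is the product corresponding to $Y$ and $v$ that corresponding to $Z$ (so $u$ corresponds to $\widehat Y$), Lemma~\ref{lem8.0} gives
\[
N(YZ)\le N(Y\widehat Y)^{1/2}\,N(Z\widehat Z)^{1/2}.
\]
The letters of $Y\widehat Y$ and of $Z\widehat Z$ still lie in $\{f_j,f_j^{*}\}$, so both factors are $\le C$; the point of the iteration is that the ``doubled'' words $Y\widehat Y$ are genuinely simpler than $w$.

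For $p=4$ this settles the lemma at once: splitting $(f_1,f_2,f_3,f_4)$ gives $N\le N(f_1,f_2,f_2^{*},f_1^{*})^{1/2}N(f_3,f_4,f_4^{*},f_3^{*})^{1/2}$, and each palindromic word $(f_i,f_j,f_j^{*},f_i^{*})$, after a cyclic rotation to $(f_j,f_j^{*},f_i^{*},f_i)$ and a further split, is bounded by $N(f_j,f_j^{*},f_j,f_j^{*})^{1/2}N(f_i^{*},f_i,f_i^{*},f_i)^{1/2}=\|f_j\|_{(4)}^{2}\|f_i\|_{(4)}^{2}\le 1$ by \eqref{eq7.1}. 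For general even $p$ one iterates: a first application gives $N(w)\le\bigl(N(Y\widehat Y)\,C\bigr)^{1/2}$ with $Y$ the first half of $w$, and $Y\widehat Y$ is palindromic (for the adjoint--reversal symmetry) and involves at most $p/2$ of the letters $f_1,\dots,f_p$. Since a palindromic word split at its midpoint reproduces itself, at each subsequent step one must first apply a suitable cyclic rotation---roughly by a quarter of the relevant period---so that the new cut misses the palindrome's centres of symmetry; arranging this so that every step (or every bounded block of steps) strictly reduces the complexity of the word, one reaches after finitely many steps, say $K=K(p)$, a word built from a single letter $f_{j_0}$ in the alternating pattern $(f_{j_0},f_{j_0}^{*},f_{j_0},f_{j_0}^{*},\dots)$, whose value is $\|f_{j_0}\|_{(p)}^{p}\le 1$ by \eqref{eq7.1}. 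Collecting the exponents from the successive inequalities yields $N(w)\le 1^{\theta}C^{\,1-\theta}$ with $\theta=2^{-K}>0$, and taking $w$ a maximiser gives $C\le C^{\,1-\theta}$, hence $C\le 1$.

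The crux is this last point. In the commutative argument one permutes the entries of a word freely, and it is exactly the interleaving of $Y\widehat Y$ obtained in this way that forces the iteration to terminate. In the tracial setting only cyclic rotations and the adjoint--reversal symmetry are available, so the rotations must be chosen with care to ensure that each application of Lemma~\ref{lem8.0} truly simplifies the word---say, decreases the number of distinct $f_j$'s occurring in it---rather than merely reproducing it; verifying that this can always be done, and hence that the reduction stops after $K(p)$ steps, is the delicate part.
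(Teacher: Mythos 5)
Your halving inequality is correct: writing $\hat\tau(w_1\dot\otimes\cdots\dot\otimes w_p)=\hat\tau(u^*\dot\otimes v)$ with $u^*$ the first half and invoking Lemma~\ref{lem8.0} does give $N(YZ)\le N(Y\widehat Y)^{1/2}N(Z\widehat Z)^{1/2}$, the two symmetries (cyclic rotation from \eqref{eq8.1}, adjoint--reversal from $\hat\tau(F)^*=\hat\tau(F^*)$) are exactly the ones available, and your $p=4$ computation is complete and correct. But for general even $p$ the proof has a genuine gap, and it is precisely the point you flag yourself: you never prove that the rotate-and-double iteration terminates. The difficulty is real, not cosmetic. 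The words produced are always of the form $Y\widehat Y$, whose letter-occurrences sit in symmetric pairs $\{i,p+1-i\}$, and the natural greedy strategies can stall: for the word $(f_1,f_2,f_1,f_2,f_1,f_2)$ every cyclic window of length $3$ contains both letters and exactly two occurrences of $f_1$, so neither ``number of distinct letters'' nor ``number of occurrences of a fixed letter'' improves after one step; similarly for $p=8$ one can have all four $f_1$-occurrences equidistributed so that every window of length $4$ sees exactly two. One can in fact always escape by a specific (non-greedy) sequence of window choices, but exhibiting a well-founded complexity measure that decreases, or otherwise proving termination in $K(p)$ steps, is the actual content of the lemma for non-dyadic $p$, and it is missing. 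Without it the inequality $C\le C^{1-\theta}$ is not established.

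For comparison, the paper's written argument avoids this combinatorics entirely by a different induction: assuming \eqref{eq7.2} for $p$, it groups the $2p$ factors in consecutive pairs $g_j=f_{2j-1}\dot\otimes f_{2j}\in B(H_{2j-1}\otimes H_{2j})\otimes L_p(\tau)$, applies \eqref{eq7.2}$_p$ once to $\hat\tau(g_1\dot\otimes\cdots\dot\otimes g_p)$, and then applies it a second time (after a single cyclic rotation) to the word $(f_{2j-1}^*\dot\otimes f_{2j-1}\dot\otimes f_{2j}\dot\otimes f_{2j}^*)^{\dot\otimes p/2}$ to get $\|g_j\|_{(p)}\le\|f_{2j-1}\|_{(2p)}\|f_{2j}\|_{(2p)}$. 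This yields \eqref{eq7.2}$_{2p}$ with no word games, and settles all $p=2^k$ rigorously; you would do well to adopt it at least for the dyadic case. For general even $p$ the paper, like you, appeals to the cyclic adaptation of the Lemma~\ref{lem2.1} iteration and leaves the details to the reader, so your proposal is an honest attempt at exactly the step the paper elides --- but as written it does not close it.
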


\begin{proof}
We will use repeatedly the fact that the minimal tensor product is commutative i.e.\ a permutation $\sigma$ of the factors induces a complete isometry (and actually a $*$-isomorphism) from $B(H_1 \otimes_2\cdots \otimes H_n)$ to
\[
 B(H_{\sigma(1)} \otimes_2\cdots \otimes_2 H_{\sigma(n)}).
\]
Thus for any $x = \sum b^1_j \otimes\cdots \otimes b^n_j \otimes x_j\in B(H_1) \otimes\cdots \otimes B(H_n) \otimes L_p(\tau)$, if we denote $\sigma[x] = \sum b^{\sigma(1)}_j \otimes \cdots \otimes b^{\sigma(n)}_j \otimes x_j$ we have 
\[
 \|x\|_{\min} = \|\sigma[x]\|_{\min}.
\]
Let $y = \sigma[x]$. To indicate that one can pass from $x$ to $y$ by a permutation, it will be convenient to write $x\approx y$.

Thus $x\approx y$ guarantees $\|x\|_{\min} = \|y\|_{\min}$. For example, let
\[
 f_1\in B(H_1) \otimes L_p(\tau) \qquad f_2\in B(H_2) \otimes L_p(\tau).
\]
Then $\hat\tau((f_1\dot\otimes f_2)^* )\approx\hat\tau(f^*_2\dot\otimes f^*_1)$ and hence
\[
 \|\hat\tau((f_1\dot\otimes f_2)^*)\| = \|\hat\tau(f^*_2 \dot\otimes f^*_1)\|.
\]
Also using the trace property we have for any $f$ in $B(H)\otimes L_2(\tau)$ 
\[
 \hat\tau(f^*\dot\otimes f) \approx \hat\tau(f\dot\otimes f^*)
\]
and hence
\begin{equation}\label{eq7.3}
 \|f\|_{(2)} = \|f^*\|_{(2)}.
\end{equation}
More generally, for any $f_1,\ldots, f_p$ as before we have by \eqref{eq8.1}
\begin{equation}\label{eq7.4}
 \|\hat\tau(f_1 \dot\otimes\cdots \dot\otimes f_p) \|_{B(H_1 \otimes_2\cdots \otimes_2 H_p)}=\|\hat\tau(f_p \dot\otimes f_1 \dot\otimes \cdots\dot \otimes f_{p-1})\|_{B(H_p \otimes_2\cdots \otimes_2 H_{p-1})}.
\end{equation}
 In particular this gives us for any $j$
\begin{equation}\label{eq7.5}
 \|f_j\|_{(p)} = \|\hat\tau(f_j\dot\otimes f^*_j \dot\otimes\cdots\dot\otimes f_j \dot\otimes f^*_j)\|^{\frac1p}
\end{equation}
or equivalently
\begin{equation}\label{eq7.6}
 \|f_j\|_{(p)} = \|f^*_j||_{(p)}.
\end{equation}

To prove the Lemma, we start with $p=2$. In that case \eqref{eq7.2} reduces to  \eqref{lem8.0}. 
 Let us denote by \eqref{eq7.2}$_p$ the inequality \eqref{eq7.2} for a given value of $p$.
We will show
\[
 \eqref{eq7.2}_p \Rightarrow \eqref{eq7.2}_{2p}.
\]
This covers only the case $p=2^k$, but actually the argument used earlier for $L_p(\mu)$ (see Lemma  \ref{lem2.1}) when $p$ is an even integer  can be easily adapted to the  case of $L_p(\tau)$
(note that the  invariance of $I(f_1,\cdots,f_p)=\hat \tau (f_1\dot\otimes\cdots \dot\otimes f_p)$   under cyclic permutations suffices to adapt this argument here). We leave the details to the reader at this point.

So assume \eqref{eq7.2}$_p$ proved for some integer $p\ge 2$. Consider
\[
 f_j \in B(H_j) \otimes L_{2p}(\tau)\qquad j=1,\ldots, 2p.
\]
Let
$$
 g_j = f_{2j-1} \dot\otimes f_{2j} \in B(H_{2j-1} \otimes_2 H_{2j})\otimes L_p(\tau)\qquad (j=1,\ldots, p)  .$$
 By  \eqref{eq7.2}$_p$ we have
\begin{equation}\label{eq7.7}
\|\hat\tau (g_1\dot\otimes \cdots\dot\otimes g_p)\| \le \prod^p_1 \|g_j\|_{(p)}.
\end{equation}
Moreover using \eqref{eq7.4} we find
\[
 \|g_j\|_{(p)} = \|\hat\tau(f^*_{2j-1} \dot\otimes f_{2j-1} \dot\otimes f_{2j}  \dot\otimes f^*_{2j}\dot\otimes\cdots)\|^{\frac1p}
\]
where the preceding expression is repeated $p/2$ times.\\
By \eqref{eq7.2}$_p$ we have
\begin{align*}
 \|g_j\|_{(p)} &\le \|f^*_{2j-1} \dot\otimes f_{2j-1}\|^{\frac12}_{(p)} \|f_{2j} \dot\otimes f^*_{2j}\|^{\frac12}_{(p)}\\
\intertext{and hence by \eqref{eq7.5}}
&\le \|f_{2j-1}\|_{(2p)} \|f_{2j}\|_{(2p)}.
\end{align*}
Thus we find that \eqref{eq7.7} implies \eqref{eq7.2}$_{2p}$.
\end{proof}

We then have just like in the commutative case:

\begin{thm}\label{thm7.2}
Let $p \ge 2$ be an even integer. The space $L_p(\tau)$ can be equipped with an o.s.s.\ so that denoting by $\Lambda_p(\tau)$ the resulting operator space we have for any $H$ and any $f$ in $B(H)\otimes L_p(\tau)$
\[
 \|f\|_{B(H)\otimes_{\min} \Lambda_p(\tau)} = \|f\|_{(p)}.
\]
\end{thm}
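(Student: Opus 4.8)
The plan is to mimic the construction of Proposition~\ref{pro2.2} line by line, with the scalar conjugation $f\mapsto\bar f$ replaced by the $*$-operation, with $\int(\cdot)\,d\mu$ replaced by $\hat\tau$, and with Lemma~\ref{lem7.1} in the role of Lemma~\ref{lem2.1}. Since an o.s.s.\ is determined by its restriction to finite-dimensional $H$, I would fix $H=\ell^n_2$ from the outset, so that any $B(H)$-valued datum can also be viewed, inside a corner $B(\ell^n_2)\subset B(\ell_2)$, as $B(\ell_2)$-valued; and I note first that for a scalar $\xi\in L_p(\tau)$ the defining formula \eqref{eq7.1} reduces to $\|\xi\|_{(p)}=\tau\big((\xi^*\xi)^{p/2}\big)^{1/p}=\|\xi\|_{L_p(\tau)}$, so the space to be constructed is indeed isometric to $L_p(\tau)$ as a Banach space.

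Write $p=2m$. The first real step is to rewrite $\|f\|_{(p)}$ as a supremum over test tuples: for $f\in B(H)\otimes L_p(\tau)$,
\[
 \|f\|_{(p)}=\sup\Big\{\big\|\hat\tau\big((f\dot\otimes G)^*\dot\otimes(f\dot\otimes G)\big)\big\|^{1/2}\Big\},
\]
where $G=g_2\dot\otimes\cdots\dot\otimes g_m$ ranges over all products with $g_2,\dots,g_m\in B(\ell_2)\otimes L_p(\tau)$ and $\|g_j\|_{(p)}\le 1$. The bound ``$\ge$'' here is immediate from Lemma~\ref{lem7.1}, since $(f\dot\otimes G)^*\dot\otimes(f\dot\otimes G)=g_m^*\dot\otimes\cdots\dot\otimes g_2^*\dot\otimes f^*\dot\otimes f\dot\otimes g_2\dot\otimes\cdots\dot\otimes g_m$ is a product of $p$ factors in $L_p(\tau)$, so its $\hat\tau$ has norm at most $\|f\|_{(p)}^2\prod_j\|g_j\|_{(p)}^2\le\|f\|_{(p)}^2$, using also $\|g_j^*\|_{(p)}=\|g_j\|_{(p)}$ from \eqref{eq7.6}. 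For the reverse bound I must exhibit an admissible $G$ attaining the supremum, and this is the only step where I expect genuine difficulty: in contrast with the commutative case, the functional $f_1,\dots,f_p\mapsto\|\hat\tau(f_1\dot\otimes\cdots\dot\otimes f_p)\|$ is invariant only under \emph{cyclic} permutations \eqref{eq8.1}, not under all permutations, so the naive choice $g_2=\cdots=g_m=\lambda f$ fails --- it produces $\hat\tau\big((f^*)^{\dot\otimes m}\dot\otimes f^{\dot\otimes m}\big)$, which is not a cyclic rearrangement of the alternating quantity $\hat\tau\big((f^*\dot\otimes f)^{\dot\otimes m}\big)$ that \eqref{eq7.1} demands. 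The fix is to interleave: take $g_k=\lambda f$ for $k$ odd and $g_k=\lambda f^*$ for $k$ even ($2\le k\le m$) with $\lambda=\|f\|_{(p)}^{-1}$; a short bookkeeping check then shows that $(f\dot\otimes G)^*\dot\otimes(f\dot\otimes G)$ is a cyclic permutation of $\lambda^{2(m-1)}(f^*\dot\otimes f)^{\dot\otimes m}$, so by \eqref{eq8.1} its $\hat\tau$-norm is $\lambda^{2(m-1)}\|f\|_{(p)}^{2m}=\|f\|_{(p)}^2$, as needed.

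To assemble the embedding I would proceed exactly as in Proposition~\ref{pro2.2}. The Hilbert space $L_2(\tau)$ carries its canonical operator Hilbert space structure $L_2(\tau)_{oh}$, and expanding \eqref{eq0.3} in an orthonormal basis of $L_2(\tau)$ gives, for any $H$ and any $h\in B(H)\otimes L_2(\tau)$, the identity $\|h\|_{B(H)\otimes_{\min}L_2(\tau)_{oh}}=\|\hat\tau(h^*\dot\otimes h)\|^{1/2}$. Letting $\cl S$ be the set of all $G$ as above, I set, for each $G\in\cl S$,
\[
 u_G\colon\ L_p(\tau)\to B\big((\ell_2)^{\otimes(m-1)}\big)\otimes_{\min}L_2(\tau)_{oh},\qquad u_G(\xi)=\xi\dot\otimes G,
\]
which makes sense because $\xi\dot\otimes G$ is a product of $m$ factors in $L_p(\tau)=L_{2m}(\tau)$, hence lies in $B((\ell_2)^{\otimes(m-1)})\otimes L_2(\tau)$. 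Combining the displayed $oh$-identity with the H\"older bound above yields $\|\mathrm{id}_{B(H)}\otimes u_G\|\le 1$ for every $G$, while the attaining choice of the previous paragraph gives $\sup_{G\in\cl S}\|(\mathrm{id}_{B(H)}\otimes u_G)(f)\|=\|f\|_{(p)}$ for all $H$ and all $f$. Hence $u=\bigoplus_{G\in\cl S}u_G$ embeds $L_p(\tau)$ into the operator space $\bigoplus_{G\in\cl S}B((\ell_2)^{\otimes(m-1)})\otimes_{\min}L_2(\tau)_{oh}$ with $\|(\mathrm{id}_{B(H)}\otimes u)(f)\|_{\min}=\|f\|_{(p)}$; declaring $\Lambda_p(\tau)$ to be $L_p(\tau)$ with the o.s.s.\ induced by $u$ completes the proof.

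Thus everything except the attainment of the supremum is a routine transcription of the commutative argument, and the one delicate point is arranging $(f\dot\otimes G)^*\dot\otimes(f\dot\otimes G)$ so that, up to a cyclic rotation, it reproduces the alternating string $f^*\dot\otimes f\dot\otimes\cdots\dot\otimes f^*\dot\otimes f$ of \eqref{eq7.1} --- which is precisely why, in the noncommutative setting, the defining formula must use that alternating string rather than the ``$m$ copies of $f$ then $m$ copies of $f^*$'' string of \eqref{eq2.4+}. Finally I would record that, because Lemma~\ref{lem7.1} is available for \emph{every} even $p$ (its dyadic iteration being supplemented for non-dyadic $p$ by the cyclic-permutation variant of the proof of Lemma~\ref{lem2.1}), the construction produces $\Lambda_p(\tau)$ for all even integers $p$.
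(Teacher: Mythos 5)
Your proof is correct and follows essentially the same route as the paper: both rest on the H\"older inequality of Lemma \ref{lem7.1}, realize $\|f\|_{(p)}$ as a supremum over normalized test tuples (attained by an alternating choice of $f$ and $f^*$, which is exactly the point where cyclic invariance \eqref{eq8.1} must replace the full permutation invariance of the commutative case), and then embed $L_p(\tau)$ into a large direct sum indexed by those tuples. The only cosmetic difference is that the paper's map $J$ uses $(p-1)$-tuples and lands directly in $\bigoplus B(\cl H)$ via $\hat\tau$, whereas you use $(m-1)$-tuples and land in $\bigoplus\big(B(\ell_2^{\otimes(m-1)})\otimes_{\min}L_2(\tau)_{oh}\big)$, mirroring Proposition \ref{pro2.2} more literally.
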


\begin{proof}
We may reduce consideration to $H=\ell_2$ for simplicity of notation. We have then by \eqref{eq7.2}
\begin{equation}\label{eq7.8}
 \|f\|_{(p)} = \sup\|\hat \tau(f\dot\otimes f_2 \dot\otimes\cdots\dot\otimes f_p)\|
\end{equation}
where the supremum runs over all $f_j$ in $B(H)\otimes L_p(\tau)$ ($2\le j\le p$) with $\|f_j\|_{(p)} \le 1$. We then define for any $x$ in $L_p(\tau)$ 
\begin{equation}\label{eq7.9}
 J(x) = \oplus [\hat \tau( x \dot\otimes f_2 \dot\otimes\cdots\dot\otimes f_p)]
\end{equation}
where the direct sum runs over all choices of $(f_j)$ $(j\ge 2)$ as before. Then \eqref{eq7.8} ensures that
\[
 \|f\|_{(p)} = \|(id_{B(H)} \otimes J) (f)\|_{\min}.
\]
Thus $J$ defines an isometric embedding of $L_p(\tau)$ into some $B({\cl H})$ (here ${\cl H}$ is a suitably ``huge'' direct sum) as in \eqref{eq7.9} so that the associated o.s.s.\ satisfies the desired property. 
\end{proof}
By exactly the same argument as for Corollary \ref{cor2.4} above, we have
\begin{cor}\label{cor2.4nc}
Let $p\ge q\ge 2$ be even integers. If $\tau(1)=1$, the inclusion
$\Lambda_p(\tau) \subset \Lambda_q(\tau) $ is a complete contraction from
$\Lambda_p(\tau)$ to $\Lambda_q(\tau)$.
\end{cor}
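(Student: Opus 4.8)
The plan is to transpose verbatim the argument of Corollary~\ref{cor2.4}, replacing the commutative H\"older inequality (Lemma~\ref{lem2.1}) by its non-commutative counterpart (Lemma~\ref{lem7.1}). First I would note that since $\tau(1)=1$ we have $L_p(\tau)\subseteq L_q(\tau)$, so for any Hilbert space $H$ and any $f\in B(H)\otimes L_p(\tau)$ the quantities $\|f\|_{B(H)\otimes_{\min}\Lambda_q(\tau)}$ and $\|f\|_{B(H)\otimes_{\min}\Lambda_p(\tau)}$ both make sense, and it suffices to prove
\[
\|f\|_{B(H)\otimes_{\min}\Lambda_q(\tau)}\le\|f\|_{B(H)\otimes_{\min}\Lambda_p(\tau)},
\]
i.e.\ $\|f\|_{(q)}\le\|f\|_{(p)}$ in the notation of Lemma~\ref{lem7.1}, for every $H$.

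Write $p=2m$ and $q=2n$ with $m\ge n\ge 1$. By \eqref{eq7.1} applied with $q$, we have $\|f\|_{(q)}^q=\|\hat\tau(f^*\dot\otimes f\dot\otimes\cdots\dot\otimes f^*\dot\otimes f)\|$, where $f^*\dot\otimes f$ is repeated $n$ times, so the product under $\hat\tau$ has $q$ factors. The key step is to pad this product with $p-q$ copies of the unit: set $g_1=f^*$, $g_2=f$, $\dots$, $g_{q-1}=f^*$, $g_q=f$, and $g_{q+1}=\cdots=g_p=1$, where $1$ denotes the constant function equal to $1_{L_p(\tau)}$, viewed in $\C\otimes L_p(\tau)$. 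Since $1$ is a multiplicative unit and the auxiliary tensor factors $B(\C)=\C$ contribute nothing, $\hat\tau(g_1\dot\otimes\cdots\dot\otimes g_p)=\hat\tau(f^*\dot\otimes f\dot\otimes\cdots\dot\otimes f^*\dot\otimes f)$, whence $\|f\|_{(q)}^q=\|\hat\tau(g_1\dot\otimes\cdots\dot\otimes g_p)\|$.

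Now I would apply the H\"older inequality \eqref{eq7.2} (for $p$ factors in $L_p(\tau)$):
\[
\|f\|_{(q)}^q=\|\hat\tau(g_1\dot\otimes\cdots\dot\otimes g_p)\|\le\prod_{j=1}^{p}\|g_j\|_{(p)}.
\]
By \eqref{eq7.6} we have $\|f^*\|_{(p)}=\|f\|_{(p)}$, so the first $q$ factors each contribute $\|f\|_{(p)}$; and $\|1\|_{(p)}=\|\hat\tau(1\cdot1\cdots1)\|^{1/p}=\tau(1)^{1/p}=1$, so the last $p-q$ factors contribute $1$. Hence $\|f\|_{(q)}^q\le\|f\|_{(p)}^q\cdot 1^{p-q}$, i.e.\ $\|f\|_{(q)}\le\|f\|_{(p)}$, which is the asserted complete contraction.

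\textbf{Main obstacle.} There is no real difficulty here — this is exactly why the paper states it ``by exactly the same argument as for Corollary~\ref{cor2.4}''. The only points requiring a line of verification are that padding by $1$ is harmless (the hypothesis $\tau(1)=1$ makes $\|1\|_{(p)}=1$, and being a unit it does not change the traced product) and that $\tau(1)=1$ ensures $L_p(\tau)\subseteq L_q(\tau)$ so that $\|f\|_{(q)}$ is defined for $f\in B(H)\otimes L_p(\tau)$; both are immediate from the definitions and from \eqref{eq7.1}.
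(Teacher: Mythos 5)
Your proof is correct and is precisely the argument the paper intends: it gives no separate proof, stating only "by exactly the same argument as for Corollary \ref{cor2.4}", and your transposition — padding the $q$-fold product with $p-q$ copies of the unit, applying the non-commutative H\"older inequality \eqref{eq7.2}, and using $\|f^*\|_{(p)}=\|f\|_{(p)}$ together with $\|1\|_{(p)}=\tau(1)^{1/p}=1$ — is exactly that argument.
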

It is important for the sequel to observe that $0\prec \widehat \tau(f^*\dot{\otimes} f)$ for any $f$ in $B\otimes L_2(\tau)$. This follows from a very general fact on sesquilinear forms.

\begin{lem}\label{lem8.3}
Let $B$ and $E$ be complex vector spaces. Let $x\in (B\otimes E) \otimes (\ovl {B\otimes  E})$ be such that $ x\succ 0$, meaning by this that $x$ can be written as a finite sum $x = \sum t_k\otimes \bar t_k$ with $t_k\in B\otimes E$. 
We will use the natural identification $\ovl {B\otimes  E}=\ovl {B}\otimes  \ovl{E}$. Let $\varphi\colon \ E\otimes\ovl E \to {\bb C}$ be a bilinear form (equivalently $\varphi$ defines a sesquilinear form on $E\times E$). Let $y = (\varphi \otimes \text{\rm id}_{B\otimes\ovl B})(x) \in B\otimes \ovl B$ (more precisely
here $\varphi$ acts on the second and fourth  factors, so,  to indicate this,  the notation $y=(\varphi)_{24}(x)$ would be 
less abusive). If $\varphi$ is positive definite (meaning that $\varphi(a\otimes\bar a) \ge 0$ $\forall a\in E$), then $y\succ 0$.
\end{lem}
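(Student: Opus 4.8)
The plan is to reduce to finite dimensions and then apply the spectral decomposition of a positive semidefinite sesquilinear form, exactly the device already used in the proof of Lemma~\ref{lem1.0}.

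First I would observe that $x=\sum_{k=1}^{n}t_k\otimes\bar t_k$ is a \emph{finite} sum, so there are finite-dimensional subspaces $B_0\subset B$ and $E_0\subset E$ with $t_k\in B_0\otimes E_0$ for all $k$; it costs nothing to replace $B,E$ by $B_0,E_0$, so we may assume $E$ finite-dimensional. The restriction of $\varphi$ to $E\otimes\ovl E$ is then a positive semidefinite sesquilinear form on a finite-dimensional space (positive in the sense of the hypothesis, $\varphi(a\otimes\bar a)\ge 0$). By the classical spectral decomposition of a positive definite matrix (the same one invoked in the proof of Lemma~\ref{lem1.0}) we may write $\varphi=\sum_{\ell}\psi_\ell\otimes\bar\psi_\ell$ with finitely many $\psi_\ell\in E^*$, i.e.\ $\varphi(a\otimes\bar b)=\sum_\ell\psi_\ell(a)\,\overline{\psi_\ell(b)}$ for all $a,b\in E$.

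Next comes the bookkeeping. Write $t_k=\sum_i b_{ki}\otimes a_{ki}$ with $b_{ki}\in B$, $a_{ki}\in E$, and set $u_{k,\ell}=(\text{\rm id}_B\otimes\psi_\ell)(t_k)=\sum_i\psi_\ell(a_{ki})\,b_{ki}\in B$. Since $\bar\psi_\ell(\bar a)=\overline{\psi_\ell(a)}$ and scalar multiplication on $\ovl B$ is the conjugated one, applying $\varphi$ to the second and fourth factors of
$t_k\otimes\bar t_k=\sum_{i,j}b_{ki}\otimes a_{ki}\otimes\bar b_{kj}\otimes\bar a_{kj}$
yields
\[
 (\varphi)_{24}(t_k\otimes\bar t_k)=\sum_{\ell}\sum_{i,j}\psi_\ell(a_{ki})\,\overline{\psi_\ell(a_{kj})}\;b_{ki}\otimes\bar b_{kj}=\sum_{\ell}u_{k,\ell}\otimes\bar u_{k,\ell}.
\]
Summing over $k$ gives $y=(\varphi)_{24}(x)=\sum_{k,\ell}u_{k,\ell}\otimes\bar u_{k,\ell}$, which is a finite sum of elements $s\otimes\bar s$ with $s\in B$, i.e.\ $y\in C_+$, that is $y\succ 0$.

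I do not expect a genuine obstacle here: the only point requiring care is the identification $\ovl{B\otimes E}=\ovl B\otimes\ovl E$ together with the fact that $\varphi$ acts precisely on the $E$- and $\ovl E$-slots (positions $2$ and $4$), so that conjugation is respected and the cross terms $\psi_\ell(a_{ki})\overline{\psi_\ell(a_{kj})}$ reassemble into the self-conjugate pattern $u_{k,\ell}\otimes\bar u_{k,\ell}$ rather than into an expression that fails to lie in $C_+$. Everything else is the same finite-dimensional linear algebra already used throughout \S\ref{sec2}.
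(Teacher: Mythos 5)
Your proof is correct, but it runs in the opposite direction from the paper's. The paper tests $y$ against rank-one functionals: for $\xi\in B^*$ it applies $\xi\otimes\bar\xi$ to the $B$- and $\ovl B$-slots of $t\otimes\bar t$, obtains $\varphi(s\otimes\bar s)\ge 0$ with $s=(\xi\otimes\mathrm{id}_E)(t)$, and then concludes $y\succ 0$ by the positivity criterion of Lemma~\ref{lem1.0}. You instead decompose the \emph{form}: writing the positive semidefinite sesquilinear form $\varphi$ spectrally as $\sum_\ell\psi_\ell\otimes\bar\psi_\ell$, you exhibit $y=\sum_{k,\ell}u_{k,\ell}\otimes\bar u_{k,\ell}$ explicitly as an element of $C_+$. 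The two arguments are dual to one another and cost the same finite-dimensional linear algebra (your use of the spectral decomposition of $\varphi$ plays exactly the role that Lemma~\ref{lem1.0} plays in the paper, where the spectral decomposition is hidden in its proof); what yours buys is a constructive representation of $y$ as a sum of self-conjugate tensors without invoking the duality criterion, at the price of slightly more bookkeeping. One small point worth making explicit in your write-up: the hypothesis only says $\varphi(a\otimes\bar a)\ge 0$, so before diagonalizing you should note that a sesquilinear form on a complex space taking real (nonnegative) values on the diagonal is automatically Hermitian by polarization; with that remark included, the argument is complete.
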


\begin{proof}
Note that we may as well assume $B$ and $E$ finite dimensional. Consider then $t = \sum b_k\otimes a_k\in B\otimes E$, $\xi\in B^*$ and $s= (\xi\otimes \text{id}_E)(t)\in E$. We have $(\xi\otimes\bar\xi\otimes \text{id}_{E\otimes\ovl E})(t\otimes\bar t) = s\otimes \bar s \succ 0$ and hence $(\xi\otimes\bar\xi)(y) = \varphi(s\otimes\bar s) \ge 0$. By the proof of Lemma~\ref{lem1.0} we conclude that $y\succ 0$.
\end{proof}

In particular, since $\tau(a^*a) = \tau(aa^*)\ge 0$ for any $a$ in $L_2(\tau)$, this implies:

\begin{lem}\label{lem8.4}
For any $f$ in $B\otimes L_2(\tau)$, we have
\[
\widehat \tau(f^*\dot{\otimes} f)\succ 0\quad (\text{and } \widehat  \tau(f\dot{\otimes} f^*)\succ 0).
\]
\end{lem}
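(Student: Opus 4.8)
The plan is to obtain this as an immediate application of Lemma~\ref{lem8.3}, taking $E=L_2(\tau)$ and taking for $\varphi$ the bilinear form on $L_2(\tau)\otimes\ovl{L_2(\tau)}$ induced by the trace.

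First I would write $f=\sum_k b_k\otimes x_k$ with $b_k\in B$ and $x_k\in L_2(\tau)$, so that $x:=f\otimes\bar f=\sum_{k,\ell}(b_k\otimes x_k)\otimes\ovl{(b_\ell\otimes x_\ell)}$ is an element of $(B\otimes E)\otimes\ovl{(B\otimes E)}$ with $x\succ 0$ trivially. Next I would set $\varphi\colon E\otimes\ovl E\to\C$, $\varphi(a\otimes\bar b)=\tau(b^*a)$; since $\tau$ is positive, $\varphi(a\otimes\bar a)=\tau(a^*a)\ge 0$, so $\varphi$ is positive definite in the sense of Lemma~\ref{lem8.3}. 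That lemma, applied with $\varphi$ acting on the second and fourth tensor factors of $x$ (i.e. on the copies of $E$ and $\ovl E$), then yields $(\varphi)_{24}(x)\succ 0$ in $B\otimes\ovl B$.

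It remains to recognize $(\varphi)_{24}(x)$. By definition it equals $\sum_{k,\ell}\tau(x_\ell^*x_k)\,b_k\otimes\bar b_\ell$, and by the trace property $\tau(x_\ell^*x_k)=\tau(x_kx_\ell^*)$, so $(\varphi)_{24}(x)=\sum_{k,\ell}\tau(x_kx_\ell^*)\,b_k\otimes\bar b_\ell=\widehat\tau(f\dot\otimes f^*)$; hence $\widehat\tau(f\dot\otimes f^*)\succ 0$. To get $\widehat\tau(f^*\dot\otimes f)\succ 0$ I would run the very same argument with $f^*$ in place of $f$ (noting $(f^*)^*=f$), or equivalently observe that $\widehat\tau(f^*\dot\otimes f)$ is obtained from $\widehat\tau(f\dot\otimes f^*)$ by the order- and norm-preserving permutation exchanging the two tensor factors together with the relabeling $k\leftrightarrow\ell$. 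The hard part here is essentially nil: one only has to keep careful track of the bar/$*$ conventions and of the positions of the tensor factors, using the fact recorded in \S\ref{sec2} that the relation $\succ 0$ is insensitive to permutations of the factors.
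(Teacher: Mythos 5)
Your proof is correct and is exactly the paper's argument: Lemma~\ref{lem8.4} is stated there as an immediate consequence of Lemma~\ref{lem8.3} applied with $E=L_2(\tau)$ and the positive definite form $\varphi$ given by the trace, using $\tau(a^*a)=\tau(aa^*)\ge 0$. Your write-up merely spells out the bookkeeping (the identification of $(\varphi)_{24}(f\otimes\bar f)$ with $\widehat\tau(f\dot\otimes f^*)$ and the passage to $f^*$) that the paper leaves implicit.
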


\begin{rem}
By the classical property of conditional expectations, if $1\le p,p'\le \infty$ are conjugate (i.e.\ $p' = p(p-1)^{-1}$) and if $T\colon \ L_p(\tau)\to L_p(\tau)$ is the conditional expectation with respect to a (von~Neumann) subalgebra of $M$, then: $\forall x\in L_p(\tau)$ $\forall y\in L_{p'}(\tau)$ we have 
\[
 \tau(T(x)y) = \tau(xT(y)) = \tau(T(x)T(y)).
\]
Therefore for any $f\in B(H_1)\otimes L_p(\tau)$ and $g\in B(H_2)\otimes L_{p'}(\tau)$ we have:
\begin{equation}\label{eq10.1}
 \widehat\tau(T(f)\dot{\otimes} g)  = \widehat\tau(f\dot{\otimes} T(g)) = \widehat\tau(T(f)\dot{\otimes} T(g))
\end{equation}
where we still denote abusively by $T$ the operator $I\otimes T$ acting either on $B(H_1)\otimes L_p(\tau)$ or on $B(H_2) \otimes L_{p'}(\tau)$. Moreover, it is easy to check that $T(f^*)=T(f)^*$
for any $f\in B\otimes L_p(\tau)$.
\end{rem}

In the rest of this section we continue to abusively denote  by $T$  the operator $I\otimes T$ on $B\otimes L_p(\tau)$.

\begin{lem}\label{lem10.2}
 Let $T\colon \ L_p(\tau)\to L_p(\tau)$ be the conditional expectation with respect to a von~Neumann subalgebra   ${\cl N}\subset {\cl M}$. Let $p=2m$ be an even integer. Then for any $f$ in $B\otimes L_p(\tau)$ we have
\begin{equation}\label{eq10.2}
 \|Tf\|_{(p)} \le \|f\|_{(p)}.
\end{equation}
\end{lem}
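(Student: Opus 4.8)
The plan is to transcribe, in the tracial setting, the short argument used for the Corollary in \S\ref{sec3} showing that $\mathbb{E}^{\cl B}$ is completely contractive on the commutative $\Lambda_p$. Write $g = Tf$ and $p = 2m$. By the defining formula \eqref{eq7.1},
\[
\|g\|_{(p)}^p = \|\hat\tau(g^*\dot\otimes g\dot\otimes g^*\dot\otimes\cdots\dot\otimes g^*\dot\otimes g)\|,
\]
with $g^*\dot\otimes g$ repeated $m$ times. The idea is to replace the very first factor $g^* = T(f^*)$ by $f^*$ at no cost, and then apply the H\"older inequality of Lemma~\ref{lem7.1} to the $p$ factors $f^*, g, g^*, g, \ldots, g$.

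First I would record that $g^* = T(f)^* = T(f^*)$ (noted in the Remark preceding Lemma~\ref{lem10.2}), so that the argument of $\hat\tau$ above can be written as $T(f^*)\dot\otimes h$, where $h = g\dot\otimes g^*\dot\otimes g\dot\otimes\cdots\dot\otimes g$ is the pointwise product of the remaining $p-1$ factors and hence lies in $B(\cdots)\otimes L_{p'}(\tau)$ with $p' = p/(p-1)$. Next I would observe that $(I\otimes T)(h) = h$: every operator-valued ``entry'' of $g$ and of $g^*$ lies in the range $L_p(\cl N,\tau)$ of $T$, so the $L_{p'}(\tau)$-part of $h$ is a product of $p-1$ elements of $L_p(\cl N,\tau)$, which by H\"older within $\cl N$ lies in $L_{p'}(\cl N,\tau)$, on which $T$ is the identity. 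Then identity \eqref{eq10.1} yields
\[
\hat\tau(g^*\dot\otimes g\dot\otimes\cdots\dot\otimes g) = \hat\tau(T(f^*)\dot\otimes h) = \hat\tau(f^*\dot\otimes(I\otimes T)(h)) = \hat\tau(f^*\dot\otimes g\dot\otimes g^*\dot\otimes g\dot\otimes\cdots\dot\otimes g).
\]

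Finally I would apply the H\"older inequality \eqref{eq7.2} to the $p$ factors $f^*, g, g^*, g, \ldots, g$ and use $\|f^*\|_{(p)} = \|f\|_{(p)}$ and $\|g^*\|_{(p)} = \|g\|_{(p)}$ (both instances of \eqref{eq7.6}) to get
\[
\|g\|_{(p)}^p = \|\hat\tau(f^*\dot\otimes g\dot\otimes g^*\dot\otimes\cdots\dot\otimes g)\| \le \|f^*\|_{(p)}\,\|g\|_{(p)}^{p-1} = \|f\|_{(p)}\,\|g\|_{(p)}^{p-1},
\]
whence $\|g\|_{(p)} \le \|f\|_{(p)}$ after dividing by $\|g\|_{(p)}^{p-1}$ (the case $\|g\|_{(p)} = 0$ being trivial). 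I expect the only point needing any care to be the verification that $T$ fixes the auxiliary element $h$, i.e.\ the standard fact that the conditional expectation on non-commutative $L_p$-spaces restricts to the identity on $L_{p'}(\cl N,\tau)$ and is an $\cl N$-bimodule map; the tensor-factor bookkeeping is just routine permutation of the minimal tensor product, and the rest is a direct transcription of the commutative argument.
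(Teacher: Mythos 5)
Your proposal is correct and follows essentially the same route as the paper: the paper likewise uses the identity \eqref{eq10.1} together with the observation that $T$ fixes the product of the remaining $p-1$ factors to replace one factor $Tf$ by $f$, then applies the H\"older inequality \eqref{eq7.2} to get $\|Tf\|_{(p)}^p \le \|f\|_{(p)}\|Tf\|_{(p)}^{p-1}$ and divides. The only (immaterial) difference is which of the $p$ factors you choose to swap out.
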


\begin{proof}
By \eqref{eq10.1}, we have 
\[
 \widehat\tau(T(f)\dot{\otimes} T(f)^* \dot{\otimes} \cdots \dot{\otimes} T(f) \dot{\otimes} T(f)^*) = \widehat\tau(f\dot{\otimes} T(f)^* \dot{\otimes} \cdots \dot{\otimes} T(f) \dot{\otimes} T(f)^*).
\]
Indeed, just observe that if $g=T(f)^* \dot{\otimes} \cdots \dot{\otimes} T(f) \dot{\otimes} T(f)^*$ then $T(g)=g$. Therefore by \eqref{eq7.2} we have
\[
 \|T(f)\|^p_{(p)} \le \|f\|_{(p)} \|T(f)\|^{p-1}_{(p)}
\]
and hence after a suitable division we obtain \eqref{eq10.2}.
\end{proof}

\begin{rem}\label{rem8.9} In the preceding situation
for any $f$ in $B\otimes L_2(\tau)$, let $f_0=T(f)$ and $d_1=f-T(f)$. We have then
$T(f^*\dot\otimes f)=  f_0^*\dot\otimes f_0 + T(d_1^*\dot\otimes d_1)$,
and hence (since $\hat \tau T =\hat \tau  $)
$\hat \tau(f^*\dot\otimes f)=  \hat \tau(f_0^*\dot\otimes f_0) + \hat \tau(d_1^*\dot\otimes d_1)$
Therefore, by Lemma \ref{lem8.4}, we have both $\hat \tau(f_0^*\dot\otimes f_0) \prec
 \hat \tau(f^*\dot\otimes f) $ and $\hat \tau(d_1^*\dot\otimes d_1) \prec
 \hat \tau(f^*\dot\otimes f) $.

\end{rem}

By Corollary \ref{cor2.4nc}, assuming $\tau(1)=1$,   for all even integers $p\ge q\ge 2$,
and any $f\in B\otimes \cl M$, we have $\|f\|_{(p)}\le  \|f\|_{(q)}$, so that it is again natural to define
$$\|f\|_{(\infty)}=\lim_{p\to \infty} \|f\|_{(p)}.$$
This norm is clearly associated to a well defined o.s.s. on $\cl M$, so 
we are led to the following
\begin{defn} Assume $\tau(1)=1$. We will denote 
by $\Lambda_\infty( \cl M,\tau)$ the
Banach space $ \cl M$ equipped with the o.s.s. determined by
the identities  
$$  \forall f\in B\otimes \cl M\qquad  \|f\|_{B\otimes_{\min} \Lambda_\infty( \cl M,\tau)} 
 =\|f\|_{(\infty)}=\sup_{p\in 2\N}\|f\|_{(p)}.$$
\end{defn}
We warn the reader that in sharp contrast with the commutative case, in general
$\Lambda_\infty( \cl M,\tau)$  is not completely isometric to $ \cl M$. 
See \S \ref{sec9bis} below for more on this, including the case study of  $ \cl M=M_n$ equipped with
its normalized trace.

\section{Comparisons}\label{compa}

We need to recall the definition of the  ``opposite''  of an operator space
  $E\subset B(H)$. The ``opposite''  of $E$, denoted by $E^{op}$,  is the 
  same Banach space as $E$, but equipped with the following norms
on $M_n(E)$. For any $(a_{ij})$ in $M_n(E)$ we define
$$\|(a_{ij})\|_{M_n(E^{op})} {\buildrel {\rm def}\over =}
\|(a_{ji})\|_{M_n(E)}.$$
Equivalently, $E^{op}$ can be defined as the
operator space structure on $E$ for which the
transposition:\ $x\to {}^tx \in B(H^*)$ defines a
completely isometric embedding of $E^{op}$ into $B(H^*)$.

Let $\cl M$ be a von Neumann algebra equipped with a normal semi-finite faithful trace $\tau$, and let 
$L_p(\tau)$ be the associated ``non-commutative'' $L_p$-space.
We need to recall the definition of the ``natural" o.s.s. on $L_p(\tau)$ in the sense of
\cite{P4} (we follow the clarification in \cite[p. 139]{P6} that is particularly important at this point).
We set $L_\infty(\tau)=\cl M$. Of course we view   $L_\infty(\tau)$ as an operator space completely isometric to $\cl M$.
The space $L_1(\tau)$ is classically defined as the completion of $\{x\in \cl M\mid \tau(|x|)<\infty\}$
for the norm $x\mapsto \|x\|_1=\tau(|x|)$. It can be identified isometrically with ${\cl M}_*$ 
via the mapping $x\mapsto \varphi_x$ defined by $\varphi_x(a)=\tau(xa)$. The space ${\cl M}_*\subset {\cl M}^{*}$ 
is equipped with the o.s.s. induced by the dual of the von Neumann algebra ${\cl M}$ (this duality uses Ruan's theorem, see e.g. \cite{P6,ER}). The ``natural" o.s.s. on $L_1(\tau)$ is defined as the one
transferred from the space ${\cl M}_*^{op}$ via the preceding isometric identification $x\mapsto \varphi_x$.
In short we declare that $L_1(\tau)={\cl M}_*^{op}$ completely isometrically. Then
using complex interpolation, we define the ``natural" o.s.s. on $L_p(\tau)$ ($1<p<\infty$)
by the completely isometric identity $L_p(\tau)=(L_\infty(\tau),L_1(\tau))_{1/p}$.\\
For example, when $(\cl M,\tau)=(B(\ell_2),tr)$, the space  $L_p(\tau)$ can be identified with
the Schatten $p$-class. The column (resp. row) matrices in $B(\ell_2)$ form an operator space 
usually denoted by $C$ (resp. $R$). However, when considered as a subspace
of $L_1(\tau)$ they are completely isometric to $R$ (resp. $C$), while
when considered as  subspaces
of $L_2(\tau)$ they both are completely isometric to $OH$.
The non-commutative case of \S  \ref{sec7} requires us to introduce yet another o.s.s. on
$L_p(\tau)$. \\
We set   
again    ${\cl L}_\infty(\tau)={\cl M}$ but we set 
${\cl L}_1(\tau)=  {\cl M}_*  $ (so that ${\cl L}_1(\tau)={ L}_1(\tau)^{op}$)
 and we
 denote by ${\cl L}_p(\tau)$ the operator space defined by
$$ {\cl L}_p(\tau)=({\cl L}_\infty(\tau),{\cl L}_1(\tau))_{1/p}.$$
The space  ${\cl L}_p(\tau)$ is isometric to  ${ L}_p(\tau)$ but in the non-commutative case
its o.s.s. is different. For instance
if $(\cl M,\tau)=(B(\ell_2),tr)$, 
the column (resp. row) matrices in ${\cl L}_p(\tau)$ form an operator space 
that is  completely isometric to $C$ (resp. $R$), for \emph{all} $1\le p\le \infty$.
In sharp contrast, the o.s.s. of the subspace formed  of the diagonal matrices is the same
in ${\cl L}_p(\tau)$ or ${L}_p(\tau)$, and it can be identified completely isometrically
with ${\ell}_p$ equipped with its natural o.s.s. . In particular, ${\cl L}_2(\tau)$ is isometric to the Hilbert-Schmidt class $S_2$,
but the column (resp. row) matrices in ${\cl L}_2(\tau)$ are completely isometric to $C$ (resp. $R$)
while the diagonal ones are completely isometric to $OH$.

\begin{pro}\label{propro} Let  $1\le p,q,r\le \infty$ be such that $r^{-1}=p^{-1}+q^{-1}$. The product mapping
$$(x,y)\mapsto xy$$
is (jointly) completely contractive from
${\cl L}_p(\tau)\times {\cl L}_q(\tau)$ to ${\cl L}_r(\tau)$.
\end{pro}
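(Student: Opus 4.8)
The plan is to imitate exactly the structure used for the commutative case, reducing everything to the bilinear Cauchy–Schwarz inequality of Lemma~\ref{lem8.0}. First I would recall that by the definition $\cl L_p(\tau)=(\cl L_\infty(\tau),\cl L_1(\tau))_{1/p}$ and the complex interpolation theorem for (jointly) c.b. multilinear maps (valid here exactly as recalled at the start of \S\ref{sec7}, since $B$ may be replaced by the compacts $K$), it suffices to check complete contractivity of the product map in the three ``extreme'' cases
$$(p,q,r)=(\infty,r,r),\qquad (p,q,r)=(r,\infty,r),\qquad (p,q,r)=(2,2,1),$$
and then obtain the general case by interpolating first in $p$ (for $q=\infty$ fixed) and then in $q$. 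The first two cases are essentially the statement that multiplication $\cl M\times \cl L_r(\tau)\to\cl L_r(\tau)$ (module action on one side) is c.b., which follows from the definition of $\cl L_r(\tau)$ as an interpolation space between $\cl M$ and $\cl M_*$ together with the fact that $\cl M$ acts completely contractively on both endpoints (on $\cl M$ this is just the $C^*$-norm submultiplicativity tensored with $B(H)$; on $\cl M_*=\cl L_1(\tau)$ it is the dual statement). So the only real content is the case $r=1$, $p=q=2$.

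For the case $L_2\times L_2\to L_1$: here $\cl L_2(\tau)$ is isometric to $S_2=L_2(\tau)$ as a Banach space, but its o.s.s. is the ``$R\cap C$-type'' one coming from interpolating $\cl M$ and $\cl M_*$, and $\cl L_1(\tau)=\cl M_*=L_1(\tau)^{op}$. I would unwind what ``jointly completely contractive from $\cl L_2(\tau)\times\cl L_2(\tau)$ to $\cl L_1(\tau)$'' means concretely: given $f\in B(H)\otimes\cl L_2(\tau)$ and $g\in B(K)\otimes\cl L_2(\tau)$, the element $f\dot\otimes g\in B(H)\otimes B(K)\otimes L_1(\tau)$ must satisfy $\|f\dot\otimes g\|_{B(H)\otimes_{\min}B(K)\otimes_{\min}\cl L_1(\tau)}\le\|f\|\,\|g\|$ in the respective norms. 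Testing against the predual, and using $\cl L_1(\tau)=\cl M_*$, this reduces to an estimate of the form $\|\hat\tau(f^*\dot\otimes g\dot\otimes\psi)\|$ (or a variant with $f$ rather than $f^*$) against $\|\hat\tau(f^*\dot\otimes f)\|^{1/2}\|\hat\tau(g^*\dot\otimes g)\|^{1/2}$ for $\psi$ in the unit ball of $B(\ell_2)\otimes\cl M$; this is precisely the content of Lemma~\ref{lem8.0} (the non-commutative Haagerup–Cauchy–Schwarz, applied with the extra $L_\infty$-factor $\psi$ absorbed by the trivial estimate $\|\psi\|_{\min}\le1$, exactly as \eqref{eq2.2bis} was used in the commutative setting). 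The key point making the $\cl L_p$ (rather than the interpolated $L_p$) structure the right one is that $\|f\|_{B(H)\otimes_{\min}\cl L_2(\tau)}=\|\hat\tau(f^*\dot\otimes f)\|^{1/2}_{B(\ovl H\otimes_2 H)}$, i.e. the norm on $\cl L_2(\tau)$ is literally the one appearing on the right-hand side of Lemma~\ref{lem8.0}; this identification of $\cl L_2(\tau)$ with the ``column–row'' Hilbertian structure is the place where one uses that $\cl L_2=(\cl M,\cl M_*)_{1/2}$ rather than $L_2(\tau)=OH$.

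The main obstacle I anticipate is the bookkeeping in the $r=1$ case: one must be careful that the o.s.s. induced on $\cl L_2(\tau)$ by $(\cl M,\cl M_*)_{1/2}$ really is the one with $\|f\|^2=\|\hat\tau(f^*\dot\otimes f)\|_{\min}$, and correspondingly that duality pairs $\cl L_1(\tau)=\cl M_*$ with $\cl M$ in the way that makes the test-function computation legitimate (this is exactly why the author inserts the earlier discussion of $C,R,OH$ inside $\cl L_p$ vs.\ $L_p$). Once that identification is in hand, the inequality is just Lemma~\ref{lem8.0} plus a trivial absorption of $\psi$, and the extrapolation to general $p,q$ with $p^{-1}+q^{-1}=r^{-1}$ is a routine double application of bilinear complex interpolation (fixing one exponent at $\infty$, then the other), followed if desired by the iteration remark to get the $N$-fold product version. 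I would present the proof in that order: reduce to endpoints by interpolation; dispose of the two module cases by the definition of $\cl L_r$; and treat $L_2\times L_2\to L_1$ by quoting Lemma~\ref{lem8.0} after identifying the $\cl L_2$-norm with $\|\hat\tau(f^*\dot\otimes f)\|^{1/2}$.
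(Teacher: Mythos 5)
Your interpolation skeleton and your treatment of the module endpoints $(\infty,r,r)$ and $(r,\infty,r)$ match the paper: the entire analytic content of the paper's proof is the pair of statements that $(x,\varphi_y)\mapsto\varphi_{xy}$ and $(\varphi_x,y)\mapsto\varphi_{xy}$ are jointly completely contractive on ${\cl M}\times{\cl M}_*$ and ${\cl M}_*\times{\cl M}$, proved there by factoring the associated map ${\cl M}\to M_n\otimes M_m$ through two explicit complete contractions (a detail you wave at as ``the dual statement'' but which is the part that actually requires an argument). The genuine gap is your third endpoint $(2,2,1)$. The identity $\|f\|_{B(H)\otimes_{\min}{\cl L}_2(\tau)}=\|\hat\tau(f^*\dot\otimes f)\|^{1/2}$ is false: that quantity is, by \eqref{eq7.1} with $p=2$, the norm of $B(H)\otimes_{\min}\Lambda_2(\tau)$, i.e.\ the $OH$-structure on $L_2(\tau)$, whereas ${\cl L}_2(\tau)=({\cl M},{\cl M}_*)_{1/2}$ induces $C$ on the column matrices and $R$ on the rows (as stated just before Proposition \ref{propro}); the two structures agree only on the diagonal. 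Worse, if your identification were correct, your case $(2,2,1)$ would assert that the product is jointly completely contractive from $OH\times OH$ into ${\cl M}_*$, which is exactly the kind of statement whose failure (attributed to Xu at the start of \S\ref{compa}) forced the introduction of ${\cl L}_p(\tau)$ in the first place. Relatedly, Lemma \ref{lem8.0} only controls the scalar pairing $\|\hat\tau(f^*\dot\otimes g)\|$ (self-duality of $OH$); the extra factor $\psi$ in the unit ball of $B\otimes{\cl M}$ cannot be ``trivially absorbed'' --- even in the commutative case \eqref{eq2.2bis} is itself proved by interpolation, and its noncommutative $OH$-analogue is precisely what breaks down.

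The repair is that no Cauchy--Schwarz endpoint is needed at all. The $r=1$ case ${\cl L}_p\times{\cl L}_{p'}\to{\cl L}_1$ is obtained by interpolating the two module endpoints \emph{against each other}, the parameter running in opposite directions in the two slots:
\begin{equation*}
({\cl M},{\cl M}_*)_\theta\times({\cl M}_*,{\cl M})_\theta\longrightarrow({\cl M}_*,{\cl M}_*)_\theta={\cl M}_*,\qquad \theta=1/p,
\end{equation*}
and a second interpolation (fixing one slot at ${\cl M}$) then yields the general $(p,q,r)$. So you should delete the $(2,2,1)$ step entirely and instead write out the factorization argument for ${\cl M}\times{\cl M}_*\to{\cl M}_*$ and its mirror image.
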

\begin{proof} We start by the two cases $p=r=1,q=\infty$ and $q=r=1,p=\infty$. We need to show that
$(x,\varphi_y)\mapsto \varphi_{xy}$  (resp. $(\varphi_x,y)\mapsto  \varphi_{xy}$)
  are 
(jointly) completely contractive  from ${\cl M}\times {\cl M}_*$  to ${\cl M}_*$ (resp. from ${\cl M}_*\times {\cl M}$  to ${\cl M}_*$).
Consider $x=[x_{kl}]\in M_m({\cl M})$ with $\|x\|\le 1$ and $y=[y_{ij}]\in M_n({\cl M}_*)$ with $\|y\|\le 1$ 
(resp. $x=[x_{kl}]\in M_m({\cl M}_*)$ with $\|x\|\le 1$ and $y=[y_{ij}]\in M_n({\cl M})$ with $\|y\|\le 1$).
It suffices  to show that in both cases we have  $\|[ \varphi_{x_{kl} y_{ij}}]\|_{M_{mn}({\cl M}_*)}\le 1$. Equivalently, we need to show
that the map
$u:\ \cl M\to M_n\otimes M_m$ defined by
$u(a)=\sum e_{ij}\otimes e_{kl} \ \tau(y_{ij} a x_{kl})  $
satisfies $\|u\|_{cb}\le 1 $.   \\
Consider $v:\ {\cl M}\to M_m\otimes {\cl M}$ defined by
$v(a) = \sum e_{kl}\otimes  ax_{kl}=(I\otimes a) x$. Clearly $\|v\|_{cb}\le \|x\|\le 1.$
Let $w:\ {\cl M}\to M_n$ defined by
$w (a)=\sum e_{ij} \tau(y_{ij} a)$. Then $\|w \|_{cb}= \|y\|_{M_n(  {\cl M}_{*} )}\le 1.$
We have $(I\otimes w ) v (a)=\sum e_{kl}\otimes e_{ij} \ \tau(y_{ij} a x_{kl}) $
which is $u(a)$ up to permutation of the tensor product. Therefore
$\|u\|_{cb}\le \|I\otimes w \|_{cb} \|v \|_{cb}\le \|w \|_{cb} \|v \|_{cb}\le 1$\\
(resp. let  $V :\ {\cl M}\to M_n\otimes {\cl M}$ and $W :\ {\cl M}\to M_m$
be defined  
by $V (a) = \sum e_{ij}\otimes  ay_{ij} $
and $W (a)=\sum e_{kl} \tau(x_{kl} a)$, then
we have $u(a)=(I\otimes W) V(a)$ and we conclude similarly).
\end{proof}
\begin{cor}\label{corcor} For any $p\in 2\N$ and any  $f\in B\otimes L_p(\tau)$ we have
$$\|f\|_{(p)}= \|f\|_{B\otimes  { {\Lambda}_p(\tau)  } }\le \max\{  \|f\|_{B\otimes_{\min}  { {\cl L}_p(\tau)  } }, \|f^*\|_{\bar B \otimes_{\min}  { {\cl L}_p(\tau)  }}\}.$$
In other words, the identity defines a completely contractive map 
 ${\cl L}_p(\tau)\cap  { {\cl L}_p(\tau)}^{op}\to {\Lambda}_p(\tau)  $ (where
 ${\cl L}_p(\tau)\cap  { {\cl L}_p(\tau)}^{op}$ denotes the o.s.s. on ${L}_p(\tau)$
 induced by the embedding $x\mapsto x \oplus x \in {\cl L}_p(\tau)\oplus  { {\cl L}_p(\tau)}^{op}$.
\end{cor}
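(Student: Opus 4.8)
The plan is to reduce the inequality to the Hölder-type bound \eqref{eq7.2} of Lemma \ref{lem7.1}, by showing that the particular contraction realizing $\|f\|_{(p)}$ factors through the product map on the $\cl L$-scale. Recall that \eqref{eq7.1} gives
$\|f\|_{(p)}=\|\widehat\tau(f^*\dot\otimes f\dot\otimes\cdots\dot\otimes f^*\dot\otimes f)\|^{1/p}$
with $f^*\dot\otimes f$ repeated $p/2$ times. The key observation is that the iterated pointwise product $f^*\dot\otimes f\dot\otimes\cdots\dot\otimes f^*\dot\otimes f$ is an element of $B^{\otimes p/2}\otimes\ovl B^{\otimes p/2}\otimes L_1(\tau)$, obtained from the $p$-fold product map $L_p(\tau)\times\cdots\times L_p(\tau)\to L_1(\tau)$ applied to the tuple $(f^*,f,\dots,f^*,f)$. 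By Proposition \ref{propro}, this product map, iterated, is jointly completely contractive from $\cl L_p(\tau)\times\cdots\times\cl L_p(\tau)$ to $\cl L_1(\tau)=L_1(\tau)^{op}$; and the pairing $\widehat\tau$ is completely contractive as a linear form (as noted after the definition of $\widehat\tau$ in \S\ref{sec8}, using $\cl L_1(\tau)^*=\cl M$, i.e. the standard duality). Composing, the multilinear map $(g_1,\dots,g_p)\mapsto\widehat\tau(g_1\dot\otimes\cdots\dot\otimes g_p)$ is jointly completely contractive from $\cl L_p(\tau)^p$ to $\bb C$.

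First I would fix $f\in B\otimes L_p(\tau)$ and observe that the $p$ slots of the pairing receive alternately $f$ and $f^*$. The slots receiving $f$ are controlled by $\|f\|_{B\otimes_{\min}\cl L_p(\tau)}$; the slots receiving $f^*$ need to be controlled by something intrinsic to $f$. Here the point is that $f\mapsto f^*$ is a complete isometry from $B\otimes_{\min}\cl L_p(\tau)$ onto $\ovl B\otimes_{\min}\cl L_p(\tau)^{op}$ — this is essentially the definition of the opposite o.s.s. combined with the fact that $\cl L_p(\tau)$ is built by interpolation from $\cl M$ and $\cl M_*$, on which $x\mapsto x^*$ manifestly swaps the o.s.s. with its opposite. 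Equivalently, $\|f^*\|_{\ovl B\otimes_{\min}\cl L_p(\tau)}=\|f\|_{B\otimes_{\min}\cl L_p(\tau)^{op}}$. So, plugging into the jointly completely contractive multilinear map above, applied with $H=\ell_2$ (and the standard reduction to $H=\ell_2$ as in the proof of Theorem \ref{thm7.2}), I would get
\[
\|f\|_{(p)}=\|\widehat\tau(f^*\dot\otimes f\dot\otimes\cdots\dot\otimes f^*\dot\otimes f)\|^{1/p}\le \big(\|f^*\|_{\ovl B\otimes_{\min}\cl L_p(\tau)}^{p/2}\,\|f\|_{B\otimes_{\min}\cl L_p(\tau)}^{p/2}\big)^{1/p},
\]
which is exactly $\le\max\{\|f\|_{B\otimes_{\min}\cl L_p(\tau)},\|f^*\|_{\ovl B\otimes_{\min}\cl L_p(\tau)}\}$ by the arithmetic–geometric mean (or just by taking the larger of the two factors to the full power). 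This is the displayed inequality.

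For the ``in other words'' reformulation, I would recall that for the o.s.s. $\cl L_p(\tau)\cap\cl L_p(\tau)^{op}$ induced by $x\mapsto x\oplus x\in\cl L_p(\tau)\oplus\cl L_p(\tau)^{op}$ one has, for any $H$,
$\|f\|_{B\otimes_{\min}(\cl L_p(\tau)\cap\cl L_p(\tau)^{op})}=\max\{\|f\|_{B\otimes_{\min}\cl L_p(\tau)},\|f\|_{B\otimes_{\min}\cl L_p(\tau)^{op}}\}$,
and the second term equals $\|f^*\|_{\ovl B\otimes_{\min}\cl L_p(\tau)}$ by the complete isometry above; so the bound just proved says precisely that the identity $\cl L_p(\tau)\cap\cl L_p(\tau)^{op}\to\Lambda_p(\tau)$ is completely contractive. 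The main obstacle I anticipate is the bookkeeping for the claim that $f\mapsto f^*$ intertwines the $\cl L_p$ and $\cl L_p^{op}$ structures: it must be checked at the endpoints $p=\infty$ (where $f\mapsto f^*$ is a complete isometry $B\otimes_{\min}\cl M\to\ovl B\otimes_{\min}\cl M^{op}$, standard) and $p=1$ (where $\cl L_1(\tau)=\cl M_*$ and $\cl L_1(\tau)^{op}=L_1(\tau)=\cl M_*^{op}$, and $x\mapsto x^*$ is the adjoint of $a\mapsto a^*$ on $\cl M$), and then transported to general $p$ by the bilinear complex interpolation of \cite{P4} used throughout \S\ref{compa}. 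Everything else is a direct concatenation of Proposition \ref{propro}, the complete contractivity of $\widehat\tau$, and formula \eqref{eq7.1}.
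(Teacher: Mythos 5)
Your proposal is correct and follows essentially the same route as the paper: iterate Proposition \ref{propro} to make the $p$-fold product jointly completely contractive on the ${\cl L}_p$-scale, compose with the (completely) contractive linear form $\widehat\tau$ to bound $\|f\|_{(p)}^p$ by $\|f\|_{B\otimes_{\min}{\cl L}_p(\tau)}^{p/2}\,\|f^*\|_{\bar B\otimes_{\min}{\cl L}_p(\tau)}^{p/2}$, and then identify $\|f^*\|_{\bar B\otimes_{\min}{\cl L}_p(\tau)}$ with $\|f\|_{B\otimes_{\min}{\cl L}_p(\tau)^{op}}$ via the complete isometry $\bar x\mapsto x^*$ from $\overline{{\cl L}_p(\tau)}$ to ${\cl L}_p(\tau)^{op}$ (checked at the endpoints and interpolated), exactly as the paper does.
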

\begin{proof} By iteration,
the preceding statement implies
that for any integer $N$ the product mapping  is  (jointly) completely contractive from 
${\cl L}_{p_1}(\tau)\times\cdots\times {\cl L}_{p_N}(\tau)$ to ${\cl L}_{r}(\tau)$  when $1/r=\sum 1/p_j$.
Equivalently, setting $B_1=\cdots=B_N=B$, the mapping
$(f_1,\cdots,f_N)\mapsto   f_1 \dot\otimes\cdots \dot\otimes f_N$
is contractive from $B_1\otimes_{\min} {\cl L}_{p_1}(\tau)\times \cdots \times B_N\otimes_{\min} {\cl L}_{p_N}(\tau)$ to $B_1\otimes_{\min} \cdots\otimes_{\min} B_N\otimes_{\min} {\cl L}_{r}(\tau)$.
A fortiori, when $r=1$,
$(f_1,\cdots,f_N)\mapsto  \hat \tau (f_1 \dot\otimes\cdots \dot\otimes f_N)$
is contractive from $B_1\otimes_{\min} {\cl L}_{p_1}(\tau)\times \cdots \times B_N\otimes_{\min} {\cl L}_{p_N}(\tau)$ to $B_1\otimes_{\min} \cdots\otimes_{\min} B_N$.
Therefore, if $p$ is an even integer,  we have
$$\|f\|^p_{(p)}= \|\hat \tau ( f^*\dot\otimes f\dot\otimes \cdots\dot\otimes   f^*\dot\otimes f)\|  \le      \|f\|_{B\otimes_{\min}  { {\cl L}_p(\tau)  } }^{p/2}   \|f^*\|_{\bar B \otimes_{\min}  { {\cl L}_p(\tau)  }}    ^{p/2}.$$
A fortiori we obtain the announced result. 
Note that $\bar x\mapsto x^*$ is a completely isometric \emph{linear} isomorphism both from  
$\bar {\cl M}$ to ${\cl M}^{op}$ and  
 from  
$\bar {\cl M}_*$  to ${\cl M}_*^{op}$, and hence also  
from $\overline { {\cl L}_p(\tau)
}$ to ${ {\cl L}_p(\tau)
}^{op}$ for all $1\le p\le \infty$.  Therefore, if $f=\sum b_j \otimes x_j$
we have
$\|f^*\|_{\bar B \otimes_{\min}  { {\cl L}_p(\tau)  }}=\|\sum \bar b_j \otimes x^*_j\|_{\bar B \otimes_{\min}  { {\cl L}_p(\tau)  }}
=\|\sum \bar b_j \otimes \bar x_j\|_{\bar B \otimes_{\min}  \overline{ {\cl L}_p(\tau)^{op}  }}=
\|\sum b_j \otimes   x_j\|_{ B \otimes_{\min}   { {\cl L}_p(\tau)^{op}  }}$. 
Thus $\|f^*\|_{\bar B \otimes_{\min}  { {\cl L}_p(\tau)  }}=\|f\|_{B \otimes_{\min}  { {\cl L}_p(\tau)^{op}   }}$,
whence the last assertion. \end{proof}

We will now examine
the particular case when $(\cl M,\tau)=(B(\ell_2),tr)$. Recall that  
 $R$ (resp.    $C$) is the subspace of $\cl M=B(\ell_2)$ formed by all row (resp. column) matrices.
More generally,    we denote by ${ R}_p$  (resp.    ${ C}_p$)    the operator space obtained by equipping
 $R$ (resp.    $C$) with the o.s.s. induced by 
 $ L_p(\tau) $.   We also denote by ${ R}^n_p$  (resp.    ${ C}^n_p$) the $n$-dimensional
 version of ${ R}_p$  (resp.    ${ C}_p$).\\
 Similarly, we will denote by $\widetilde{ R}_p$  (resp.    $\widetilde{ C}_p$) 
 the operator space obtained by equipping
 $R$ (resp.    $C$) with the o.s.s. induced 
 by  $ \Lambda_p(\tau) $.   \\
 Furthermore, let  $\widetilde{ D}_p$ be the operator subspace of $ \Lambda_p(\tau) $ formed of all the diagonal matrices.
As a Banach space this is isometric to $\ell_p$, and it is easy to check
that as an operator space $\widetilde{ D}_p$ is completely isometric to the space
$\lambda_p=\Lambda_p(\N,\mu)$ with $\mu$ equal to   the counting measure on $\N$.\\
Let $b_j\in B$ ($j=1,\cdots,n$) and let $f=\sum b_j \otimes e_{1j}\in B\otimes R$
(resp. $g=\sum b_i \otimes e_{i1} \in B\otimes C $).
Then $f\dot\otimes f^*=\sum b_j \otimes \bar b_j \otimes e_{11}$
(resp. $g^*\dot\otimes g=\sum \bar b_j \otimes   b_j \otimes e_{11}$). Note that $\|\sum \bar b_j \otimes   b_j\|^{1/2}=\|\sum b_j \otimes \bar b_j\|^{1/2}.$
Therefore, viewing $f$ and $g$ as  elements of $B\otimes   \Lambda_p(\tau)$,
for any $p\in 2\N$,  we have  
$$\|f\|_{(p)}=\|g\|_{(p)}=\|\sum b_j \otimes \bar b_j\|^{1/2}.$$
Thus we find:

\begin{lem} The spaces $\widetilde{ R}_p$ and $\widetilde{ C}_p$ are both completely isometric to $OH$
for any $p\in 2 \N$, while $\widetilde{ D}_p$ is completely isometric to $\lambda_p$.
\end{lem}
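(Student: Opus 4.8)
The plan is to read off all three identifications directly from the defining formula \eqref{eq7.1} for the norm of $\Lambda_p(\tau)$ (here $\tau=tr$ on $B(\ell_2)$), using the multiplication rules of the matrix units $e_{ij}$ to collapse the relevant $\widehat\tau$-expressions, and then comparing with the known formulas \eqref{eq0.3} and \eqref{eq2.4+}.

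\textbf{Rows and columns.} For $f=\sum_j b_j\otimes e_{1j}\in B\otimes\widetilde{R}_p$ (arbitrary $H$, $b_j\in B(H)$) the computation already recorded just above the statement gives $f\dot\otimes f^*=\sum_j b_j\otimes\bar b_j\otimes e_{11}$; by the cyclic invariance \eqref{eq7.5} one has $\|f\|_{(p)}^p=\|\widehat\tau((f\dot\otimes f^*)^{\dot\otimes p/2})\|$, and since $e_{11}^{p/2}=e_{11}$, $\tau(e_{11})=1$, and the index sums in the distinct copies of $f\dot\otimes f^*$ decouple, this equals $\bigl\|\bigl(\sum_j b_j\otimes\bar b_j\bigr)^{\otimes p/2}\bigr\|$ up to a permutation of tensor factors. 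Because $\|\cdot\|_{\min}$ is a cross norm, $\|A^{\otimes p/2}\|=\|A\|^{p/2}$, so $\|f\|_{(p)}=\|\sum_j b_j\otimes\bar b_j\|^{1/2}$, which by \eqref{eq0.3} is exactly the norm of $\sum_j b_j\otimes e_j$ in $B\otimes_{\min}OH$. Hence $e_{1j}\mapsto e_j$ extends to a complete isometry $\widetilde{R}_p\to OH$; the argument for $\widetilde{C}_p$ via $g=\sum_j b_j\otimes e_{j1}$, $g^*\dot\otimes g=\sum_j\bar b_j\otimes b_j\otimes e_{11}$ is identical once one notes $\|\sum_j\bar b_j\otimes b_j\|=\|\sum_j b_j\otimes\bar b_j\|$.

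\textbf{The diagonal.} For $f=\sum_j b_j\otimes e_{jj}\in B\otimes\widetilde{D}_p$ the cross terms vanish because $e_{jj}e_{kk}=\delta_{jk}e_{jj}$, so $f\dot\otimes f^*=\sum_j b_j\otimes\bar b_j\otimes e_{jj}$ and, iterating, $(f\dot\otimes f^*)^{\dot\otimes p/2}=\sum_j(b_j\otimes\bar b_j)^{\otimes p/2}\otimes e_{jj}$; applying $\widehat\tau$ (with $\tau(e_{jj})=1$) yields $\widehat\tau((f\dot\otimes f^*)^{\dot\otimes p/2})=\sum_j(b_j\otimes\bar b_j)^{\otimes p/2}$ up to permutation, so $\|f\|_{(p)}^p=\|\sum_j(b_j\otimes\bar b_j)^{\otimes p/2}\|$. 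On the other hand, for $h=\sum_j b_j\otimes\delta_j\in B\otimes\lambda_p$ with $\delta_j=1_{\{j\}}\in L_p(\N,\mu)$ and $\mu$ the counting measure, the pointwise function $h^{\dot\otimes p/2}\dot\otimes\bar h^{\dot\otimes p/2}$ takes the value $b_j^{\otimes p/2}\otimes\bar b_j^{\otimes p/2}$ at the point $j$, so $\int h^{\dot\otimes p/2}\dot\otimes\bar h^{\dot\otimes p/2}\,d\mu=\sum_j(b_j\otimes\bar b_j)^{\otimes p/2}$ up to permutation, and by \eqref{eq2.4+} this equals $\|h\|_{B\otimes_{\min}\lambda_p}^p$. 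Thus $e_{jj}\mapsto\delta_j$ extends to a complete isometry $\widetilde{D}_p\to\lambda_p$.

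\textbf{Main point.} There is no deep obstacle here; the only care needed is the bookkeeping of matrix-unit products under the iterated pointwise product $\dot\otimes$ and of the tensor-factor permutations (in the sense explained before \eqref{eq-or}), together with the passage, via \eqref{eq7.5}, from the defining cyclic word $f^*\dot\otimes f\dot\otimes\cdots$ to the word $f\dot\otimes f^*\dot\otimes\cdots$ for which the matrix units collapse to a single idempotent. Once those are organized, both sides of each identity match formally; and since the equalities of norms are established at the level of $B\otimes_{\min}(\,\cdot\,)$ for every $H$, they are complete isometries.
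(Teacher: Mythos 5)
Your proof is correct and follows essentially the same route as the paper, which likewise computes $f\dot\otimes f^*=\sum_j b_j\otimes\bar b_j\otimes e_{11}$ (resp. $g^*\dot\otimes g$) and reads off the norm from \eqref{eq7.1} via the cyclic invariance \eqref{eq7.5}, leaving the diagonal case as an easy check of the same kind. You have merely supplied the bookkeeping (idempotence of $e_{11}$, the cross-norm identity $\|A^{\otimes m}\|=\|A\|^m$, and the permutation-invariance of the min norm) that the paper treats as routine.
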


Again let $b_j\in B$ ($j=1,\cdots,n$) and let $f=\sum b_j \otimes e_{1j}$. 
We have $\|f\|_{ B\otimes L_p(\tau)} = \sup\{ \| \sum b_j a b_j^* \|_p^{1/2}\mid \|a\|_p\le 1\}$
(see \cite[p. 83-84]{P3} or \cite{X2} for details). In case $b_j=e_{j1}$, this gives us  $\|f\|_{ B\otimes L_p(\tau)} =n^{1/2p}$.
Therefore the natural inclusion $R^n_p \to R^n_2$ has c.b. norm $\ge n^{1/4-1/2p}$.
Similarly, using instead $b_j=e_{1j}$, we find $\|f\|_{ B\otimes L_p(\tau)} =n^{(1/2)(1-1/p)}$ and hence 
$\|R^n_2 \to R^n_p\|_{cb}\ge n^{1/4-1/2p}$.
This shows:
\begin{lem} 
For any $p\in 2 \N$ and any integer $n\ge 1$, 
the $n$-dimensional identity maps satisfy
$$ \| L_p(M_n,tr) \to \Lambda_p(M_n,tr)\|_{cb} \ge n^{1/4-1/2p}\ {\rm and}\  
 \| \Lambda_p(M_n,tr) \to L_p(M_n,tr)\|_{cb} \ge n^{1/4-1/2p} 
.$$
\end{lem}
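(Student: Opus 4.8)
The plan is to produce, for each of the two identity maps, a single test element on which the $\Lambda_p$-norm and the $L_p$-norm differ by the claimed factor. Since for every $f\in B\otimes L_p(M_n,tr)$ (with $B=M_n$) one has $\|id\|_{cb}\ge\|f\|_{B\otimes_{\min}\Lambda_p(M_n,tr)}/\|f\|_{B\otimes_{\min}L_p(M_n,tr)}$, and likewise with the roles of the two spaces reversed, such an example immediately gives the desired lower bounds. The natural place to look is the row subspace $R^n\subset M_n$: by the computation carried out in the lines just above the statement (equivalently, by the preceding Lemma, which identifies $\widetilde R_p$ with $OH$), a row element $f=\sum_{j=1}^n b_j\otimes e_{1j}$ with $b_j\in B$ satisfies $\|f\|_{(p)}=\|\sum_j b_j\otimes\bar b_j\|^{1/2}_{B(\ell_2^n\otimes_2\bar\ell_2^n)}$, while $\|f\|_{B\otimes_{\min}L_p(M_n,tr)}=\sup\{\|\sum_j b_j a b_j^*\|_p^{1/2}\colon\|a\|_p\le 1\}$ by the standard row-norm formula in $L_p(\tau)$.

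For the first inequality I would take $b_j=e_{j1}$. Then $\sum_j b_j\otimes\bar b_j=\sum_j e_{j1}\otimes\bar e_{j1}$ is the rank-one operator on $\ell_2^n\otimes_2\bar\ell_2^n$ that annihilates $(e_1\otimes\bar e_1)^\perp$ and sends $e_1\otimes\bar e_1$ to $\sum_j e_j\otimes\bar e_j$, hence has norm $\sqrt n$; so $\|f\|_{(p)}=n^{1/4}$. On the other hand $\sum_j e_{j1}a e_{1j}=a_{11}I_n$, so $\sup\{\|a_{11}I_n\|_p\colon\|a\|_p\le 1\}=n^{1/p}$ and $\|f\|_{B\otimes_{\min}L_p}=n^{1/2p}$. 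Dividing gives $\|id\colon L_p(M_n,tr)\to\Lambda_p(M_n,tr)\|_{cb}\ge n^{1/4-1/2p}$.

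For the second inequality I would instead take $b_j=e_{1j}$. Then $\sum_j e_{1j}\otimes\bar e_{1j}$ is the rank-one operator with range $\C(e_1\otimes\bar e_1)$ and kernel $(\sum_j e_j\otimes\bar e_j)^\perp$, again of norm $\sqrt n$, so $\|f\|_{(p)}=n^{1/4}$; meanwhile $\sum_j e_{1j}a e_{j1}=(tr\,a)e_{11}$, whence $\sup\{|tr\,a|\colon\|a\|_p\le 1\}=\|I_n\|_{p'}=n^{1-1/p}$ and $\|f\|_{B\otimes_{\min}L_p}=n^{(1/2)(1-1/p)}$. Therefore $\|id\colon\Lambda_p(M_n,tr)\to L_p(M_n,tr)\|_{cb}\ge n^{(1/2)(1-1/p)}/n^{1/4}=n^{1/4-1/2p}$.

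No genuine obstacle remains once the preceding material is granted: the argument is just a pair of explicit computations. The one point needing a moment's care is the index and conjugation bookkeeping — verifying that the two rank-one operators $\sum_j e_{j1}\otimes\bar e_{j1}$ and $\sum_j e_{1j}\otimes\bar e_{1j}$ each have operator norm exactly $\sqrt n$, and that $\sum_j e_{j1}a e_{1j}=a_{11}I_n$ while $\sum_j e_{1j}a e_{j1}=(tr\,a)e_{11}$ — but these are routine one-line checks. One could equally run the whole argument with the column subspace $C^n$, which is also completely isometric to $OH_n$ inside $\Lambda_p(M_n,tr)$ by the same Lemma, with no change.
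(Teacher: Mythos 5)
Your proof is correct and is essentially the paper's own argument: the paper likewise tests the identity maps on the row elements $f=\sum_j b_j\otimes e_{1j}$ with $b_j=e_{j1}$ (giving $\|f\|_{L_p}=n^{1/2p}$) and $b_j=e_{1j}$ (giving $\|f\|_{L_p}=n^{(1/2)(1-1/p)}$), using the same row-norm formula and the fact that $\|f\|_{(p)}=\|\sum_j b_j\otimes\bar b_j\|^{1/2}=n^{1/4}$ in both cases. The only cosmetic difference is that the paper phrases the conclusion as lower bounds on $\|R^n_p\to R^n_2\|_{cb}$ and $\|R^n_2\to R^n_p\|_{cb}$ before restricting/extending to the full identity maps.
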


\section{Connection with CB maps on $OH$}\label{sec9bis}

Given a Hilbert space $H$ we denote by $OH$ the operator Hilbert space
isometric to $H$, as defined in \cite{P3}. This means that whenever $(T_j)$ is an orthonormal basis of $OH$, for any finitely supported  family $(b_j)$ in $B$
we have
\begin{equation}\label{oh}\|\sum b_j \otimes T_j\|=\| \sum\bar b_j \otimes b_j\|^{1/2}. 
\end{equation}
Assume  $\cl M\subset B(H)$ and $\tau(1)=1$. We will compare the limit o.s.s. of  $\Lambda_p({\cl M} ,\tau)$  when $p\to \infty$  to the one induced on $\cl M$ by
$CB(OH)$ equipped with its usual operator space structure.\\
The latter can be described as follows (see e.g. \cite{ER}): Whenever $E,F$ are operator spaces
the space $CB(E,F)$ of all c.b. maps from $E$ to $F$ is equipped with
the (unique) o.s.s. determined by the isometric identity
$$\forall N\ge 1\qquad M_N(CB(E,F))=CB(E,M_N(F)).$$
More generally, we have an isometric embedding
\begin{equation}\label{cb+}
B \otimes_{\min} CB(E,F)\subset CB(E,B \otimes_{\min} F).
\end{equation}
If either $E$ or $F$ is finite dimensional, we may identify completely isometrically $CB(E,F)$ with $E^*\otimes_{\min} F$. When $E=F$, we denote simply
 $CB(E)=CB(E,E)$. Thus in particular  $CB(OH_n)$ can be identified with
 $OH_n^*\otimes_{\min} OH_n$, or equivalently by the selfduality
 of $OH_n$, with $\overline{OH_n}\otimes_{\min} OH_n$ or
 ${OH_n}\otimes_{\min} \overline{OH_n}$. We first recall a well known fact.

  \begin{lem}\label{mn0} Let $E,F,G$ be   operator spaces.
    Let $B'=B(H')$ for some Hilbert space $H'$. Then 
  for any $f=\sum b_j\otimes x_j\in B\otimes CB(F,G)$ and  $g=\sum b'_k \otimes y_k\in B'\otimes CB(E,F)$ we have
 \begin{equation}\label{g5}\|f \dot\otimes g\|_{ B\otimes_{\min} B' \otimes_{\min} CB(E,G) }\le  \|f  \|_{   B \otimes_{\min} CB(F,G) }\ \|  g\|_{ B'   \otimes_{\min} CB(E,F) },\end{equation}
 where, as before, we denote $f \dot\otimes g=\sum_{j,k} b_j\otimes b'_k \otimes x_j   y_k\in B\otimes B'\otimes CB(E,G)$.\\ In other words,
 the composition $(x,y)\mapsto xy$ is (jointly) completely contractive
 from $CB(F,G)\times CB(E,F)$ to $CB(E,G)$.
   \end{lem}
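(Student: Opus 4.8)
The plan is to translate everything into the map-theoretic picture provided by the isometric embedding \eqref{cb+}, and then to use that a composition of complete contractions is a complete contraction. By homogeneity I would first reduce to showing that if $\|f\|_{B\otimes_{\min}CB(F,G)}\le 1$ and $\|g\|_{B'\otimes_{\min}CB(E,F)}\le 1$, then $\|f\dot\otimes g\|_{B\otimes_{\min}B'\otimes_{\min}CB(E,G)}\le 1$. Via \eqref{cb+} I would view $f$ as a completely contractive map $\tilde f\colon F\to B\otimes_{\min}G$, with $\tilde f(\xi)=\sum_j b_j\otimes x_j(\xi)$, and $g$ as a completely contractive map $\tilde g\colon E\to B'\otimes_{\min}F$, with $\tilde g(\eta)=\sum_k b'_k\otimes y_k(\eta)$.

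Next I would observe that amplifying $\tilde f$ gives a complete contraction $id_{B'}\otimes\tilde f\colon B'\otimes_{\min}F\to B'\otimes_{\min}B\otimes_{\min}G$: its cb-norm is $\sup_K\|id_{B(K)}\otimes id_{B'}\otimes\tilde f\|$, and since $B(K)\otimes_{\min}B'\cong B(K\otimes_2 H')$ completely isometrically (writing $B'=B(H')$), this equals $\sup_K\|id_{B(K\otimes_2 H')}\otimes\tilde f\|\le\|\tilde f\|_{cb}\le 1$, using the description of the cb-norm recalled in \S\ref{sec1}. Then the composition $v=(id_{B'}\otimes\tilde f)\circ\tilde g\colon E\to B'\otimes_{\min}B\otimes_{\min}G$ is a complete contraction, being a composition of complete contractions, and unwinding the definitions gives $v(\eta)=\sum_{j,k}b'_k\otimes b_j\otimes x_j(y_k(\eta))$.

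Finally, since the flip $B'\otimes_{\min}B\to B\otimes_{\min}B'$ is a complete isometry (commutativity of the minimal tensor product), the map $\hat h\colon E\to B\otimes_{\min}B'\otimes_{\min}G$ with $\hat h(\eta)=\sum_{j,k}b_j\otimes b'_k\otimes x_j(y_k(\eta))$ is again a complete contraction; and applying \eqref{cb+} once more, now with the operator space $B\otimes_{\min}B'$ in the role of $B$, I would identify $\hat h$ isometrically with the element $f\dot\otimes g\in(B\otimes_{\min}B')\otimes_{\min}CB(E,G)$, whence $\|f\dot\otimes g\|\le 1$, i.e. \eqref{g5}. The ``in other words'' reformulation is then just \eqref{g5} read as a bound on the multilinear composition map. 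I do not expect a genuine obstacle here; the only thing demanding care is the bookkeeping of the order of the tensor legs and checking that \eqref{cb+} may legitimately be applied to the composite algebra $B\otimes_{\min}B'$, which is again of the form $B(\,\cdot\,)$.
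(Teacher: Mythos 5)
Your proof is correct and follows essentially the same route as the paper's: identify $f$ and $g$ with c.b.\ maps $\tilde f$ and $\tilde g$ via \eqref{cb+}, write $f\dot\otimes g$ as the composition of $\tilde g$ with an amplification of $\tilde f$, and use that cb-norms are submultiplicative under composition and preserved by amplification. The only cosmetic caveat is that $B\otimes_{\min}B'$ is a proper subalgebra of $B(H\otimes_2 H')$ when both spaces are infinite dimensional, but since it embeds completely isometrically (injectivity of $\otimes_{\min}$) your bookkeeping goes through unchanged.
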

  \begin{proof} 
To prove \eqref{g5}, note that $f$ (resp. $g$) defines a c.b. map 
${\tilde f}:\ F \to  G\otimes_{\min} B$ (resp. ${\tilde g}:\ E \to F\otimes_{\min} B'$) and 
$\|f\|_{ B\otimes_{\min} CB(F,G)}=\|{\tilde f}\|_{cb}$ (resp. $\|g\|_{ B'\otimes_{\min} CB(E,F)}=\|{\tilde g}\|_{cb}$). Indeed,  recall that, if we wish,  $G\otimes_{\min} B$ can be identified with $B\otimes_{\min} G$. Similarly,
$f\dot \otimes g$ defines a c.b. map $\Psi: E \to  G \otimes_{\min} B\otimes_{\min} B'$
such that $\|f\dot \otimes g\|_{ B\otimes_{\min} B'\otimes_{\min} CB(E,G)}=\|\Psi\|_{cb}$.
But since
  $\Psi = (  {\tilde f} \otimes Id_{B'} ) \circ {\tilde g}$ we have
$\|\Psi\|_{cb}\le \|{\tilde f}\|_{cb}\|{\tilde g}\|_{cb}$ and \eqref{g5} follows.
    \end{proof}
     \begin{rem}\label{rcb} In particular, the preceding Lemma implies a fortiori that
     that if $D,E,F,G$ are operator spaces
     and if $u\in CB(D,E)$ and $v\in CB(F,G)$ are fixed complete contractions,
     then the mapping $x\mapsto vxu$ is a complete contraction
     from $CB(E,F)$ to $CB(D,G)$. Indeed, the latter can be viewed as the restriction
     of the triple product map to $\C v \times CB(E,F)\times \C u$.
      \end{rem}
 \begin{thm}\label{mn}
Let  $(\cl M, \tau)$ be as before with $\cl M \subset B(H)$ and   $\tau(1)=1$.
 Let us denote by $\underline{ \cl M}$ the operator space
 obtained by equipping $\cl M$ with the o.s.s. induced by  $CB(OH)$.
 Then  $$\Lambda_\infty(\cl M,\tau)=\underline{ \cl M} $$   completely isometrically.
 \end{thm}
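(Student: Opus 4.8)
The plan is to fix the given realization $\cl M\subset B(H)$, write $OH=H_{oh}$, and prove the asserted complete isometry level by level: for every Hilbert space $K$ and every $f=\sum_j b_j\otimes x_j\in B(K)\otimes\cl M$ one should show
\[
\sup_{p\in 2\N}\|f\|_{(p)}\;=\;\|f\|_{B(K)\otimes_{\min}CB(OH)}=:N .
\]
(For orientation one may first keep in mind $\cl M=M_n$ with its normalized trace, where $CB(OH_n)=\overline{OH_n}\otimes_{\min}OH_n$ and all traces are honest; the argument for general $(\cl M,\tau)$ is identical up to taking completions.) One inequality says the identity $\underline{\cl M}\to\Lambda_\infty(\cl M,\tau)$ is completely contractive, the other its inverse.

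For the first inequality I would argue that each $x_j$, being a bounded operator on $H$, lies in $CB(OH,OH)$ with $\|x_j\|_{CB(OH)}\ge\|x_j\|_{B(H)}$, so that $f$ defines an element of $B(K)\otimes_{\min}CB(OH)$ of norm $N$, while $f^{*}=\sum\bar b_j\otimes x_j^{*}$ defines an element of $\overline{B(K)}\otimes_{\min}CB(OH)$ of the \emph{same} norm by the self-duality of $OH$. Now $\|f\|_{(p)}^{p}=\big\|\widehat\tau\big((f^{*}\dot\otimes f)^{\dot\otimes p/2}\big)\big\|$, and $(f^{*}\dot\otimes f)^{\dot\otimes p/2}$ is precisely the $p$-fold composition product $f^{*}\dot\otimes f\dot\otimes\cdots\dot\otimes f^{*}\dot\otimes f$ to which Lemma~\ref{mn0} applies (the $\cl M$-entries multiply as operators on $H$). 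Applying \eqref{g5} $p$ times gives
\[
\big\|(f^{*}\dot\otimes f)^{\dot\otimes p/2}\big\|_{(\overline{B(K)}\otimes B(K))^{\otimes p/2}\otimes_{\min}CB(OH)}\le N^{p},
\]
and finally $\widehat\tau=\mathrm{id}\otimes\tau$ is completely contractive on $\underline{\cl M}$-valued elements, because $\tau$ is a state on $\cl M$ and $\|\cdot\|_{CB(OH)}\ge\|\cdot\|_{B(H)}$ on $\cl M$, so $\tau$ is a contractive — hence completely contractive — functional on $\underline{\cl M}$. Thus $\|f\|_{(p)}\le N$ for every even $p$, and letting $p\to\infty$ gives $\|f\|_{(\infty)}\le N$.

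For the reverse inequality $N\le\|f\|_{(\infty)}$ I would start from the variational description \eqref{eq7.8} in the proof of Theorem~\ref{thm7.2}, $\|f\|_{(p)}=\sup\{\|\widehat\tau(f\dot\otimes g_2\dot\otimes\cdots\dot\otimes g_p)\|:g_j\in B(K)\otimes\cl M,\ \|g_j\|_{(p)}\le1\}$, together with the monotonicity $\|g\|_{(p)}\le\|g\|_{(\infty)}$ coming from Corollary~\ref{cor2.4nc}. Consequently, for \emph{any} test elements $(g_j)$ with $\sup_j\|g_j\|_{(\infty)}\le1$ one gets $\sup_p\|f\|_{(p)}\ge\limsup_p\|\widehat\tau(f\dot\otimes g_2\dot\otimes\cdots\dot\otimes g_p)\|$, so it remains to build, for each $\varepsilon>0$, a family realizing $N-\varepsilon$ in the limit. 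The construction should be dual to the construction of $\Lambda_p$ itself: since $N=\|\tilde f\|_{CB(OH,\,B(K)\otimes_{\min}OH)}$ (cf.\ \eqref{cb+}) and $OH$ is self-dual, $N$ is a supremum of quantities $\|(\mathrm{id}_{B(K)}\otimes\varphi)(f\dot\otimes h)\|$ over elements $h$ representing near-norming functionals of $CB(OH)$, and such $h$ can be manufactured — after a matrix amplification absorbing $B(K)$ and using the cyclic unit vector supplied by $\tau(1)=1$ to provide rank-one ``states'' in $\cl M$ — as iterated $\dot\otimes$-products of copies of $f$, $f^{*}$ and of those states, of length growing with $p$ and with every factor of $(p)$-norm $\le1$. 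This step is where one must invoke the structural facts about $CB(OH)$ developed in the remainder of \S\ref{sec9bis}: the operator-space form of self-duality, the composition estimates (Lemma~\ref{mn0} and Remark~\ref{rcb}), and especially the ``operator Gelfand'' identity $\|g\|_{B\otimes_{\min}CB(OH)}^{2}=\|g^{*}\dot\otimes g\|_{\overline B\otimes_{\min}B\otimes_{\min}CB(OH)}$ — which $\Lambda_\infty$ satisfies automatically, since $\|g\|_{(2^{k})}^{2}=\|g^{*}\dot\otimes g\|_{(2^{k-1})}$ passes to the limit as $\|g\|_{(\infty)}^{2}=\|g^{*}\dot\otimes g\|_{(\infty)}$. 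Iterating this identity $k$ times reduces the matter to the special ``positive'' elements $(f^{*}\dot\otimes f)^{\dot\otimes 2^{k-1}}$, whose $B\otimes_{\min}CB(OH)$-norm is then to be matched against $\|f\|_{(2^{k})}^{2^{k}}$ as $k\to\infty$.

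The main obstacle, as the outline makes clear, is exactly this last point: everything in the ``$\le$'' half and in the reductions is a routine combination of Lemma~\ref{mn0}, the self-duality of $OH$, and the defining formula for $\|\cdot\|_{(p)}$, but proving that the variational lower bounds $\|\widehat\tau(f\dot\otimes g_2\dot\otimes\cdots)\|$ over test elements of controlled $\Lambda_\infty$-norm are \emph{asymptotically sharp} — i.e.\ exhaust $\|f\|_{B(K)\otimes_{\min}CB(OH)}$ as $p\to\infty$ — is the heart of the theorem and is where the finer geometry of $CB(OH)$ (its self-dual operator-space structure, the Gelfand-type identity, the functoriality of $E\mapsto\underline E$) has to do the work.
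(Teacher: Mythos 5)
Your ``$\le$'' half (that the identity $\underline{\cl M}\to\Lambda_\infty(\cl M,\tau)$ is completely contractive) is correct and is essentially the paper's argument: submultiplicativity of the composition product in $CB(OH)$ (Lemma \ref{mn0}, or the exact identity \eqref{gbis}), the equality $\|f^*\|=\|f\|$ from \eqref{g2}, and the fact that $\tau$, having norm $1$ on $\underline{\cl M}$, has c.b.\ norm $1$ as a functional. The problem is the reverse inequality, which you yourself flag as ``the heart of the theorem'' and leave unproved. What you propose there --- norming test families built from ``rank-one states'' supplied by the cyclic vector and from iterated products of $f$ and $f^*$ --- is not what works, and a general von Neumann algebra need not contain anything resembling rank-one elements for $H$.

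The two missing ideas are these. First, by Corollary \ref{mn01} the o.s.s.\ of $\underline{\cl M}$ does not depend on the embedding $\cl M\subset B(H)$, so one may take $H=L_2(\tau)$ with $\cl M$ acting by left multiplication; then $OH=L_2(\tau)_{oh}$, the $(2)$-norm on $B'\otimes\cl M$ is the $OH$-norm, and by \eqref{cb+} the quantity $\|f\|_{B\otimes_{\min}\underline{\cl M}}$ becomes exactly $\sup\{\|f\dot\otimes\xi\|_{(2)}:\ \xi\in B'\otimes\cl M,\ \|\xi\|_{(2)}\le1\}$ --- no separate ``norming functional'' construction is needed. Second, for a fixed such $\xi$ one proves by induction on $m$ (with $p=2^m$), using the Haagerup--Cauchy--Schwarz inequality \eqref{eq8.0} at each step, the estimate
\[
\|f\dot\otimes\xi\|_{(2)}\ \le\ \bigl\|\hat\tau\bigl((f^*\dot\otimes f)^{\dot\otimes p/2}\dot\otimes\xi\dot\otimes\xi^*\bigr)\bigr\|^{1/p}\ \le\ \|f\|_{(2p)}\,\|\xi\dot\otimes\xi^*\|_{(2)}^{1/p},
\]
and the point that makes the bound asymptotically sharp is not any fine geometry of $CB(OH)$ but simply that $\tau(1)=1$ forces $\|\xi\dot\otimes\xi^*\|_{(2)}<\infty$, so the correction factor $\|\xi\dot\otimes\xi^*\|_{(2)}^{1/p}\to1$ as $p\to\infty$, giving $\|f\dot\otimes\xi\|_{(2)}\le\sup_{p}\|f\|_{(p)}$. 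Without the passage to the GNS realization and this iterated Cauchy--Schwarz estimate with its vanishing correction, your outline does not close.
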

 The proof of this Theorem will require some observations
 about the space  $CB(OH)$ that may be of independent  interest.

 The following rather striking identity \eqref{g} appears as analogous to Gelfand's axiom 
  (namely $\|x\|^2=\|x^*x\|$) for $C^*$-algebras. It seems to express that 
  $CB(OH)$ is an o.s. analogue of a $C^*$-algebra...
  \begin{thm}   Let us denote simply by $\cl B$ the operator
  space $CB(OH)$. (Note that $\cl B$ is isometric to $B(H)$ as a Banach space.)
  For any $f \in B\otimes \cl B$ we have
\begin{equation}\label{g}\|f\|_{  B\otimes_{\min} \cl B }^2=
 \|  f^*\dot\otimes f \|_{  \bar B\otimes_{\min} B\otimes_{\min} \cl B}= \|  f\dot\otimes f^* \|_{ B\otimes_{\min} \bar B\otimes_{\min} \cl B}.\end{equation}
Moreover, we also have
 \begin{equation}\label{g2}\|f ^*\|_{  \bar B \otimes_{\min} \cl B }  = \|f \|_{  B\otimes_{\min} \cl B}.\end{equation}
  \end{thm}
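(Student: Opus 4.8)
The plan is to deduce both identities from the already-established properties of the composition product on $CB$-spaces (Lemma \ref{mn0}) together with the self-duality of $OH$, exactly paralleling the way $\Lambda_p$ was built out of Haagerup's Cauchy--Schwarz inequality. First I would set up the basic dictionary: writing $f=\sum b_j\otimes x_j\in B\otimes\cl B$, identify $f$ with a c.b.\ map $\tilde f\colon OH\to B\otimes_{\min}OH$ with $\|f\|_{B\otimes_{\min}\cl B}=\|\tilde f\|_{cb}$ (via \eqref{cb+}), and observe that $f^*\in\bar B\otimes_{\min}\cl B$ corresponds, under the self-duality $\overline{OH}\cong OH^*$, to the ``adjoint'' map. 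Concretely, using $CB(OH)=\overline{OH}\otimes_{\min}OH=OH\otimes_{\min}\overline{OH}$, the operation $x\mapsto x^*$ on $\cl B$ is nothing but the flip of the two tensor legs composed with the canonical conjugations; this flip is a complete isometry because the minimal tensor norm is symmetric, which immediately gives \eqref{g2}: $\|f^*\|_{\bar B\otimes_{\min}\cl B}=\|f\|_{B\otimes_{\min}\cl B}$.

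For \eqref{g} the inequality ``$\ge$'' is the easy direction: by Lemma \ref{mn0} (joint complete contractivity of composition $CB(OH)\times CB(OH)\to CB(OH)$) we have
\[
\|f^*\dot\otimes f\|_{\bar B\otimes_{\min}B\otimes_{\min}\cl B}\le\|f^*\|_{\bar B\otimes_{\min}\cl B}\,\|f\|_{B\otimes_{\min}\cl B}=\|f\|_{B\otimes_{\min}\cl B}^2,
\]
the last step by \eqref{g2}; similarly for $f\dot\otimes f^*$. The substance is the reverse inequality $\|f\|_{B\otimes_{\min}\cl B}^2\le\|f^*\dot\otimes f\|$. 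The natural route is to unwind what $f^*\dot\otimes f$ does: as a c.b.\ map it is $(\tilde f^*\otimes Id_B)\circ\tilde f\colon OH\to \overline{OH}\otimes_{\min}B\otimes_{\min}B$ (up to reordering of legs), and I would test this map on a single vector, i.e.\ evaluate $\langle(f^*\dot\otimes f)\xi,\eta\rangle$ for $\xi,\eta$ in a suitable (finite-dimensional) slice of $OH$. Because $OH$ is self-dual, evaluating the sesquilinear form attached to $\tilde f^*$ against $\tilde f(\xi)$ recovers $\|\tilde f(\xi)\|^2$ in the $B\otimes_{\min}OH$ norm; taking the supremum over unit vectors $\xi$ and using that the $cb$-norm of $\tilde f$ is computed as such a supremum (the $OH$ norm being ``Hilbertian'' in the strong operator-space sense of \eqref{oh}) yields $\|f\|_{B\otimes_{\min}\cl B}^2$ on the nose. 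In more computational terms, writing $x_j\in\cl B=\overline{OH}\otimes_{\min}OH$ as operators on $OH$ and using \eqref{oh} for the $OH$ legs, the quantity $\|f^*\dot\otimes f\|$ becomes $\sup\|\sum_{j,k}\bar b_j\otimes b_k\otimes x_j^*x_k\|$-type expression that collapses to $\|\sum_j b_j\otimes x_j\|_{\min}^2$ after identifying $\overline{OH}\otimes_{\min}OH$ with the operators and pairing.

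I expect the main obstacle to be bookkeeping rather than a conceptual gap: keeping straight the several canonical identifications $CB(OH)\cong\overline{OH}\otimes_{\min}OH\cong OH\otimes_{\min}\overline{OH}$, the conjugation $b\mapsto\bar b$ on the $B$-leg, and the permutations of tensor factors hidden in the notation $f^*\dot\otimes f$ (recall the $\prec$-conventions of \S\ref{sec2}), so that the final contraction of legs genuinely produces $\|f\|^2$ and not some permuted variant. In particular one must check that the ``flip'' implementing $x\mapsto x^*$ really is completely isometric on $\cl B$ (not merely isometric), which is where the symmetry of $\otimes_{\min}$ and the self-duality of $OH$ are both used; once that is nailed down, \eqref{g2} is immediate and \eqref{g} follows by combining the easy Cauchy--Schwarz bound from Lemma \ref{mn0} with the sharp reverse estimate obtained by testing on unit vectors of $OH$.
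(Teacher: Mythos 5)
Your plan lands, in its ``computational'' form, on essentially the paper's own proof: the paper writes $f=\sum b_{ij}\otimes T_i\otimes\bar T_j$ using $CB(OH_n)=OH_n\otimes_{\min}\overline{OH_n}$, obtains \eqref{g2} from the flip of the two $OH$ legs (conjugation plus the symmetry of $\otimes_{\min}$), and gets \eqref{g} as an \emph{exact identity} by grouping $f=\sum_i y_i\otimes T_i$ with $y_i=\sum_j b_{ij}\otimes\bar T_j$ and applying the defining identity \eqref{oh} of $OH$, which yields $\|f\|^2=\|\sum_i \bar y_i\otimes y_i\|=\|f^*\dot\otimes f\|$ in one stroke, with no need to split into two inequalities. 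Two caveats on your version. First, the identification $\cl B=\overline{OH}\otimes_{\min}OH$ that you use throughout is only valid when $H$ is finite dimensional; for infinite-dimensional $H$ the space $\overline{OH}\otimes_{\min}OH$ is a proper subspace of $CB(OH)$, so a reduction step is genuinely needed. The paper supplies it: compress $f$ to an increasing net of finite-dimensional subspaces $H_i$ and use the homogeneity of $OH$ to check that each side of \eqref{g} and \eqref{g2} is the supremum over $i$ of the corresponding finite-dimensional quantity. Your parenthetical ``finite-dimensional slice'' gestures at this but does not carry it out. Second, your ``test on unit vectors'' route for the reverse inequality is imprecise as stated: $\|f\|_{B\otimes_{\min}\cl B}$ is the cb-norm of $\tilde f\colon OH\to B\otimes_{\min}OH$ via \eqref{cb+}, and this is \emph{not} the supremum of $\|\tilde f(\xi)\|$ over unit vectors $\xi\in OH$; one must test against matrix-valued elements $\xi\in B'\otimes OH$ (this is precisely what the paper does later, in the proof of Theorem \ref{mn}). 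Fortunately the purely tensorial computation via \eqref{oh} that you sketch at the end sidesteps this entirely and is the correct (and the paper's) argument; I would drop the vector-testing heuristic and keep only that, preceded by the finite-dimensional reduction.
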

 \begin{proof} 
 Let $H_i\subset H$ be an increasing net of finite dimensional subspaces
 with dense union.
 Assuming $f=\sum b_j\otimes x_j$,
 let $f(i)=\sum b_j\otimes P_{H_i} {x_j}_{|H_i}\in B\otimes CB(OH_i)$.
 Then, using the homogeneity of $OH$
 in the sense of \cite[p. 19]{P3} or \cite{P4}, one checks that each side of either \eqref{g} or  \eqref{g2} is equal to the supremum
 over $i$ of the expression obtained after substituting $f_i$ for $f$.
 Thus it suffices to prove \eqref{g} or  \eqref{g2} when $\dim(H)<\infty$.\\
 In that case, denoting by $T_j$ an orthonormal basis of
 $OH_n$, and using the identity $CB(OH_n)=OH_n \otimes_{\min} \overline{OH_n}$, we may write  any $f\in B\otimes \cl B$  
 as $f=\sum b_{ij}\otimes T_i \otimes  \bar T_j$ with $b_{ij}\in B$,
 and $\|f\|_{  B\otimes \cl B }=\|\sum b_{ij}\otimes T_i \otimes \bar  T_j\|_{B\otimes_{\min}  {OH_n}\otimes_{\min} \overline{OH_n}}$.
 Using $\|x\|=\|\bar x\|$ for any operator $x$,
 and permuting the second and third factors, we have then obviously (the norm being the min-norm)
 $$\|\sum b_{ij}\otimes T_i \otimes   \bar T_j\|=\|\sum \bar b_{ij}\otimes  \bar  T_i \otimes T_j\|=\|\sum \bar b_{ij}\otimes T_j  \otimes  \bar  T_i \|=\|\sum \bar b_{ji}\otimes T_i \otimes\bar  T_j\|,
 $$
 and this is clearly equivalent to \eqref{g2}.\\
 Let $y_i=\sum_j b_{ij}\otimes \bar T_j$. By \eqref{oh},
  we have
$ \|\sum b_{ij}\otimes  T_i \otimes \bar T_j\|_{B\otimes_{\min} {OH_n}\otimes_{\min} \overline{OH_n}}=\| \sum y_i \otimes T_i\|=\| \sum \bar y_i  \otimes   y_i\|^{1/2}=\| \sum  y_{jk} \otimes  T_j \otimes \bar T_k\| ^{1/2} $
where $y_{jk}=\sum_i \bar b_{ij} \otimes   b_{ik}$. Using again
the identity $CB(OH_n)={OH_n}\otimes_{\min}  \overline{OH_n}$ 
we find $  f^* \dot\otimes f =  \sum  y_{jk} \otimes T_j \otimes \bar  T_k$.
Thus, we have $\|f\|=\|f^*\dot\otimes f\|^{1/2}$, and 
  by \eqref{g2} we obtain \eqref{g}.
\end{proof}
\begin{rem} Note that after iteration,  for any $p=2^m$ ($m\ge 1$),  \eqref{g} yields
 \begin{equation}\label{gbis}
 \|f\|_{  B\otimes_{\min} \cl B }^p=
 \|  f^*\dot\otimes f \dot\otimes f^*\dot\otimes f\cdots \|_{ \bar B\otimes_{\min}   B\otimes_{\min} \cdots \otimes_{\min} \cl B}.
 \end{equation}
 \end{rem}
\begin{cor}\label{cor00} Let $H_I=\oplus_{i\in I} H_i$ be an orthogonal decomposition of  a Hilbert space $H_I$.
We have then  a completely isometric embedding
$$ \oplus_{i\in I} CB(OH_i)\subset CB(OH_I).$$
\end{cor}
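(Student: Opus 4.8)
I want to prove Corollary \ref{cor00} by realizing the embedding as block‑diagonalization with respect to $OH_I=\bigoplus_{i\in I}^{2}OH_{H_i}$ and checking that it is completely isometric with the help of the Gelfand‑type identity \eqref{g}.

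First the set‑up and the easy half. Write $OH_i:=OH_{H_i}$, regarded as a subspace of $OH_I$ (completely isometrically, by the homogeneity of $OH$), let $\iota_i\colon OH_i\hookrightarrow OH_I$ be the inclusion and $P_i\colon OH_I\to OH_i$ the corestriction of the orthogonal projection onto $H_i$; both are complete contractions. Let $\Phi\colon\bigoplus_{i\in I}CB(OH_i)\to CB(OH_I)$ send $(u_i)_i$ to the block‑diagonal operator $\bigoplus_i\iota_iu_iP_i$, let $\Phi_i\colon CB(OH_i)\to CB(OH_I)$ be $u\mapsto\iota_iuP_i$, and let $C_i\colon CB(OH_I)\to CB(OH_i)$ be $T\mapsto P_iT\iota_i$. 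By Remark \ref{rcb} every $C_i$ and every $\Phi_i$ is completely contractive, and since $C_i\Phi_i=\mathrm{id}$ each $\Phi_i$ is in fact a complete isometry $CB(OH_i)\hookrightarrow CB(OH_I)$. Moreover, for $B=B(\ell_2)$ and $f=\bigoplus_i f_i$ with $f_i\in B\otimes CB(OH_i)$ one has $(\mathrm{id}_B\otimes C_i)(\Phi f)=f_i$, whence, $C_i$ being completely contractive, $\|\Phi f\|_{B\otimes_{\min}CB(OH_I)}\ge\sup_i\|f_i\|_{B\otimes_{\min}CB(OH_i)}$.

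The real content is the reverse inequality $\|\Phi f\|_{B\otimes_{\min}CB(OH_I)}\le\sup_i\|f_i\|_{B\otimes_{\min}CB(OH_i)}$. Assume first that $I$ is finite. Apply \eqref{gbis} with $p=2^m$ both to $\bigoplus_i f_i$ inside $CB(OH_I)$ and to each $f_i$ inside $CB(OH_i)$: since the operation $\dot\otimes$ (which is composition on the $CB$‑factor) and the involution $*$ both respect the block‑diagonal decomposition, $(\bigoplus_i f_i)^{*}\dot\otimes(\bigoplus_i f_i)\dot\otimes\cdots=\bigoplus_i G_i$ with $G_i=f_i^{*}\dot\otimes f_i\dot\otimes\cdots$, and $\|G_i\|_{(\bar B\otimes_{\min}B\otimes_{\min}\cdots)\otimes_{\min}CB(OH_i)}=\|f_i\|^{p}_{B\otimes_{\min}CB(OH_i)}$. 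Using the complete isometry $CB(OH_i)\hookrightarrow CB(OH_I)$ established above together with the functoriality of $\otimes_{\min}$, and the triangle inequality,
\[
\Bigl\|\textstyle\bigoplus_i f_i\Bigr\|^{p}_{B\otimes_{\min}CB(OH_I)}=\Bigl\|\textstyle\bigoplus_i G_i\Bigr\|\le\sum_i\|G_i\|=\sum_i\|f_i\|^{p}_{B\otimes_{\min}CB(OH_i)}\le(\#I)\bigl(\textstyle\sup_i\|f_i\|\bigr)^{p},
\]
so $\|\bigoplus_i f_i\|\le(\#I)^{1/p}\sup_i\|f_i\|$, and letting $p=2^m\to\infty$ gives the desired bound. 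The general case follows by a routine approximation, $\bigoplus_i f_i$ being the norm limit of its finite truncations. Combining with the previous paragraph, $\Phi$ is a complete isometry onto the block‑diagonal operators, which is exactly the statement of the corollary.

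The one delicate point is thus the upper bound, and it is instructive to see why it cannot be obtained naively: one would like to represent all the $OH_{H_i}$ simultaneously as block‑diagonal subspaces of some $B(\mathcal K)$ and then argue that block‑diagonal positive operators have norm equal to the supremum of the blocks; but this is impossible, since for $x\in OH_{H_i}$, $y\in OH_{H_j}$ with $i\ne j$ one has $\|x+y\|=(\|x\|^{2}+\|y\|^{2})^{1/2}\ne\max(\|x\|,\|y\|)$. The passage to high powers via \eqref{gbis} is precisely what rescues the situation: after raising to the power $p=2^m$ the crude inequality $\|\bigoplus_i G_i\|\le\sum_i\|G_i\|$ costs only a factor $(\#I)^{1/p}$, which tends to $1$, whereas at the first power the same estimate would be useless. (This is also consistent with the description one gets a posteriori from Theorem \ref{mn}.)
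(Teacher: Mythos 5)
Your proof is correct and follows essentially the same route as the paper: both arguments hinge on the Gelfand-type identity \eqref{g}/\eqref{gbis} to raise the norm to a power $p=2^m$, absorb a crude factor of order $|I|$ (you via the triangle inequality over the blocks, the paper via the bound $\|u\|_{cb}\le|I|$ from Lemma \ref{mn0}), and then let $m\to\infty$, with the converse inequality coming from the completely contractive coordinate projections as in Remark \ref{rcb}. The only differences are cosmetic.
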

 \begin{proof}
 Let $u:\  \oplus_{i\in I} CB(OH_i)\to CB(OH_I)$ denote this embedding.
   It is easy to reduce the proof to the finite case
  so we assume $|I|<\infty$. Since the coordinatewise inclusions and  projections  relative to $O  H_I$ are all
completely contractive, it is easy to check
using Lemma \ref{mn0}  that $\|u\|_{cb}\le |I|<\infty$.
  Consider now $f\in B\otimes_{\min}  (\oplus_{i\in I} CB(OH_i))$ and let $g=(Id_B \otimes u)(f)\in B\otimes_{\min}  CB(OH_I)$. 
We need to show that $\|g\|= \|f\|$. 
Note that   $(Id_{\bar B\otimes B\otimes\cdots} \otimes u)(f^*\dot\otimes f)^{\dot\otimes m})=(g^*\dot\otimes g)^{\dot\otimes m}$. Thus,
by  \eqref{g5} and \eqref{g} 
we have for any integer $m$ 
$$\|g\| = \|(g^*\dot\otimes g)^{\dot\otimes m}\| ^{1/2m}  \le (|I|\|(f^*\dot\otimes f)^{\dot\otimes m}\| )^{1/2m}=|I|^{1/2m} \|f\|$$
so that letting $m\to \infty$ we obtain $\|g\|\le \|f\|$.
Since the converse inequality follows easily  from Remark \ref{rcb} 
applied to the coordinate projections, we have equality.
 \end{proof}
 \begin{thm}\label{mn00} Let $E\subset B(H)$ be any operator space. Let us denote again
 by $\underline E$ the operator space obtained by inducing on $E$ the o.s.s. of $CB(OH)$.
 Let $F\subset B(K)$ be another  operator space.
 Then for any $u\in CB(E,F)$ we have
 $$\|u\|_{CB(\underline E,\underline F)}\le \|u\|_{CB(  E,  F)}.$$
 In particular, if $u:\ E\to F$ is completely isometric,
 then $u:\ \underline  E\to \underline  F$ also is.
 \end{thm}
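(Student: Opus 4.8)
The plan is to reduce, via the Arveson--Wittstock extension theorem, to the case $E=B(H)$, $F=B(K)$; then to reduce the target to a matrix algebra; then to reduce to \emph{normal} completely contractive maps, for which a normal Stinespring factorization together with the homogeneity of $OH$, Remark \ref{rcb} and Corollary \ref{cor00} finishes the job.

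\emph{Step 1: reductions.} We may assume $\|u\colon E\to F\|_{cb}\le1$, and must show $\|\widehat u\colon B\otimes_{\min}\underline E\to B\otimes_{\min}\underline F\|\le1$ for $B=B(\ell_2)$. First, directly from the definition of $\underline{\phantom{E}}$ (cf.\ Theorem \ref{mn}), $\underline{B(H)}=CB(OH)$ completely isometrically, because $x\mapsto(\xi\mapsto x\xi)$ is a linear bijection $B(H)\to CB(OH,OH)$ (surjectivity and $\|x\|_{cb(OH)}=\|x\|$ follow from the homogeneity of $OH$). Hence $\underline E$ is, by definition, the subspace of $\underline{B(H)}=CB(OH)$ carried by $E$, and similarly $\underline F\subset CB(OK)$. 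By Arveson--Wittstock, $u$ regarded as a map into $B(K)$ extends to $\tilde u\colon B(H)\to B(K)$ with $\|\tilde u\|_{cb}\le1$; since $\underline u=\underline{\tilde u}|_{\underline E}$ and $\underline{\tilde u}(\underline E)\subset\underline F$, it suffices to prove that $\underline{\tilde u}\colon CB(OH)\to CB(OK)$ is completely contractive. (The ``in particular'' clause then follows by applying the inequality to a completely isometric $u$ and to $u^{-1}\colon u(E)\to E$, noting $\underline{u^{-1}}=(\underline u)^{-1}$.)

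\emph{Step 2: reduce to a finite-dimensional target, then to normal maps.} Exactly as in the proof of \eqref{g} (using the homogeneity of $OH$), for any $g\in B\otimes_{\min}CB(OK)$ one has $\|g\|=\sup_{K_0}\|g^{(K_0)}\|_{B\otimes_{\min}CB(OK_0)}$, the supremum over finite-dimensional $K_0\subset K$, where $g^{(K_0)}$ compresses each entry by $P_{K_0}$; moreover $g\mapsto g^{(K_0)}$ is completely contractive by Remark \ref{rcb}, applied to the Hilbert-space contractions $OK\to OK_0\hookrightarrow OK$. Applying this to $g=\widehat{\underline{\tilde u}}(f)$, it suffices to bound, for each finite-dimensional $K_0$, the norm of $\widehat{\underline v}(f)$ in $B\otimes_{\min}CB(OK_0)$, where $v:=C_{K_0}\circ\tilde u\colon B(H)\to B(K_0)=M_m$ is completely contractive ($C_{K_0}$ the compression by $P_{K_0}$). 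Now a standard weak-$*$ density argument (normal functionals are weak-$*$ dense in $B(H)^*$, since they separate points of $B(H)$) shows that $v$ is a point-norm limit of \emph{normal} completely contractive maps $v_\lambda\colon B(H)\to M_m$; as $M_n(CB(OK_0))$ is finite-dimensional, $\widehat{\underline{v_\lambda}}(f)\to\widehat{\underline v}(f)$ there, so it is enough to treat normal $v$.

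\emph{Step 3: the normal case.} A normal completely contractive $v\colon B(H)\to M_m$ has the form $v(x)=a^*(x\otimes 1_{\cl L_0})b$ for a Hilbert space $\cl L_0$ and contractions $a,b\colon\mathbb C^m\to H\otimes_2\cl L_0$ (normal Stinespring/Wittstock factorization). Thus $\underline v$ is the composition $CB(OH)\to CB(O(H\otimes_2\cl L_0))\to CB(OK_0)$, the first map being $\alpha\colon x\mapsto x\otimes1_{\cl L_0}$ and the second $x\mapsto a^*xb$. The second map is completely contractive by Remark \ref{rcb}, because $a^*$ and $b$ are Hilbert-space contractions, hence complete contractions between the corresponding $OH$'s by the homogeneity of $OH$. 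For $\alpha$: fixing an orthonormal basis $(\eta_\alpha)$ of $\cl L_0$ gives the orthogonal decomposition $H\otimes_2\cl L_0=\bigoplus_\alpha(H\otimes\eta_\alpha)$, under which $x\otimes1_{\cl L_0}=\bigoplus_\alpha x$, while each $O(H\otimes\eta_\alpha)$ is completely isometric to $OH$ (isometric Hilbert spaces). Hence $\alpha$ factors as the diagonal embedding of $CB(OH)$ into the $\ell_\infty$-direct sum $\bigoplus_\alpha CB(OH)$ --- a complete isometry --- followed by the complete isometry $\bigoplus_\alpha CB(O(H\otimes\eta_\alpha))\subset CB(O(H\otimes_2\cl L_0))$ of Corollary \ref{cor00}. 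So $\alpha$ is completely isometric, $\underline v$ is completely contractive, and the proof is complete.

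\emph{Main obstacle.} The content is carried by Step 3 together with Corollary \ref{cor00}, which is clean; the genuine difficulty is Step 2. A general completely bounded $u$ (or its extension $\tilde u$) need not be normal, so one cannot pass to finite dimensions by applying $u$ to finite-rank compressions of the source (indeed $\tilde u$ need not be continuous for the strong-operator topology); this is precisely why one must first compress the \emph{target} into a matrix algebra and only then invoke the density of normal maps. I expect carrying out this reduction rigorously --- including the ``locality'' identity $\|g\|=\sup_{K_0}\|g^{(K_0)}\|$ for $CB(OH)$-valued norms, which again rests on the homogeneity of $OH$ as in the proof of \eqref{g} --- to be the main technical point; the remainder is a routine assembly of Remark \ref{rcb}, Corollary \ref{cor00}, Wittstock's extension and factorization theorems, and the homogeneity/self-duality of $OH$.
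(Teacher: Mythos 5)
Your proposal is correct and follows essentially the same route as the paper: Wittstock extension to $B(H)\to B(K)$, reduction to a finite-dimensional target, passage to normal maps by weak-$*$ density of the predual, and then the normal Stinespring factorization $u(x)=V\rho(x)W$ handled by combining Remark \ref{rcb} (with the homogeneity of $OH$) for the multiplication part and Corollary \ref{cor00} for the ampliation part. The only difference is that you spell out some of the reduction steps (the locality identity for the $CB(OK)$-valued norm and the identification $\underline{B(H)}=CB(OH)$) in more detail than the paper does.
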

\begin{proof} We may clearly assume $F=B(K)$ and $\underline F=CB(OK)$ and by the same argument
as in the preceding proof, we may assume $\dim(K)=n<\infty$. Assume $\|u\|_{CB(  E,  F)}\le  1$. Then
$u$ extends to a  c.b. map $\hat u:\ B(H) \to B(K)$ with the same cb-norm.
Since $M_n(B(H)^*)= M_n(B(H)_*)^{**}$ isometrically (see e.g. \cite[p. 75]{ER}) $\hat u$ is then a point norm limit
of normal maps with cb-norm $\le 1$, so we may assume that $u$ is normal on $E=B(H)$.
Then (see \cite[p. 45]{SS}) there is a factorization of $u$
of the form $u(x)=V \rho(x) W$ with $\|V\|\le 1, \|W\|\le 1$ where $\rho$ is an ``ampliation",
i.e.   $\rho$ takes its values in $B(\oplus_{i\in I} H_i)$ for some set $I$ with $H_i=H$ for all $i\in I$
and $\rho(x)=\oplus_{i\in I} \rho_i(x)$ with $\rho_i(x)=x$ for all $i\in I$. This reduces the Lemma
to the case when $u$ is an ampliation 
and to the case when $u$ is of the form  $u(x)=V x W$. 

Let us first assume  $u(x)=V x W$ with $V:\ H\to K$ and $W:\ K\to H$ of norm 1.
By the homogeneity of $OH$ we know that the cb norm of $V:\ OH\to OK$ is 1, and similarly
for $W:\ OK\to OH$. Then by Remark \ref{rcb}  $\|u\|_{CB(CB(OH),CB(OK))}\le 1$.   \\
We now assume that $u$ is an ampliation i.e. 
$u=u_I$ where $u_I(x)=\oplus_{i\in I} u_i(x)\in B(\oplus_{i\in I} H_i)$
with $H_i=H$ and  $u_i(x)=x$ for all $i\in I$. Let $ H_I= \oplus_{i\in I} H_i$. 
By Corollary \ref{cor00} $u_I$ is a complete isometry from $CB(OH)$ to $CB(OH_I)$.
Since both multiplications and ampliations  have been checked,
the proof of the first assertion is complete. The second assertion is then
 immediate.
 \end{proof}
 \begin{cor}\label{mn01} Let $E\subset B(H)$ be any operator space.
 The o.s.s.  of $\underline E$   (induced on $E$ by that of $CB(OH)$)
 is independent of the completely isometric embebdding $E\subset B(H)$,
 i.e. it
 depends only on the o.s.s. of $E$.
 \end{cor}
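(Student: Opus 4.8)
The plan is to reduce the statement to Theorem~\ref{mn00}, of which it is essentially a formal consequence. Suppose we are given two completely isometric embeddings $i_1\colon E\to B(H)$ and $i_2\colon E\to B(K)$ of the same operator space $E$, and write $E_1=i_1(E)\subset B(H)$ and $E_2=i_2(E)\subset B(K)$ for the two resulting copies, which carry the same abstract operator space structure by hypothesis. Then $u=i_2\circ i_1^{-1}\colon E_1\to E_2$ is a complete isometry. Let $\underline{E_1}\subset CB(OH)$ and $\underline{E_2}\subset CB(OK)$ be the operator spaces associated to $E_1$ and $E_2$ as in Theorem~\ref{mn00}; pulling their operator space structures back to $E$ along $i_1$ and $i_2$ produces the two a priori different o.s.s.\ on $E$ whose coincidence we must prove.

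By the last assertion of Theorem~\ref{mn00}, applied to the complete isometry $u$, the same underlying linear map $u$ is a complete isometry from $\underline{E_1}$ onto $\underline{E_2}$. Since $u\circ i_1=i_2$ by construction, this says exactly that the o.s.s.\ on $E$ obtained from $\underline{E_1}$ via $i_1$ agrees with the one obtained from $\underline{E_2}$ via $i_2$. Hence $\underline E$ is well defined, depending only on the o.s.s.\ of $E$ and not on the chosen completely isometric embedding.

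I do not expect any genuine obstacle here once Theorem~\ref{mn00} is available; the verification is routine, and the only points worth recording are formal ones. First, $\underline E$ has the same underlying Banach space as $E$: the canonical map $B(H)\to CB(OH)$ sending an operator to left multiplication by it is isometric at the Banach space level, because $OH$ and $H$ coincide as Hilbert spaces, so the construction genuinely decorates $E$ with a new o.s.s.\ rather than a new norm. Second, Theorem~\ref{mn00} is stated for operator spaces $E\subset B(H)$ and $F\subset B(K)$ with possibly different Hilbert spaces $H$ and $K$, which is precisely the generality needed to compare two unrelated embeddings. One may also observe that the same argument, applied to arbitrary complete contractions in place of complete isometries, simply re-expresses the inequality $\|u\colon\underline E\to\underline F\|_{cb}\le\|u\colon E\to F\|_{cb}$ of Theorem~\ref{mn00} as the statement that the assignment $E\mapsto\underline E$ is functorial and does not increase cb-norms.
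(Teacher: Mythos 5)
Your argument is correct and is exactly the intended one: the paper states this corollary without proof as an immediate consequence of the final assertion of Theorem~\ref{mn00}, and your reduction via the complete isometry $u=i_2\circ i_1^{-1}$ (applied together with its inverse to get a completely isometric isomorphism $\underline{E_1}\to\underline{E_2}$) is precisely that deduction. The additional formal remarks (that the underlying norm is unchanged by homogeneity of $OH$, and that Theorem~\ref{mn00} already allows different ambient Hilbert spaces) are accurate and harmless.
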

 
\begin{proof}[Proof of Theorem \ref{mn}]
We first give a simple argument for the special case 
when $\cl M=M_n$ equipped with its normalized trace $\tau_n$. We will show that $\Lambda_\infty(M_n,\tau_n)$ can be identified
 completely isometrically with  $CB(OH_n)$ 
 for any $n\ge 1$. 
We first claim that the identity from $\overline{OH_n} \otimes OH_n$ to itself
induces a mapping $V_n:\ \overline{OH_n} \otimes_{h} OH_n
\to \overline{OH_n} \otimes_{\min}OH_n$
such that $\|V_n\|_{cb} \le 1$ and $\|V^{-1}_n\|_{cb} \le n^{1/2}$.
By the minimality of the minimal tensor product the first assertion is obvious.
To check the second one,
recall the  identity map on $n$-dimensional Hilbert space
defines an isomorphism $u_n:\ R_n\to OH_n$ such that
$\|u_n\|_{cb}=\|u^{-1}_n\|_{cb}=n^{1/4}$ (see e.g. \cite[p. 219]{P6}).
Therefore, we have a factorization of $V^{-1}_n$ as follows
$$\overline{OH_n} \otimes_{\min}OH_n {\buildrel {Id\otimes u^{-1}_n}\over \to} 
\overline{OH_n} \otimes_{\min}R _n=\overline{OH_n} \otimes_{h}R _n
{\buildrel {Id\otimes u_n}\over \to}  \overline{OH_n} \otimes_{h} OH_n,$$
where we used the identity $E\otimes_{\min} R_n=E\otimes_{h} R_n$
for which we refer e.g. to \cite[p. 95]{P6}.
From this follows  $\|V^{-1}_n\|_{cb} \le \|u^{-1}_n\|_{cb}\|u_n\|_{cb}   =n^{1/2}$.\\
More explicitly, recall that for any pair of Hilbert spaces $H,K$ we have (see \cite[Cor. 2.12]{P3})
$$OH \otimes_h OK= O(H\otimes_2 K);$$
in particular,
$\Lambda_2(M_n,tr)$ is the same as $L_2(M_n,tr)=\overline{OH_n} \otimes_{h}OH _n$. Therefore, for any $g\in B\otimes \Lambda_2(M_n,tr)$ we have
 \begin{equation}\label{g3}  \|g\|_{B\otimes_{\min} CB(OH_n)}  \le  \|g\|_{B\otimes_{\min}\Lambda_2(M_n,tr)}\le n^{1/2}\|g\|_{B\otimes_{\min} CB(OH_n)}.
\end{equation}
Consider an even integer $p$ and $f\in B\otimes M_n$. Let
$g=f^*\dot\otimes f \dot\otimes  \cdots$ (here there are  $p/2$ factors equal to $f^*\dot\otimes f$).
We have by \eqref{g}
$$\|g\|^2_{\bar B\otimes_{\min}   \dots \otimes_{\min}CB(OH_n)}=\| g\dot\otimes g^* \|_{\bar B\otimes_{\min}   \dots \otimes_{\min}CB(OH_n)}.$$
We have also $g\dot\otimes g^*=f^*\dot\otimes f \dot\otimes \cdots$
(here there are  $p$ such factors) and hence,
assuming $p=2^m$ for some $m$ and using \eqref{gbis},
we find
 \begin{equation}\label{g4} \|g\|^2_{\bar B\otimes_{\min}   \dots \otimes_{\min}CB(OH_n)}=\|f\|^p_{B\otimes_{\min} CB(OH_n)}.\end{equation}
By definition of   $\Lambda_p$  we have 
$$\|f\|^p_{B\otimes_{\min} \Lambda_p(M_n,tr)}=\| g\|^2_{\Lambda_2(M_n,tr)} $$
and hence by \eqref{g3}  
$$   \|g\|^2_{B\otimes_{\min} CB(OH_n)}
  \le \|f\|^p_{B\otimes_{\min} \Lambda_p(M_n,tr)}\le  n\|g\|^2_{B\otimes_{\min} CB(OH_n)}.
$$
Then by \eqref{g4} we obtain
$$   \|f\|^p_{B\otimes_{\min} CB(OH_n)}
  \le \|f\|^p_{B\otimes_{\min} \Lambda_p(M_n,tr)}\le  n \|f\|^p_{B\otimes_{\min} CB(OH_n)},
$$
and, if we  take the $p$-th root and let  $p\to \infty$ this yields
$$ \|f\|_{B\otimes_{\min} CB(OH_n)}
  = \|f\|_{B\otimes_{\min} \Lambda_\infty(M_n,\tau_n)}.$$
We now consider the general case.
 Let $f\in B\otimes \cl M$. For any $p\in 2 \N$ we have $\|f\|_{(p)}^p=\hat \tau(f^*\dot\otimes f\cdots)$.
 As a linear form on $\cl M$, $\tau$ has norm 1, and hence c.b. norm equal to 1 on $\underline{ \cl M} $. Therefore
 $$\|f\|_{(p)}^p=\|\hat \tau(f^*\dot\otimes f\cdots)\| \le \|f^*\dot\otimes f\cdots\|_{\bar B\otimes\cdots B\otimes \underline{ \cl M} }=\|f^*\dot\otimes f\cdots\|_{\bar B\otimes\cdots B\otimes { \cl B} }$$
 and by \eqref{gbis} (assuming $p=2^m$) 
 $$ \|f^*\dot\otimes f\cdots\|_{\bar B\otimes\cdots B\otimes { \cl B} }=\|f\|^p_{B\otimes_{\min} \cl B}=\|f\|^p_{B\otimes_{\min} \underline{ \cl M}}.$$ 
 Thus we obtain $\|f\|_{(p)}\le \|f\|_{B\otimes_{\min} \underline{ \cl M}}$, and taking the supremum 
 over $p$ yields $$\|f\|_{B\otimes_{\min} \Lambda_\infty(\cl M,\tau)}\le \|f\|_{B\otimes_{\min} \underline{ \cl M}}.$$
 It remains to prove the converse inequality.
 
 Consider $f\in B\otimes \cl M$. Let $F:\ OH \to OH\otimes_{\min} B$ be the associated
 c.b. map (as in the proof of Lemma \ref{mn0}).
By Corollary  \ref{mn01} we may assume  that $H=L_2(\tau)$ and that the inclusion $\cl M\subset B(L_2(\tau))$
 is the usual realization of $ \cl M$  acting on $ L_2(\tau)$ by left multiplication.
 \\
 Let $B'$ be another copy of $B$. 
 Note that for any $\xi \in B'\otimes  {OH}$ we have
 $$\|\xi\|_{B'\otimes_{\min}  {OH}}=\|\xi\|_{(2)}.$$
 Moreover, if $\xi\in B'\otimes   {\cl M}\subset B'\otimes  {OH}$,
 then up to permutation of factors $(Id_{B'}\otimes F)(\xi) \approx f\dot\otimes \xi$.
Since $\| F\|_{cb}=\|Id_{B'}\otimes F\|_{cb}$,
   the definition of the o.s.s of $CB(OH)$ (see \eqref{cb+} above) shows that
 $$\|f\|_{B\otimes_{\min}  \underline{\cl M}  }=\| F\|_{cb}=
 \sup\{    \| f\dot\otimes \xi\|_{(2) }   \mid \xi \in B'\otimes  {\cl M},\     \|\xi\|_{(2)}\le 1\}.$$
 Fix $\xi\in B'\otimes   {\cl M}$ with $ \|\xi\|_{(2)}\le 1  $. To complete the proof
 it suffices to show
 that     
 $$ \| f\dot\otimes \xi\|_{(2) }  \le \sup_{p\in 2\N}\|f\|_{(p)}=\|f\|_{B\otimes_{\min} \Lambda_\infty(\cl M,\tau)}.$$
 To verify this, we claim that for any $p$ of the form $p=2^m$ we have
 \begin{equation}\label{g6}  \| f\dot\otimes \xi\|_{(2) }  \le \| \hat\tau ( (f^*\dot\otimes f)^{\dot\otimes p/2 } \dot\otimes \xi  \dot\otimes\xi^*)\|^{1/p}.\end{equation}
  This is easy to check by induction on $m$. Indeed, by  \eqref{eq7.1} (for $p=2$), equality holds
  in the case $m=1$  and if we assume our claim proved for a given value of $m$
  then the Haagerup-Cauchy-Schwarz inequality \eqref{eq8.0}
  shows that it holds also for $m+1$, because we may write (recall \eqref{eq8.1})
   $$\| \hat\tau ( (f^*\dot\otimes f)^{\dot\otimes p/2 } \dot\otimes \xi  \dot\otimes\xi^*)\|
   \le   \|  (f^*\dot\otimes f)^{\dot\otimes p/2 } \dot\otimes \xi \|_{(2) } \|\xi^*\|_{(2) } =
      \| \hat\tau ( (f^*\dot\otimes f)^{\dot\otimes p } \dot\otimes \xi  \dot\otimes\xi^*)\|^{1/2} \|\xi^*\|_{(2) },
     $$
     and by \eqref{eq7.3} $\|\xi^*\|_{(2) }\le 1$, so we obtain \eqref{g6} with  $2p$ in place of $p$. \\
  We now use the claim to conclude: By  \eqref{eq8.0} again (or by \eqref{eq7.2}) we have
 $$ \| \hat\tau ( (f^*\dot\otimes f)^{\dot\otimes p/2 } \dot\otimes \xi  \dot\otimes\xi^*)\|^{1/p}\le
 \|f\|_{(2p)} \|\xi  \dot\otimes\xi^*\|^{1/p}_{(2)} .$$
 Now $\xi\in B'\otimes   {\cl M}$ implies $\xi\dot\otimes\xi^*\in B'\otimes   \bar{B'}\otimes  {\cl M}$ and,  since 
 $\tau$ is finite, we have $\|\xi  \dot\otimes\xi^*\|_{(2)} <\infty$, therefore
 $\|\xi  \dot\otimes\xi^*\|^{1/p}_{(2)} \to 1$ when $p\to \infty$ and
 we deduce from \eqref{g6}
 $$  \| f\dot\otimes \xi\|_{(2) } \le  \limsup_{p\to \infty} \| \hat\tau ( (f^*\dot\otimes f)^{\dot\otimes p/2 } \dot\otimes \xi  \dot\otimes\xi^*)\|^{1/p}\le
 \limsup_{p\to \infty}   \|f\|_{(2p)}=  \sup_{p\in 2\N}\|f\|_{(p)},$$
 which completes the proof.
 \end{proof}
 
 \begin{rem}\label{rere} By the minimality of the min tensor product, we know
 that we have a completely contractive inclusion $OH^*\otimes_h OH \to OH^*\otimes_{\min} OH\subset CB(OH)$. Therefore, for any pair of sets $I,J$, in analogy with the inclusion of the Hilbert-Schmidt class into the bounded operators,  we have  a completely contractive inclusion
 $$OH(I\times J)\to CB(OH(I),OH(J)).$$
  \end{rem}
\section{Non-commutative Khintchine inequalities}\label{sec9}

We start by a fairly simple statement mimicking a classical commutative fact:

\begin{pro}\label{probuch} Let $p=2n$. Let  $\{x_k\}$ be   a sequence  in $L_p(\tau)$,
such that, for some constant $C$,
for any finite sum $f=\sum b_k \otimes x_k$ with coefficients $b_k$ in $B(H)$, we have
\begin{equation}\label{eq-buch}
\|f\|_{(p)}\le C  \left\|\sum b_k\otimes \bar b_k\right\|^{1/2}.
\end{equation}
Assume moreover that $\{x_k\}$ is orthonormal in $L_2(\tau)$ and $\tau(1)=1$.
Then    the closed span of $(x_k)$ in $\Lambda_p(\tau)$
is completely isomorphic to $OH$ and  completely complemented  in $\Lambda_p(\tau)$.
More precisely the orthogonal projection $P$ onto
this span satisfies $\|P\colon\ \Lambda_p \to \Lambda_p\|_{cb}\le C$.
\end{pro}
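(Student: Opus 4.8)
The hypothesis \eqref{eq-buch} is nothing but the statement that the map $u\colon OH\to\Lambda_p(\tau)$ sending the canonical basis vector $e_k$ to $x_k$ is completely bounded with $\|u\|_{cb}\le C$ (recall from \eqref{eq0.3} that the $OH$-norm of $\sum_k b_k\otimes e_k$ is $\|\sum_k b_k\otimes\bar b_k\|^{1/2}$). So the plan is to (i) produce a completely contractive inverse of $u$ on $\overline{\mathrm{span}}(x_k)$, which already yields the complete isomorphism with $OH$; (ii) observe that any contraction of the Hilbert space $L_2(\tau)$ is completely contractive for the $OH$-structure on it (this is $\Lambda_2(\tau)$, by \eqref{eq7.1} with $p=2$); and (iii) combine (i), (ii), and the contractive inclusion $\Lambda_p(\tau)\subset\Lambda_2(\tau)$ (Corollary \ref{cor2.4nc}, valid since $\tau(1)=1$) to estimate $\|P\|_{cb}$ on $\Lambda_p$.

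For (i), I would first note that, because $(x_k)$ is orthonormal in $L_2(\tau)$, a one-line computation with $\widehat\tau$ and the trace property gives $\|{\textstyle\sum}_k b_k\otimes x_k\|_{(2)}=\|{\textstyle\sum}_k b_k\otimes\bar b_k\|^{1/2}$ for every finite sum, i.e. the closed span of $(x_k)$ inside $\Lambda_2(\tau)$ is completely isometric to $OH$. Since $\Lambda_p(\tau)\subset\Lambda_2(\tau)$ completely contractively, $\|f\|_{(p)}\ge\|{\textstyle\sum}_k b_k\otimes\bar b_k\|^{1/2}$, so $u$ is (completely) bounded below by the $OH$-norm; combined with $\|u\|_{cb}\le C$ this proves $\overline{\mathrm{span}}(x_k)^{\Lambda_p}\cong OH$ with constant $C$. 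For (ii), writing a general element of $B(H)\otimes\Lambda_2(\tau)$ as $\sum_k b_k\otimes\xi_k$ for an orthonormal basis $(\xi_k)$ of $L_2(\tau)$ and a contraction $v$ with matrix $(v_{lk})$, the coefficients of $(\mathrm{id}\otimes v)f$ are $b'_l=\sum_k v_{lk}b_k$, and $\sum_l b'_l\otimes\bar b'_l=\sum_{k,j}(v^*v)_{jk}b_k\otimes\bar b_j$; since $1-v^*v=\sum_m w_m^*w_m\ge0$, expanding shows $\sum_k b_k\otimes\bar b_k-\sum_l b'_l\otimes\bar b'_l\in C_+$, i.e. $\sum_l b'_l\otimes\bar b'_l\prec\sum_k b_k\otimes\bar b_k$, so Lemma \ref{lem1.1} gives $\|(\mathrm{id}\otimes v)f\|_{(2)}\le\|f\|_{(2)}$. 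In particular $P$ and all its truncations $P_N$ (onto $\mathrm{span}(x_1,\dots,x_N)$) are completely contractive on $\Lambda_2(\tau)$.

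For (iii), given $g\in B(H)\otimes\Lambda_p(\tau)$, set $b_k=\widehat\tau(g\dot\otimes x_k^*)$, so $(\mathrm{id}\otimes P_N)g=\sum_{k\le N}b_k\otimes x_k$ is a finite sum; applying \eqref{eq-buch} to it and then (ii) together with Corollary \ref{cor2.4nc},
\[
\|(\mathrm{id}\otimes P_N)g\|_{(p)}\ \le\ C\,\Bigl\|{\textstyle\sum}_{k\le N}b_k\otimes\bar b_k\Bigr\|^{1/2}\ =\ C\,\|(\mathrm{id}\otimes P_N)g\|_{(2)}\ \le\ C\,\|g\|_{(2)}\ \le\ C\,\|g\|_{(p)} .
\]
Since $(\mathrm{id}\otimes P_N)g\to(\mathrm{id}\otimes P)g$ in $\Lambda_2(\tau)$ and $\Lambda_p(\tau)$ is reflexive ($1<p<\infty$) with weakly lower semicontinuous norm, letting $N\to\infty$ gives $\|(\mathrm{id}\otimes P)g\|_{(p)}\le C\|g\|_{(p)}$, i.e. $\|P\colon\Lambda_p\to\Lambda_p\|_{cb}\le C$, which with (i) finishes the proof. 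I expect step (ii) to be the conceptual heart — it is what reduces the statement for arbitrary even $p$ to the trivial $p=2$ case and is the only step really using the order $\prec$ — while the only technical nuisance is the passage to the limit in (iii), which is a soft reflexivity argument.
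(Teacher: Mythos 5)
Your proof is correct and follows essentially the same route as the paper: combine the hypothesis \eqref{eq-buch} with the complete contractivity of the orthogonal projection on $\Lambda_2(\tau)$ and the completely contractive inclusion $\Lambda_p(\tau)\to\Lambda_2(\tau)$ of Corollary \ref{cor2.4nc}. The only differences are that you supply a self-contained proof (via the ordering $\prec$) of the homogeneity fact that the paper simply cites from \cite{P3}, and that you spell out the truncation/limit argument which the paper leaves implicit.
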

\begin{proof} Let $P$ be the orthogonal projection 
on $\Lambda_2$ onto the span under consideration.
For any $f\in B\otimes \Lambda_p$,
let $h=(Id\otimes P)(f)$. 
By a well known fact (see \cite[p. 19]{P3}), $P$ is completely contractive on $\Lambda_2$, so that
$\|h\|_{(2)}\le \|f\|_{(2)}$.
By Corollary \ref{cor2.4nc}, we have $ \|f\|_{(2)}\le \|f\|_{(p)}$
and by our assumption 
$\|h\|_{(p)}\le C\|h\|_{(2)}$. Therefore   $\|h\|_{(p)}\le C\|f\|_{(p)}$.   Thus, the c.b. norm of $P$ acting
from $\Lambda_p$ to itself is automatically $\le C$. Moreover,   for any $h\in B\otimes \overline{\rm span}[x_k]$, we have 
 $ \|h\|_{(2)}\le \|h\|_{(p)}\le C\|h\|_{(2)}$, which shows that the span is 
completely isomorphic to $OH$.
\end{proof}
With the ``natural'' o.s.s.\ introduced in \cite{P4} the Khintchine inequalities for $1<p<\infty$ are due to F.~Lust-Piquard \cite{LP} . For $p$ an even integer, A.~Buchholz \cite{Buch2}  found a beautiful proof that yields optimal constants. His proof is valid for a much more general class of variables instead of the Rademacher functions. We will now follow his ideas to investigate the analogous question in the space $\Lambda_p$.

Let $P_2(2n)$ denote the set of all partitions of $[1,\ldots, 2n]$ onto subsets each with exactly 2 elements. So an element $\nu$ in $P_2$ can be described as a collection of disjoint pairs $\{k_i,j_i\}$ $(1\le i\le n)$ with $k_i\ne j_i$ such that $\{1,\ldots, 2n\} = \{k_1,\ldots, k_n, j_1,\ldots, j_n\}$. 

We call such a partition into pairs a 2-partition.
Let $p=2n$ be an even integer $\ge 2$. Following \cite{Buch} we say that a sequence $\{x_k\}$ in $L_p(\tau)$, has $p$-th moments defined by pairings if there is a function ${\psi}\colon \ P_2(2n)\to {\bb C}$ defined on the set of 2-partitions of $[2n]  = \{1,\ldots, 2n\}$ such that for any $k_1,\ldots, k_{2n}$ we have
\[
 \tau(x_{k_1}x^*_{k_2}x_{k_3}\ldots x_{k_{2n-1}}x^*_{k_{2n}}) = \sum_{\nu\sim (k_1,\ldots, k_{2n})} {\psi}(\nu)
\]
where the notation $\nu\sim (k_1,\ldots, k_{2n})$ means that $k_i=k_j$ whenever the pair $\{i,j\}$ is a block of the partition $\nu$.\\
Note that, for each $k$, taking  the $k_j$'s all equal to $k$, this implies 

\begin{equation}\label{eq9.1b} \tau( |x_k|^p)=\sum\nolimits_{\nu\in P_2(2n) }  {\psi}(\nu).
\end{equation}

Now let $E = \text{span}[x_j]$ and $B=B(H)$. Consider $f\in B\otimes E$ of the form
\[
 f = \sum b_j\otimes x_j.
\]
We have
\[
\widehat \tau((f\dot{\otimes}\, f^*)^{\dot{\otimes}\, n}) = \sum_{k_1,\ldots, k_{2n}} \sum_{\nu\sim (k_1,\ldots, k_{2n})} {\psi} (\nu) b_{k_1} \otimes \bar b_{k_2} \otimes\cdots \otimes b_{k_{2n-1}} \otimes \bar b_{k_{2n}}.
\]
Therefore
\begin{align*}
 \|f\|^{2n}_{(2n)} &= \left\|\sum_{\nu\in P_2(2n)} {\psi}(\nu) \sum_{(k_1,\ldots, k_{2n})\sim\nu} b_{k_1} \otimes \bar b_{k_2} \otimes\cdots\otimes \bar b_{k_{2n}}\right\|\\
&\le \sum_{\nu\in P_2(2n)} |{\psi}(\nu)| \left\|\sum_{(k_1,\ldots, k_{2n})\sim \nu} b_{k_1} \otimes \bar b_{k_2} \otimes\cdots\otimes \bar b_{k_{2n}}\right\|.
\end{align*}
But now let
\[
 \Phi(\nu) = \sum_{(k_1,\ldots, k_{2n})\sim\nu} b_{k_1}\otimes \bar b_{k_2}\otimes\cdots \otimes b_{k_{2n-1}} \otimes \bar b_{k_{2n}}.
\]
Then up to permutation $\Phi(\nu)$ is equal to a product of $n$ terms of the form either $\sum b_k\otimes b_k$, $\sum \bar b_k\otimes\bar b_k$ or $\sum b_k\otimes \bar b_k$. 
Let $T_1,\cdots T_n$ be an enumeration of the latter terms.
Since the permutation leaves the norm invariant, we have $\|\Phi(\nu)\| =\prod_1^n \|T_j\|$.
By \eqref{eq2.1}   $\|T_j\|\le \|\sum b_k\otimes \bar b_k \|$ for each $j$ (actually there is equality  for terms the third kind), and hence
\[
 \|\Phi(\nu)\| \le  \left\|\sum b_k\otimes \bar b_k\right\|^n
\]
and   we conclude that
\begin{equation}\label{eq9.1}
 \|f\|_{(2n)} \le \left(\sum_{\nu\in P_2(2n)} |{\psi}(\nu)|\right)^{1/{2n}} \left\|\sum b_k\otimes \bar b_k\right\|^{1/2}.
\end{equation}
Moreover by \eqref{eq9.1b} we know that if ${\psi}(\nu)\ge 0$ for all $\nu$, then the constant
$\sum_{\nu\in P_2(2n)} |{\psi}(\nu)|$ is optimal.
Recapitulating, we have proved:
\begin{thm}\label{thm9.1} Let $p=2n$. Let  $\{x_k\}$ be as above a sequence  in $L_p(\tau)$, with $p$-th moments defined by pairings via a function $\psi\colon \ P_2(2n)\to {\bb C}$.
Then for any finite sum $f=\sum b_k \otimes x_k$ ($b_k\in B(H)$), we have
\begin{equation}\label{buch}
\|f\|_{(p)}\le C_{\psi,p} \left\|\sum b_k\otimes \bar b_k\right\|^{1/2}, 
\end{equation}
where $C_{\psi,p} = \left(\sum_{\nu\in P_2(2n)} |{\psi}(\nu)|\right)^{1/{2n}}$.
Moreover this constant is optimal if ${\psi}(\nu)\ge 0$ for all $\nu$.
\end{thm}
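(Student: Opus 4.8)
The plan is essentially to recapitulate, in a clean linear order, the estimates assembled in the paragraphs preceding the statement; since the inequality is established there, the "proof" amounts to collecting those steps. First I would unwind the definition of the norm of $\Lambda_p(\tau)$: by Theorem~\ref{thm7.2} (equivalently \eqref{eq7.1}) one has $\|f\|_{(p)}^{p} = \|\widehat\tau\big((f\dot\otimes f^*)^{\dot\otimes n}\big)\|$, and substituting $f=\sum_k b_k\otimes x_k$ expands this into a sum over $2n$-tuples $(k_1,\dots,k_{2n})$ with coefficient $\tau(x_{k_1}x^*_{k_2}\cdots x_{k_{2n-1}}x^*_{k_{2n}})$. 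Applying the hypothesis that the $p$-th moments of $\{x_k\}$ are given by pairings replaces each such coefficient by $\sum_{\nu\sim(k_1,\dots,k_{2n})}\psi(\nu)$; interchanging the two summations then gives $\widehat\tau\big((f\dot\otimes f^*)^{\dot\otimes n}\big)=\sum_{\nu\in P_2(2n)}\psi(\nu)\,\Phi(\nu)$, where $\Phi(\nu)=\sum_{(k_1,\dots,k_{2n})\sim\nu} b_{k_1}\otimes\bar b_{k_2}\otimes\cdots\otimes b_{k_{2n-1}}\otimes\bar b_{k_{2n}}$, and the triangle inequality in the minimal tensor product yields $\|f\|_{(p)}^{p}\le\sum_{\nu}|\psi(\nu)|\,\|\Phi(\nu)\|$.

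The next step is to bound each $\|\Phi(\nu)\|$ by $\|\sum_k b_k\otimes\bar b_k\|^{\,n}$. The constraint $(k_1,\dots,k_{2n})\sim\nu$ forces equality of the two indices attached to each block of $\nu$, so the sum defining $\Phi(\nu)$ factorizes over the $n$ blocks; after the permutation of the $2n$ tensor legs that brings the two legs of each block next to one another, $\Phi(\nu)$ becomes a genuine tensor product $T_1\otimes\cdots\otimes T_n$ in which each $T_j$ is one of $\sum_k b_k\otimes b_k$, $\sum_k\bar b_k\otimes\bar b_k$, or $\sum_k b_k\otimes\bar b_k$ (the type being dictated by how the two members of the block sit relative to the alternating pattern of $f$'s and $f^*$'s). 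Since a permutation of factors is a complete isometry of $\otimes_{\min}$, we get $\|\Phi(\nu)\|=\prod_{j=1}^{n}\|T_j\|$. Haagerup's Cauchy--Schwarz inequality \eqref{eq2.1}, whose right-hand side is unchanged upon replacing $b_k$ by $\bar b_k$, gives $\|T_j\|\le\|\sum_k b_k\otimes\bar b_k\|$ in all three cases (with equality in the third), hence $\|\Phi(\nu)\|\le\|\sum_k b_k\otimes\bar b_k\|^{\,n}$. Plugging this back in and taking $p$-th roots with $p=2n$ is exactly \eqref{eq9.1}, i.e.\ \eqref{buch} with $C_{\psi,p}=\big(\sum_{\nu}|\psi(\nu)|\big)^{1/2n}$.

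Finally, for optimality when $\psi\ge0$, I would specialize to $H={\bb C}$ and a single term: fix $k$, take $b_k=1$ and $b_j=0$ for $j\ne k$, so that $f=x_k$ seen in $\Lambda_p(\tau)$, which, $\Lambda_p(\tau)$ being isometric to $L_p(\tau)$, has $\|f\|_{(p)}=\tau(|x_k|^p)^{1/p}$, while $\|\sum_j b_j\otimes\bar b_j\|^{1/2}=1$. By \eqref{eq9.1b}, $\tau(|x_k|^p)=\sum_{\nu\in P_2(2n)}\psi(\nu)=\sum_{\nu}|\psi(\nu)|=C_{\psi,p}^{\,p}$ under the positivity assumption, so \eqref{buch} is attained and the constant is best possible. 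The only delicate point in the whole argument is the combinatorial bookkeeping of the second step — reading off, for each $2$-partition $\nu$, which elementary tensor each block contributes and checking that grouping the paired legs is realized by an honest permutation of the factors, so that multiplicativity of $\|\cdot\|_{\min}$ applies; once that is in place, the rest is a mechanical use of \eqref{eq7.1}, the triangle inequality, and \eqref{eq2.1}.
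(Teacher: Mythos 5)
Your proposal reproduces, step for step, the argument the paper gives immediately before the statement of Theorem~\ref{thm9.1}: expand $\widehat\tau((f\dot\otimes f^*)^{\dot\otimes n})$ using the pairing hypothesis, bound each $\|\Phi(\nu)\|$ by factorizing over the blocks of $\nu$ and applying Haagerup's inequality \eqref{eq2.1}, and obtain optimality from \eqref{eq9.1b}. It is correct and essentially identical to the paper's proof.
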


Buchholz applied the preceding statement  to  a $q$-Gaussian family
with $q\in [-1,1]$. The latter have moments defined by pairings. When $q\in [0,1]$, the 
function $\psi$ is non-negative, so the constant $C_{\psi,p}$ is optimal and,  by \eqref{eq9.1b}, we know $C_{\psi,p}=\|x_1\|_p   $.
In particular, we have:
\begin{cor}\label{cor9.1b} Let $(x_k)$ be a sequence of independent Gaussian normal random variables on
a probability space $(\Omega,\bb P)$. Then the span of $(x_k)$ 
is completely isomorphic to $OH$ and is completely complemented  in $\Lambda_p(\Omega,\bb P)$
for every even integer $p$. Moreover, \eqref{buch} holds with a constant $C_{\psi,p} =\|x_1\|_p   $ that is
  $O(\sqrt{p})$ when $p\to \infty$. \end{cor}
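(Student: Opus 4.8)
\textit{Proof proposal.} The plan is to check that an i.i.d.\ sequence of standard real Gaussian variables fits verbatim into Theorem~\ref{thm9.1} and Proposition~\ref{probuch}, with the pairing function identically equal to $1$, and then to extract the asymptotic size of the constant from the classical Gaussian moment formula.

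First I would invoke the classical Wick (Isserlis) formula. Since each $x_k$ is real, we have $x_k^*=x_k$, so for all $k_1,\ldots,k_{2n}$
\[
 \tau(x_{k_1}x_{k_2}^*\cdots x_{k_{2n-1}}x_{k_{2n}}^*)={\bb E}(x_{k_1}x_{k_2}\cdots x_{k_{2n}})=\sum_{\nu\in P_2(2n)}\ \prod_{\{a,b\}\in\nu}{\bb E}(x_{k_a}x_{k_b}),
\]
and since ${\bb E}(x_{k_a}x_{k_b})=\delta_{k_ak_b}$, the product over a block system $\nu$ equals $1$ exactly when $\nu\sim(k_1,\ldots,k_{2n})$ and $0$ otherwise. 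Hence $(x_k)$ has $p$-th moments defined by pairings with pairing function ${\psi}\equiv 1$ on $P_2(2n)$. In particular ${\psi}\ge 0$, so Theorem~\ref{thm9.1} gives the bound \eqref{buch} with the optimal constant
\[
 C_{{\psi},p}=\Big(\sum_{\nu\in P_2(2n)}1\Big)^{1/2n}=\big((2n-1)!!\big)^{1/2n}={\bb E}(|x_1|^{p})^{1/p}=\|x_1\|_p,
\]
consistently with \eqref{eq9.1b}.

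Next I would feed this into Proposition~\ref{probuch}. Taking $({\cl M},\tau)=(L_\infty(\Omega,{\bb P}),{\bb E})$, we have $\tau(1)={\bb E}(1)=1$, and the $x_k$ are orthonormal in $L_2$ since $\tau(x_j^*x_k)={\bb E}(x_jx_k)=\delta_{jk}$ by independence and unit variance. Thus all hypotheses of Proposition~\ref{probuch} hold with $C=C_{{\psi},p}=\|x_1\|_p$, for every even integer $p$. It follows at once that $\overline{\rm span}[x_k]\subset\Lambda_p(\Omega,{\bb P})$ is completely isomorphic to $OH$ and that the orthogonal projection $P$ onto it satisfies $\|P\colon\Lambda_p\to\Lambda_p\|_{cb}\le\|x_1\|_p$, so the span is completely complemented.

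Finally, for the growth of the constant I would use the classical formula ${\bb E}|x_1|^p=2^{p/2}\Gamma\!\big(\tfrac{p+1}{2}\big)/\sqrt{\pi}$ (equivalently $(2n-1)!!=(2n)!/(2^n n!)$ for $p=2n$) together with Stirling's formula, which give $\|x_1\|_p=({\bb E}|x_1|^p)^{1/p}\sim\sqrt{p/e}$ as $p\to\infty$; in particular $C_{{\psi},p}=O(\sqrt p)$. There is no genuinely hard step here: the only point requiring care is that the alternating $x\,x^*$ pattern in the definition of ``moments defined by pairings'' collapses to the plain product ${\bb E}(x_{k_1}\cdots x_{k_{2n}})$, which is legitimate precisely because the variables are real; the general ($q$-Gaussian, or complex Gaussian) bookkeeping of $\psi$ is not needed in this corollary.
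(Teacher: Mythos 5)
Your proposal is correct and follows essentially the same route as the paper: verify that the Gaussian sequence has $p$-th moments defined by pairings with $\psi\ge 0$ (the paper does this via Buchholz's $q$-Gaussian computation at $q=1$, and its appendix \S\ref{sec13} contains exactly the Wick-formula verification you perform), then apply Theorem~\ref{thm9.1} together with \eqref{eq9.1b} to get $C_{\psi,p}=\|x_1\|_p$, feed this into Proposition~\ref{probuch} for the $OH$-isomorphism and complete complementation, and conclude $\|x_1\|_p=O(\sqrt p)$ from the classical moment formula. Your explicit identification $\psi\equiv 1$ and the Stirling asymptotics are accurate, and your remark that the alternating $x\,x^*$ pattern collapses for real variables is exactly the point that makes the reduction legitimate.
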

  \begin{rem}\label{rem9.1b}
The preceding Corollary also holds
  when $(x_k)$ is a sequence $(\vp_k)$ of independent symmetric
  $\pm1$ valued variables (or equivalently for  the Rademacher functions).
We   show this in Corollary \ref{cor9.1}  below, but here is a quick proof with a slightly worse constant.
 Let $(x_k)$ be independent Gaussian normal random variables
and assume that $(\vp_k)$ is independent from $(x_k)$. It is well known that
$(x_k)$  has the same distribution as $(\vp_k |x_k|)$. Let 
  $\delta={\bb E}(|x_k|)= 2/\sqrt{\pi}$. The  conditional expectation
 $\cl E$ with respect to  $(\vp_k)$ satisfies ${\cl E}(\vp_k |x_k|)=\delta\vp_k $.
 Therefore $\delta\sum \vp_k b_k={\cl E}(\sum \vp_k|x_k| b_k)$,
 and by \eqref{eq10.2}, this implies
 $$\delta\|\sum \vp_k b_k\|_{(p)}\le \|\sum \vp_k|x_k| b_k\|_{(p)}=\|\sum x_k b_k\|_{(p)}.$$
 So we obtain the Rademacher case with a constant $\le \delta^{-1} C_{\psi,p}$
 since  Proposition \ref{probuch} ensures the complete complementation.
\end{rem}

 The preceding result applies to $q$-Gaussian and in particular
free semi-circular (or circular) elements, see \cite{Buch2} for details.
We have then  $ C_{\psi,p} \le 2/\sqrt{1-|q|}$ for all even $p$. \\
In either the semi-circular ($q=0$) or the circular  case, we have  $C_{\psi,p} \le 2$ for all even $p$, and hence:

\begin{cor} For any even integer $p$, the closed span of a free semi-circular (or circular) family,
 is completely isomorphic to $OH$ and completely complemented
(by the orthogonal projection)
in the space $\Lambda_p$ for the associated trace (on the free group factor). Moreover, the 
corresponding constants are bounded by $2$ uniformly over $p$.
\end{cor}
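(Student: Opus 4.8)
The plan is to run exactly the argument behind Corollary~\ref{cor9.1b}: combine Theorem~\ref{thm9.1} with Proposition~\ref{probuch}, the only new point being that for free semi-circular (resp.\ circular) families the relevant pairing constant is bounded by $2$ independently of $p$. So, writing $p=2n$ and normalizing the free semi-circular family $(x_k)$ by $\tau(x_k^2)=1$ (resp.\ the circular family by $\tau(x_k^*x_k)=1$, e.g.\ $x_k=(s_k+is_k')/\sqrt2$ with $(s_k,s_k')$ a standard free semi-circular family), I would check three things: that $(x_k)$ has $p$-th moments defined by pairings with $\sum_{\nu\in P_2(2n)}|\psi(\nu)|\le 4^{\,n}$; that $(x_k)$ is orthonormal in $L_2(\tau)$; and that $\tau(1)=1$. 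The last is automatic for the trace on a free group factor, and orthonormality follows from freeness together with $\tau(x_k)=0$, which give $\tau(x_j^*x_k)=\delta_{jk}$.

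For the pairing condition I would invoke Voiculescu's formula for the mixed moments of a free semi-circular (resp.\ circular) family, which is precisely the assertion that such a family has moments defined by pairings in the sense used before Theorem~\ref{thm9.1}; this is the content of \cite{Buch2} and is recalled in the appendix \S\ref{sec13}. In the semi-circular case $x_k^*=x_k$ and the associated $\psi$ is the indicator function of the non-crossing $2$-partitions of $[2n]$; in the circular case $\psi$ is the indicator of those non-crossing $2$-partitions that moreover pair each position carrying an unstarred entry to one carrying a starred entry. In both cases $\psi$ is $\{0,1\}$-valued, nonnegative, and supported inside the set of non-crossing $2$-partitions of $[2n]$, whose cardinality is the Catalan number $C_n=\frac1{n+1}\binom{2n}{n}\le 4^{\,n}$. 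Hence
\[
 C_{\psi,p}=\Big(\sum_{\nu\in P_2(2n)}|\psi(\nu)|\Big)^{1/2n}\le (4^{\,n})^{1/2n}=2,
\]
and, since $\psi\ge 0$, \eqref{eq9.1b} identifies this constant with $\|x_1\|_p=\tau(|x_1|^p)^{1/p}$.

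With these three facts the proof closes mechanically. Theorem~\ref{thm9.1} furnishes inequality \eqref{eq-buch} with $C=C_{\psi,p}\le 2$, uniformly in the even integer $p$; feeding this into Proposition~\ref{probuch} shows that $E=\overline{\mathrm{span}}[x_k]\subset\Lambda_p(\tau)$ is completely isomorphic to $OH$ and that the orthogonal projection $P$ onto $E$ satisfies $\|P\colon\Lambda_p\to\Lambda_p\|_{cb}\le C\le 2$. For the isomorphism constant itself, note that for $h\in B\otimes E$ one has $\|h\|_{(2)}\le\|h\|_{(p)}\le C\|h\|_{(2)}$ by Corollary~\ref{cor2.4nc} and \eqref{eq-buch}, while $\|\cdot\|_{(2)}$ is the $OH$-norm on $E$ because $(x_k)$ is orthonormal in $L_2(\tau)$; thus the complete-isomorphism constant between $E$ and $OH$ is also at most $2$. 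All bounds being independent of $p$, the uniform statement follows.

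I do not expect a genuine obstacle: the hard analytic work is already inside Theorem~\ref{thm9.1}, which rests on the iterated Haagerup--Cauchy--Schwarz inequality. The one point that requires attention is the size of the constant: in contrast with the Gaussian case of Corollary~\ref{cor9.1b}, where $C_{\psi,p}=\|x_1\|_p$ grows like $\sqrt p$ because all $(2n)!/(2^n n!)$ pairings contribute, for semi-circular and circular families only the non-crossing pairings occur, so the count is merely $C_n\le 4^{\,n}$ and the $2n$-th root remains $\le 2$ for every even $p$. A secondary bookkeeping point is to fix the normalization so that $\tau(x_k^*x_k)=1$, which is what makes $(x_k)$ orthonormal in $L_2(\tau)$ and hence isometric to $OH$ inside $\Lambda_2(\tau)$.
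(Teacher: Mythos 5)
Your proposal is correct and follows essentially the same route as the paper: the corollary is obtained by feeding Theorem~\ref{thm9.1} (moments defined by pairings) into Proposition~\ref{probuch}, with the only point to check being that the pairing constant is at most $2$. The paper gets this bound by specializing Buchholz's estimate $C_{\psi,p}\le 2/\sqrt{1-|q|}$ for $q$-Gaussian families at $q=0$, whereas you count the non-crossing pair partitions directly via the Catalan number $C_n\le 4^{\,n}$; these are the same underlying fact, since for $q=0$ the function $\psi(\nu)=q^{i(\nu)}$ is exactly the indicator of the non-crossing pairings.
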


\begin{cor} \label{cor9.2}  Let $\cl M$ be the von Neumann algebra of the free group $\F_\infty$ with infinitely many generators $(g_k)$.
For any $p=2n$ and any finite sum $\tilde f=\sum b_k \otimes \lambda(g_k) $ ($b_k\in B(H)$), we have
\begin{equation}\label{eqbuch1}\left\|\sum b_k\otimes \bar b_k\right\|^{1/2}\le    \|\tilde   f\|_{(p)}\le 2 \left\|\sum b_k\otimes \bar b_k\right\|^{1/2}.\end{equation}
More generally, let $W_d\subset \F_\infty$ denote the subset formed of the reduced words of length $d$.
Then for any finitely supported   function $b:\ W_d\to B$ we have
\begin{equation}\label{buchd}\|\sum\nolimits_{t\in W_d} b(t) \otimes \lambda(t)\|_{B\otimes \underline{\cl M}}\le (d+1)  \left\|\sum\nolimits_{t\in W_d}  b(t) \otimes \bar b(t) \right\|^{1/2}.\end{equation}

\end{cor}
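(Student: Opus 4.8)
The plan is to establish, for every finite even integer $p=2n$ and every finitely supported $b\colon W_d\to B(H)$, the single inequality
$$\Big\|\sum_{t\in W_d}b(t)\otimes\lambda(t)\Big\|_{(p)}\ \le\ (d+1)\Big\|\sum_{t\in W_d}b(t)\otimes\bar b(t)\Big\|^{1/2}\qquad(\ast)$$
and to read everything off from it. Granting $(\ast)$, the right inequality of \eqref{eqbuch1} is its case $d=1$ with $b$ supported on the positive generators (so that $\lambda(t)$ runs over the $\lambda(g_k)$), while \eqref{buchd} follows on taking the supremum over $p\in 2\N$, since $\|g\|_{B\otimes_{\min}\underline{\cl M}}=\sup_{p\in 2\N}\|g\|_{(p)}$ by Theorem \ref{mn} together with Corollary \ref{cor2.4nc}. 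For the left inequality of \eqref{eqbuch1} nothing new is needed: the $\lambda(g_k)$ are orthonormal in $L_2(\tau)$, so $\|\tilde f\|_{(2)}^{2}=\|\widehat\tau(\tilde f^{*}\dot\otimes\tilde f)\|=\|\sum_k\bar b_k\otimes b_k\|=\|\sum_k b_k\otimes\bar b_k\|$, and $\|\tilde f\|_{(2)}\le\|\tilde f\|_{(p)}$ again by Corollary \ref{cor2.4nc}.

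To prove $(\ast)$ I would expand its left side by the defining identity \eqref{eq7.1}, the trace property \eqref{eq8.1}, and the fact that $\tau(\lambda(w))=\delta_{w,e}$ on $\cl L(\F_\infty)$, writing $f=\sum_t b(t)\otimes\lambda(t)$:
$$\|f\|_{(p)}^{p}=\big\|\widehat\tau\big((f\dot\otimes f^{*})^{\dot\otimes n}\big)\big\|=\Big\|\sum_{\substack{(s_1,t_1,\dots,s_n,t_n)\in W_d^{2n}\\ s_1t_1^{-1}\cdots s_nt_n^{-1}=e}}b(s_1)\otimes\bar b(t_1)\otimes\cdots\otimes b(s_n)\otimes\bar b(t_n)\Big\|.$$
Spelling out each word letter by letter presents a tuple as a word of length $2nd$ over $\{g_k^{\pm1}\}$, cut into $2n$ consecutive length-$d$ blocks; such a word is trivial exactly when it admits a non-crossing matching of its $2nd$ letters into mutually inverse pairs, no pair lying inside one block (a reduced word has no internal cancellation). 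Thus the displayed sum runs over the tuples consistent with at least one admissible non-crossing matching, and by inclusion--exclusion over these matchings (equivalently, by assigning to each tuple its leftmost reduction) it rewrites as a finite \emph{signed} combination of terms $\Phi(\pi)$, indexed by those partitions $\pi$ of the $2nd$ letter-positions that arise as joins of admissible non-crossing matchings, where $\Phi(\pi)$ gathers the coefficient tensors of all tuples consistent with the cancellation pattern $\pi$.

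The key — and hardest — step will be to estimate each $\Phi(\pi)$ and to count the patterns, following Buchholz's method \cite{Buch2}. Summing freely over the generator attached to each block of $\pi$ and permuting tensor legs (permissible since $\otimes_{\min}$ is commutative), $\Phi(\pi)$ factorizes as a tensor product over the blocks of $\pi$; since every admissible matching joins a ``positive'' to a ``negative'' letter-occurrence, each block of a join carries equally many of each (a regular bipartite graph has balanced sides), so every factor has the shape $\sum_{\kappa}\big(b(t)\dot\otimes\bar b(t)\big)^{\dot\otimes a}$, whose norm is $\le\|\sum_t b(t)\otimes\bar b(t)\|^{a}$ by \eqref{diag}, \eqref{eq1.7} and Lemma \ref{lem1.1} (the diagonal domination already used for \eqref{eq4.51}); multiplying gives $\|\Phi(\pi)\|\le\|\sum_t b(t)\otimes\bar b(t)\|^{n}$ for every $\pi$ that occurs. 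What remains — and this is the real obstacle — is to show that the total signed weight carried by the admissible matchings is at most $(d+1)^{2n}=(d+1)^{p}$: this is Haagerup's counting lemma, to the effect that $2n$ reduced words of length $d$ can cancel to $e$ in only boundedly many shapes, and it is exactly where the non-crossing structure and the block structure of length-$d$ words must be exploited most carefully (a naive count gives only a $p$-dependent constant). Granting it, $\|f\|_{(p)}^{p}\le (d+1)^{p}\|\sum_t b(t)\otimes\bar b(t)\|^{n}$, i.e.\ $(\ast)$; by contrast the reduction to $\Lambda_p$ and the moment expansion above are routine given Theorem \ref{mn}, \eqref{eq7.1} and the H\"older-type inequality \eqref{eq7.2}.

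Finally, for \eqref{eqbuch1} alone there is a cheaper route that sidesteps the combinatorics: realize $\lambda(g_k)=\delta^{-1}(\mathrm{id}\otimes\cl E)(c_k)$ for a free circular family $(c_k)$ in an enlargement of $\cl M$ (adjoin to $\cl M$ a free positive family $(h_k)$ with $h_k^{2}$ free-Poisson and put $c_k=\lambda(g_k)h_k$), where $\cl E$ is the trace-preserving conditional expectation onto $\cl M$ and $\delta=\tau(|c_k|)$; since $\cl E$ is completely contractive on $\Lambda_p$ by Lemma \ref{lem10.2} and the circular Khintchine constant is $\le 2$, one gets the estimate $\|\tilde f\|_{(p)}\le (2/\delta)\|\sum_k b_k\otimes\bar b_k\|^{1/2}$, with the slightly larger constant $2/\delta$ in place of the optimal $2$.
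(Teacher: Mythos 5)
Your reduction of \eqref{buchd} to the uniform-in-$p$ estimate $(\ast)$, and of the left side of \eqref{eqbuch1} to orthonormality plus Corollary \ref{cor2.4nc}, is sound. But the heart of the matter --- the claim that the total signed weight of admissible cancellation patterns is at most $(d+1)^{2n}$ uniformly in $n$ --- is exactly what you do not prove; you name it, call it the real obstacle, and write ``granting it.'' That is a genuine gap, not a routine verification: the partitions $\pi$ arising as joins of admissible matchings are not themselves matchings, the inclusion--exclusion coefficients are M\"obius-function values whose absolute sum grows factorially (cf.\ Sch\"utzenberger's formula in \S\ref{sec11}, which is precisely why the $p$-orthogonality method of that section only yields an $O(p)$ constant rather than a $p$-independent one), and the factorization of $\Phi(\pi)$ ``over the blocks of $\pi$'' is itself delicate because the coefficient tensors $b(s_i)$ are attached to whole words while $\pi$ partitions individual letter positions. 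Your fallback for \eqref{eqbuch1} via the circular polar decomposition is correct in outline (it is essentially the argument the paper uses for Corollary \ref{cor9.3}), but, as you concede, it only gives the constant $2/\delta=3\pi/4>2$, so it does not establish \eqref{eqbuch1} as stated either.

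The paper takes an entirely different route that avoids this combinatorics: it quotes Buchholz's operator-coefficient Haagerup inequality \cite[Th.~2.8]{Buch0}, by which the span of $\{\lambda(t)\}_{t\in W_d}$ in $\cl M$ is $(d+1)$-completely isomorphic to an intersection $X_0\cap\cdots\cap X_d$ with $X_0\simeq R$, $X_d\simeq C$ and the intermediate $X_j$ sitting inside $B(\ell_2(W_{d-j}),\ell_2(W_j))$, and then pushes this through the functor $E\mapsto\underline{E}$ using Theorem \ref{mn00}, Corollary \ref{cor00} and Remark \ref{rere}: each $\underline{X_j}$ receives $OH(\ell_2(W_d))$ completely contractively, so the intersection collapses to $OH$ and \eqref{buchd} follows with constant $d+1$; the right side of \eqref{eqbuch1} is then the case $d=1$ after identifying $\underline{\cl M}=\Lambda_\infty$ via Theorem \ref{mn}. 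If you wish to keep your direct moment-method approach, you must actually prove the counting lemma with a constant of the form $C^{2n}$ with $C=d+1$; otherwise the clean fix is to invoke \cite{Buch0} and transfer it as the paper does.
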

\begin{proof} The left hand side of \eqref{eqbuch1} follows from Corollary \ref{cor2.4nc} with $q=2$
and the orthonormality of  $(\lambda(g_k))$ in $L_2(\tau)$. 
By \cite[Th. 2.8]{Buch0}  the operator space spanned by $\{ \lambda(t)\}_{t\in W_d} $
is completely isomorphic to the intersection $X$ of a family of $d+1$ operator spaces
$X_i,\  0\le i\le d$, with associated constant equal to $d+1$. On one hand, the space $X_0$ (resp. $X_d$) is completely isometric to
$R$ (resp. $C$), the underlying respective Hilbert space
being $\ell_2(W_d)$. On the other hand, when $0<j<d$ the space  $X_j$ is completely isometric to
the  subspace of 
$B(\ell_2(W_{d-j}), \ell_2(W_{j}))$ associated to matrices of the form $[a(st)]$ 
when $a$ is supported on $W_d$.
Identifying each $W_i$ simply with $\N$ we see that 
$\underline{X_0}$ (resp. $\underline{X_d}$) is completely isometric to $OH( \N) $,
while $\underline{X_i}$ is completely isometric to the associated subspace of  $CB(OH(\N ) )$. By Remark  \ref{rere}
we have a completely isometric inclusion $\underline{X_0} \to \underline{X_i}   $ for any $ 0\le i\le d$,
therefore the intersection of the family $\underline{X_i}\  0\le i\le d$ is completely isometric to $OH$
with $H=\ell_2(W_d)$. Since by Corollary \ref{cor00}  we know that 
 $\underline{X}=\cap_{0\le i\le d}\underline{X_i}$, \eqref{buchd} follows. 
\end{proof}

\begin{rem} A comparison with known results (see \cite{Buch2} for detailed references)
shows that the limit of $ \|\tilde f\|_{(p)}$ when $p\to \infty$ is {\it not} equivalent to  
 $ \|\tilde f\|_{B(H)\otimes_{\min} {\cl M}}$ (here ${\cl M}$ is  the 
von Neumann algebra of the free group with infinitely many generators), in sharp contrast with 
\eqref{eq3.9} above.
\end{rem}

More generally, let $L_p({\cl N}, \varphi)$, or briefly $L_p({\cl N}, \varphi)$, be another non-commutative (semi-finite) $L_p$-space. Consider $f_k\in B\otimes L_p(\varphi)$ and let
\[
 F = \sum f_k\otimes x_k\in B \otimes L_p(\varphi\times \tau) 
\]
where $\{x_k\}$ is as in Theorem~\ref{thm9.1}. We have then:

\begin{thm}\label{thm9.2}
Let $p=2n$ and let $C=C_{\psi,p} $ be the constant appearing in \eqref{buch}. Then for any $F$ as above we have
\begin{equation}\label{eq9.2}
 \|F\|_{(p)} \le C\max\left\{\left\|\sum f_k\dot{\otimes} f^*_k\right\|_{(p/2)}^{1/2}, \left\|\sum f^*_k\dot{\otimes} f_k\right\|_{(p/2)}^{1/2}\right\}.
\end{equation}
\end{thm}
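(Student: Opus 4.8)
The plan is to transcribe the computation preceding Theorem~\ref{thm9.1}, replacing the scalar coefficients $b_k$ by the operators $f_k$ and the tensor product $\otimes$ by the amalgamated product $\dot\otimes$ over $L_p(\varphi)$, and replacing Haagerup's inequality \eqref{eq2.1} by the non-commutative Cauchy--Schwarz inequality \eqref{eq8.0} together with the H\"older inequality \eqref{eq7.2}. Since $\varphi\times\tau$ is a product trace, $\widehat{\varphi\times\tau}=\widehat\varphi\otimes\tau$, and expanding $F=\sum_k f_k\otimes x_k$ (so $F^*=\sum_k f_k^*\otimes x_k^*$) yields
\[
\widehat{\varphi\times\tau}\big((F\dot\otimes F^*)^{\dot\otimes n}\big)=\sum_{k_1,\dots,k_{2n}}\tau\big(x_{k_1}x_{k_2}^*\cdots x_{k_{2n}}^*\big)\,\widehat\varphi\big(f_{k_1}\dot\otimes f_{k_2}^*\dot\otimes\cdots\dot\otimes f_{k_{2n}}^*\big).
\]
Inserting the pairing hypothesis $\tau(x_{k_1}x_{k_2}^*\cdots x_{k_{2n}}^*)=\sum_{\nu\sim(k_1,\dots,k_{2n})}\psi(\nu)$ and interchanging the two sums gives $\|F\|_{(p)}^{2n}=\big\|\sum_{\nu\in P_2(2n)}\psi(\nu)\,\Phi(\nu)\big\|$, where $\Phi(\nu)=\sum_{(k_1,\dots,k_{2n})\sim\nu}\widehat\varphi(f_{k_1}\dot\otimes f_{k_2}^*\dot\otimes\cdots\dot\otimes f_{k_{2n}}^*)$; so by the triangle inequality it suffices to prove the key estimate
\[
\|\Phi(\nu)\|\le M^n,\qquad M:=\max\Big\{\big\|\textstyle\sum_k f_k\dot\otimes f_k^*\big\|_{(p/2)},\ \big\|\textstyle\sum_k f_k^*\dot\otimes f_k\big\|_{(p/2)}\Big\},
\]
for every $2$-partition $\nu$. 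This then gives $\|F\|_{(p)}^{2n}\le\big(\sum_\nu|\psi(\nu)|\big)M^n=C^{2n}M^n$, which is \eqref{eq9.2}.

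The key estimate I would prove by induction, following Buchholz's combinatorial scheme \cite{Buch2}. If $\nu$ has two cyclically consecutive legs (which always occurs when $\nu$ is non-crossing), one contracts that pair into a two-factor block $T$, which is one of $\sum_k f_k\dot\otimes f_k$, $\sum_k f_k^*\dot\otimes f_k^*$, $\sum_k f_k\dot\otimes f_k^*$; using \eqref{eq8.1} to rotate $T$ to the final slots and then a two-factor case of \eqref{eq7.2}, one bounds $\|\Phi(\nu)\|$ by $\|T\|_{(p/2)}$ times the quantity of the same shape attached to the $(n-1)$-leg partition obtained by deleting the contracted pair. Here $\|T\|_{(p/2)}\le M$ for each of the three blocks — the third directly, the first two from a single use of \eqref{eq8.0} and the symmetry \eqref{eq7.3} — and the remaining factor is $\le M^{n-1}$ by induction. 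The genuinely new case is a crossing $\nu$ with no consecutive pair: one splits $f_{k_1}\dot\otimes\cdots\dot\otimes f_{k_{2n}}^*$ into its first $n$ and last $n$ factors and applies \eqref{eq8.0}, bounding $\|\Phi(\nu)\|$ by the geometric mean of two expressions of the same shape whose sign patterns are now built purely from $f_k\dot\otimes f_k^*$'s (respectively $f_k^*\dot\otimes f_k$'s); iterating these cuts, every partition eventually acquires a consecutive pair, except for the fully self-dual crossing, which is disposed of by a self-referential application of \eqref{eq8.0} (giving $\|\Phi(\nu)\|\le M^n$ directly). All these self-referential inequalities are legitimate because only the fixed finite family $\{f_k\}$ is involved, so each $\|\Phi(\nu)\|$ is finite.

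The main obstacle is exactly this last case. In the commutative situation of Theorem~\ref{thm9.1}, $\Phi(\nu)$ is an honest minimal tensor product of $n$ two-leg blocks, hence invariant under \emph{all} permutations of the legs, so every pairing untangles at once; here $\widehat\varphi$ of an amalgamated product is invariant only under \emph{cyclic} permutations, so crossing pairings genuinely call for Cauchy--Schwarz, and keeping track of whether $\sum_k f_k\dot\otimes f_k^*$ or $\sum_k f_k^*\dot\otimes f_k$ is produced at each cut is precisely what forces the maximum appearing in \eqref{eq9.2}. Once the key estimate is established, the two reductions above close the argument, with the stated constant $C=C_{\psi,p}=\big(\sum_{\nu\in P_2(2n)}|\psi(\nu)|\big)^{1/2n}$.
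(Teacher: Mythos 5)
Your reduction of the theorem to the single combinatorial estimate $\|\Phi(\nu)\|\le M^n$ is exactly the paper's: one expands $\widehat{(\varphi\times\tau)}\big((F\dot\otimes F^*)^{\dot\otimes n}\big)$, inserts the pairing hypothesis, and applies the triangle inequality; the paper isolates the same key estimate as Lemma~\ref{lem9.4}, whose conclusion $\|\widehat\varphi(S(\nu))\|\le\max\{\|\widehat\varphi(S(\nu_0'))\|,\|\widehat\varphi(S(\nu_0''))\|\}$ is your bound once one identifies $S(\nu_0')=(\sum f_k\dot\otimes f_k^*)^{\dot\otimes n}$ and $S(\nu_0'')=(\sum f_k^*\dot\otimes f_k)^{\dot\otimes n}$. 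Your treatment of crossings --- split the word into its first and last $n$ factors and apply \eqref{eq8.0} --- is also the paper's mechanism. The problem is your other case, the contraction of a cyclically consecutive pair.

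After rotating the contracted block $T$ to the final slots (and note that in the alternating word a consecutive pair can only produce $\sum_k f_k\dot\otimes f_k^*$ or $\sum_k f_k^*\dot\otimes f_k$, never $\sum_k f_k\dot\otimes f_k$), you are looking at $\widehat\varphi(R\dot\otimes T)$ where $R$ is a sum of products of $2n-2$ elements of $L_{2n}(\varphi)$ and $T\in B\otimes\bar B\otimes L_n(\varphi)$. The ``two-factor case of \eqref{eq7.2}'' you invoke is \eqref{eq8.0}, which requires both factors to lie in $L_2(\varphi)$; this holds only when $n=2$. For $n>2$ you would need a mixed-exponent H\"older inequality $\Lambda_n\times\Lambda_{n/(n-1)}\to\Lambda_1$, and the space $\Lambda_{n/(n-1)}$ is not even defined in this framework (the only H\"older available, \eqref{eq7.2}, takes $p$ factors all in $L_p$). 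Moreover, the residual quantity you propose to handle ``by induction'' still involves the fixed family $f_k\in L_{2n}$ and the norms $\|\cdot\|_{(n)}$, whereas the key estimate at level $n-1$ concerns elements of $L_{2(n-1)}$ and the norms $\|\cdot\|_{(n-1)}$, so the homogeneity does not match and the induction does not close. The paper's Lemma~\ref{lem9.4} sidesteps all of this by never reducing the number of factors: it uses only the cyclic invariance \eqref{eq8.1} to place a pair straddling the middle cut and the $L_2\times L_2$ Cauchy--Schwarz \eqref{eq8.0} to double one half, iterates, and closes with the extremal argument ${\cl C}\le(\max({\cl C}',{\cl C}''))^\theta\,{\cl C}^{1-\theta}$, where ${\cl C}$ is the supremum of $\|\widehat\varphi(S(\nu))\|$ over all pairings. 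If you replace your contraction step by that scheme, the rest of your argument goes through and yields the stated constant.
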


\begin{proof}
 Repeating the steps of the proof of Theorem~\ref{thm9.1}, all we need to do is majorize
\[
 \left\|\widehat\varphi \left(\sum_{(k_1,\ldots, k_{2n})\sim\nu} f_{k_1} \dot{\otimes} f^*_{k_2} \dot{\otimes} \cdots \dot{\otimes} f^*_{k_{2n}}\right)\right\|
\]
by   the right side of \eqref{eq9.2}. This is established in Lemma~\ref{lem9.4} below that is a rather easy adaptation to our $\Lambda_p$-setting of \cite[Lemma 2]{Buch}.
\end{proof}

By the same argument as in  Remark \ref{rem9.1b},  the case of the free generators of the free group
 can be deduced    from the  ``free-Gaussian" one.  Indeed, let $(c_k)$ be a free circular family
(sometimes called ``complex free-Gaussian"). The polar decomposition $c_k=u_k |c_k|$, is such that
the $*$-distribution of $(u_k)$ is identical to that of a free family of Haar unitaries in the sense of \cite{VDN},
or equivalently $(u_k)$ has the same $*$-distribution as that of the free generators $\lambda(g_k)$ in the 
von Neumann algebra of the free group with infinitely many generators. Moreover, a simple calculation
relative to the circular distribution yields $\|c_k\|_1=8/3\pi$. These observations lead us to :

\begin{cor}\label{cor9.3}
With the same notation as in Corollary~\ref{cor9.2}, let $\widetilde F = \sum f_k\otimes \lambda(g_k) $. We have then 
\begin{equation}\label{eq9.4}
(3\pi/4)^{-1}   \|\widetilde F\|_{(p)} \le  \max\left\{\left\|\sum f_k\dot{\otimes} f^*_k\right\|_{(p/2)}^{1/2}, \left\|\sum f^*_k\dot{\otimes} f_k\right\|_{(p/2)}^{1/2}\right\}
 \le  \|\widetilde F\|_{(p)} 
\end{equation}
\end{cor}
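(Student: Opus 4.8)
The plan is to derive both halves of \eqref{eq9.4} from the Khintchine inequality of Theorem~\ref{thm9.2} (for families with $p$-th moments defined by pairings) combined with the polar decomposition of a free circular family, in the spirit of the argument sketched in Remark~\ref{rem9.1b}.

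For the right-hand inequality $\max\{\ldots\}\le\|\widetilde F\|_{(p)}$, I would use the trace-preserving (since $\tau(1)=1$) conditional expectation $E_\tau=\mathrm{id}_{\cl N}\otimes\tau$ from $L_{p/2}(\varphi\times\tau)$ onto $L_{p/2}(\varphi)$. Since $\tau(\lambda(g_k)^*\lambda(g_l))=\delta_{kl}$, one checks at once that $E_\tau(\widetilde F^*\dot\otimes\widetilde F)=\sum_k f_k^*\dot\otimes f_k$ and $E_\tau(\widetilde F\dot\otimes\widetilde F^*)=\sum_k f_k\dot\otimes f_k^*$. The $\Lambda_{p/2}$-contractivity of conditional expectations (Lemma~\ref{lem10.2}) then gives $\|\sum_k f_k^*\dot\otimes f_k\|_{(p/2)}\le\|\widetilde F^*\dot\otimes\widetilde F\|_{(p/2)}$, and the right side equals $\|\widetilde F\|_{(p)}^2$ directly from the definition \eqref{eq7.1} of $\|\cdot\|_{(p)}$ (because $\widetilde F^*\dot\otimes\widetilde F$ is, up to a permutation of the two $B(H)$-factors, self-adjoint); the same works for $\sum_k f_k\dot\otimes f_k^*$. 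Taking square roots yields this half.

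For the left-hand inequality $\|\widetilde F\|_{(p)}\le(3\pi/4)\max\{\ldots\}$, I would start from a free circular family $(c_k)$ in a tracial $W^*$-probability space. Such a family has $p$-th moments defined by pairings with a \emph{nonnegative} weight $\psi$, so Theorem~\ref{thm9.2} applies to $F=\sum f_k\otimes c_k$ with constant $C=C_{\psi,p}=\|c_1\|_p\le 2$, giving $\|F\|_{(p)}\le 2\max\{\ldots\}$ (the same max as in \eqref{eq9.4}). Using the polar decomposition $c_k=u_k|c_k|$, the $u_k$'s form a free family of Haar unitaries with the same $*$-distribution as $(\lambda(g_k))$, and moreover $\{u_k,|c_k|,(u_j)_{j\ne k}\}$ is free, whence $|c_k|$ is free from $\cl P_0:=W^*(\{u_j\})$ and $\tau(|c_k|)=\|c_1\|_1=8/(3\pi)$. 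If $\cl E$ denotes the conditional expectation of $\cl P$ onto $\cl P_0$, then $u_k\in\cl P_0$ and $\cl E(|c_k|)=\tau(|c_k|)1$ give $\cl E(c_k)=\tau(|c_k|)u_k$, so $(\mathrm{id}_B\otimes\mathrm{id}_{\cl N}\otimes\cl E)(F)=\tfrac{8}{3\pi}\sum_k f_k\otimes u_k$; this element has the same $\Lambda_p$-norm as $\widetilde F$, since $u_k\mapsto\lambda(g_k)$ extends to a trace-preserving $*$-isomorphism, and $\mathrm{id}\otimes\cl E$ is $\Lambda_p$-contractive by Lemma~\ref{lem10.2}. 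Combining, $\tfrac{8}{3\pi}\|\widetilde F\|_{(p)}\le\|F\|_{(p)}\le 2\max\{\ldots\}$, and since $\|c_1\|_p/\|c_1\|_1\le 2/(8/(3\pi))=3\pi/4$ we are done.

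The soft bookkeeping with conditional expectations is straightforward; the main points needing care are the free-probabilistic facts underlying the second half — that $|c_k|$ is free from the algebra generated by \emph{all} the polar parts $u_j$ (so that $\cl E(c_k)=\tau(|c_k|)u_k$), and the value $\tau(|c_1|)=8/(3\pi)$, obtained by integrating $\sqrt{x}$ against the relevant Marchenko--Pastur law — together with one technical subtlety: when $p/2$ is odd the symbol $\|\cdot\|_{(p/2)}$ and the accompanying Hölder/Cauchy--Schwarz estimates must be read in the sense used in the proof of Theorem~\ref{thm9.2} (cf. Lemma~\ref{lem9.4}).
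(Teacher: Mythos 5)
Your proof is correct and follows essentially the same route as the paper: the right-hand inequality via the trace-preserving conditional expectation onto $L_{p/2}(\varphi)\otimes 1$ together with its $\Lambda_{p/2}$-contractivity, and the left-hand inequality via Theorem~\ref{thm9.2} applied to a free circular family, the polar decomposition $c_k=u_k|c_k|$, the conditional expectation onto $W^*(\{u_k\})$, and the value $\tau(|c_1|)=8/(3\pi)$. Your added care about the freeness of $|c_k|$ from the algebra generated by all the $u_j$'s is a welcome clarification of a point the paper leaves implicit.
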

\begin{proof} Let $\tilde \tau$ denote the normalized trace on the von Neumann algebra
of the free group with generators $(g_k)$. Let ${\cl E}$ denote the conditional expectation equal to the orthogonal projection
from $L_2(\varphi \otimes \tilde \tau)$ onto $L_2(\varphi) \otimes 1$. Then
${\cl E}(\widetilde F  \dot{\otimes}   {\widetilde F }^*)=\sum f^*_k\dot{\otimes} f_k$.
Since $\|\widetilde F\|_{(p)}^2=\|  \widetilde F  \dot{\otimes}   {\widetilde F }^* \|_{p/2}$
and $ \|  \widetilde F  \dot{\otimes}   {\widetilde F }^* \|_{p/2}\ge \|{\cl E}  (\widetilde F  \dot{\otimes}   {\widetilde F }^*) \|_{p/2}$ by \eqref{eq10.2},
the right hand side follows. To prove the left hand side,
consider   $F=\sum f_k \otimes c_k=\sum f_k \otimes u_k |c_k|$ with $(c_k)$ free circular as above and note
that by the preceding observations (this is similar to Remark \ref{rem9.1b}) we have $   \sum f_k \otimes u_k= (3\pi/8)  (Id \otimes {\cl E}_1 )(\sum f_k \otimes u_k |c_k|)$ where ${\cl E}_1 $ denotes the conditional expectation from the von Neumann
algebra generated by $\{c_k\}$ onto the one generated by $\{u_k\}$.
Since $\{u_k\}$ and $\{\lambda(g_k) \}$ have identical $*$-moments, we find
$$\|\widetilde F\|_{(p)} =\| \sum f_k \otimes u_k\|_{(p)}\le (3\pi/8) \|\sum f_k \otimes u_k |c_k|\|_{(p)}= (3\pi/8)\| \sum f_k \otimes c_k \|_{(p)}$$ and hence the left hand side
of \eqref{eq9.4} follows from \eqref{eq9.2}, recalling that $C\le 2$ when $(x_k)$ is  a free circular sequence.
\end{proof}
\begin{rem} A more careful estimate probably yields the preceding Corollary with the constant
$2$ in place of $3\pi/4$.
\end{rem}
\begin{lem}\label{lem9.4}
 With the preceding notation let
\[
 S(\nu) = \sum_{(k_1,\ldots, k_{2n})\sim\nu} f_{k_1} \dot{\otimes} f^*_{k_2} \dot{\otimes} \cdots \dot{\otimes} f^*_{k_{2n}},
\]
and let $\nu'_0$ (resp. $\nu''_0$) denote the partition  of $[1,\ldots, 2n]$ into consecutive pairs of the form $\{1,2\}$, $\{3,4\},\ldots$ (resp.\ $\{2n,1\}$, $\{2,3\}$, $\{4,5\},\ldots$). We have then
\[
 \|\widehat\varphi(S(\nu))\| \le \max \{\|\widehat\varphi(S(\nu'_0))\|, \|\widehat\varphi(S(\nu''_0))\|\}.
\]
\end{lem}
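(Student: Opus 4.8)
The plan is to reduce Lemma \ref{lem9.4} to a purely combinatorial statement about $2$-partitions of the cyclically two-coloured $2n$-cycle (colour the position $i$ by $f$ if $i$ is odd and by $f^{*}$ if $i$ is even), and then to run the induction of Buchholz \cite[Lemma~2]{Buch}, making three cosmetic substitutions: $\widehat\tau\rightsquigarrow\widehat\varphi$, Haagerup's scalar Cauchy--Schwarz $\rightsquigarrow$ Lemma \ref{lem8.0} (equivalently \eqref{eq2.02}), and the trace property $\rightsquigarrow$ the cyclic invariance \eqref{eq8.1}. Two preliminary remarks fix the targets. By \eqref{eq8.1} the scalar $\|\widehat\varphi(S(\nu))\|$ is unchanged when the $2n$ positions are rotated cyclically (carrying $\nu$ along). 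And, writing $g=\sum_{k}f_{k}\dot\otimes f_{k}^{*}$ and noting that $(f_{k}\dot\otimes f_{k}^{*})^{*}$ agrees with $f_{k}\dot\otimes f_{k}^{*}$ up to transposition of the two tensor legs, one gets $S(\nu'_0)=g^{\dot\otimes n}$, hence $\|\widehat\varphi(S(\nu'_0))\|=\|g\|_{(p/2)}^{p/2}=\|\sum_{k}f_{k}\dot\otimes f_{k}^{*}\|_{(p/2)}^{p/2}$; applying one cyclic rotation to $\nu''_0$ gives likewise $\|\widehat\varphi(S(\nu''_0))\|=\|\sum_{k}f_{k}^{*}\dot\otimes f_{k}\|_{(p/2)}^{p/2}$. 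Thus the lemma is exactly what is needed in the proof of Theorem \ref{thm9.2}, and it remains to bound $\|\widehat\varphi(S(\nu))\|$ by the maximum of these two quantities for an arbitrary $2$-partition $\nu$.

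The engine is a halving step. Given $\nu$, cut the $2n$-cycle into two cyclically consecutive arcs $A,A'$ of exactly $n$ positions each; because $p=2n$, the product over the positions of $A$ (resp.\ $A'$) lies in $B^{\otimes n}\otimes L_{2}(\varphi)$, which is precisely the hypothesis of Lemma \ref{lem8.0}. For $(k_1,\ldots,k_{2n})\sim\nu$ the indices attached to the pairs of $\nu$ that straddle the cut are the only ones common to both arcs, so fixing those shared indices $\alpha$ and summing the remaining internal ones gives $S(\nu)=\sum_{\alpha}u_{\alpha}\dot\otimes v_{\alpha}$ with $u_{\alpha},v_{\alpha}\in B^{\otimes n}\otimes L_{2}(\varphi)$. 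Placing the adjoints suitably (using the cyclic freedom once more) and applying Lemma \ref{lem8.0} yields
\[
\|\widehat\varphi(S(\nu))\|^{2}\le \|\widehat\varphi(S(\nu_{A}))\|\,\|\widehat\varphi(S(\nu_{A'}))\|,
\]
where $\nu_{A}$ is the $2$-partition of a fresh $2n$-cycle obtained by gluing $A$ to a mirror copy of itself, keeping the pairs of $\nu$ internal to $A$ together with their mirror images and joining each straddling position of $A$ to its mirror by a ``straight'' pair; $\nu_{A'}$ is defined symmetrically. Since the mirror exchanges the two colours, $\nu_{A}$ and $\nu_{A'}$ are again of the same type. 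In the simplest case $n=2$, $\nu=\{\{1,3\},\{2,4\}\}$ has no internal pairs and a one-line computation shows both factors equal $\|\widehat\varphi(S(\nu''_0))\|$, which already settles that case.

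It remains to iterate the halving so that it terminates at $\nu'_0$ and $\nu''_0$; here I would follow \cite{Buch}. A natural monovariant is the total cyclic chord length $\ell(\nu)=\sum_{\{i,j\}\in\nu}\mathrm{dist}(i,j)$, which satisfies $\ell(\nu)\ge n$ with equality precisely for the two ``domino tilings'' $\nu'_0$ and $\nu''_0$; so if $\nu\notin\{\nu'_0,\nu''_0\}$ one chooses the cut (through a longest chord, or through a crossing) so that $\ell(\nu_{A})$ and $\ell(\nu_{A'})$ are strictly smaller, and iterating produces a finite binary tree each of whose leaves is $\nu'_0$ or $\nu''_0$; composing the geometric means along the tree bounds $\|\widehat\varphi(S(\nu))\|$ by a weighted geometric mean of the two leaf values, hence by their maximum, which is the assertion. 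The main obstacle is this combinatorial scheduling: one must show that for every $\nu\notin\{\nu'_0,\nu''_0\}$ there is a cut into two $n$-arcs for which \emph{both} symmetrized partitions $\nu_{A},\nu_{A'}$ have strictly smaller chord length (and stay of the right type), so that the recursion is well-founded and stops exactly at the two domino tilings. A subsidiary point forcing the shape of the argument is that every application of Haagerup--Cauchy--Schwarz must act on genuine $L_{2}(\varphi)$-elements, which is why the arcs are constrained to have exactly $n=p/2$ positions; once this is arranged, verifying the three substitutions above is routine, which is why the paper bills this as ``a rather easy adaptation'' of \cite[Lemma~2]{Buch}.
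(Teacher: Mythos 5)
Your analytic engine is the right one — the Haagerup--Cauchy--Schwarz halving of the $2n$-cycle into two $n$-arcs, the symmetrization of each arc into a new pair partition, and the identification of $S(\nu'_0)$ and $S(\nu''_0)$ with $\bigl(\sum f_k\dot{\otimes}f_k^*\bigr)^{\dot{\otimes}n}$ and $\bigl(\sum f_k^*\dot{\otimes}f_k\bigr)^{\dot{\otimes}n}$ — and up to the inequality $\|\widehat\varphi(S(\nu))\|^2\le\|\widehat\varphi(S(\nu_A))\|\,\|\widehat\varphi(S(\nu_{A'}))\|$ you are doing exactly what the paper does. But your iteration scheme then diverges, and the point where you say "the main obstacle is this combinatorial scheduling" is a genuine gap, not a routine verification: you need that for every non-extremal $\nu$ there is a cut for which \emph{both} symmetrized halves strictly decrease your chord-length monovariant (and one can check on small examples, e.g.\ $\nu=\{13,25,46\}$ for $n=3$ with the cut $A=\{1,2,3\}$, that a badly chosen cut can leave $\nu_A=\nu$ unchanged). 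Without a proof that a good cut always exists, the binary tree of your recursion is not known to terminate at the two domino tilings, and the lemma is not proved.

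The paper's argument (following Buchholz) is organized precisely so that this scheduling problem never arises. One sets $\mathcal{C}=\sup_\nu\|\widehat\varphi(S(\nu))\|$ over \emph{all} pair partitions, and after each Cauchy--Schwarz step one tracks only \emph{one} of the two factors — the one whose symmetrization locks in an additional consecutive pair $\{n,n+1\}$ (arranged by a prior cyclic rotation via \eqref{eq8.1}) — while the other factor, being some $S(\nu'')$, is bounded crudely by $\mathcal{C}^{1/2}$. Iterating on the good branch alone doubles the stock of consecutive pairs at each step, so after finitely many steps that branch is $\nu'_0$ or $\nu''_0$, and one obtains $\|\widehat\varphi(S(\nu))\|\le M^{\theta}\mathcal{C}^{1-\theta}$ with $M=\max\{\|\widehat\varphi(S(\nu'_0))\|,\|\widehat\varphi(S(\nu''_0))\|\}$ and some $\theta\in(0,1)$. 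Taking the supremum over $\nu$ gives $\mathcal{C}\le M^{\theta}\mathcal{C}^{1-\theta}$, hence $\mathcal{C}\le M$ — the same extremality trick already used in the proof of Lemma \ref{lem2.1}. I would recommend replacing your two-branch recursion by this one-branch-plus-supremum argument; it makes the monovariant, and the existence of a simultaneously improving cut, entirely unnecessary.
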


\begin{proof}[Sketch of Proof]
We set
\[
 {\cl C} = \sup\|\widehat\varphi(S(\nu))\|
\]
where the sup runs over all pair partitions $\nu$ in $P_2(2n)$. By the cyclicity of the trace (see \eqref{eq8.1}) we may assume that $k_1,\ldots, k_{2n}$ is such that for some $j$ with $n<j\le 2n$, the pair $\{k_n,k_j\}$ is a block of our partition $\nu$. Let 
\[
 F(\nu) = \sum_{k_1,\ldots, k_{2n}\sim \nu} f_{k_1} \dot{\otimes} f^*_{k_2} \dot{\otimes} \cdots \dot{\otimes} f^*_{k_{2n}}.
\]
We may rewrite $F(\nu)$ as
\begin{equation}\label{eq9.3}
F(\nu) = \sum\nolimits_\alpha \sum\nolimits_\beta a_{\alpha,\beta} \dot{\otimes} b_{\alpha,\beta}
\end{equation}
where $\alpha$ represents the set of indices $k_j$ such that the pair containing $j$ is split by the partition $[1,\ldots, n] [n+1,\ldots, 2n]$, and $\beta$ represents the remaining indices, and the sum is restricted to $(k_1,\ldots, k_{2n})\sim \nu$. Since the indices in $\beta$  correspond to pairs of indices $\{k_i,k_j\}$ with $\{i,j\}$ included either in $[1,\ldots, n]$ or in $[n+1,\ldots, 2n]$, we can rewrite the sum \eqref{eq9.3} as 
\[
 F(\nu) = \sum\nolimits_\alpha \sum\nolimits_{\beta',\beta''} \alpha_{\alpha,\beta'} \dot{\otimes} b_{\alpha,\beta''}.
\]
Then 
\[
 S(\nu) = \sum\nolimits_\alpha x_\alpha\dot{\otimes} y_\alpha
\]
with $x_\alpha = \sum_{\beta'} a_{\alpha,\beta'}$ and $y_\alpha = \sum_{\beta''} b_{\alpha,\beta''}$. By \eqref{eq8.0} we find
\[
 \|\widehat\varphi(S(\nu))\|\le \left\|\widehat\varphi \left(\sum\nolimits_\alpha x_\alpha \dot{\otimes} x^*_\alpha\right)\right\|^{1/2} \left\|\widehat\varphi\left(\sum\nolimits_\alpha y^*_\alpha \dot{\otimes} y_\alpha\right)\right\|^{1/2}.
\]
But now $\sum_\alpha x_\alpha \otimes x^*_\alpha$ is a sum of the kind $S(\nu')$ for some $\nu'$ but for which we know (by our initial choice relative to the pair $\{n,j\}$) that the pair $\{n,n+1\}$ appears in $\nu'$. If we then iterate the argument in the style of \cite{Buch} we end up with a number $0<\theta<1$ such that we have either 
\[
 \|\widehat\varphi(S(\nu))\| \le ({\cl C}')^\theta {\cl C}^{1-\theta}
\]
or 
\[
 \|\widehat\varphi(S(\nu))\| \le ({\cl C}'')^\theta {\cl C}^{1-\theta}
\]
where ${\cl C}' = \|\widehat\varphi(S(\nu'_0))\|$ and ${\cl C}'' = \|\widehat\varphi(S(\nu''_0))\|$. Thus we conclude that
\[
 {\cl C} \le (\max({\cl C}',{\cl C}''))^\theta {\cl C}^{1-\theta}
\]
and hence ${\cl C}\le \max({\cl C}',{\cl C}'')$. Since $S(\nu'_0) = (\sum f_k \dot{\otimes} f^*_k)^{\dot{\otimes} n}$ and $S(\nu''_0) = (\sum f^*_k \dot{\otimes} f_k)^{\dot{\otimes}n}$, this completes the proof.
\end{proof}

By a spin system we mean a system of anticommuting self-adjoint unitaries
assumed realized over a non-commutative probability space $(M,\tau)$.
In the $q$-Gaussian case with $q=-1$, Theorem \ref{thm9.1} describes the closed span of a spin system
in $\Lambda_p$, and exactly for the same reason as   in \cite{Buch2}
we obtain optimal constants for those.
\begin{cor}\label{cor9.1} If $(x_k)$ is    a spin system,
then   \eqref{buch} holds with 
  the same optimal constant $C_{\psi,p}$ as in the Gaussian case. In particular, this constant
grows like $\sqrt{p}$ when $p\to \infty$.
Moreover, the same result holds for the (Rademacher) sequence $(\vp_k)$,
and again the Gaussian constant is optimal.
\end{cor}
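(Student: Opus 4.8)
The plan is to obtain the spin case as a direct application of Theorem~\ref{thm9.1} and to handle the Rademacher sequence $(\vp_k)$ by a separate argument, since — in contrast with a spin system — the Rademacher functions do \emph{not} have $p$-th moments given by pairings in the strict sense of \S\ref{sec9} (already $\EE\vp_1^4=1\ne \#P_2(4)$, so \eqref{eq9.1b} cannot be arranged for them).

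For the spin system I would first recall that it is the $q=-1$ specialization of the generalized Gaussian systems used in \cite{Buch2}, and as such has $p$-th moments defined by pairings with $\psi(\nu)=(-1)^{\mathrm{cr}(\nu)}$, where $\mathrm{cr}(\nu)$ is the number of crossings of the $2$-partition $\nu$ (one checks this directly, e.g. $\tau(s_1s_2s_1s_2)=-1$ corresponds to the single crossing partition $\{1,3\}\{2,4\}$). Theorem~\ref{thm9.1} then gives \eqref{buch} with $C_{\psi,p}=\big(\sum_{\nu\in P_2(2n)}|\psi(\nu)|\big)^{1/2n}=\big(\#P_2(2n)\big)^{1/2n}=\big((2n-1)!!\big)^{1/2n}$. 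Since $(2n-1)!!=\EE|\gamma|^{2n}$ for a standard Gaussian $\gamma$, this is exactly the constant of Corollary~\ref{cor9.1b}, and $\big((2n-1)!!\big)^{1/2n}=(\EE|\gamma|^p)^{1/p}$ is of order $\sqrt p$ by Stirling's formula. Together with Proposition~\ref{probuch} (a spin system being orthonormal in $L_2$, and $\tau(1)=1$) this yields the first assertion, the stated growth, and the identification of the span with $OH$, completely complemented.

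The optimality of this constant for spin systems is \emph{not} covered by the last sentence of Theorem~\ref{thm9.1}, because $\psi$ changes sign. Here I would argue as in \cite{Buch2}: inside a spin system with enough generators one exhibits coefficients $b_k$ for which $\|f\|_{(p)}$ approaches $\big((2n-1)!!\big)^{1/2n}\,\|\sum b_k\otimes\bar b_k\|^{1/2}$, obtained by realizing the generators on an antisymmetric Fock space and choosing for $b_k$ the associated creation operators, so that the $(2n-1)!!$ terms of the pairing sum defining $\|f\|_{(p)}^p$ become mutually ``orthogonal'' and positive and no cancellation occurs; the same construction transported to the $\pm1$-valued model then gives optimality for $(\vp_k)$.

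It remains to prove the Rademacher upper bound with the sharp constant $C_{\psi,p}$ rather than the $\delta^{-1}C_{\psi,p}$ of Remark~\ref{rem9.1b}. For this I would rerun the proof of Theorem~\ref{thm9.1} directly on the Walsh expansion: writing $\widehat\EE\big((f\dot\otimes\bar f)^{\dot\otimes n}\big)=\sum_{(k):\ \text{all multiplicities even}} b_{k_1}\otimes\bar b_{k_2}\otimes\cdots\otimes\bar b_{k_{2n}}$, one splits this sum as a \emph{disjoint} union of at most $(2n-1)!!$ pieces, each assigned to a $2$-partition $\nu$ refining the kernel of the tuple, and controls each piece — which up to a permutation of the tensor legs is a product of $n$ factors of the form $\sum_k b_k\otimes b_k$, $\sum_k\bar b_k\otimes\bar b_k$ or $\sum_k b_k\otimes\bar b_k$ — by $\|\sum_k b_k\otimes\bar b_k\|^{n}$ via Haagerup's inequality \eqref{eq2.1}. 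Alternatively one compares with the Gaussian sum $f_G=\sum b_k\otimes g_k$ and the Rademacher sum $f_\vp=\sum b_k\otimes\vp_k$, checking that $\widehat\EE\big((f_G\dot\otimes\bar f_G)^{\dot\otimes n}\big)\succ\widehat\EE\big((f_\vp\dot\otimes\bar f_\vp)^{\dot\otimes n}\big)$ — the Gaussian Wick coefficients dominate the Rademacher ones and the excess is a positive combination of terms $\sum_k c_k\otimes\bar c_k$ — so that $\|f_\vp\|_{(p)}\le\|f_G\|_{(p)}\le C_{\psi,p}\|\sum b_k\otimes\bar b_k\|^{1/2}$ by Lemma~\ref{lem1.1} and Corollary~\ref{cor9.1b}. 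The delicate point, which I expect to be the main obstacle, is exactly this bookkeeping with repeated indices: making the decomposition genuinely disjoint while keeping each piece dominated (in norm, or in the order $\prec$) by the corresponding full block $\Phi(\nu)$ — this is the combinatorial heart of Buchholz's treatment in \cite{Buch2}, which we simply transplant.
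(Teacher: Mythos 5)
Your spin-system upper bound is exactly the paper's argument ($\psi(q)(\nu)=q^{i(\nu)}$, hence $|\psi(-1)(\nu)|=\psi(1)(\nu)$ and $C_{\psi(-1),p}=C_{\psi(1),p}$, then Theorem~\ref{thm9.1}), and you are right that the Rademacher functions do \emph{not} have $p$-th moments defined by pairings ($\EE\vp_1^4=1\ne 3$), so Theorem~\ref{thm9.1} cannot be applied to them directly. But that is precisely where your proposal has a genuine gap: neither of your two routes for the Rademacher upper bound is carried out, and each hides a nontrivial unproved step. In route (a), the sum over tuples with all multiplicities even is \emph{not} the disjoint union of the blocks $\Phi(\nu)$: a tuple in which some index occurs $2a\ge 4$ times is compatible with $\prod_i(2a_i-1)!!>1$ pairings, and once you impose distinctness across blocks to make the decomposition disjoint, the pieces are no longer tensor products of $n$ Haagerup factors; the inclusion--exclusion needed to repair this is exactly the step you defer to ``Buchholz's treatment.'' In route (b), the domination $\EE\bigl((f_G\dot\otimes\bar f_G)^{\dot\otimes n}\bigr)\succ\EE\bigl((f_\vp\dot\otimes\bar f_\vp)^{\dot\otimes n}\bigr)$ is asserted, not proved: the excess carries the coefficients $\prod_i(2a_i-1)!!-1$ on the higher-multiplicity tuples, and while for $p=4$ it is visibly $2\sum_k b_k\otimes\bar b_k\otimes b_k\otimes\bar b_k\succ0$ (cf.\ \eqref{diag}), you give no argument for general $n$.

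The paper sidesteps all of this with one observation you did not use: for a spin system $(x_k)$, the commuting self-adjoint unitaries $(x_k\otimes x_k)$ have the same joint distribution as $(\vp_k)$. One then applies the operator-coefficient inequality \eqref{eq9.2} of Theorem~\ref{thm9.2} to $F=\sum f_k\otimes x_k$ with $f_k=b_k\otimes x_k$; since the $x_k$ are unitary, $\sum f_k^*\dot\otimes f_k=\sum\bar b_k\otimes b_k\otimes 1$ and $\sum f_k\dot\otimes f_k^*=\sum b_k\otimes\bar b_k\otimes 1$, so the square functions collapse exactly and $\|\sum b_k\otimes\vp_k\|_{(p)}=\|F\|_{(p)}\le C_{\psi(1),p}\,\|\sum b_k\otimes\bar b_k\|^{1/2}$ with no combinatorial bookkeeping at all. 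Finally, for optimality the paper simply invokes the central limit theorem (block sums of Rademachers converge to Gaussians, so a smaller Rademacher constant would contradict the optimality of the Gaussian constant from Theorem~\ref{thm9.1}); your proposed antisymmetric-Fock-space extremizer is both unnecessary for this and itself only sketched. Your worry that Theorem~\ref{thm9.1} does not yield optimality for the spin system per se is legitimate, but the paper's proof likewise only establishes optimality in the Rademacher (hence Gaussian-limit) case.
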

\begin{proof} Let $\psi(q)$ denote the function $\psi$ for a $q$-Gaussian system.
By Bo\.zejko and Speicher's results (see \cite{Buch2}) we have
$\psi(q)(\nu)=q^{i(\nu)}$ where $i(\nu)$ is the crossing number of the the partition $\nu$.
This implies $|\psi(q)(\nu)|=\psi(|q|)(\nu)$ and hence also $C_{\psi(q),p}=C_{\psi(|q|),p}$.
In particular, any spin system
 $(x_k)$   satisfies \eqref{buch} with the constant  $C_{\psi(1),p}$, i.e. the same constant as in the Gaussian
 case. We now address the Rademacher case.
Just as in \cite{Buch2} we use the fact that the sequences $(x_k \otimes x_k)$ and $(\vp_k)$ have the same distribution.
We then apply \eqref{eq9.2} to $\sum f_k \otimes x_k$ with $f_k=b_k \otimes x_k$. Recalling that the $x_k$'s are unitary, we find $\sum f^*_k \dot{\otimes} f_k=\sum \bar b_k \otimes b_k \otimes 1$ and
$\sum f_k \dot{\otimes} f^*_k=\sum   b_k \otimes \bar b_k \otimes 1$.
This gives us $\| \sum b_k \otimes \vp_k\|_{(p)}\le C \| \sum b_k \otimes \bar b_k\|^{1/2}$
where $C=C_{\psi(1),p}$ is the Gaussian constant. By the central limit theorem, the latter is optimal.
\end{proof}



In the rest of this  section we turn to the span of an i.i.d.\  sequence of Gaussian random matrices of size $N\times N$ in $\Lambda_p$. We will use ideas from \cite{HT2} and \cite{Buch}. We analyze the dependence in $N$ using a concentration of measure argument. Let $\{g_{ij}\mid i,j\ge 1\}$ be a doubly indexed family of complex valued Gaussian random variables such that $\EE g_{ij}=0$ and $\EE|g_{ij}|^2 = 1$. Let $Y^{(N)}$ be the random $N\times N$ matrix defined by
\[
Y^{(N)}({i,j}) = N^{-1/2} g_{ij}.
\]
Let $Y^{(N)}_1,Y^{(N)}_2,\ldots$ be an i.i.d.\ sequence of copies of $Y^{(N)}$ on some probability space $(\Omega,{\cl A},{\bb P})$. We will view $(Y^{(N)}_j)_{j\ge 1}$ as a sequence in $L_p({\bb P}\times \tau_N)$ where $\tau_N$ denotes the normalized trace on $M_N$.

By the Appendix \S \ref{sec13}, we know that, for any even $p\ge 2$, $(Y^{(N)}_j)_{j\ge 1}$ has $p$-th moments defined by pairings via the function
\[
Y^{(N)}(\nu) = \EE \tau_N(Y^{(N)\nu})
\]
where $Y^{(N)\nu} = Y^{(N)}_{k_1} Y^{(N)^*}_{k_2}\ldots Y^{(N)}_{k_{p-1}} Y^{(N)^*}_{k_p}$ for $k=(k_j)$ such that $k_i=k_j$ if and only if $(i,j)$ belong to the same block of  $\nu$. It is easy to see that the distribution of $Y^{(N)\nu}$ does not depend on the choice of such a $k$. Moreover, $\psi^{(N)}(\nu)\ge 0$ for any $\nu$ since $\psi^{(N)}(\nu)$ is
 a sum of terms of the form
  $$ \EE\left(Y^{(N)}_{k_1} (i_1,j_1) \ovl{Y^{(N)}_{k_2}(i_2,j_2)}\ldots \ovl{Y^{(N)}_{k_{p}}(i_p,j_p)}  \right)$$ 
  and, when $k \sim \nu$,  these are either $=0$ or $=N^{-p/2} (\EE|g_{11}|^2)^{p/2}=N^{-p/2}$. Therefore we again have
\[
\sum|\psi^{(N)}(\nu)| = \sum\psi^{(N)}(\nu) = \EE\tau_N(|Y^{(N)}|^p).
\]
By   \eqref{buch} we have:

\begin{cor}\label{cor9.13}
Let $p=2n$ and let $(b_k)$ be any finite sequence in $B=B(H)$. Let $f\in \sum b_k\otimes Y^{(N)}_k\in B\otimes L_p({\bb P}\times \tau_N)$. We have
\[
\|f\|_{(p)} \le (\EE\tau_N(|Y^{(N)}|^p))^{1/p} \left\|\sum b_k\otimes \bar b_k\right\|^{1/2}
\]
and this constant is optimal.
\end{cor}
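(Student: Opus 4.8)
The plan is to deduce this statement directly from Theorem~\ref{thm9.1}: the substantive work is already contained in the paragraph preceding the corollary, so what remains is essentially bookkeeping.

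First I would record that, as established just above (with the underlying moment computation carried out in the appendix \S\ref{sec13}), the i.i.d.\ sequence $(Y^{(N)}_k)_{k\ge 1}$, viewed in $L_p({\bb P}\times\tau_N)$, has $p$-th moments defined by pairings via the function $\psi^{(N)}(\nu)=\EE\tau_N(Y^{(N)\nu})$ on $P_2(2n)$, and that $\psi^{(N)}(\nu)\ge 0$ for every $\nu$, since each $\psi^{(N)}(\nu)$ is a sum of quantities that are either $0$ or $N^{-p/2}$. Hence $\sum_{\nu\in P_2(2n)}|\psi^{(N)}(\nu)|=\sum_{\nu}\psi^{(N)}(\nu)$, and by \eqref{eq9.1b} this common value equals $\EE\tau_N(|Y^{(N)}|^p)$.

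Next I would apply Theorem~\ref{thm9.1} to $\{x_k\}=\{Y^{(N)}_k\}$ with $\psi=\psi^{(N)}$. It yields, for $f=\sum b_k\otimes Y^{(N)}_k$, the bound $\|f\|_{(p)}\le C_{\psi^{(N)},p}\,\|\sum b_k\otimes\bar b_k\|^{1/2}$ with
\[
C_{\psi^{(N)},p}=\Big(\sum_{\nu\in P_2(2n)}|\psi^{(N)}(\nu)|\Big)^{1/2n}=\big(\EE\tau_N(|Y^{(N)}|^p)\big)^{1/p},
\]
since $p=2n$; this is precisely the asserted inequality. For optimality, since $\psi^{(N)}\ge 0$ the optimality clause of Theorem~\ref{thm9.1} applies; concretely, testing on the one-term sum $f=1_{B(H)}\otimes Y^{(N)}_1$, the defining formula \eqref{eq7.1} (together with \eqref{eq9.1b}) gives $\|f\|_{(p)}^p=\EE\tau_N(|Y^{(N)}|^p)=C_{\psi^{(N)},p}^p$, while $\|1\otimes\bar 1\|^{1/2}=1$, so the inequality becomes an equality and the constant cannot be lowered.

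I do not expect a genuine obstacle here: the only input that is not pure bookkeeping is the non-negativity of $\psi^{(N)}$, i.e.\ that the relevant Gaussian pairing contributions are all $\ge 0$, and this is the elementary computation recalled immediately before the statement.
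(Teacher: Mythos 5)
Your proposal is correct and follows exactly the route the paper takes: the paragraph preceding the corollary establishes that $(Y^{(N)}_k)$ has $p$-th moments defined by pairings with $\psi^{(N)}\ge 0$ and that $\sum_\nu|\psi^{(N)}(\nu)|=\EE\tau_N(|Y^{(N)}|^p)$ via \eqref{eq9.1b}, after which the corollary is an immediate application of Theorem~\ref{thm9.1} together with its optimality clause. Your explicit one-term test for optimality is just an unwinding of how \eqref{eq9.1b} yields optimality in Theorem~\ref{thm9.1}, so there is no substantive difference.
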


\section{Non-commutative martingale inequalities}\label{sec10}

In this section, we assume given a filtration ${\cl M}_0 \subset {\cl M}_1\subset\cdots$ of von~Neumann subalgebras of $\cl M$. We assume for simplicity that $\cl M$ coincides with the von~Neumann algebra generated by $\cup {\cl M}_n$. We will denote again by ${\bb E}_n$ the conditional expectation with respect to ${\cl M}_n$. Then to any $f$ in $L_p(\tau)$ $(1\le p<\infty)$ we can associate  a martingale $(f_n)$ (defined by $f_n = {\bb E}_n(f)$) that converges in $L_p(\tau)$ to $f$. We will continue to denote $d_0 = f_0$ and $d_n=f_n-f_{n-1}$.

It is natural to expect that Theorems  \ref{thm3.1} and \ref{thm3.1+} will extend to the non-commutative 
case. However, at the time of this writing, we have   completed this task only for $p=4$. We also proved
below (see Theorem \ref{thm12.1}) a one sided version of  \eqref{eq3.1+} using the notion of $p$-orthogonal sums. 

Let $H_1,H_2$ be two Hilbert spaces. To lighten the notation in the rest of this section we set $B_1=  B(H_1)$ and $B_2=B(H_2)$. It is useful to observe that for any $f_1\in B_1\otimes L_4(\tau)$ and $f_2\in B_2\otimes L_4(\tau)$ we have 
\begin{equation}\label{eq10.7+}
 \widehat\tau(f^*_1 \dot{\otimes} f_1 \dot{\otimes} f^*_2\dot{\otimes} f_2) \approx \widehat\tau(f_1 \dot{\otimes} f^*_2 \dot{\otimes} f_2 \dot{\otimes} f^*_1)\succ 0.
\end{equation}
Indeed, the first sign $\approx$ is by the trace property while sign $\succ 0$ holds because $f_1 \dot{\otimes} f^*_2 \dot{\otimes} f_2 \dot{\otimes} f^*_1 \approx F\dot{\otimes} F^*$ with $F = f_1 \dot{\otimes} f^*_2\in B_1\otimes \ovl B_2 \otimes L_2(\tau)$. In the next lemma, we extend this observation to $\widehat\tau(f^*_1\dot{\otimes} f_1 \dot{\otimes} T(f^*_2\dot{\otimes} f_2))$ where $T\colon \ L_2(\tau)\to L_2(\tau)$ is a completely positive map (e.g.\ a conditional expectation). The reader can convince himself easily that the simplest case of maps of the form $T(x) = \sum a^*_kxa_k$ $(a_k\in {\cl M})$, follows immediately from \eqref{eq10.7+}.

\begin{lem}\label{lem10.7}
With the preceding notation, let ${\cl B} = B_1\otimes \ovl B_2 = B(H_1\otimes_2\ovl H_2)$. For any completely positive map $T\colon \ L_2(\tau)\to L_2(\tau)$, we have
for any $f_1\in B_1\otimes L_4(\tau)$ and $f_2\in B_2\otimes L_4(\tau)$
\[
 \widehat\tau(f^*_1\dot{\otimes} f_1 \dot{\otimes} T(f^*_2\dot{\otimes} f_2))\succ 0
\]
where the latter element is identified with an element of ${\cl B}\otimes \ovl{\cl B}$, via the permutation $\begin{pmatrix} 1&2&3&4\end{pmatrix} \to \begin{pmatrix} 2&3&1&4\end{pmatrix}$ of the tensorial factors that takes $\ovl B_1\otimes B_1 \otimes \ovl B_2 \otimes B_2$ to ${\cl B} \otimes \ovl{\cl B}$.
\end{lem}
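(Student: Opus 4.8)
The plan is to reduce the claim to the elementary case already noted, namely that $F\dot\otimes F^*\succ 0$ for any $F\in B_1\otimes\ovl B_2\otimes L_2(\tau)$, by using the fact that a completely positive map on $L_2(\tau)$ can be approximated (in the appropriate sense) by maps of the ``Kraus'' form $x\mapsto\sum_k a_k^* x a_k$ with $a_k\in\cl M$. Concretely, $L_2(\tau)$ is an operator system (or at least sits inside one naturally), and a normal completely positive $T$ has a Stinespring-type representation; composing with the GNS embedding one sees that $T$ is a point-$L_2$ limit of finite sums $T_N(x)=\sum_{k\le N} a_k^* x a_k$. Since the expression $\widehat\tau(f_1^*\dot\otimes f_1\dot\otimes T(f_2^*\dot\otimes f_2))$ depends continuously (in norm on $\cl B\otimes\ovl{\cl B}$, a finite-dimensional-in-the-$B$-variables object once $f_1,f_2$ are fixed) on $T$ applied to the fixed element $f_2^*\dot\otimes f_2\in B_2\otimes\ovl B_2\otimes L_1(\tau)$, and since $C_+$ is closed under the relevant limits here (we are in a fixed finite-dimensional subspace of $\cl B\otimes\ovl{\cl B}$, where $C_+$ is closed by the spectral argument of Lemma \ref{lem1.0}), it suffices to prove the assertion for $T=T_N$, and by linearity of $\prec$-sums (we never change the permutation $\sigma$ mid-calculation) for a single rank-one Kraus term $T(x)=a^*xa$.

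So the key reduction is: it is enough to show $\widehat\tau(f_1^*\dot\otimes f_1\dot\otimes (a^*\cdot f_2^*\dot\otimes f_2\cdot a))\succ 0$ for a single $a\in\cl M$, where $a$ acts on the $L_2(\tau)$ tensor leg of $f_2^*\dot\otimes f_2$ — more precisely, if $f_2=\sum c_j\otimes y_j$, then $T$ applied to $f_2^*\dot\otimes f_2=\sum_{j,\ell}\bar c_j\otimes c_\ell\otimes y_j^* y_\ell$ produces $\sum_{j,\ell}\bar c_j\otimes c_\ell\otimes a^* y_j^* y_\ell a=\sum_{j,\ell}\bar c_j\otimes c_\ell\otimes (y_j a)^*(y_\ell a)$. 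But $\sum_j c_j\otimes (y_j a)$ is just $f_2$ with $y_j$ replaced by $y_j a$, i.e.\ it is again an element $\tilde f_2\in B_2\otimes L_4(\tau)$ (using $a\in\cl M\subset$ multipliers, so $y_j a\in L_4(\tau)$), and $a^*(f_2^*\dot\otimes f_2)a=\tilde f_2^*\dot\otimes\tilde f_2$. Hence $\widehat\tau(f_1^*\dot\otimes f_1\dot\otimes T(f_2^*\dot\otimes f_2))=\widehat\tau(f_1^*\dot\otimes f_1\dot\otimes\tilde f_2^*\dot\otimes\tilde f_2)$, which is exactly of the form \eqref{eq10.7+} and is therefore $\succ 0$ after the stated permutation, because $f_1\dot\otimes\tilde f_2^*\in B_1\otimes\ovl B_2\otimes L_2(\tau)=\cl B\otimes L_2(\tau)$, so with $F=f_1\dot\otimes\tilde f_2^*$ we get $f_1\dot\otimes\tilde f_2^*\dot\otimes\tilde f_2\dot\otimes f_1^*\approx F\dot\otimes F^*\succ 0$ by Lemma \ref{lem8.4}, and the cyclicity \eqref{eq8.1} together with the trace property moves the factors into the order claimed in the statement.

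I would organize the write-up as: (i) record the rank-one Kraus case as above, giving the explicit identification $a^*(f_2^*\dot\otimes f_2)a=\tilde f_2^*\dot\otimes\tilde f_2$ and invoking \eqref{eq10.7+}/Lemma \ref{lem8.4}; (ii) pass to finite Kraus sums $T=\sum_{k\le N}a_k^*\cdot a_k$ by adding the corresponding positive terms — here one must use the convention that the permutation $\sigma$ is fixed throughout so that the sum of $\prec$-positive elements is $\prec$-positive, exactly as in the remarks around \eqref{eq-or}; (iii) approximate a general completely positive $T$ by such finite sums and pass to the limit inside the fixed finite-dimensional subspace of $\cl B\otimes\ovl{\cl B}$ spanned by the matrix coefficients, using that $C_+$ is closed there (Lemma \ref{lem1.0}).

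The main obstacle is step (iii): justifying that an arbitrary completely positive $T\colon L_2(\tau)\to L_2(\tau)$ is, for present purposes, a limit of Kraus-form maps. For a \emph{normal} completely positive map on $\cl M$ this is the classical Stinespring/Kraus decomposition, and the cases of interest (conditional expectations) are manifestly of this type, or even already of Kraus form on a weakly dense subalgebra; but to state Lemma \ref{lem10.7} for a genuinely arbitrary c.p.\ $T$ on $L_2(\tau)$ one needs a density argument — expand $f_1,f_2$ in a finite basis so only finitely many elements $y_j^* y_\ell\in L_1(\tau)\cap L_2(\tau)\subset L_2(\tau)$ are fed to $T$, approximate $T$ on this finite set in the $L_2$-norm by $T_N$ of Kraus form (possible because $\cl M$ is $\sigma$-weakly dense and c.p.\ maps are determined by their action via a GNS construction), and conclude by continuity. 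If one is content to prove the lemma for the c.p.\ maps actually used later — conditional expectations and their compositions — step (iii) is trivial and only (i)–(ii) are needed; I would state it in the clean general form but remark that only this special case is used.
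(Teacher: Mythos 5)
Your steps (i) and (ii) are fine as far as they go, but step (iii) --- and hence the whole reduction --- has a genuine gap: a completely positive map $T$ on $L_2(\tau)$ is \emph{not} in general a (point-$L_2$) limit of Kraus-form maps $x\mapsto\sum_k a_k^*xa_k$ with $a_k\in\cl M$, and this already fails for the conditional expectations to which the lemma is actually applied. Indeed, in the commutative case a Kraus-form map with coefficients in $\cl M=L_\infty(\mu)$ is just multiplication by the fixed positive function $\sum_k|a_k|^2$, so no conditional expectation onto a proper subalgebra is even approximable by such maps on a single element (take $f$ supported on a small set: $\EE^{\cl B}f$ spreads out, while $mf$ does not). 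Normality of $T$ does give a Stinespring dilation $T(x)=V^*\pi(x)V$, but the resulting ``Kraus operators'' live in $B(H)$, not in $\cl M$, so the manipulation $a^*y_j^*y_\ell a=(y_ja)^*(y_\ell a)$ with $y_ja\in L_4(\tau)$ is unavailable. Your fallback remark that the case of conditional expectations is ``trivial'' is therefore also incorrect.

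The strategy can be repaired without any approximation by decomposing not $T$ itself but the single element it is applied to. Writing $f_2=\sum_j c_j\otimes y_j$, complete positivity in the sense the paper uses (namely $[T(y_i^*y_j)]\in L_2(M_n(\cl M))_+$) lets you factor $[T(y_i^*y_j)]=h^*h$ with $h\in L_4(M_n(\cl M))$, i.e.\ $T(y_i^*y_j)=\sum_k h_{ki}^*h_{kj}$, whence $T(f_2^*\dot\otimes f_2)=\sum_k\tilde g_k^*\dot\otimes\tilde g_k$ with $\tilde g_k=\sum_jc_j\otimes h_{kj}\in B_2\otimes L_4(\tau)$; now \eqref{eq10.7+} applies to each summand and you conclude by adding positive terms relative to one fixed permutation. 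This is in substance what the paper does, except that it packages the positivity of $\sum_{ij}\tau(a_j^*a_iT(b_i^*b_j))=\tau_n(\alpha\beta)$ (with $\alpha=[a_i^*a_j]\ge0$ and $\beta=[T(b_i^*b_j)]\ge0$) into a positive-definite sesquilinear form $\Phi$ on $L_4(\tau)\otimes\ovl{L_4(\tau)}$ and then invokes Lemma \ref{lem8.3} to transfer positivity through $\Phi$; that route also sidesteps any worry about which $L_p$ the intermediate products such as $f_1\dot\otimes\tilde g_k^*$ land in.
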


\begin{rem}
 Let $E = L_4(\tau) \otimes \ovl{L_4(\tau)}$. Let $T\colon \ L_2(\tau)\to L_2(\tau)$ be a completely positive map, so that for any finite sequence $a_1,\ldots, a_n$ in $L_4(\tau)$ the matrix $[T(a^*_ia_j)]$ is in $L_2(M_n({\cl M}))_+$. Let $\Phi\colon \ E\otimes \ovl E\to {\bb C}$ be the bilinear form defined by
\[
 \Phi(a_1\otimes \bar b_1 \otimes \ovl{a_2\otimes \bar b_2}) = \tau(a^*_2a_1T(b^*_1b_2)).
\]
We claim that $\Phi$ is positive definite on $E\otimes\ovl E$, i.e.\ $\Phi(e\otimes\bar e)\ge 0$ for any $e$ in $E$. Indeed, if $e = \sum a_i\otimes\bar b_i\in E$ we have
\[
 \Phi(e\otimes\bar e) = \sum\nolimits_{ij} \tau(a^*_ja_iT(b^*_ib_j)),
\]
so that, if $\tau_n$ denote the trace on $M_n({\cl M})$, we have $\Phi(e\otimes\bar e) = \tau_n(\alpha\beta) = \tau_n(a^{1/2}\beta \alpha^{1/2})\ge 0$ where $\beta_{ij} = T(b^*_ib_j)$ $\alpha_{ij} = a^*_ia_j$ and of course $\alpha\ge 0$ and $\beta\ge 0$. This proves our claim.
\end{rem}

\begin{proof}[Proof of Lemma \ref{lem10.7}]
Let $L_p=L_p(\tau)$.
Consider
$f_1\otimes \bar f_2\in B_1\otimes L_4 \otimes \ovl B_2 \otimes \bar L_4$.
  Let $g \in {\cl B} \otimes E
  $   be the element obtained by the natural permutation of factors from  $f_1\otimes \bar f_2$.
Then an easy verification shows that
\[
 \widehat\tau(f^*_1\dot{\otimes} f_1\dot{\otimes} T(f^*_2 \dot{\otimes} f_2)) \approx   (I\otimes \Phi) (g\otimes \bar g)
\]
or more precisely with the notation indicated in Lemma~\ref{lem8.3}
\[
 \widehat\tau(f^*_1\dot{\otimes} f_1\dot{\otimes} T(f^*_2 \dot{\otimes} f_2))  \approx  (\Phi)_{24} (g\otimes \bar g)
\]
so that Lemma~\ref{lem10.7} follows from Lemma~\ref{lem8.3} and the preceding Remark.
\end{proof}

\begin{pro}\label{pro8.5}
Let $(f_n)_{n\ge 0}$ be a martingale in $B\otimes L_4(\tau)$. Assume for simplicity $f=f_N$ for some $N\ge 0$. Let $g = f^*\dot{\otimes} f - \sum d^*_n \dot{\otimes} d_n$. We have then
\begin{equation}\label{eq10.9}
 \|g\|_{(2)}\le \|f\|_{(4)} (\|\sigma_r\|^{1/2}_{(2)} + \|\sigma_c\|^{1/2}_{(2)}).  
\end{equation}
where
\[
 \sigma_r = \sum {\bb E}_{n-1}(d_n\dot{\otimes} d^*_n)\quad \text{and}\quad \sigma_c = \sum {\bb E}_{n-1}(d^*_n\dot{\otimes} d_n).
\]
\end{pro}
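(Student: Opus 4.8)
The plan is to transpose, essentially line by line, the first part of the proof of Theorem~\ref{thm3.1} for $p=4$, replacing the expectation $\EE$ by $\widehat\tau$ and the order-convexity / Haagerup Cauchy--Schwarz machinery of \S\ref{sec2} by its $\widehat\tau$-versions from \S\ref{sec8}: Lemma~\ref{lem8.0} in place of Haagerup's inequality, the cyclicity identity \eqref{eq8.1}, the module property \eqref{eq10.1} of conditional expectations, and the positivity statements of Lemmas~\ref{lem8.3}, \ref{lem8.4} and~\ref{lem10.7}. Write $f=\sum_{n\ge0}d_n$ (a finite sum) and expand $f^*\dot\otimes f=\sum_{n,m}d_n^*\dot\otimes d_m$; subtracting the diagonal $\sum_n d_n^*\dot\otimes d_n$ leaves $g=a+b$, where $a=\sum_m f_{m-1}^*\dot\otimes d_m$ collects the terms whose $f$-side index is the larger one and $b=\sum_n d_n^*\dot\otimes f_{n-1}$ those whose $f^*$-side index is the larger one. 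By \eqref{eq7.1} with $p=2$, $h\mapsto\|h\|_{(2)}=\|\widehat\tau(h^*\dot\otimes h)\|^{1/2}$ is a norm on $B\otimes\ovl B\otimes\Lambda_2(\tau)$ (it is the $OH$-norm attached to $L_2(\tau)$), so by subadditivity it suffices to estimate $\|a\|_{(2)}$ and $\|b\|_{(2)}$ separately; the two estimates are mirror images of each other, and — because of the crossed $B$/$\ovl B$ slot structure — one of the two yields the row conditioned square function $\sigma_r$ and the other the column one $\sigma_c$, according to which of the two conjugate slots carries the martingale difference $d_n$.

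The heart of the matter is the estimate for one piece, say $b$. I would first compute $\widehat\tau(b^*\dot\otimes b)=\sum_{n,k}\widehat\tau\big((d_n^*\dot\otimes f_{n-1})^*\dot\otimes(d_k^*\dot\otimes f_{k-1})\big)$, whose $(n,k)$-summand has, after a cyclic rotation via \eqref{eq8.1}, an $L_1$-part of the shape $f_{n-1}^*d_nd_k^*f_{k-1}$ with the remaining factors $\mathcal M_{k-1}$- or $\mathcal M_{n-1}$-measurable; since $\EE_{k-1}(d_k)=0$ and $\EE_{n-1}(d_n)=0$, every off-diagonal term vanishes, so only $k=n$ survives. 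A further cyclic rotation plus one use of the module property \eqref{eq10.1} of $\EE_{n-1}$ rewrites the diagonal as $\sum_n\widehat\tau\big(\EE_{n-1}(d_n\dot\otimes d_n^*)\dot\otimes f_{n-1}\dot\otimes f_{n-1}^*\big)$, a sum of $\succ0$ terms by Lemma~\ref{lem10.7}. Next, the $\EE_{n-1}$-version of \eqref{eq1.2}, namely $f_{n-1}\dot\otimes f_{n-1}^*\prec\EE_{n-1}(f\dot\otimes f^*)$ (valid since $\EE_{n-1}(f\dot\otimes f^*)-f_{n-1}\dot\otimes f_{n-1}^*=\EE_{n-1}(y\dot\otimes y^*)$ with $y=f-f_{n-1}$, which is $\succ0$ by the non-commutative analogue of \eqref{eq1.1+}, cf.\ Lemma~\ref{lem8.4} and Remark~\ref{rem8.9}), together with the positivity of Lemma~\ref{lem10.7}, gives $\widehat\tau(b^*\dot\otimes b)\prec\sum_n\widehat\tau\big(\EE_{n-1}(d_n\dot\otimes d_n^*)\dot\otimes\EE_{n-1}(f\dot\otimes f^*)\big)$. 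Using $\EE_{n-1}=\EE_{n-1}^2=\EE_{n-1}^*$ to discard the outer $\EE_{n-1}$ (legitimate because $\EE_{n-1}(d_n\dot\otimes d_n^*)$ is $\mathcal M_{n-1}$-measurable) and summing, the right-hand side equals $\widehat\tau\big(\sigma_r\dot\otimes(f\dot\otimes f^*)\big)$. By \eqref{eq1.1-} this yields $\|b\|_{(2)}^2\le\|\widehat\tau(\sigma_r\dot\otimes(f\dot\otimes f^*))\|$, and a single application of Lemma~\ref{lem8.0} (with entries $\sigma_r$ and $f\dot\otimes f^*$), combined with \eqref{eq7.3} and the $p=4$ instance of \eqref{eq7.1} (which gives $\|f\dot\otimes f^*\|_{(2)}=\|f\|_{(4)}^2$), bounds it by $\|\sigma_r\|_{(2)}\|f\|_{(4)}^2$. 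Hence $\|b\|_{(2)}\le\|f\|_{(4)}\|\sigma_r\|_{(2)}^{1/2}$, and the identical computation applied to $a$ — with $d_n\dot\otimes d_n^*$ replaced throughout by $d_n^*\dot\otimes d_n$ — gives $\|a\|_{(2)}\le\|f\|_{(4)}\|\sigma_c\|_{(2)}^{1/2}$. Adding the two proves \eqref{eq10.9}.

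The main obstacle is not any single inequality but the bookkeeping: one must keep track of which tensor slots are conjugated, since this fixes the permutation underlying every instance of $\prec$, and it is precisely this that forces $a$ and $b$ to produce the column and the row square function respectively (both $a^*\dot\otimes a$ and $b^*\dot\otimes b$ are honest ``squares'', but they sit in $4$-fold tensor products whose natural pairings into $K\otimes\ovl K$ differ). In carrying this out one must also apply each cyclic rotation of \eqref{eq8.1} uniformly to every summand — so that ``a sum of $\succ0$ terms is $\succ0$'' and the norm-monotonicity \eqref{eq1.1-} stay available — and check that the intermediate objects $\EE_{n-1}(d_n\dot\otimes d_n^*)$ and $\EE_{n-1}(f\dot\otimes f^*)$ are genuinely $\succ0$ so that Lemma~\ref{lem10.7} applies, both being consequences of Lemmas~\ref{lem8.3} and~\ref{lem8.4}. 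Everything else is a routine transcription of the $p=4$ argument of \S\ref{sec4}.
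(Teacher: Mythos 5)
Your proposal is correct and follows essentially the same route as the paper: the same splitting $g=a+b$ into the two off-diagonal pieces, the same reduction of $\|b\|_{(2)}^2$ (resp.\ $\|a\|_{(2)}^2$) to $\|\widehat\tau(\sigma_r\dot\otimes f\dot\otimes f^*)\|$ (resp.\ the $\sigma_c$ analogue) via the martingale cancellation, the module property \eqref{eq10.1}, the domination $f_{n-1}\dot\otimes f_{n-1}^*\prec\EE_{n-1}(f\dot\otimes f^*)$ with positivity supplied by Lemma~\ref{lem10.7}, and a final application of \eqref{eq8.0}. The only cosmetic difference is that you work with $\widehat\tau(b^*\dot\otimes b)$ where the paper uses $\widehat\tau(x\dot\otimes x^*)$, which is harmless by \eqref{eq7.3}.
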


\begin{proof}
As usual we start by $g = x+y$ with $x = \sum d^*_n \dot{\otimes} f_{n-1}$ and $y = \sum f^*_{n-1} \dot{\otimes} d_n$, so that $\|g\|_{(2)} \le \|x\|_{(2)} + \|y\|_{(2)}$. Then 
\[
 \|x\|^2_{(2)} = \|\widehat \tau(x\dot{\otimes} x^*)\| = \left\|\widehat \tau\left(\sum d^*_n \dot{\otimes} f_{n-1} \dot{\otimes} f^*_{n-1} \dot{\otimes} d_n\right)\right\|.
\]
Let $\delta_n = f-f_{n-1}$. Note that since ${\bb E}_{n-1}(\delta_n) = 0$ 
\[
 {\bb E}_{n-1}(f\dot{\otimes} f^*) = f_{n-1}\dot{\otimes} f^*_{n-1} + {\bb E}_{n-1}(\delta_n \dot{\otimes} \delta^*_n)
\]
and hence
$$\widehat \tau\left(\sum d^*_n \dot{\otimes} f_{n-1} \dot{\otimes} f^*_{n-1} \dot{\otimes} d_n\right)  = \widehat \tau \left(\sum d^*_n \dot{\otimes} {\bb E}_{n-1} (f\dot{\otimes} f^*)\dot{\otimes} d_n\right)
 - \widehat \tau \left(\sum d^*_n \dot{\otimes} {\bb E}_{n-1}(\delta_n \dot{\otimes} \delta^*_n)\dot{\otimes} d_n\right),$$
By the trace property and by Lemma \ref{lem10.7}, these last three terms can all be viewed as $\succ 0$ in
a suitable permutation of the factors. This shows by \eqref{eq1.1-}
$$\left\|\widehat \tau\left(\sum d^*_n \dot{\otimes} f_{n-1} \dot{\otimes} f^*_{n-1} \dot{\otimes} d_n\right)\right\|\le \left\|\widehat \tau\left(\sum d^*_n \dot{\otimes} {\bb E}_{n-1} (f\dot{\otimes} f^*) \dot{\otimes} d_n\right)\right\|.$$
 Since
 \[
 \widehat \tau\left(\sum d^*_n\dot{\otimes} {\bb E}_{n-1}(f\dot{\otimes} f^*)\dot{\otimes} d_n\right)\approx  \widehat \tau\left(   {\bb E}_{n-1}(f\dot{\otimes} f^*)\dot{\otimes} \sum d_n\dot{\otimes} d^*_n\right),
\]
and since ${\bb E}_{n-1}$ is  self-adjoint, we have
 \[
 \left\|\widehat \tau\left(\sum d^*_n\dot{\otimes} {\bb E}_{n-1}(f\dot{\otimes} f^*)\dot{\otimes} d_n\right)\right\|=\left\| \widehat \tau\left(f\dot{\otimes} f^*\dot{\otimes} \sum {\bb E}_{n-1}(d_n\dot{\otimes} d^*_n)\right)
\right\|,
\]
thus we find
\[
 \|x\|^2_{(2)} \le \|\widehat \tau(f\dot{\otimes} f^*\dot{\otimes} \sigma_r)\|\le \|f\|^2_{(4)} \|\sigma_r\|_{(2)}.
\]
A similar reasoning leads to
\[
 \|y\|^2_{(2)}\le \|f\|^2_{(4)} \|\sigma_c\|_{(2)},
\]
so we conclude
\[
 \|g\|_{(2)} \le \|f\|_{(4)} (\|\sigma_r\|^{1/2}_{(2)} + \|\sigma_c\|^{1/2}_{(2)}).\eqno\qed
\]
\renewcommand{\qed}{}\end{proof}

To complete the case $p=4$, we need to check the non-commutative extension of Lemma~\ref{lem3.2} as follows:

\begin{lem}\label{lem10.5}
Let $\theta_n$ be any finite sequence in $B\otimes L_4(\tau)$, let $\beta_n = \theta^*_n \dot{\otimes} \theta_n$ and $\alpha_n = {\bb E}_n(\beta_n)$. Then
\[
 \left\|\sum\alpha_n\right\|_{(2)} \le 2\left\|\sum \beta_n\right\|_{(2)}.
\]
\end{lem}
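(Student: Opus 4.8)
The plan is to mimic the commutative proof of Lemma~\ref{lem3.2}, using the identity \eqref{eq8.1} (the cyclic/permutation invariance of $\widehat\tau$) in place of \eqref{eq2.2}, together with the positivity Lemmas~\ref{lem8.3}, \ref{lem8.4} and \ref{lem10.7}. First I would write $\sum\alpha_n\dot\otimes\ovl{\sum\alpha_n} = \sum_{n,k}\alpha_n\dot\otimes\bar\alpha_k$ and split the double sum into the part with $n\le k$ and the part with $n>k$; call the resulting expressions $\mathrm{I}$ and $\mathrm{II}$ after applying $\widehat\tau$. In the range $n\le k$ I would use $\alpha_k = {\bb E}_k(\beta_k)$ together with the self-adjointness of ${\bb E}_k$ and the tower property to replace $\ovl{{\bb E}_k\beta_k}$ by $\bar\beta_k$ (since ${\bb E}_n\beta_n$ is ${\cl M}_k$-measurable for $n\le k$, we have $\widehat\tau(({\bb E}_n\beta_n)\dot\otimes\ovl{{\bb E}_k\beta_k}) = \widehat\tau(({\bb E}_n\beta_n)\dot\otimes\bar\beta_k)$), and symmetrically in the range $n>k$. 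The key structural point to check is that each summand $({\bb E}_n\beta_n)\dot\otimes\bar\beta_k$, suitably permuted, is $\succ 0$: here $\beta_n = \theta_n^*\dot\otimes\theta_n$ is already of the form $F\dot\otimes F^*$ up to permutation, so $\beta_n\succ 0$ by Lemma~\ref{lem8.4}, and ${\bb E}_n\beta_n = {\bb E}_n(\theta_n^*\dot\otimes\theta_n)\succ 0$ by Lemma~\ref{lem10.7} applied with $T = {\bb E}_n$ (which is completely positive); then the tensor product of two $\succ 0$ elements is $\succ 0$ by \eqref{eq1.7bbis}.

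Granting the positivity of the summands, \eqref{eq1.1-} (or rather \eqref{eq1.1++} plus Lemma~\ref{lem1.1} in the appropriate permutation) gives
\[
 \|\mathrm{I}\| \le \Big\|\widehat\tau\Big(\sum_{n,k}({\bb E}_n\beta_n)\dot\otimes\bar\beta_k\Big)\Big\| = \Big\|\widehat\tau\Big(\Big(\sum\alpha_n\Big)\dot\otimes\ovl{\Big(\sum\beta_k\Big)}\Big)\Big\|,
\]
and then the Haagerup--Cauchy--Schwarz inequality \eqref{eq8.0} (with $f_k^*\dot\otimes g_k$ read as $\alpha$ against $\beta$) yields
\[
 \|\mathrm{I}\| \le \Big\|\widehat\tau\Big(\sum\alpha_n\dot\otimes\ovl{\sum\alpha_n}\Big)\Big\|^{1/2}\Big\|\widehat\tau\Big(\sum\beta_n\dot\otimes\ovl{\sum\beta_n}\Big)\Big\|^{1/2} = \Big\|\sum\alpha_n\Big\|_{(2)}^2\Big\|\sum\beta_n\Big\|_{(2)}.
\]
The same bound holds for $\|\mathrm{II}\|$ by the symmetric argument (using the $n>k$ decomposition and the cyclicity \eqref{eq8.1} to move $\widehat\tau$-factors around). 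Adding, $\big\|\sum\alpha_n\big\|_{(2)}^2 \le \|\mathrm{I}\| + \|\mathrm{II}\| \le 2\big\|\sum\alpha_n\big\|_{(2)}\big\|\sum\beta_n\big\|_{(2)}$, and dividing by $\big\|\sum\alpha_n\big\|_{(2)}$ (which we may assume nonzero) gives the claim $\big\|\sum\alpha_n\big\|_{(2)} \le 2\big\|\sum\beta_n\big\|_{(2)}$.

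The main obstacle, and the point that genuinely differs from the commutative case, is the bookkeeping of permutations needed to make Lemma~\ref{lem10.7} applicable: the element $\widehat\tau(({\bb E}_n\beta_n)\dot\otimes\bar\beta_k)$ lives in a four-fold tensor product $\bar B\otimes B\otimes\bar B\otimes B$ (after expanding $\beta_n = \theta_n^*\dot\otimes\theta_n$, $\beta_k = \theta_k^*\dot\otimes\theta_k$), and one must verify that the specific permutation sending it to ${\cl B}\otimes\ovl{\cl B}$ with ${\cl B} = B(H)\otimes\ovl{B(H)}$ is exactly the one covered by Lemma~\ref{lem10.7} (with the completely positive map $T$ there taken to be ${\bb E}_{n}$ acting on the $\beta_n$ factor, and with $f_1, f_2$ corresponding to $\theta_k, \theta_n$ or vice versa depending on whether $n\le k$). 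Once this identification is set up correctly and kept fixed throughout — in particular, never changing the chosen $\sigma$ when adding the two ``positive'' pieces $\mathrm{I}$ and $\mathrm{II}$, as warned before \eqref{eq-or} — the rest is a routine transcription of the commutative argument, and no new analytic input beyond \eqref{eq8.0}, \eqref{eq8.1}, Lemma~\ref{lem1.1} and Lemma~\ref{lem10.7} is required.
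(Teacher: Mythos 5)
Your proof is correct and follows essentially the same route as the paper's: the paper likewise reduces to the argument of Lemma~\ref{lem3.2}, observing only that $\widehat\tau(\alpha_n\dot{\otimes}\alpha_k)=\widehat\tau(\alpha_n\dot{\otimes}\beta_k)$ for $n\le k$ and that $\widehat\tau(\alpha_n\dot{\otimes}\beta_k)\approx\widehat\tau(\beta_k\dot{\otimes}\alpha_n)\succ 0$ by Lemma~\ref{lem10.7} together with the trace property, and then applies \eqref{eq8.0} to $\widehat\tau(\alpha\dot{\otimes}\beta)$ exactly as you do. The only blemish is the harmless slip $\bigl\|\sum\alpha_n\bigr\|_{(2)}^{2}\bigl\|\sum\beta_n\bigr\|_{(2)}$ at the end of your Cauchy--Schwarz display (the exponent $2$ should not be there), which your subsequent line already uses in the corrected form.
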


\begin{proof}
The proof is essentially the same as for Lemma~\ref{lem3.2}. We just need to observe that if $n\le k$ we have
\[
 \widehat\tau(\alpha_n \dot{\otimes} \alpha_k) = \widehat\tau(\alpha_n \dot{\otimes} \beta_k),
\]
but also by Lemma \ref{lem10.7} (and the trace property) 
\begin{equation}
 \widehat\tau(\alpha_n \dot{\otimes} \beta_k) \approx \widehat\tau(\beta_k \dot{\otimes} \alpha_n) \succ 0\tag*{$\forall n,k$},\end{equation}
so that again we  have 
\[
 \left\|\sum\nolimits_{n\le k} \widehat\tau(\alpha_n \dot{\otimes} \beta_k)\right\|\le \left\|\sum\nolimits_{n,k} \widehat\tau(\alpha_n \dot{\otimes} \beta_k)\right\| = \|\widehat\tau( \alpha\dot{\otimes} \beta)\| \le \|\alpha\|_{(2)} \|\beta\|_{(2)}
\]
and similarly for $\|\sum_{n>k}\|$.
\end{proof}
Let $S_r=\sum d_j\dot{\otimes} d^*_j   $
abd $S_c=\sum d^*_j \dot{\otimes} d_j $. Applying 
this Lemma to   Proposition~\ref{pro8.5}, we find
$$\|g\|_{(2)} \le 2\sqrt{2}\|f\|_{(4)} \max\{\|S_r\|^{1/2}_{(2)} , \|S_c\|^{1/2}_{(2)} \}.$$
We then obtain by the same reasoning as for the commutative case:
\begin{cor}\label{cor10.6}
There is a constant $C$ such that for any finite martingale $f_0,\ldots, f_N$ in $L_4(\tau)$ we have
\[
 C^{-1}\max\left\{\left\|\sum d_j\dot{\otimes} d^*_j\right\|^{1/2}_{(2)}, \left\|\sum d^*_j \dot{\otimes} d_j\right\|^{1/2}_{(2)}\right\} \le \|f\|_{(4)} \le C\max\left\{\left\|\sum d_j\dot{\otimes} d^*_j\right\|^{1/2}_{(2)}, \left\|\sum d^*_j \dot{\otimes} d_j\right\|^{1/2}_{(2)}\right\}.
\]
\end{cor}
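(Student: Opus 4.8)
\textbf{Proof proposal for Corollary \ref{cor10.6}.}

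The plan is to run the same ``$|x^2-y^2|\le C\,xy$'' argument as in the commutative case (cf.\ the first part of the proof of Theorem \ref{thm3.1} for $p=4$), now using the non-commutative ingredients already assembled. First I would set $x = \|f\|_{(4)}$ and let $y$ denote the max on the right-hand side, i.e.\ $y^2 = \max\{\|S_r\|_{(2)},\|S_c\|_{(2)}\}$ where $S_r = \sum d_j\dot\otimes d_j^*$ and $S_c = \sum d_j^*\dot\otimes d_j$. By Proposition \ref{pro8.5} and the subadditivity of $\|\cdot\|_{(2)}$ (it is a genuine norm, being the $OH$-norm on $L_2(\tau)$), we have
\[
 \big|\, \|f^*\dot\otimes f\|_{(2)} - \|\textstyle\sum d_n^*\dot\otimes d_n\|_{(2)}\,\big| \le \|g\|_{(2)} \le \|f\|_{(4)}\big(\|\sigma_r\|_{(2)}^{1/2}+\|\sigma_c\|_{(2)}^{1/2}\big).
\]
Now I observe that $\|f^*\dot\otimes f\|_{(2)} = \|f\|_{(4)}^2 = x^2$ (this is the defining identity \eqref{eq7.1} for $p=4$, up to a permutation handled by \eqref{eq7.4}), and that $\sum d_n^*\dot\otimes d_n = S_c$, so $\|\sum d_n^*\dot\otimes d_n\|_{(2)} = \|S_c\|_{(2)} \le y^2$.

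The key step is to control $\sigma_r$ and $\sigma_c$ by $S_r$ and $S_c$. This is exactly what Lemma \ref{lem10.5} gives: applying it to $\theta_n = d_n$ (so $\beta_n = d_n^*\dot\otimes d_n$, $\alpha_n = {\bb E}_{n-1}(\beta_n)$, noting the harmless shift from ${\bb E}_n$ to ${\bb E}_{n-1}$) yields $\|\sigma_c\|_{(2)}\le 2\|S_c\|_{(2)}$, and applying the same lemma with $\theta_n$ replaced so as to produce the row version (i.e.\ using $d_n\dot\otimes d_n^*$ in place of $d_n^*\dot\otimes d_n$ throughout, which works verbatim since the only inputs are Lemma \ref{lem10.7} and the trace property) yields $\|\sigma_r\|_{(2)}\le 2\|S_r\|_{(2)}$. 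Hence $\|\sigma_r\|_{(2)}^{1/2}+\|\sigma_c\|_{(2)}^{1/2}\le 2\sqrt2\,y$. Plugging this into the displayed inequality above gives
\[
 |x^2 - \|S_c\|_{(2)}| \le 2\sqrt2\,xy,
\]
and since $\|S_c\|_{(2)}\le y^2$ this yields $x^2 \le y^2 + 2\sqrt2\,xy$, whence $x \le (\sqrt2 + \sqrt3)\,y$ by the quadratic formula; this is the right-hand inequality of the Corollary.

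For the left-hand (lower) inequality I would run the same estimate but keep track of the lower bound: from $|x^2 - \|S_c\|_{(2)}|\le 2\sqrt2\,xy$ and $\|S_c\|_{(2)}\le y^2$ alone one does not immediately recover $y\lesssim x$, so I would instead argue symmetrically. One has $\|S_c\|_{(2)} = \|\sum d_n^*\dot\otimes d_n\|_{(2)} \le \|f^*\dot\otimes f\|_{(2)} + \|g\|_{(2)} = x^2 + \|g\|_{(2)}$, and $\|g\|_{(2)}\le 2\sqrt2\,xy$ from Proposition \ref{pro8.5} together with Lemma \ref{lem10.5} as above; thus $\|S_c\|_{(2)}\le x^2 + 2\sqrt2\,xy$. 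The same reasoning applied to $f\dot\otimes f^*$ in place of $f^*\dot\otimes f$ (using $g' = f\dot\otimes f^* - \sum d_n\dot\otimes d_n^*$ and the row analogue of Proposition \ref{pro8.5}, which follows by taking adjoints) gives $\|S_r\|_{(2)}\le x^2 + 2\sqrt2\,xy$. Taking the max, $y^2 \le x^2 + 2\sqrt2\,xy$, which again by the quadratic formula forces $y\le (\sqrt2+\sqrt3)\,x$, i.e.\ the lower bound in the Corollary with $C = \sqrt2+\sqrt3$ (or any larger universal constant).

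\textbf{Main obstacle.} The substantive point is not the final quadratic manipulation but making sure the positivity/ordering steps go through non-commutatively: every time the commutative proof used ``$\succ 0$ after a permutation'' for terms like $d_n\dot\otimes\bar f_{n-1}\dot\otimes\bar d_n\dot\otimes f_{n-1}$, one now needs $d_n^*\dot\otimes f_{n-1}\dot\otimes f_{n-1}^*\dot\otimes d_n \succ 0$ and, crucially, $d_n^*\dot\otimes {\bb E}_{n-1}(f\dot\otimes f^*)\dot\otimes d_n \succ 0$ with a conditional expectation sandwiched in the middle — this is precisely the content of Lemma \ref{lem10.7} (completed positivity of $T = {\bb E}_{n-1}$), and it is the one genuinely new ingredient beyond the commutative argument. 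Granting Lemma \ref{lem10.7}, Proposition \ref{pro8.5}, and Lemma \ref{lem10.5}, all of which are already proved in the excerpt, the Corollary is just bookkeeping.
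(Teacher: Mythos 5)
Your proposal is correct and follows essentially the same route as the paper: the paper also combines Proposition \ref{pro8.5} with Lemma \ref{lem10.5} to get $\|g\|_{(2)}\le 2\sqrt2\,\|f\|_{(4)}\max\{\|S_r\|_{(2)}^{1/2},\|S_c\|_{(2)}^{1/2}\}$ and then invokes ``the same reasoning as in the commutative case.'' Your write-up usefully makes explicit the detail the paper leaves implicit, namely that the lower bound requires running the argument a second time on $f^*$ (the row version $g'=f\dot\otimes f^*-S_r$) so that \emph{both} $\|S_r\|_{(2)}$ and $\|S_c\|_{(2)}$ are controlled by $x^2+2\sqrt2\,xy$.
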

\begin{rem} We leave as an open problem whether the extension of the left hand side
of Corollary \ref{cor10.6} is valid for any even integer $p>4$. Note however that the right hand side
is proved below as a consequence of Theorem \ref{thm12.1}.
\end{rem}
We will now  extend Theorem \ref{thm3.1+} to the non-commutative case for $p=4$.\\
Given $f\in B\otimes L_4(\tau)$ let us denote
$$\|f\|_{[4]} =\max \{ \|\hat \tau(\sum d_n\dot\otimes   d^*_n \dot\otimes d_n \dot\otimes d^*_n)\|^{1/4}  ,\|\sigma_r\|^{1/2}_{(2)} , \|\sigma_c\|^{1/2}_{(2)}\}.$$
\begin{cor}\label{cor10.6bis}
For any finite martingale $f_0,\ldots, f_N$ in $L_4(\tau)$ we have
\begin{equation}\label{eq10.10}
 2C^{-1} \|f\|_{[4]} \le \|f\|_{(4)} \le 2C\|f\|_{[4]},
\end{equation} where $C$ is as in the preceding statement.
\end{cor}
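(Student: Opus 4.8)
The plan is to deduce Corollary~\ref{cor10.6bis} from the already–established results for $p=4$, exactly paralleling the commutative derivation of Theorem~\ref{thm3.1+} from Theorem~\ref{thm3.1}. The right–hand inequality $\|f\|_{(4)}\le 2C\|f\|_{[4]}$ is the harder direction and is essentially Corollary~\ref{cor10.6}: by that corollary $\|f\|_{(4)}\le C\max\{\|S_r\|_{(2)}^{1/2},\|S_c\|_{(2)}^{1/2}\}$, where $S_r=\sum d_j\dot\otimes d_j^*$ and $S_c=\sum d_j^*\dot\otimes d_j$, so it suffices to bound each of $\|S_r\|_{(2)}$ and $\|S_c\|_{(2)}$ by a multiple of $\|f\|_{[4]}^2$. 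I would do this with the non-commutative analogue of Lemma~\ref{lem3.7}: write $S_r-\sigma_r=\sum c_n$ with $c_n = d_n\dot\otimes d_n^*-{\bb E}_{n-1}(d_n\dot\otimes d_n^*)$, note that $(c_n)$ is a martingale difference sequence in $B\otimes\bar B\otimes L_2(\tau)$, and apply the right-hand side of Corollary~\ref{cor10.6} (the $p=4$ martingale inequality) to $\sum c_n$. This produces $\|S_r-\sigma_r\|_{(2)}\le C\max\{\|S(c)_r\|_{(2)}^{1/2},\|S(c)_c\|_{(2)}^{1/2}\}$, and then \eqref{eq1.1} applied pointwise (order convexity of $\Phi$) gives $\tfrac12\, c_n\dot\otimes c_n^*\prec d_n\dot\otimes d_n^*\dot\otimes d_n\dot\otimes d_n^* + {\bb E}_{n-1}(d_n\dot\otimes d_n^*)\dot\otimes {\bb E}_{n-1}(d_n\dot\otimes d_n^*)^{\!*}$ in a suitable permutation, so that by Lemma~\ref{lem1.1}, $\|S(c)_r\|_{(2)}$ is controlled by $\|\delta\|^{1/2}$ plus $\|v_r\|^{1/2}$ where $\delta=\sum d_n\dot\otimes d_n^*\dot\otimes d_n\dot\otimes d_n^*$ and $v_r=\sum{\bb E}_{n-1}(d_n\dot\otimes d_n^*)\dot\otimes\overline{{\bb E}_{n-1}(d_n\dot\otimes d_n^*)}$; the non-commutative Stein inequality of Corollary~\ref{cor3.5} (with $m=1$) bounds $\|v_r\|$ by $\|\delta\|$. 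One is left with $\|S_r\|_{(2)}\le\|\sigma_r\|_{(2)}+ \text{const}\cdot\|\delta\|^{1/2}\le\text{const}\cdot\|f\|_{[4]}^2$, and symmetrically for $S_c$; feeding this back into Corollary~\ref{cor10.6} yields the right inequality with a new constant of the form $2C$.

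\textbf{The reverse inequality.} For the left-hand side $2C^{-1}\|f\|_{[4]}\le\|f\|_{(4)}$ I would argue as in the proof of Theorem~\ref{thm3.1+}. First, $\|\delta\|^{1/4}=\|\hat\tau(\sum d_n\dot\otimes d_n^*\dot\otimes d_n\dot\otimes d_n^*)\|^{1/4}\le\|S_r\|_{(2)}^{1/2}$: indeed $\sum d_n\dot\otimes d_n^*\dot\otimes d_n\dot\otimes d_n^*\prec\sum_{n,k} d_n\dot\otimes d_n^*\dot\otimes d_k\dot\otimes d_k^* = S_r\dot\otimes \overline{S_r}$ (using $0\prec d_n\dot\otimes d_n^*\dot\otimes \overline{d_k\dot\otimes d_k^*}$ for all $n,k$, the non-commutative version of \eqref{diag}, together with Lemma~\ref{lem8.4}), so Lemma~\ref{lem1.1} gives $\|\hat\tau(\delta)\|\le\|\hat\tau(S_r\dot\otimes\overline{S_r})\|=\|S_r\|_{(2)}^2$. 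Second, Lemma~\ref{lem10.5} (the non-commutative dual-Doob inequality for $p=4$) gives $\|\sigma_r\|_{(2)}\le 2\|S_r\|_{(2)}$ and $\|\sigma_c\|_{(2)}\le 2\|S_c\|_{(2)}$. Hence $\|f\|_{[4]}\le\sqrt 2\max\{\|S_r\|_{(2)}^{1/2},\|S_c\|_{(2)}^{1/2}\}$, and the left-hand side of Corollary~\ref{cor10.6} ($C^{-1}\max\{\|S_r\|_{(2)}^{1/2},\|S_c\|_{(2)}^{1/2}\}\le\|f\|_{(4)}$) finishes the bound, with constant absorbed into $2C$.

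\textbf{Main obstacle.} The genuinely delicate point is the positivity bookkeeping in the analogue of Lemma~\ref{lem3.7}: I must arrange all the terms ${\bb E}_{n-1}(d_n\dot\otimes d_n^*)\dot\otimes\overline{(\cdot)}$ and the mixed term $d_n\dot\otimes d_n^*\dot\otimes d_n\dot\otimes d_n^*$ to be $\succ 0$ after a single, consistent permutation of tensor factors, so that \eqref{eq1.1-} and Lemma~\ref{lem1.1} apply — this is where Lemma~\ref{lem10.7} and Lemma~\ref{lem8.4} do the work that the pointwise classical inequalities did in the commutative case. Provided one is careful never to change the chosen permutation mid-computation (the warning after \eqref{eq-or}), the argument goes through verbatim from the commutative template, and the constant $2C$ in \eqref{eq10.10} is the same $C$ furnished by Corollary~\ref{cor10.6}. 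I would end with a brief remark that the passage $p\to 2p$ used in the dyadic proof of Theorem~\ref{thm3.1} is \emph{not} available here because only $p=4$ of Corollary~\ref{cor10.6} is known, which is exactly why the statement is restricted to $p=4$.
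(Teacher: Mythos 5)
Your treatment of the left-hand inequality is correct and is essentially the paper's argument: bound $\|\hat\tau(\sum d_n\dot\otimes d_n^*\dot\otimes d_n\dot\otimes d_n^*)\|^{1/4}$ by $\|S_r\|_{(2)}^{1/2}$ via the ordering $\delta\prec S_r\dot\otimes \ovl{S_r}$ (this is \eqref{eq10.7+}), control $\sigma_r,\sigma_c$ by Lemma \ref{lem10.5}, and finish with the left side of Corollary \ref{cor10.6}.

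The right-hand inequality, however, has a genuine gap. You propose to bound $\|S_r-\sigma_r\|_{(2)}=\|\sum c_n\|_{(2)}$ by applying the $p=4$ martingale inequality of Corollary \ref{cor10.6} to the martingale difference sequence $c_n=d_n\dot\otimes d_n^*-\EE_{n-1}(d_n\dot\otimes d_n^*)$. This is the wrong exponent. First, $c_n$ is a product of two $L_4$ elements, hence lies only in $L_2(\tau)$, so Corollary \ref{cor10.6} (a statement about $L_4$ martingales, whose right-hand side involves $\|\sum c_n\dot\otimes c_n^*\|_{(2)}$, i.e.\ requires $c_n\in L_4$ and hence $d_n\in L_8$) does not apply under the hypothesis $f\in L_4(\tau)$. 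Second, and more decisively, even if it did apply, the resulting bound is in terms of $\|\sum c_n\dot\otimes c_n^*\|_{(2)}^{1/2}$; after your convexity step this reduces to $\|\delta\|_{(2)}^{1/2}=\|\hat\tau(\delta^*\dot\otimes\delta)\|^{1/4}$, an \emph{eighth}-moment quantity in the $d_n$'s. That is not one of the three terms of $[f]_4$ and is not dominated by $\|f\|_{[4]}^2$ (the inclusion $\Lambda_2\to\Lambda_1$ goes the wrong way), so the chain does not close. The correct move — and the one the commutative template actually makes, since Lemma \ref{lem3.7} at $p=4$ invokes \eqref{eq3.1+} at exponent $p/2=2$ — is to use exact $L_2$-orthogonality of the martingale differences: $\|\sum c_n\|_{(2)}^2=\|\sum\hat\tau(c_n^*\dot\otimes c_n)\|$, and then Remark \ref{rem8.9} (the Pythagoras identity for conditional expectations), which gives directly $\hat\tau(c_n^*\dot\otimes c_n)\prec\hat\tau(x_n^*\dot\otimes x_n)$ with $x_n=d_n^*\dot\otimes d_n$; the right-hand side is, up to permutation, exactly the fourth-moment term of $[f]_4$, yielding $\|S_c\|_{(2)}^{1/2}\le\|\sigma_c\|_{(2)}^{1/2}+\|\hat\tau(\delta)\|^{1/4}\le 2\|f\|_{[4]}$. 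This also makes your appeal to a (nowhere-stated) non-commutative version of the Stein inequality of Corollary \ref{cor3.5} unnecessary.
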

\begin{proof} By the preceding Corollary and by Lemma \ref{lem10.5} we
have $\max\{\|\sigma_r\|^{1/2}_{(2)}  , \|\sigma_c\|^{1/2}_{(2)}\}\le 2C  \|f\|_{(4)} $.
Moreover, by \eqref{eq10.7+}
we have $\hat\tau(\sum d_n\dot\otimes  d^*_n \dot\otimes d_n \dot\otimes   d^*_n)\prec
 \hat\tau(\sum d_n\dot\otimes  d^*_n) \dot\otimes  (\sum d_j\dot\otimes  d^*_j)^*)$, and hence \\
 $\|\hat \tau(\sum d_n\dot\otimes  d^*_n \dot\otimes d_n \dot\otimes   d^*_n)\|^{1/4}\le \left\|\sum d_j\dot{\otimes} d^*_j\right\|^{1/2}_{(2)}$.
Therefore, the left hand side of \eqref{eq10.10} follows.\\
To prove the right hand side, we will use the preceding Corollary. \\
Let $x_n=d_n^*\dot\otimes d_n$ and $y_n=d_n\dot\otimes d_n^*$.
We have
$$\sum x_n =\sigma_c+ \sum \delta_n$$
where $\delta_n= x_n- \EE_{n-1} x_n$. 
Then, by the triangle inequality
$\|\sum x_n\|_{(2)}\le  \|\sigma_c\|_{(2)} + \|\sum \delta_n\|_{(2)} $
and by the orthogonality of the martingale differences $(\delta_n)$ we have
$ \|\sum \delta_n\|^2_{(2)} = \|\sum \hat\tau (\delta^*_n \dot\otimes  \delta_n)\|$.
But by Remark \ref{rem8.9} we have
$ \hat\tau (\delta^*_n \dot\otimes  \delta_n)\prec  \hat\tau (x^*_n \dot\otimes  x_n)$
and hence also  $ \sum\hat\tau (\delta^*_n \dot\otimes  \delta_n)\prec  \sum\hat\tau (x^*_n \dot\otimes  x_n)$
from which follows by Lemma \ref{lem1.1} that
$ \|\sum \hat\tau (\delta^*_n \dot\otimes  \delta_n)\|\le  \|\sum \hat\tau (x^*_n \dot\otimes  x_n)\|$.\\
Recapitulating, we find $\|\sum x_n\|_{(2)}\le  \|\sigma_c\|_{(2)} + \|\sum \hat\tau (x^*_n \dot\otimes  x_n)\|^{1/2},$
and a fortiori
$$\|\sum x_n\|^{1/2}_{(2)}\le  \|\sigma_c\|^{1/2}_{(2)} + \|\sum \hat\tau (x^*_n \dot\otimes  x_n)\|^{1/4}\le 2 \| f\|_{[4]}.$$
Since a similar argument applies to majorize $\|\sum y_n\|_{(2)}$, by Corollary \ref{cor10.6} we obtain
$$C^{-1} \| f\|_{(4)}\le \max\{\|\sum x_n\|^{1/2}_{(2)},\|\sum y_n\|^{1/2}_{(2)}  \} \le 2 \| f\|_{[4]}.$$
\end{proof}

\section{$p$-orthogonal sums}\label{sec11}

Let $L_p(\tau)$ be 
as before the ``non-commutative'' $L_p$-space associated to a von Neumann algebra equipped with a standard (= faithful, 
normal) semi-finite trace. (Of course, if $M$ is commutative, we 
recover the classical $L_p$ associated to a measure space.) Let $p\ge 2$ 
be an even integer. A family $d = (d_i)_{i\in I}$ is called $p$-orthogonal if, 
for any {\it  injective} function $g\colon \ [1,2,\ldots, p]\to I$ we have
$$\tau(d^*_{g(1)} d_{g(2)} d^*_{g(3)} d_{g(4)}\ldots d^*_{g(p-1)} d_{g(p)}) = 
0.$$
Clearly, any 
martingale difference sequence is $p$-orthogonal, but 
the class of $p$-orthogonal sums is more general.
In the commutative case, \ie for classical random variables, this notion
is very close to that of  ``multiplicative sequence" already considered in
the literature, see the references in \cite{P5}, on which this section is modeled.

By a natural extension, we will say that a sequence $(d_j)_{j\in I}$ in $B \otimes L_p(\tau) $
is $p$-orthogonal if for any {\it  injective} function $g\colon \ [1,2,\ldots, p]\to I$
as before we have
$$\widehat \tau(d^*_{g(1)} \dot\otimes d_{g(2)} \dot\otimes  \ldots \dot\otimes  d^*_{g(p-1)} \dot\otimes d_{g(p)}) = 
0.$$

The method used in \cite{P5}, that is based on a combinatorial formula involving the ``M\"obius function",
is particularly easy to adapt to our setting where $\Lambda_p$ takes the place of $L_p$.

We will use crucially some well known ideas from the combinatorial theory of 
partitions, which can be found, for instance, in  
the book \cite{A}. We denote by $P_n$ the lattice of all partitions of $[1,\ldots, 
n]$, equipped with the following order:\ we write $\sigma\le \pi$ (or 
equivalently $\pi\ge\sigma$) when every ``block'' of the partition $\sigma$ is 
contained in some block of $\pi$. Let $\dot 0$ and $\dot 1$ be respectively 
the minimal and maximal elements in $P_n$, so that $\dot 0$ is the partition 
into $n$ singletons and $\dot 1$ the partition formed of the single set 
$\{1,\ldots, n\}$. We denote by $\nu(\pi)$ the number of blocks of $\pi$ (so
that $\nu(\dot 0) = n$ and $\nu(\dot 1) = 1$).

For any $\pi$ in $P_n$ and any $i=1,2,\ldots, n$, we denote by $r_i(\pi)$ the 
number of blocks (possibly $=0$) of $\pi$ of cardinality $i$. In particular, 
we have $\sum^n_1 ir_i(\pi) = n$ and $\sum^n_1 r_i(\pi) = \nu(\pi)$.

Given two partitions $\sigma,\pi$ in $P_n$ with $\sigma\le \pi$ we denote by 
$\mu(\sigma,\pi)$ the M\"obius function, which has the following fundamental 
property:

 Let $V$ be a vector space. Consider two functions 
$\Phi\colon \ P_n\to V$ and $\Psi\colon \ P_n\to V$.
 If 
$\Psi(\sigma) = \sum\limits_{\pi\le \sigma} \Phi(\pi)$,
 then
$\Phi(\sigma) = \sum_{\pi\le\sigma} \mu(\pi,\sigma)\Psi(\pi).$

 Essentially equivalently, if
$  \Psi(\sigma)  = \sum_{\pi\ge\sigma} \Phi(\pi)$, 
 then
$\Phi(\sigma)  = \sum_{\pi\ge \sigma} \mu(\sigma,\pi) \Psi(\pi).$

In particular we have:
\begin{equation}\label{eq-moe2}\forall~\sigma\ne \dot 
0\quad \sum_{\dot 0\le \pi \le \sigma} \mu(\pi,\sigma) = 0.  \end{equation}

 The last assertion follows from the above  fundamental property applied with $\Phi$ equal to the delta 
function at $\dot 0$ (i.e.\ $\Phi(\pi) = 0$ $\forall~\pi\ne \dot 0$ and 
$\Phi(\dot 0) = 1$) and $\Psi\equiv 1$.

We also recall Sch\"utzenberger's theorem (see \cite{A}):

 For any $\pi$ we have
$$\mu(\dot 0, \pi) = \prod^n_{i=1} [(-1)^{i-1} (i-1)!]^{r_i(\pi)},$$
and consequently
$$\sum_{\pi\in P_n} |\mu(\dot 0,\pi)| = n!.$$

We now apply these results to set the stage for the questions of interest to 
us. Let $E_1,\ldots, E_n, V$ be vector spaces equipped with a multilinear
form  (= a ``product'')  
$$\varphi\colon \ E_1\times\cdots\times E_n\to V.$$
 Let $I$ 
be a finite set. For each $k=1,2,\ldots, n$ and $i\in I$, we give ourselves 
elements $d_i(k) \in E_k$, and we form the sum
$$F_k = \sum_{i\in I} d_i(k).$$
Then we are interested in ``computing'' the quantity
$\varphi(F_1,\ldots, F_n).$
We have obviously
$$\varphi(F_1,\ldots, F_n) = \sum_g \varphi(d_{g(1)}(1),\ldots, d_{g(n)}(n))$$
where the sum runs over all functions $g\colon \ [1,2,\ldots, n] \to I$. Let 
$\pi(g)$ be the partition associated to $g$, namely the partition obtained 
from $\bigcup\limits_{i\in I} g^{-1}(\{i\})$ after deletion of all the empty 
blocks.
We can write
$$\varphi(F_1,\ldots, F_n) = \sum_{\sigma\in P_n} \Phi(\sigma)$$
where
$\Phi(\sigma) = \sum_{g\colon\ \pi(g) =\sigma} \varphi(d_{g(1)}(1),\ldots, 
d_{g(n)}(n))$.
Let $\Psi(\sigma) = \sum\limits_{\pi\ge\sigma} 
\Phi(\pi)$. \\ Using \eqref{eq-moe2} (with $\sigma,\pi$ exchanged), we have then:  
 \begin{align}\varphi(F_1,\ldots, F_n) &= \Phi(\dot 0) + \sum_{\dot 0<\sigma} 
\Phi(\sigma)
= \Phi(\dot 0) + \sum_{\dot 0<\sigma} \sum_{\pi\ge \sigma} \mu(\sigma,\pi) 
\Psi(\pi)\\
&= \Phi(\dot 0) + \sum_{\dot 0<\pi} \Psi(\pi)\cdot \sum_{\dot 0<\sigma\le\pi} 
\mu(\sigma,\pi)
= \Phi(\dot 0) - \sum_{\dot 0<\pi} \Psi(\pi) \mu(\dot 0,\pi).
\end{align}
Recapitulating, we found:
\begin{equation}\label{eq-moe}\varphi(F_1,\ldots, F_n) = \Phi(\dot 0) - \sum_{\dot 0<\pi} \Psi(\pi) \mu(\dot 0, 
\pi)\end{equation}
where
$$\Phi(\dot 0) = \sum_{g~{\rm injective}} \varphi(d_{g(1)}(1),\ldots, 
d_{g(n)}(n)) \ 
{\rm and} \  
\Psi(\pi) = \sum_{g\colon \ \pi(g)\ge \pi} \varphi(d_{g(1)}(1),\ldots, 
d_{g(n)}(n)).$$

\begin{thm}\label{thm12.1}
Let $p=2n$ be an even integer $> 2$. Then for any $p$-orthogonal finite sequence $(d_j)_{j\in I}$ in $B\otimes L_p(\tau)$ we have
\begin{equation}\label{eqporth}
 \left\|\sum d_j\right\|_{(p)} \le (3\pi/2)p\max\left\{\left\|\sum d_j\dot{\otimes} d^*_j\right\|^{1/2}_{(p/2)}, \left\|\sum d^*_j \dot{\otimes} d_j\right\|^{1/2}_{(p/2)}\right\}.
\end{equation}
\end{thm}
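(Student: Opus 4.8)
The plan is to expand $\|\sum d_j\|_{(p)}^p = \|\widehat\tau((F^*\dot\otimes F)^{\dot\otimes n})\|$ where $F=\sum d_j$, using the Möbius-inversion identity \eqref{eq-moe} with $\varphi$ taken to be the $2n$-linear form $\varphi(e_1,\dots,e_{2n}) = \widehat\tau(e_1^*\dot\otimes e_2\dot\otimes\cdots\dot\otimes e_{2n-1}^*\dot\otimes e_{2n})$ and all $E_k$ equal to $B\otimes L_p(\tau)$, with $d_i(k)=d_i$ for all $k$. Since the family is $p$-orthogonal, the ``diagonal-free'' term $\Phi(\dot 0)$ vanishes, so \eqref{eq-moe} collapses to $\widehat\tau((F^*\dot\otimes F)^{\dot\otimes n}) = -\sum_{\dot 0<\pi}\mu(\dot 0,\pi)\Psi(\pi)$, where $\Psi(\pi)=\sum_{\pi(g)\ge\pi}\varphi(d_{g(1)},\dots,d_{g(2n)})$. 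Taking norms and using Schützenberger's bound $\sum_\pi|\mu(\dot 0,\pi)|=(2n)!$, the problem reduces to bounding $\|\Psi(\pi)\|$ for each partition $\pi>\dot 0$ by (essentially) $\big(\max\{\|\sum d_j\dot\otimes d_j^*\|_{(n)},\|\sum d_j^*\dot\otimes d_j\|_{(n)}\}\big)^{n}$, and then controlling the combinatorial sum $(2n)!\cdot(\text{something})$ to extract the clean bound $(3\pi/2)p$ after taking $p$-th roots --- here the factor $((2n)!)^{1/2n}\sim 2n/e$ together with a Stirling-type refinement should produce the constant $3\pi/2$.

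The core estimate is the bound on $\|\Psi(\pi)\|$. Fix $\pi>\dot 0$; then $\Psi(\pi)$ is a sum over functions $g$ whose kernel partition refines to contain $\pi$, which (after relabeling via $\pi$) is the same as $\widehat\tau$ applied to an expression of the form $\sum_{j(1),\dots,j(\nu(\pi))} e_1\dot\otimes\cdots\dot\otimes e_{2n}$ where each $e_\ell$ is one of $d_{j(\cdot)}$ or $d_{j(\cdot)}^*$ and repeated indices occur in the pattern prescribed by $\pi$. Grouping the $2n$ tensor factors along the blocks of $\pi$, one rewrites $\Psi(\pi)$ --- up to a permutation of tensor factors, which is harmless by commutativity of $\otimes_{\min}$ --- as $\widehat\tau$ of a product $T_1\dot\otimes\cdots\dot\otimes T_r$ with $r=\nu(\pi)\ge 1$, where each $T_b=\sum_j a_j^{(b)}\dot\otimes b_j^{(b)}$ with $a_j^{(b)},b_j^{(b)}$ being $\dot\otimes$-products of copies of $d_j$ and $d_j^*$. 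Then Proposition \ref{pro3.4} (precisely \eqref{eq3.9+}, in the $L_p(\tau)$ context, i.e.\ the non-commutative Haagerup--Cauchy--Schwarz of Lemma \ref{lem8.0} iterated) gives
\[
\|\widehat\tau(T_1\dot\otimes\cdots\dot\otimes T_r)\| \le \|\widehat\tau(\alpha_1\dot\otimes\cdots\dot\otimes\alpha_r)\|^{1/2}\|\widehat\tau(\beta_1\dot\otimes\cdots\dot\otimes\beta_r)\|^{1/2},
\]
where $\alpha_b=\sum_j a_j^{(b)}\dot\otimes (a_j^{(b)})^*$ and $\beta_b=\sum_j (b_j^{(b)})^*\dot\otimes b_j^{(b)}$; iterating (splitting each $a_j^{(b)}$, which is itself a $\dot\otimes$-product of $d_j$'s and $d_j^*$'s, by the same inequality, exactly as in the proof of Lemma \ref{lem2.1}) one reduces every factor to a block of the form $\sum_j d_j\dot\otimes d_j^*$ or $\sum_j d_j^*\dot\otimes d_j$. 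Counting factors shows there are $n$ of them in total, so $\|\Psi(\pi)\|\le \max\{\|\sum d_j\dot\otimes d_j^*\|_{(n)},\|\sum d_j^*\dot\otimes d_j\|_{(n)}\}^{n}$, using that $\|G\|_{(n)}=\|\widehat\tau(G^{\dot\otimes n})\|^{1/n}$ for such positive-type $G$ (cf.\ \eqref{eq3.as}, \eqref{eq1.7}, and that each $\sum_j d_j\dot\otimes d_j^*\succ 0$).

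Putting these together,
\[
\Big\|\sum d_j\Big\|_{(p)}^p \le \Big(\sum_{\dot 0<\pi}|\mu(\dot 0,\pi)|\Big)\,M^{n} \le (2n)!\,M^{n},\qquad M:=\max\Big\{\Big\|\sum d_j\dot\otimes d_j^*\Big\|_{(n)},\Big\|\sum d_j^*\dot\otimes d_j\Big\|_{(n)}\Big\},
\]
so $\|\sum d_j\|_{(p)}\le ((2n)!)^{1/2n}\,M^{1/2}$. The remaining task is purely numerical: show $((2n)!)^{1/2n}\le (3\pi/2)\cdot 2n = (3\pi/2)p$ for all $n\ge 2$ (a Stirling estimate, $((2n)!)^{1/2n}\le 2n/e\cdot(4\pi n)^{1/4n}$, easily gives a constant below $3\pi/2$), which yields \eqref{eqporth}.

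\textbf{Main obstacle.} The delicate point is the bookkeeping in the reduction of $\Psi(\pi)$: one must verify that after grouping the $2n$ factors $d_{g(1)}^*\dot\otimes d_{g(2)}\dot\otimes\cdots$ along the blocks of an arbitrary $\pi$ and applying the iterated Haagerup--Cauchy--Schwarz inequality, the resulting ``atomic'' factors are \emph{exactly} $\sum_j d_j\dot\otimes d_j^*$ and $\sum_j d_j^*\dot\otimes d_j$ (and not some mixed object like $\sum_j d_j\dot\otimes d_j$), and that the total count of such factors is exactly $n=p/2$ regardless of $\pi$ --- this is where the alternating $*$-pattern and the careful choice of how to split each block enter, and it parallels (but is somewhat more intricate than) the iteration in Lemma \ref{lem2.1} and Lemma \ref{lem9.4}. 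I expect the non-commutativity to cause no real trouble here because \eqref{eq8.1} (cyclicity of $\widehat\tau$) and the permutation-invariance of $\|\cdot\|_{\min}$ let us reorder factors freely; the genuine work is the combinatorial argument that the reduction terminates with the claimed homogeneous bound in $M$.
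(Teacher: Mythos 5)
Your overall skeleton (Möbius inversion, $\Phi(\dot 0)=0$ by $p$-orthogonality, Schützenberger's bound) matches the paper's, but the core estimate is wrong as stated, and this is precisely the point where the proof cannot be closed by bookkeeping alone. You claim the uniform bound $\|\Psi(\pi)\|\le M^{n}$ for every $\pi>\dot 0$. This fails for partitions containing singletons: if $j$ is a singleton of $\pi$, the index $g(j)$ ranges freely over $I$ in the sum defining $\Psi(\pi)$, so that tensor position contributes the \emph{full sum} $f=\sum d_j$ (or $f^*$), not a square-function block. Any Hölder/Cauchy--Schwarz estimate of $\Psi(\pi)$ therefore necessarily produces a factor $\|f\|_{(p)}^{r_1(\pi)}$, where $r_1(\pi)$ is the number of singletons --- which is the very quantity being bounded. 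The correct estimate, which is what the paper proves as \eqref{eq12.1}, is $\|\Psi(\pi)\|\le(\alpha\Delta)^{p-r_1(\pi)}\|f\|_{(p)}^{r_1(\pi)}$ with $\alpha=3\pi/4$; summing against $|\mu(\dot 0,\pi)|$ then gives a \emph{self-referential} polynomial inequality in $\|f\|_{(p)}$, and extracting the linear-in-$p$ conclusion requires the bootstrap argument of \cite[p.~912]{P5}, which your proposal omits entirely. Your final inequality $\|f\|_{(p)}^p\le(2n)!\,M^n$ is not obtainable without, in effect, assuming the theorem.

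The second gap is in the reduction of the non-singleton blocks. For a block of size $m\ge 3$, iterating Haagerup--Cauchy--Schwarz as in Lemma \ref{lem2.1} doubles the number of factors at each step and does not terminate at the atoms $\sum_j d_j\dot\otimes d_j^*$ and $\sum_j d_j^*\dot\otimes d_j$; one instead produces objects such as $\sum_j(d_j^*\dot\otimes d_j)^{\dot\otimes 2}$, which are not powers of the square functions. The paper avoids this entirely by decoupling: it replaces $d_j$ by $\lambda(g_j)\otimes d_j$ in every position belonging to a non-singleton block (the construction of \cite[Sublemma~3.3]{P5}), so that each such position contributes a factor $\widetilde f_k=\sum_j\lambda(g_j)\otimes d_j$, and then invokes the free Khintchine inequality \eqref{eq9.4} (Corollary \ref{cor9.3}) to get $\|\widetilde f_k\|_{(p)}\le(3\pi/4)\Delta$, after which a single application of Hölder \eqref{eq7.2} handles blocks of arbitrary size and shape. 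This is also where the constant comes from: $3\pi/2=2\cdot(3\pi/4)$ is twice the free circular Khintchine constant, and the factor $p$ arises from the Möbius combinatorics combined with the bootstrap --- not from a Stirling estimate of $((2n)!)^{1/2n}$ as you propose. To repair your argument you would need both the singleton-sensitive bound \eqref{eq12.1} and the free-group decoupling step.
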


\begin{proof}
This proof is modeled on that in \cite{P5} so we will be deliberately sketchy. Let $f = \sum d_j$. We can write
\[
 \widehat\tau [(f^*\dot{\otimes} f)^{\dot{\otimes}n}] = -\sum_{\dot 0<\pi} \mu(\dot{0},\pi)\Psi(\pi)
\]
where $\Phi$ and $\Psi$ are defined by
\[
 \Phi(\sigma) = \sum_{g\colon \ \pi(g)=\sigma} \widehat\tau(d^*_{g(1)} \dot{\otimes} d_{g(2)} \ldots \dot{\otimes} d^*_{g(p-1)} \dot{\otimes} d_{g(p)})
\]
and $\Psi = \sum\limits_{\sigma\ge \pi} \Phi(\sigma)$, or equivalently,
\[
\Psi(\pi) = \sum\nolimits_{g\sim\sigma} \widehat\tau(d^*_{g(1)} \dot{\otimes} \cdots \dot{\otimes} d_{g(p)})
\]
where  (as  in \S\ref{sec9}) $g\sim\sigma$ means that $g(i) = g(j)$ whenever $i,j$ are in the same block of  $\sigma$.
Here the functions $\Phi$ and $\Psi$ take values in $\ovl B \otimes B \otimes\cdots\otimes \ovl B \otimes B$ where $\ovl B \otimes B$ is repeated $n$-times .

Let $\alpha = 3\pi/4$ as in \cite{P5}. Arguing as in \cite[p.~912]{P5} we see that it suffices to prove that
\begin{equation}\label{eq12.1}
 \|\Psi(\pi)\| \le (\alpha\Delta)^{p-r_1(\pi)} \|f\|^{r_1(\pi)}_{(p)}
\end{equation}
where $\Delta = \max\left\{\left\|\sum d_j\dot{\otimes} d^*_j\right\|^{1/2}_{(p/2)}, \left\|\sum d^*_j \dot{\otimes} d_j\right\|^{1/2}_{(p/2)}\right\}$, and we recall that $r_1(\pi)$ is the number of singletons in $\pi$. Let ${\bb F}_I$ be the free group with generators $(g_j)_{j\in I}$, and let $\varphi$ be the normalized trace on the von~Neumann algebra of ${\bb F}_I$.

Let $f_k = \sum_{i\in I} d_i(k)$ be a finite sum in $B \otimes L_p(\tau)$, $k=1,\ldots, p$. We denote by
\[
 \tilde f_k = \sum\nolimits_{i\in I} \lambda(g_i) \otimes d_i(k)
\]
the corresponding sum in $L_p(\varphi\times \tau)\otimes B$. Note that by \eqref{eq9.4} we know that
\begin{equation}\label{eq12.2}
 \|\tilde f_k\|_{(p)} \le (3\pi/4) \max\left\{\left\|\sum d_j\dot{\otimes} d^*_j\right\|^{1/2}_{(p/2)}, \left\|\sum d^*_j \dot{\otimes} d_j\right\|^{1/2}_{(p/2)}\right\}.
\end{equation}
Let $\pi$ be a partition of $[1,\ldots, p]$. Let $B_1$ be the union of all the singletons in $\pi$ and let $B_2$ be the complement of $B_1$ in $[1,\ldots, p]$. By the construction in the proof of \cite[Sublemma~3.3]{P5} for a suitable discrete group $G$ there are $F_1,\ldots, F_p$ in $L_p(\tau_G\times\tau)\otimes B$ such that $\|F_k\|_{(p)} = \|\tilde f_k\|_{(p)}$ for all $k$ in $B_2$, $\|F_k\|_{(p)} = \|f_k\|_{(p)}$ for all $k$ in $B_1$, and also
\[
 \widehat\tau\left(\sum_{\pi(g)\ge \pi} d_{g(1)}(1) \dot{\otimes}\cdots \dot{\otimes} d_{g(p)}(p) \right) = \widehat{(\tau_G\times\tau)} [F_1 \dot{\otimes}\cdots\dot{\otimes} F_p].
\]
Then if we apply this to $d_j(k) = d^*_j$ if $k$ is odd and $d_j(k) = d_j$ if $k$ is even we find by \eqref{eq7.2}
\begin{align*}
 \|\Psi(\pi)\| &= \|\widehat{(\tau_G\times\tau)} (F_1 \dot{\otimes} \cdots\dot{\otimes} F_p)\|\\
&\le \prod^p_{k=1} \|F_k\|_{(p)}\\
&\le \|f\|^{|B_1|}_{(p)} \cdot \prod_{k\in B_2} \|\tilde f_k\|_{(p)}
\end{align*}
and by \eqref{eq12.2} we obtain \eqref{eq12.1}.
\end{proof}

\begin{cor}\label{corrad} Let $p$ be an even integer.  
Assume $\tau(1)=1$.
Let $(f_j)$ be a $p$-orthogonal  sequence in $L_p(\tau)$ that is orthonormal in
$L_2(\tau)$. Consider a finite sequence $(b_j)$ with   $b_j\in B$. 
   We have then 
\[ \|\sum b_j  \otimes \bar b_j\|^{ 1/2}\le 
 \left\|\sum b_j\otimes f_j \right\|_{(p)} \le (3\pi/2)p \|\sum b_j  \otimes \bar b_j\|^{ 1/2}
.
\]
Let $E_p$ denote   the
closed span of  $(f_j)$ in $\Lambda_p(\tau)$. 
Then $E_p$ is  completely isomorphic to $OH$, and
moreover,  the orthogonal projection $P$ 
from $L_2(\tau)$ onto the span of $(f_j)$  is c.b. on $\Lambda_p$ with c.b. norm
at most $ (3\pi/2)p $.
\end{cor}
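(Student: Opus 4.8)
The plan is to deduce Corollary \ref{corrad} from Theorem \ref{thm12.1} together with the general complementation mechanism already set up in Proposition \ref{probuch}. First I would establish the two-sided inequality. For the upper bound, I apply \eqref{eqporth} to the element $f=\sum b_j\otimes f_j\in B\otimes L_p(\tau)$ (regarding $f_j$ as scalars commuting with the $b_j$, so that $d_j=b_j\otimes f_j$ forms a $p$-orthogonal sequence in $B\otimes L_p(\tau)$, exactly because $(f_j)$ is $p$-orthogonal in $L_p(\tau)$). Then $\sum d_j\dot\otimes d_j^*=\sum b_j\otimes\bar b_j\otimes f_jf_j^*$ and $\sum d_j^*\dot\otimes d_j=\sum\bar b_j\otimes b_j\otimes f_j^*f_j$; since $(f_j)$ is orthonormal in $L_2(\tau)$, a short computation using \eqref{eq2.4+} (or rather its non-commutative analogue \eqref{eq7.1}) and the fact that $\|\sum\bar b_j\otimes b_j\|=\|\sum b_j\otimes\bar b_j\|$ shows that both $\|\sum d_j\dot\otimes d_j^*\|_{(p/2)}^{1/2}$ and $\|\sum d_j^*\dot\otimes d_j\|_{(p/2)}^{1/2}$ equal $\|\sum b_j\otimes\bar b_j\|^{1/2}$, giving the right-hand inequality with constant $(3\pi/2)p$. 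For the lower bound, I invoke Corollary \ref{cor2.4nc} (which requires $\tau(1)=1$): $\|f\|_{(p)}\ge\|f\|_{(2)}$, and $\|f\|_{(2)}=\|\sum b_j\otimes\bar b_j\|^{1/2}$ by \eqref{eq7.1} with $p=2$ and orthonormality of $(f_j)$ in $L_2(\tau)$.

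Once the two-sided estimate \eqref{buch}-type inequality
\[
\|\sum b_j\otimes f_j\|_{(p)}\le (3\pi/2)p\,\Big\|\sum b_j\otimes\bar b_j\Big\|^{1/2}
\]
is in hand, the structural conclusions follow verbatim from the proof of Proposition \ref{probuch}: that proposition's hypothesis \eqref{eq-buch} holds with $C=(3\pi/2)p$, $(x_k)=(f_j)$ is orthonormal in $L_2(\tau)$, and $\tau(1)=1$, so the orthogonal projection $P$ onto $\overline{\rm span}[f_j]$ satisfies $\|P\colon\Lambda_p\to\Lambda_p\|_{cb}\le (3\pi/2)p$ and $E_p=\overline{\rm span}[f_j]$ is completely isomorphic to $OH$ (with isomorphism constant controlled by the same quantity, since for $h$ in the span $\|h\|_{(2)}\le\|h\|_{(p)}\le(3\pi/2)p\|h\|_{(2)}$ and $\|h\|_{(2)}$ is precisely the $OH$-norm). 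This part is essentially a citation of an already-proved statement rather than new work.

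The main obstacle, and the only place requiring genuine care, is verifying that $(b_j\otimes f_j)_j$ is $p$-orthogonal in $B\otimes L_p(\tau)$ in the precise sense of \S\ref{sec11}, namely that $\widehat\tau(d_{g(1)}^*\dot\otimes d_{g(2)}\dot\otimes\cdots\dot\otimes d_{g(p)})=0$ for every \emph{injective} $g$. Writing $d_{g(i)}=b_{g(i)}\otimes f_{g(i)}$ and using that the $B$-factors and the $L_p(\tau)$-factors separate under $\widehat\tau$, this quantity equals $\big(b_{g(1)}^*\otimes\cdots\otimes b_{g(p)}\big)\cdot\tau(f_{g(1)}^*f_{g(2)}\cdots f_{g(p)})$, which vanishes precisely by the $p$-orthogonality hypothesis on $(f_j)$ in $L_p(\tau)$. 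I should also double-check the bookkeeping of the alternating $*$'s so that the product inside $\tau$ is $f_{g(1)}^*f_{g(2)}f_{g(3)}^*\cdots f_{g(p)}$ as in the definition, matching the $\dot\otimes$-pattern; this is routine but easy to get slightly wrong. Everything else is a direct quotation of Theorem \ref{thm12.1}, Corollary \ref{cor2.4nc}, Proposition \ref{probuch}, and the identity \eqref{eq7.1}.
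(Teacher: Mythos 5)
Your overall route is exactly the paper's: apply Theorem \ref{thm12.1} to $d_j=b_j\otimes f_j$, get the lower bound from Corollary \ref{cor2.4nc} together with $\|\sum b_j\otimes f_j\|_{(2)}=\|\sum b_j\otimes\bar b_j\|^{1/2}$, and then run the complementation mechanism (the paper factors $P$ as $\Lambda_p\to\Lambda_2\to E_2\to E_p$ rather than quoting Proposition \ref{probuch}, but that is the same argument). Your verification that $(b_j\otimes f_j)$ is $p$-orthogonal in $B\otimes L_p(\tau)$, including the bookkeeping of the alternating stars, is exactly what is needed and is correct.

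The one step that does not go through as you state it is the ``short computation'' giving $\|\sum d_j\dot\otimes d_j^*\|_{(p/2)}^{1/2}=\|\sum b_j\otimes\bar b_j\|^{1/2}$. One has $\sum d_j\dot\otimes d_j^*=\sum b_j\otimes\bar b_j\otimes f_jf_j^*$ and $\sum d_j^*\dot\otimes d_j=\sum\bar b_j\otimes b_j\otimes f_j^*f_j$; orthonormality of $(f_j)$ in $L_2(\tau)$ says nothing about the elements $f_jf_j^*,\,f_j^*f_j\in L_{p/2}(\tau)$, and in general $\|\sum b_j\otimes\bar b_j\otimes f_jf_j^*\|_{(p/2)}\neq\|\sum b_j\otimes\bar b_j\|$. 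Already for a single scalar coefficient $b_1=1$ and $f_1=\mu(A)^{-1/2}1_A$ on a probability space, the left side is $\|f_1\|_{L_p}^2=\mu(A)^{2/p-1}$, which is unbounded as $\mu(A)\to 0$, while the right side is $1$; this family is vacuously $p$-orthogonal and $L_2$-normalized, so the asserted upper bound of the corollary itself fails under the stated hypotheses. The identity you want holds precisely when $f_jf_j^*=f_j^*f_j=1$ (unitaries), which is the case in every application in the paper (characters, $\lambda(t)$ for $t$ in a $\Lambda(p)$-set, Rademachers, spin systems); the paper's own one-line deduction of the right-hand inequality from \eqref{eqporth} is silently using this. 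So you should either add the hypothesis that the $f_j$ are unitaries (or more generally that $\max\{\|\sum b_j\otimes\bar b_j\otimes f_jf_j^*\|_{(p/2)},\ \|\sum\bar b_j\otimes b_j\otimes f_j^*f_j\|_{(p/2)}\}\le\|\sum b_j\otimes\bar b_j\|$), or state the conclusion with the two square-function norms of Theorem \ref{thm12.1} on the right-hand side; with that correction the rest of your argument is sound and coincides with the paper's.
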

\begin{proof} 
The right hand side of 
the inequality follows from \eqref{eqporth}
 since
$(d_j)=(b_j\otimes f_j)$ is clearly $p$-orthogonal.
 By Corollary \ref{cor2.4nc}, the inclusion $\Lambda_p(\tau)\to \Lambda_2(\tau)=L_2(\tau)$
has c.b norm 1. Using this the left hand side follows. This shows that   $E_p\simeq OH$.
The projection $P$ can then be factorized as $\Lambda_p\to \Lambda_2 \to E_2\to E_p$,
which implies    $\|P\colon\ \Lambda_p\to E_p\|_{cb} \le  (3\pi/2)p $, since the first two arrows
are completely contractive.
\end{proof}
\section{Lacunary Fourier series in $\Lambda_p$}\label{sec12}

\indent

In this section, we review the results of \cite{Har} and \cite{P5} with $\Lambda_p$ in place of $L_p$, and again we find the space $OH$ appearing in place of $R_p\cap C_p$. To save space, it will be convenient to adopt the general viewpoint in \cite[\S4]{P5}, although this may seem obscure to a reader unfamiliar with \cite{Har}.\medskip 

\n {\bf Notation:}\ Let $1 = \sum_{k\in J} P_k$ be an orthogonal decomposition of the identity of $L_2(\tau)$ on a semi-finite ``non-commutative'' measure space $(M,\tau)$. Let $p =2n$ be an even integer $>2$. Let $(d_j)_{j\in I}$ be a finite family in $B \otimes L_p(\tau)$. We set $x^\omega = x^*$ if $n$ is odd and $x^\omega=x$ in $n$ is even.

\begin{thm}\label{thm11.1}
Let $F$ be the set of all injective functions $g\colon \ [1,2,\ldots, n]\to I$. For any $g$ in $F$, we let $x_g =   d^*_{g(1)} \dot{\otimes} d_{g(2)} \dot{\otimes}   d^*_{g(3)} \ldots d^\omega_{g(n)}$. We define
\[
 N(d)=\sup_{k\in J} \text{\rm card}\{g\in F\mid (P_k\otimes I) (x_g)\ne 0\}.
\]
Then
\[
 \bigg\|\sum_{j\in I} d_j\bigg\|_{(p)} \le \left[(4N(d))^{1/p} + p \frac{9\pi}8\right] \max\left\{\left\|\sum d_j\dot{\otimes} d^*_j\right\|^{1/2}_{(p/2)}, \left\|\sum d^*_j \dot{\otimes} d_j\right\|^{1/2}_{(p/2)}\right\}.
\]
\end{thm}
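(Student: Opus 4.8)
The plan is to mimic the proof of Theorem~\ref{thm12.1} for $p$-orthogonal sums, using the M\"obius-function expansion \eqref{eq-moe} but now keeping track of the extra ``collision'' term coming from the fact that $(d_j)$ need not be $p$-orthogonal. Write $f=\sum_{j\in I}d_j$ and expand $\widehat\tau[(f^*\dot\otimes f)^{\dot\otimes n}]$ exactly as in the proof of Theorem~\ref{thm12.1}:
\[
\widehat\tau[(f^*\dot\otimes f)^{\dot\otimes n}]=\Phi(\dot 0)-\sum_{\dot 0<\pi}\mu(\dot 0,\pi)\Psi(\pi),
\]
where $\Phi(\dot 0)=\sum_{g\text{ injective}}x_g$ runs over the injective functions in $F$ and $\Psi(\pi)=\sum_{g\sim\pi}x_g$. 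For the $p$-orthogonal case $\Phi(\dot 0)=0$; here it does not vanish, so the first step is to bound $\|\Phi(\dot 0)\|$ directly. This is where the hypothesis on $N(d)$ enters: inserting the orthogonal decomposition $1=\sum_{k\in J}P_k$ on the appropriate tensor leg, $\Phi(\dot 0)=\sum_{k\in J}(P_k\otimes I)\Phi(\dot 0)$, and by \eqref{eq1.9} (applied on each leg) plus the triangle inequality in $\Lambda_1$ one gets $\|\Phi(\dot 0)\|\le\big(\text{number of nonzero }(P_k\otimes I)x_g\big)\cdot\max_g\|x_g\|$, which after the Cauchy--Schwarz-type argument of Lemma~\ref{lem2.1}/\eqref{eq2.2} yields a bound of the form $\le 4N(d)\,\Delta^p$ with $\Delta=\max\{\|\sum d_j\dot\otimes d_j^*\|_{(p/2)}^{1/2},\|\sum d_j^*\dot\otimes d_j\|_{(p/2)}^{1/2}\}$. (The precise constant $4$ is exactly what is extracted in \cite{Har,P5}, and I would simply transcribe that bookkeeping.)

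For the remaining sum $\sum_{\dot 0<\pi}\mu(\dot 0,\pi)\Psi(\pi)$, the argument is essentially identical to that of Theorem~\ref{thm12.1}: by Sch\"utzenberger's theorem $\sum_\pi|\mu(\dot 0,\pi)|=n!$, and it suffices to prove the per-partition estimate
\[
\|\Psi(\pi)\|\le(\alpha\Delta)^{p-r_1(\pi)}\|f\|_{(p)}^{r_1(\pi)}
\]
with $\alpha=3\pi/4$, exactly as in \eqref{eq12.1}. The proof of this is word-for-word the one in Theorem~\ref{thm12.1}: replace the sum over $g$ with $\pi(g)\ge\pi$ by a genuine product over the free group, invoking the Khintchine-type bound \eqref{eq12.2} (which in turn rests on \eqref{eq9.4}) for the non-singleton blocks and using H\"older \eqref{eq7.2} to split the product, while singletons contribute a factor $\|f\|_{(p)}$ each. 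Combining, $\|\widehat\tau[(f^*\dot\otimes f)^{\dot\otimes n}]\|\le 4N(d)\Delta^p+\sum_{\dot 0<\pi}|\mu(\dot 0,\pi)|(\alpha\Delta)^{p-r_1(\pi)}\|f\|_{(p)}^{r_1(\pi)}$, i.e. $\|f\|_{(p)}^p\le 4N(d)\Delta^p+\sum_\pi|\mu(\dot 0,\pi)|(\alpha\Delta)^{p-r_1(\pi)}\|f\|_{(p)}^{r_1(\pi)}$.

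The last step is the elementary ``polynomial inequality'' manipulation, also borrowed from \cite{P5}: setting $t=\|f\|_{(p)}/\Delta$ (assuming $\Delta>0$, the degenerate case being trivial), one has $t^p\le 4N(d)+\sum_\pi|\mu(\dot 0,\pi)|\alpha^{p-r_1(\pi)}t^{r_1(\pi)}$, and since $\sum_\pi|\mu(\dot 0,\pi)|\alpha^{p-r_1(\pi)}t^{r_1(\pi)}\le(\alpha+t)^p$ up to the combinatorial factor that was already absorbed in \cite[p.~912]{P5} into the constant $9\pi/8$ (this uses $r_1(\pi)\le p$ and the count of partitions with a given number of singletons), one deduces $t\le(4N(d))^{1/p}+\tfrac{9\pi}{8}p$, which is the claim. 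I expect the main obstacle to be purely expository rather than mathematical: correctly re-deriving the combinatorial constant $9\pi/8\,p$ from the $\alpha=3\pi/4$ of the free-group step together with the partition-counting, and making sure the $\Lambda_p$-analogues of the free Khintchine inequality \eqref{eq9.4} and of H\"older \eqref{eq7.2} are applied on the correct tensor legs after the permutations $x\approx y$; none of this is deep, but the tensor-leg bookkeeping is the place where errors would creep in, so I would be careful to write $x_g$ and $\Psi(\pi)$ as elements of $\ovl B\otimes B\otimes\cdots\otimes\ovl B\otimes B$ throughout and track the permutation $\sigma$ fixing the order $\prec$ as warned in \S\ref{sec2}.
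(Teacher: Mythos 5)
Your overall architecture (M\"obius expansion, per-partition estimate via the free-group substitution and Corollary \ref{cor9.3}, then the polynomial inequality borrowed from \cite{P5}) is the right one and matches the paper for the $\Psi(\pi)$ terms. The gap is in how $N(d)$ enters, and it is not merely expository. You run the expansion on $\widehat\tau[(f^*\dot\otimes f)^{\dot\otimes n}]=\|f\|_{(p)}^p$, a product of $2n$ factors; then your $\Phi(\dot 0)$ is a sum over injective $g\colon[1,\ldots,2n]\to I$ and $\widehat\tau$ has already been applied, so there is no tensor leg left on which to insert $1=\sum_k P_k$ -- and in any case $N(d)$ is defined in the statement via the $n$-fold products $x_g$ with $g\colon[1,\ldots,n]\to I$, so it does not control collisions among $2n$-fold products. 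Worse, your proposed bound, ``triangle inequality in $\Lambda_1$'' times the number of nonzero terms $(P_k\otimes I)x_g$, produces a factor equal to the \emph{total} number of such terms (roughly $\sum_{k\in J}\mathrm{card}(F_k)$, which can be as large as $\mathrm{card}(F)$ or larger), not the per-$k$ supremum $N(d)$; no triangle-inequality argument converts a fiberwise count into a global one.

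The paper's mechanism is different and essential: one writes $\|f\|_{(p)}^n=\|f^*\dot\otimes f\cdots\dot\otimes f^\omega\|_{(2)}$ (only $n$ factors, viewed in $\Lambda_2$) and runs the M\"obius decomposition there, so that $\Phi(\dot 0)=\sum_{g\in F}x_g$ is still an $L_2$-valued object. Setting $\Phi_k=(id\otimes P_k)\Phi(\dot 0)$, genuine orthogonality \eqref{eq2.03} gives $\|\Phi(\dot 0)\|_{(2)}^2=\|\widehat\tau(\sum_k\Phi_k\dot\otimes\Phi_k^*)\|$; since each $\Phi_k$ is a sum of at most $N(d)$ terms $x_g(k)$, the order-convexity inequality \eqref{eq1.3} yields $\Phi_k\dot\otimes\Phi_k^*\prec N(d)\sum_{g\in F_k}x_g(k)\dot\otimes x_g(k)^*$, and resumming over $k$ and completing the injective sum to the full sum by positivity (Lemma \ref{lem8.4} and \eqref{eq1.1-}) gives $\|\Phi(\dot 0)\|_{(2)}^2\le N(d)\,\|\sum d_j\dot\otimes d_j^*\|_{(p/2)}^{p/2}$, i.e.\ a contribution $N(d)^{1/2}\Delta^{n}$ to $\|f\|_{(p)}^n$, whence the exponent $1/p=1/(2n)$ in the statement. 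It is precisely the combination of Pythagoras in $L_2$ across the fibers with order-convexity within each fiber that produces $N(d)$, and this forces the entire argument (including the $\Psi(\pi)$ estimates, whose exponents become $n-r_1(\pi)$ and $r_1(\pi)$ rather than $p-r_1(\pi)$) to be run on the $n$-fold product in $\Lambda_2$. As written, your bound on $\Phi(\dot 0)$ does not hold, so the proof does not go through without this restructuring.
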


\begin{proof}
Since we follow closely the ideas in \cite{Har} and \cite{P5} we will merely sketch the proofs. We have
\[
 \|f\|^n_{(p)} = \|f^* \dot{\otimes} f\cdots\dot{\otimes} f^\omega\|_{(2)}.
\]
Arguing as in \cite[p.~919]{P5} we find
\[
 f^* \dot{\otimes} f\cdots \dot{\otimes} f^\omega = \Phi(\dot{0}) - \sum_{\dot{0}<\pi\in P_n} \mu(\dot{0},\pi)\Psi(\pi)
\]
with $\Phi(\sigma)= \sum_{\pi(g)=\sigma} x_g$ and
\[
 \Psi(\pi) = \sum\nolimits_{\sigma\le\pi} \Phi(\sigma).
\]
Note that $\Phi(\dot{0}) = \sum_{g\in F} x_g$. Using a suitable adaptation of \cite[Sublemma 3.3]{P5} and replacing \cite[(3.5)]{P5} by   Corollary~\ref{cor9.3} above, we find:
\[
 \|\Psi(\pi)\|_{(2)} \le \|f\|^{r_1(\pi)}_{(p)} (\alpha\Delta)^{n-r_1(\pi)}
\]
where
\[
 \Delta = \max\left\{\left\|\sum d_j\dot{\otimes} d^*_j\right\|^{1/2}_{(p/2)}, \left\|\sum d^*_j \dot{\otimes} d_j\right\|^{1/2}_{(p/2)}\right\}.
\]
Let $F_k  = \{g\in F\mid (id\otimes P_k)x_g\ne 0\}$ and $\Phi_k = (id\otimes P_k) \Phi(\dot{0})$, so that
 $$\Phi(\dot{0}) = \sum\Phi_k \quad{\rm and}\quad \Phi_k = \sum_{g\in F_k} x_g(k)$$ where $x_g(k) = (id\otimes P_k)(x_g)$. By \eqref{eq2.03} we have by ``orthogonality'' of $\Phi_k$
\[
 \|\Phi(\dot{0})\|^2_{(2)} = \left\|\widehat\tau\left(\sum \Phi_k\dot{\otimes} \Phi^*_k\right)\right\|.
\]
Since $\text{card}(F_k) \le N(d)$, by \eqref{eq1.3} and Lemma \ref{lem8.4} we have
\[
 \widehat\tau\left( \sum \Phi_k \dot{\otimes} \Phi^*_k\right) \prec N(d)\widehat\tau \left(\sum\nolimits_{k}\sum\nolimits_{g\in F_k} x_g(k) \dot{\otimes} x_g(k)^*\right) = N(d) \widehat \tau \left(\sum\nolimits_{g\in F} x_g\dot{\otimes} x^*_g\right).
\]
Therefore, we find
\[
 \frac1{N(d)} \|\Phi(\dot{0})\|^2_{(2)} \le \left\|\widehat\tau \left(\sum\nolimits_{g\in F} x_g \dot{\otimes} x^*_g\right)\right\| \le \left\|\widehat\tau \left(\sum_{g(1),\ldots, g(n)} d^*_{g(1)} \dot{\otimes}\cdots \dot{\otimes} d^\omega_{g(n)} \dot{\otimes} d^{\omega*}_{g(n)} \ldots \dot{\otimes} d_{g(1)}\right)\right\|
\]
and hence
\[
 \|\Phi(\dot{0})\|^2_{(2)} \le N(d) \left\|\widehat\tau \left(\left(\sum d_j \dot{\otimes} d^*_j\right)^{\dot{\otimes}n}\right)\right\| = N(d) \left\|\sum d_j\dot{\otimes} d^*_j\right\|^{p/2}_{(p/2)}.
\]
Thus we may conclude by the same reasoning as in \cite[p.~920]{P5}.
\end{proof}

We can now reformulate the main result of \cite{Har} with $\Lambda_p$ in place of $L_p$:

\begin{cor}\label{cor11.2}
Fix an even integer $p=2n>2$. Let $E \subset \Gamma$ be a subset of a discrete group $\Gamma$ with unit $e$. For any $\gamma$ in $\Gamma$ let $Z_p(\gamma,E)$ be the cardinality of the set of injective functions $g\colon \ [1,\ldots, n]\to E$ such that
\[
 \gamma = g(1) g(2)^{-1} g(3)\ldots g(n)^w
\]
where $g^w = g^{-1}$ if $n$ is even and $g^w=g$ if $n$ is odd. We set
\[
 Z(E) = \sup\{Z_p(\gamma,E)\mid \gamma\in \Gamma\}.
\]
Then for any finitely supported family $(b(t))_{t\in E}$ in $B=B(H)$ we have
\begin{equation}\label{eq11.1}
 \left\|\sum\nolimits_{t\in E} \lambda(t) \otimes b(t)\right\|_{(p)} \le ((4Z(E))^{1/p} + (9\pi/8)p) \left\|\sum b(t)\otimes \ovl{b(t)}\right\|^{1/2}.
\end{equation}
\end{cor}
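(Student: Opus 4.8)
The plan is to derive Corollary~\ref{cor11.2} as a direct application of Theorem~\ref{thm11.1}, by making the right choice of the orthogonal decomposition $1 = \sum_{k\in J} P_k$ on $L_2(\tau)$ and the family $(d_j)$. Take $\tau$ to be the normalized trace on the von~Neumann algebra of $\Gamma$, acting on $L_2(\tau) = \ell_2(\Gamma)$, and let $P_k$ be the rank-one projection onto the vector $\delta_k$ for each $k\in\Gamma$; thus $J = \Gamma$. For the family $(d_j)_{j\in I}$ indexed by $I = E$, set $d_t = \lambda(t)\otimes b(t)$ for $t\in E$. This is exactly the setting of Theorem~\ref{thm11.1}, so the whole task reduces to computing, or rather bounding from above, the quantity $N(d)$ and identifying the two ``square function'' norms on the right-hand side.

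First I would compute the square function norms. Since $d_t \dot\otimes d_t^* = \lambda(t)\lambda(t)^* \otimes b(t)\otimes\ovl{b(t)} = 1\otimes b(t)\otimes\ovl{b(t)}$ (using that $\lambda(t)$ is unitary), and similarly $d_t^*\dot\otimes d_t = 1\otimes\ovl{b(t)}\otimes b(t)$, the sums $\sum_t d_t\dot\otimes d_t^*$ and $\sum_t d_t^*\dot\otimes d_t$ are both, up to the trivial first tensor factor $1$ and a permutation, equal to $\sum_t b(t)\otimes\ovl{b(t)}$ (a constant function in $L_{p/2}(\tau)$). Since $\tau(1)=1$ and the constant function $1$ has all $\Lambda_q$-norms equal to $1$, an easy check (using the multiplicativity in Corollary~\ref{cor2.4-} or directly the definition \eqref{eq7.1}, noting $1\otimes y$ just sits inside a bigger $B$) gives $\|\sum_t d_t\dot\otimes d_t^*\|_{(p/2)}^{1/2} = \|\sum_t d_t^*\dot\otimes d_t\|_{(p/2)}^{1/2} = \|\sum_t b(t)\otimes\ovl{b(t)}\|^{1/2}$. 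So the maximum on the right-hand side of Theorem~\ref{thm11.1} is precisely $\|\sum_t b(t)\otimes\ovl{b(t)}\|^{1/2}$, matching \eqref{eq11.1}.

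Next I would bound $N(d)$ by $Z(E)$. For $g\in F$ (an injective function $[1,\ldots,n]\to E$), the element $x_g = d_{g(1)}^*\dot\otimes d_{g(2)}\dot\otimes\cdots\dot\otimes d_{g(n)}^{\omega}$ equals $\lambda(g(1))^*\lambda(g(2))\cdots\lambda(g(n))^{w}\otimes(\text{stuff in }B^{\otimes n})$; note the convention $\omega$ on the $B$-valued factors and $w$ on the group factors correspond, and $\lambda(t)^* = \lambda(t^{-1})$, so the group part is $\lambda(g(1)^{-1} g(2)\cdots g(n)^{w})$. Wait---I need to reconcile the indexing: in Theorem~\ref{thm11.1} $x_g$ starts with $d_{g(1)}^*$, whereas in Corollary~\ref{cor11.2} the word is $g(1)g(2)^{-1}g(3)\cdots$; applying the anti-automorphism $\lambda(t)\mapsto\lambda(t^{-1})$ and cyclicity of the trace (or just relabelling $g\mapsto g(\cdot)^{-1}$) shows the two conventions define the same counting function up to a bijection of the indexing set, so the count is unchanged. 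Now $(P_k\otimes I)(x_g)\ne 0$ means $P_k$ does not annihilate $\lambda(\gamma_g)\delta_e$ where $\gamma_g$ is the word, i.e. $P_k$ is the projection onto $\delta_{\gamma_g}$, i.e. $k=\gamma_g$. Hence for fixed $k=\gamma\in\Gamma$, $\mathrm{card}\{g\in F: (P_k\otimes I)(x_g)\ne 0\}$ is exactly the number of injective $g$ with $\gamma = g(1)g(2)^{-1}\cdots g(n)^{w}$, which is $Z_p(\gamma,E)$. Taking the sup over $k$ gives $N(d) = \sup_\gamma Z_p(\gamma,E) = Z(E)$ (or $\le Z(E)$, which is all we need). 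Plugging $N(d)\le Z(E)$ into the conclusion of Theorem~\ref{thm11.1} yields \eqref{eq11.1} verbatim.

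The only genuine subtlety---and the step I would double-check most carefully---is the bookkeeping of the various involution/cyclic conventions ($\omega$ versus $w$, the starting index, the direction of the word) to make sure the set being counted is literally $Z_p(\gamma,E)$ and not, say, $Z_p(\gamma^{-1},E)$ or a count with a different parity of inverses; since we take a supremum over all $\gamma\in\Gamma$ this discrepancy is harmless, but it is worth stating explicitly. Everything else is a transcription of the general machinery, so the proof is short: set up the decomposition, read off the two square functions, identify $N(d)$ with $Z(E)$, and quote Theorem~\ref{thm11.1}.
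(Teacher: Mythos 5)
Your proposal is correct and follows exactly the paper's own (very terse) proof: apply Theorem~\ref{thm11.1} to the Fourier decomposition $1=\sum_{k\in\Gamma}P_k$ of $L_2(\tau)$ with $d_t=\lambda(t)\otimes b(t)$, identify the quantity $\Delta$ with $\left\|\sum b(t)\otimes\overline{b(t)}\right\|^{1/2}$ (using that each $\lambda(t)$ is unitary), and bound $N(d)$ by $Z(E)$. The one caveat is that taking the supremum over $\gamma$ does not by itself absorb the mismatch between the word $g(1)^{-1}g(2)\cdots$ produced by $x_g$ and the word $g(1)g(2)^{-1}\cdots$ in the definition of $Z_p(\gamma,E)$ (in a non-abelian group the maximal multiplicities of left- and right-quotient patterns can differ); the clean repair is the symmetry $\|f\|_{(p)}=\|f^*\|_{(p)}$ of \eqref{eq7.6}, i.e.\ applying the theorem to $E^{-1}$ with conjugated coefficients --- a point the paper itself also glosses over.
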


\begin{proof}
Here $L_2(\tau)$ is the $L_2$-space associated to the usual trace on the von~Neumann algebra associated to $\Gamma$. Any element in $L_2(\tau)$ has an orthonormal expansion in a ``Fourier series'' $x = \sum_{t\in \Gamma} x(t)\lambda(t)$, so we can apply Theorem~\ref{thm11.1} to this orthogonal decomposition (with $J=\Gamma$). Note that if $(t_j)$ are distinct elements in $E$ and if $d_j = \lambda(t_j)\otimes b(t_j)$ we have $N(d)\le Z(E)$. Lastly, we note that in the present situation, since $L_p(\varphi) = {\bb C}$, the term previously denoted by $\Delta$ coincides with
$$
 \left\|\sum b(t)\otimes \ovl{b(t)}\right\|^{1/2}.\eqno\qed
$$
\renewcommand{\qed}{}\end{proof}

We have also a (one sided) version of  the Littlewood--Paley inequality for $\Lambda_p$:

\begin{cor}\label{cor11.3}
Consider a Fourier series of the form
\[
 f = \sum\nolimits_{n>0} \hat f(n)e^{int}
\]
where $n\to\hat f(n)$ is a finitely supported $B(H)$-valued function. Let 
\begin{align*}
\Delta_n &= \sum\nolimits_{2^n\le k <2^{n+1}} \hat f(k)e^{ikt}
\end{align*}
 and let
\[
 {\cl S}(f) =   \sum \Delta_n \dot{\otimes} \ovl{\Delta_n} .
\]
There is an absolute constant $C$ such that for any even integer $p\ge 2$
\[
 \|f\|_{(p)} \le C_p\|{\cl S}(f)\|^{1/2}_{(p/2)}.
\]
\end{cor}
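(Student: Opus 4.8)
The plan is to deduce Corollary~\ref{cor11.3} from Corollary~\ref{cor11.2} by the classical Riesz-product substitution (``transference'') that replaces the dyadic blocks $\Delta_n$ by new free-like generators to which a $\Lambda(p)$-type estimate applies. First I would recall that the dyadic intervals $[2^n,2^{n+1})$ form a Sidon-type partition of the positive integers: a product of $n$ frequencies, one from each of $n$ distinct blocks (with signs $\pm1$ as dictated by the alternating pattern $g(1)g(2)^{-1}\cdots$), determines the blocks used essentially uniquely, so the relevant counting function $Z$ is bounded by a constant depending only on $p$. Concretely, I would introduce the group $\Gamma=\bigoplus_{n} \mathbb Z$ (or work directly inside $L_p$ of the Bohr-type compactification) and, for each $n$, pick a single representative frequency; the point is that once we know which block each factor came from, the lacunarity of $(2^n)$ forces the remaining choices to be controlled. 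This gives a subset $E\subset\Gamma$ with $Z(E)\le C(p)$, so that Corollary~\ref{cor11.2} yields
\[
 \Big\|\sum_n \Delta_n\Big\|_{(p)} \le C_p\Big\|\sum_n \Delta_n\dot\otimes\ovl{\Delta_n}\Big\|^{1/2}_{(p/2)}
\]
\emph{provided} the $\Delta_n$ behave like the $\lambda(t_j)\otimes b(t_j)$, i.e.\ provided we have replaced the characters $e^{ikt}$ inside the $n$-th block by $\lambda(\gamma_n)$ times a character, and then transferred back.

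Second, the technical heart is the transference step. I would use that the map sending $e^{ikt}\mapsto \lambda(\gamma_n)e^{ikt}$ when $2^n\le k<2^{n+1}$ is, by Corollary~\ref{cor2.4nc} together with the complete contractivity of conditional expectations (the Corollary after Proposition~\ref{pro2.4}, and its non-commutative analogue Lemma~\ref{lem10.2}), compatible with the $\Lambda_p$ norms: more precisely, the ``de-transference'' direction $\lambda(\gamma_n)e^{ikt}\mapsto e^{ikt}$ is a conditional expectation composed with a translation, hence completely contractive on $\Lambda_p$. This lets me bound $\|f\|_{(p)}=\|\sum_n\Delta_n\|_{(p)}$ by the norm of the lifted function $\sum_n \lambda(\gamma_n)\dot\otimes\Delta_n$ in $\Lambda_p$ of the larger (von Neumann-algebraic) space, to which Corollary~\ref{cor11.2} (or directly Theorem~\ref{thm11.1}) applies. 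The square function on the right is unchanged by this substitution, up to the completely isometric identifications built from $\eqref{eq2.03}$ and the permutation-invariance of the norms, because $\lambda(\gamma_n)\dot\otimes\lambda(\gamma_n)^*=1$ whenever the free/lacunary generators satisfy the relevant orthogonality, so $\sum_n(\lambda(\gamma_n)\Delta_n)\dot\otimes\ovl{(\lambda(\gamma_n)\Delta_n)}$ reduces to $\sum_n\Delta_n\dot\otimes\ovl{\Delta_n}={\cl S}(f)$.

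Third, I would assemble the constants: Corollary~\ref{cor11.2} gives the bound $((4Z(E))^{1/p}+(9\pi/8)p)\|\cdot\|^{1/2}$, and since $Z(E)\le C(p)$ for a dyadic partition, the prefactor is some finite $C_p$; writing $C$ for an absolute constant governing the combinatorics (the ``$4Z$'' part being controlled uniformly in $p$ up to exponent $1/p$, hence $O(1)$, while the $(9\pi/8)p$ term is genuinely $O(p)$), one gets $\|f\|_{(p)}\le C_p\|{\cl S}(f)\|^{1/2}_{(p/2)}$ with $C_p=O(p)$, which is the claimed inequality.

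The main obstacle I expect is verifying the counting bound $Z(E)\le C(p)$ rigorously --- i.e.\ that for the dyadic decomposition of $\mathbb N$ the number of injective $n$-tuples $g$ with a prescribed alternating product $g(1)g(2)^{-1}\cdots g(n)^{w}=\gamma$ is bounded independently of $\gamma$ --- and, relatedly, choosing the transference group and the frequencies $\gamma_n$ so that the lacunarity of $(2^n)$ is faithfully reflected as injectivity/combinatorial sparseness in $E$. Once that combinatorial input is in place, everything else is a routine concatenation of the complete contractivity statements already established (Corollary~\ref{cor2.4nc}, Lemma~\ref{lem10.2}, the identity $\eqref{eq2.03}$) together with Corollary~\ref{cor11.2}; the only subtlety there is to be careful with the various permutations of tensor factors so that the ``$\prec$'' ordering and the norm identities are applied with a fixed permutation throughout, as warned after $\eqref{eq-or}$.
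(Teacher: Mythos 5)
Your proposal has a genuine gap at the transference step, and it also misreads which result of Section \ref{sec12} is actually applicable. Corollary \ref{cor11.2} concerns sums $\sum_{t\in E}\lambda(t)\otimes b(t)$ with \emph{constant} coefficients $b(t)\in B(H)$, so it cannot be applied to the dyadic blocks $\Delta_n$, which are operator-valued trigonometric polynomials; to make your scheme work you would really be invoking the Khintchine-type inequality with operator-valued coefficients (Theorem \ref{thm9.2} / Corollary \ref{cor9.3}) for the lifted element $\sum_n\lambda(\gamma_n)\otimes\Delta_n$. The fatal problem is the ``de-transference'' direction: you assert that the map $\lambda(\gamma_n)e^{ikt}\mapsto e^{ikt}$ is a conditional expectation composed with a translation, hence completely contractive on $\Lambda_p$. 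It is not. The conditional expectation from the tensor product algebra onto $L_\infty({\bb T})$ sends $\lambda(\gamma_n)\otimes e^{ikt}$ to $\tau(\lambda(\gamma_n))e^{ikt}=0$, not to $e^{ikt}$; stripping off the auxiliary unitaries is neither a conditional expectation nor obviously completely bounded on $\Lambda_p$. Without a valid argument that $\|f\|_{(p)}\le C\|\sum_n\lambda(\gamma_n)\otimes\Delta_n\|_{(p)}$, the whole reduction collapses, and this inequality is exactly the nontrivial content you would need to prove. (Your counting discussion of $Z(E)$ for single representative frequencies is also not the right combinatorial quantity: what must be controlled is the number of ordered tuples of distinct dyadic \emph{blocks} whose alternating sumset contains a given frequency.)

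The paper's own proof avoids transference entirely: Theorem \ref{thm11.1} is stated for an arbitrary family $(d_j)$ in $B\otimes L_p(\tau)$ together with an arbitrary orthogonal decomposition $1=\sum_k P_k$, so one simply takes $d_j=\Delta_j$ and $P_k$ the rank-one projections onto the characters $e^{ikt}$. The quantity $N(d)$ then counts, for fixed $k$, the injective tuples of blocks whose alternating sumset $-I_{g(1)}+I_{g(2)}-\cdots$ contains $k$, and this is bounded by elementary arithmetic of lacunary sequences \emph{after} splitting $f$ into $\sum_m\Delta_{2m}$ and $\sum_m\Delta_{2m+1}$ so that consecutive blocks used are separated by a factor of $4$ (a splitting your argument omits). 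If you want to repair your approach, you should either abandon the lifting and argue directly with Theorem \ref{thm11.1} as the paper does, or supply an honest proof that the substitution $e^{ikt}\mapsto\lambda(\gamma_{n(k)})\otimes e^{ikt}$ and its inverse are completely bounded on $\Lambda_p$, which is not among the tools established in the paper.
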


\begin{proof}
 It suffices to prove the inequality separately for the cases $f = \sum_m \Delta_{2m}$ and $f = \sum_m \Delta_{2m+1}$. But then each of these cases follows from Theorem~\ref{thm11.1} and elementary arithmetic involving lacunary sequences.
\end{proof}

It may be worthwhile to point out that in the commutative case, the following variant of Theorem~\ref{thm11.1} holds:

\begin{thm}\label{thm11.4}
Consider the same situation as in Theorem~\ref{thm11.1} but with $M$ commutative so that $L_2(\tau)$ can be identified with $L_2(\Omega,\mu)$. Let $y_g = d_{g(1)} \dot{\otimes} d_{g(2)} \dot{\otimes} \cdots \dot{\otimes} d_{g(n)}$ and let
\[
 N_+(d) = \sup_{k\in J} \text{\rm card}\{g\in F\mid (P_k\otimes I) (y_g) \ne 0\}.
\]
Then
\[
 \left\|\sum d_j\right\|_{(p)} \le [(4 N_+(d))^{1/p} + 9\pi/8] \left\|\sum d_j\dot{\otimes} \bar d_j\right\|^{1/2}_{(p/2)}.
\]
\end{thm}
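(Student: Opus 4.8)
The plan is to mirror the proof of Theorem~\ref{thm11.1} almost verbatim, replacing the use of the Haagerup--Cauchy--Schwarz inequality \eqref{eq8.0} (which involves $f^*\dot\otimes f$ and $f\dot\otimes f^*$, hence both row and column terms) by the commutative inequality \eqref{eq2.2}, which produces only $f\dot\otimes\bar f$-type terms. First I would write $f=\sum_{j\in I}d_j$ and, using \eqref{eq2.4++}, reduce to estimating $\|f^{\dot\otimes n}\|_{(2)}$, i.e.\ the norm of $\int f^{\dot\otimes n}\dot\otimes\overline{f^{\dot\otimes n}}\,d\mu$. Expanding $f^{\dot\otimes n}=\sum_{g}y_g$ over functions $g\colon[1,\ldots,n]\to I$, and applying the M\"obius-inversion formula \eqref{eq-moe} exactly as in \cite[p.~919]{P5} (with $\varphi$ taken to be the $n$-linear pointwise product into $L_2$), I would write
\[
 f^{\dot\otimes n}=\Phi(\dot 0)-\sum_{\dot 0<\pi\in P_n}\mu(\dot 0,\pi)\Psi(\pi),
\]
where $\Phi(\dot 0)=\sum_{g\in F}y_g$ is the ``fully injective'' term and $\Psi(\pi)=\sum_{\pi(g)\ge\pi}y_g$.

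Next I would bound the two kinds of terms separately. For the lower-dimensional terms $\Psi(\pi)$: here the key substitution is that $p$-orthogonality of $(d_j)$ (in the commutative sense, i.e.\ with all $d_j$ in place of alternating $d_j,d_j^*$) together with the free-group transference of \cite[Sublemma~3.3]{P5} — now invoking Corollary~\ref{cor9.2}/\eqref{eqbuch1} rather than Corollary~\ref{cor9.3} — gives
\[
 \|\Psi(\pi)\|_{(2)}\le\|f\|^{r_1(\pi)}_{(p)}\,(\alpha\Delta_+)^{n-r_1(\pi)},\qquad
 \Delta_+=\Big\|\sum d_j\dot\otimes\bar d_j\Big\|^{1/2}_{(p/2)},
\]
with $\alpha=3\pi/4$ arising from \eqref{eqbuch1}; note the bound is now controlled by the single quantity $\Delta_+$ and not by a max of row/column terms. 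For the injective term $\Phi(\dot 0)=\sum_k\Phi_k$: since the $P_k$ are orthogonal I can use \eqref{eq1.9} (or \eqref{eq2.03}) to drop to the diagonal, then \eqref{eq1.3} together with the cardinality bound $\mathrm{card}(F_k)\le N_+(d)$ to get
\[
 \int\Phi(\dot 0)\dot\otimes\overline{\Phi(\dot 0)}\,d\mu
 \prec N_+(d)\sum_{g\in F}\int y_g\dot\otimes\bar y_g\,d\mu
 \prec N_+(d)\Big(\sum d_j\dot\otimes\bar d_j\Big)^{\dot\otimes n},
\]
whence $\|\Phi(\dot 0)\|^2_{(2)}\le N_+(d)\,\Delta_+^{p}$.

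Finally I would combine the two estimates exactly as in \cite[p.~920]{P5} and \eqref{eq12.1}: set $x=\|f\|_{(p)}$, $\Delta=\Delta_+$, plug the bounds on $\|\Phi(\dot 0)\|_{(2)}$ and $\|\Psi(\pi)\|_{(2)}$ into the M\"obius formula, use Sch\"utzenberger's theorem $\sum_\pi|\mu(\dot 0,\pi)|=n!$ to sum over $\pi$, and solve the resulting polynomial inequality in $x/\Delta$ to extract $x\le[(4N_+(d))^{1/p}+9\pi/8]\,\Delta$. The bookkeeping constant $9\pi/8$ comes out the same as in Theorem~\ref{thm11.1} because the per-pair constant $\alpha=3\pi/4$ is unchanged. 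The main obstacle is the careful adaptation of the transference argument \cite[Sublemma~3.3]{P5}: one must check that, in the commutative/non-alternating setting, the auxiliary discrete group $G$ and the lifted elements $F_k$ can still be arranged so that singleton blocks contribute a factor $\|f_k\|_{(p)}$ while non-singleton blocks contribute $\|\tilde f_k\|_{(p)}$ bounded via \eqref{eqbuch1}; everything else is a routine transcription of the argument in \cite{P5}, and I would only sketch it, as is done for Theorem~\ref{thm11.1}.
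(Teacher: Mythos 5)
Your proposal is correct and is essentially the paper's own proof: the paper simply argues exactly as for Theorem~\ref{thm11.1} but starting from $\|f\|^n_{(p)}=\|f\dot\otimes\cdots\dot\otimes f\|_{(2)}$ (no adjoints), which is precisely the substitution you carry out, with the square-function terms then collapsing to the single quantity $\|\sum d_j\dot\otimes\bar d_j\|^{1/2}_{(p/2)}$. Two harmless slips in your narrative: the transference step still uses the operator-coefficient free Khintchine inequality (Corollary~\ref{cor9.3}, i.e.\ \eqref{eq9.4}, whose row and column terms coincide here) rather than the scalar-coefficient \eqref{eqbuch1}, and $p$-orthogonality plays no role in bounding $\Psi(\pi)$ (it is not assumed in this theorem); neither affects the argument.
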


\begin{proof}
We argue exactly as for Theorem~\ref{thm11.1} except that we start instead from
\[
\|f\|^n_{(p)} = \|f\dot{\otimes} \cdots \dot{\otimes} f\|_{(2)}.\eqno\qed
\]
\renewcommand{\qed}{}\end{proof}

\begin{rem}
In particular, if $E\subset \Gamma$ is a subset of a \emph{commutative} group, let $Z_{p+}(\gamma,E)$ be the cardinality of the set of injective $g\colon \ [1,\ldots, n]\to E$ such that $\gamma = g(1)g(2)\ldots g(n)$ and let $Z_+(E) = \sup\{Z_{p+}(\gamma,E)\mid \gamma\in\Gamma\} < \infty$. Then for any finitely supported family $(b(t))_{t\in E}$ in $B$ we have \eqref{eq11.1}  with $Z_+(E)$ in place of $Z(E)$. Thus we obtain $OH$ also for the span of certain $\Lambda(p)$-sets originally considered by Rudin \cite{Ru}, which are \emph{not} $\Lambda(p)_{cb}$-sets in the sense of \cite{Har}.
\end{rem}

\section{Appendix}\label{sec13}

\indent 

The goal of this appendix is to clarify the relation between ``moments defined by pairings'' used in \S 9
(following \cite{Buch}) and the well known Wick formula. The latter
 (probably going back independently to Ito and Wick)   was used by Ito in connection with multiple Wiener integrals and Wiener chaos.  Although we reformulate them using tensor products,
 the results below are all well known.
  \\
We first consider the Gaussian case in a very general framework. Let $B$ be a real vector space. Let $X$ be a $B$-valued Gaussian variable. This means that for any ${\bb R}$-linear form $\xi\in B^*$, the real valued variable $\xi(X)$ is Gaussian with mean zero and variance equal to ${\bb E}\xi(X)^2$. If $B$ is a complex space, (e.g.\ if $B={\bb C}$) we may view it a fortiori as a real one and the previous notion still makes sense.

Let $X = (X_1,\ldots, X_n)$ be a Gaussian variable with values in $B^n$. Then $X_1 \otimes\cdots\otimes X_n$  is a random variable with values in $B^{\otimes n}$. When $n$ is odd its mean vanishes. Let us assume that $X = (X_1,\ldots, X_n)$ is defined on $(\Omega,{\cl A}, {\bb P})$ and that $n$ is even. \\
Let $\pi$ be a partition of $[1,\ldots, n]$ into $K$ blocks. We will define a $B^{\otimes n}$-valued random variable $X^{\otimes\pi}$ on $(\Omega,{\cl A}, {\bb P})^{\otimes K}$ as follows: \ Assume that the blocks of $\pi$ have been enumerated as $\alpha_1,\ldots,\alpha_{K}$. We define $\widehat\omega_j$ for $j=1,\ldots, n$ by setting $\widehat\omega_j = \omega_k$ if $j\in \alpha_k$. We then define
\[
X^{\otimes\pi} (\omega_1,\ldots,\omega_{K}) = X_1(\widehat\omega_1) \otimes\cdots\otimes X_{n}(\widehat\omega_{n}).
\]
Note that the distribution (and hence all the moments) of $X^{\otimes\pi}$ do not depend on the particular enumeration $(\alpha_1,\alpha_2,\ldots)$ chosen to define it. In particular, ${\bb E}(X^{\otimes\pi})$ depends only on $\pi$. We now may state 

\begin{pro}\label{pro10}
For any even integer $n$ 
\begin{equation}\label{lemeq1}
{\bb E}(X_1\otimes\cdots\otimes X_n) = \sum\nolimits_{\nu\in P_2(n)} {\bb E}(X^{\otimes\nu}).
\end{equation}
\end{pro}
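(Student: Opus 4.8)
The plan is to reduce the statement to the classical Wick/Isserlis formula for scalar-valued Gaussian variables by testing against linear functionals. First I would note that both sides of \eqref{lemeq1} are elements of $B^{\otimes n}$, so it suffices to check that they pair to the same scalar against an arbitrary elementary functional $\xi_1 \otimes \cdots \otimes \xi_n$ with $\xi_j \in B^*$ (real-linear functionals; in the complex case we restrict to real-linear ones and use real-bilinearity of the tensor product, or simply observe everything is real-multilinear). Applying $\xi_1 \otimes \cdots \otimes \xi_n$ to the left-hand side gives $\EE(\xi_1(X_1)\cdots \xi_n(X_n))$, where $(\xi_1(X_1),\ldots,\xi_n(X_n))$ is a centered real Gaussian vector (as a linear image of the Gaussian $X$).

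The second step is to recall the scalar Wick formula: for a centered Gaussian vector $(Y_1,\ldots,Y_n)$ with $n$ even, $\EE(Y_1\cdots Y_n) = \sum_{\nu\in P_2(n)} \prod_{\{i,j\}\in\nu}\EE(Y_iY_j)$, while for $n$ odd the expectation vanishes. Applying this with $Y_j = \xi_j(X_j)$, the left-hand side of \eqref{lemeq1} pairs against $\xi_1\otimes\cdots\otimes\xi_n$ to $\sum_{\nu}\prod_{\{i,j\}\in\nu}\EE(\xi_i(X_i)\xi_j(X_j))$. The third step is to compute the same pairing for each summand $\EE(X^{\otimes\nu})$ on the right-hand side. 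Here I would use the definition of $X^{\otimes\nu}$: for $\nu$ a $2$-partition with blocks $\{i_1,j_1\},\ldots,\{i_{n/2},j_{n/2}\}$, the variable $X^{\otimes\nu}$ is built on a product probability space with one independent coordinate per block, so that $X_{i_k}$ and $X_{j_k}$ share the $k$-th coordinate while factors from distinct blocks are independent. Consequently $\EE(\xi_1\otimes\cdots\otimes\xi_n)(X^{\otimes\nu}) = \prod_{k}\EE(\xi_{i_k}(X_{i_k})\xi_{j_k}(X_{j_k}))$, using independence across blocks to factor the expectation and the fact that within a block the two functionals see the same underlying Gaussian coordinate of $X$. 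Summing over $\nu\in P_2(n)$ reproduces exactly the scalar Wick expansion obtained in the second step, so the two sides of \eqref{lemeq1} agree after pairing against every elementary functional, hence agree in $B^{\otimes n}$.

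I do not expect a serious obstacle here; the statement is essentially a bookkeeping repackaging of Wick's theorem. The one point requiring a little care is making precise that $\EE(X^{\otimes\nu})$ is well defined independently of the enumeration of the blocks (already noted in the excerpt) and that the pairing-against-functionals argument is legitimate in the possibly infinite-dimensional setting of a general vector space $B$: since $X_1\otimes\cdots\otimes X_n$ and each $X^{\otimes\nu}$ take values, pointwise, in a fixed finite-dimensional subspace of $B^{\otimes n}$ (spanned by the ranges of the $X_j$, which one may harmlessly assume finite-dimensional, or pass to the closed span), the expectations live in a finite-dimensional space and are determined by their pairings with functionals. Modulo these routine remarks, the proof is the three-line reduction above.
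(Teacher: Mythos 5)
Your argument is correct, but it takes a genuinely different route from the paper. You reduce the identity to the classical scalar Isserlis--Wick formula by pairing both sides against elementary tensors $\xi_1\otimes\cdots\otimes\xi_n$ of (real-)linear functionals: the left side becomes the $n$-th joint moment of the centered Gaussian vector $(\xi_j(X_j))_j$, and the independence-across-blocks structure of $X^{\otimes\nu}$ makes each term on the right factor into the corresponding product of covariances; separation of points on a finite-dimensional subspace of $B^{\otimes n}$ then closes the argument. The paper instead runs a self-contained ``central limit'' argument borrowed from Haagerup--Thorbj{\o}rnsen: by rotational invariance $X$ has the same distribution as $s^{-1/2}(X(1)+\cdots+X(s))$ for i.i.d.\ copies, one expands $\EE(\widehat X_1(s)\otimes\cdots\otimes\widehat X_n(s))$ over functions $g:[n]\to[s]$, groups terms by the partition $\pi(g)$, and lets $s\to\infty$ so that only the pair partitions survive (the count $|G_s(\nu)|\sim s^{n/2}$ versus $o(s^{n/2})$ for coarser partitions). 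Your proof is shorter and cleaner granted the scalar Wick formula, and it is a perfectly valid substitute here; what the paper's method buys is that it makes no appeal to an underlying scalar Gaussian calculus, so it transfers verbatim to the $q$-Gaussian proposition later in the appendix, where second quantization of isometries replaces rotational invariance and there is no classical joint distribution to test against. Your functional-pairing reduction would not carry over to that setting. The only points needing care in your write-up --- joint Gaussianity of $(\xi_j(X_j))_j$ from the $B^n$-valued hypothesis, well-definedness of $\EE(X^{\otimes\nu})$, and whether the tensor products and functionals are taken over $\bb R$ or $\C$ --- are all routine and you have flagged them.
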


\begin{proof} We will use the same trick as in  \cite[Prop. 1.5]{HT2}) to deduce the formula
 from the rotational invariance of Gaussian distribution.
Let $X(s) = (X_1(s),\ldots, X_n(s)$ be an i.i.d.\ sequence indexed by $s\in {\bb N}$ of copies of $X$. By the invariance of Gaussian distributions, the variable $\widehat X(s) = s^{-1/2}(X(1) +\cdots+ X(s))$ has the same distribution as $X$. Therefore for any $s$, we have
\[
{\bb E}(X_1 \otimes\cdots\otimes X_n) = {\bb E}(\widehat X_1(s) \otimes\cdots\otimes \widehat X_n(s))
\]
and hence
\begin{equation}\label{prfeq0}
{\bb E}(X_1 \otimes\cdots\otimes X_n)  = \lim_{s\to\infty} {\bb E}(\widehat X_1(s) \otimes\cdots\otimes \widehat X_n(s)).
\end{equation}
Let $E(s) = {\bb E}(\widehat X_1(s) \otimes\cdots\otimes \widehat X_n(s))$. We have
\[
E(s) = s^{-n/2} \sum\nolimits_g {\bb E}(X_1(g(1)) \otimes\cdots\otimes X_n(g(n))),
\]
where the sum runs over all functions $g\colon \ [1,\ldots, n]\to [1,\ldots, s]$. We claim that after elimination of all the (vanishing) odd terms and all the (asymptotically vanishing) terms such as 
\[
\frac1{s^2} \sum\nolimits_{t\le s} X_1(t) \otimes\cdots\otimes X_4(t)
\]
we find
\begin{equation}\label{prfeq00}
\lim\nolimits_{s\to\infty} E(s) = \sum\nolimits_{\nu\in P_2(n)} {\bb E}(X^{\otimes\nu}).
\end{equation}
Indeed, if we let
\[
t(g) = {\bb E}(X_1(g(1)) \otimes\cdots\otimes X_n(g(n))
\]
and if we denote by $\pi(g)$ the partition of $[1,\ldots, n]$ defined by $\bigcup_{k\le s} g^{-1}(k)$ we have (eliminating vanishing terms)
\[
E(s) = s^{-n/2} \sum\nolimits_{g\in A_s} t(g)
\]
where $A_s$ is the set of $g\colon \ [1,\ldots, n]\to [1,\ldots,s]$ such that $\pi(g)$ is a partition of $[1,\ldots, n]$ into blocks of even cardinality. For any such $\pi$, let
\[
G_s(\pi) = \{g\in A_s\mid \pi(g) = \pi\}.
\]
Let $B_s\subset A_s$ be the set of all $g$'s such that $\pi(g)$ is in $P_2(n)$ (i.e.\ is a partition into pairs) so that 
\[
B_s = \bigcup\nolimits_{\nu\in P_2(n)} G_s(\nu).
\]
Note that for any $g$ in $G_s(\pi)$ we have
\[
t(g) = {\bb E}(X^{\otimes\pi}).
\]
Let $P'_2(n)$ denote the set of partitions $\pi$ of $[1,\ldots, n]$ into blocks of even cardinality. We have
\[
E(s) =s^{-n/2} \sum\nolimits_{\pi \in P'_2(n)} |G_s(\pi)| {\bb E}(X^{\otimes\pi}).
\]
Note that $P_2(n) \subset P'_2(n)$.
Therefore
\begin{equation}\label{prfeq2}
E(s) = \sum\nolimits_{\nu\in P_2(n)} s^{-n/2} |G_s(\nu)| {\bb E}(X^{\otimes\nu}) + \sum\nolimits_{\pi\in P'_2(n)\backslash P_2(n)} s^{-n/2}|G_s(\pi)| {\bb E}(X^{\otimes\pi}).
\end{equation}
But now a simple counting argument shows that $|G_s(\nu)| = s(s-1) \ldots \left(s-\frac{n}2+1\right)\simeq s^{n/2}$ and hence $s^{-n/2} |G_s(\nu)|\to 1$, while for any $\pi$ in $P'_2(n)\backslash P_2(n)$ we have $s^{-n/2}|G_s(\pi)|\to 0$. Thus taking the limit when $s\to\infty$ in \eqref{prfeq2} yields our claim \eqref{prfeq00}. By \eqref{prfeq0} this completes the proof.
\end{proof}

To emphasize the connection with the classical Wick formula of which \eqref{lemeq1} is but an abstract form,
let us state:
\begin{cor} Let $X=(X_j)$ be a Gaussian sequence of real valued  random variables
(i.e. all their linear combinations are Gaussian).
Then
$$\EE(X_1\cdots X_n)=\sum_\nu \prod \langle X_{k_j} X_{\ell_j} \rangle $$
where the
sum runs over all partitions $\nu$  of $[1,\ldots, n]$ into pairs, the product runs
over all pairs $\{k_j,\ell_j\}$ ($j=1\cdots n/2$) of $\nu$, and the 
  scalar products are meant in $L_2$.
\end{cor}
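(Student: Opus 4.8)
The plan is to deduce the classical Wick formula directly from Proposition~\ref{pro10} by specializing $B = \R$ (or $B = \C$ regarded as a real vector space) and taking the Gaussian variable to be $X = (X_1,\ldots,X_n)$, the finite subfamily of the given Gaussian sequence. Since each $B^{\otimes k}$ is then canonically identified with $B = \R$ (or $\C$) via the multiplication map $t_1\otimes\cdots\otimes t_k \mapsto t_1\cdots t_k$, the tensor product $X_1\otimes\cdots\otimes X_n$ becomes simply the product $X_1\cdots X_n$, and the left side of \eqref{lemeq1} becomes $\EE(X_1\cdots X_n)$. It remains only to identify each summand $\EE(X^{\otimes\nu})$ on the right side with the product $\prod_j \langle X_{k_j} X_{\ell_j}\rangle$ over the pairs of $\nu$.

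First I would fix a pair partition $\nu \in P_2(n)$ with blocks $\{k_1,\ell_1\},\ldots,\{k_{n/2},\ell_{n/2}\}$, enumerated so that the $j$-th block is $\alpha_j = \{k_j,\ell_j\}$. By the construction preceding Proposition~\ref{pro10}, the variable $X^{\otimes\nu}$ lives on the product space $(\Omega,\cl A,\bb P)^{\otimes (n/2)}$ and is given by $X^{\otimes\nu}(\omega_1,\ldots,\omega_{n/2}) = X_1(\widehat\omega_1)\cdots X_n(\widehat\omega_n)$ where $\widehat\omega_i = \omega_j$ when $i \in \alpha_j$. Grouping the factors by block, this product is exactly $\prod_{j=1}^{n/2} \big(X_{k_j}(\omega_j) X_{\ell_j}(\omega_j)\big)$, a product over distinct coordinates $\omega_1,\ldots,\omega_{n/2}$ of the product probability space. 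By independence of these coordinates, its expectation factorizes: $\EE(X^{\otimes\nu}) = \prod_{j=1}^{n/2} \EE\big(X_{k_j} X_{\ell_j}\big) = \prod_{j=1}^{n/2} \langle X_{k_j} X_{\ell_j}\rangle$, the last equality being the definition of the $L_2$ scalar product of two (jointly Gaussian, hence square-integrable) real random variables. One should note here that the value is independent of the chosen enumeration of the blocks and of the labeling within each block (since $X_k X_\ell = X_\ell X_k$), which is precisely the invariance already recorded after the construction of $X^{\otimes\pi}$.

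Summing over all $\nu \in P_2(n)$ and invoking \eqref{lemeq1} then yields $\EE(X_1\cdots X_n) = \sum_\nu \prod_j \langle X_{k_j} X_{\ell_j}\rangle$, which is the claimed formula. I do not anticipate any serious obstacle: the only point requiring a word of care is the identification of $B^{\otimes n}$ with scalars when $B = \R$, and the bookkeeping that the product over blocks of $X^{\otimes\nu}$ really does separate into independent factors — but both are immediate from the definitions. The corollary is thus essentially a translation of Proposition~\ref{pro10} back into the concrete scalar language of the Wick formula.
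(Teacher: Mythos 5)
Your proof is correct and follows exactly the route the paper intends: the paper states this corollary without proof as the scalar specialization of Proposition~\ref{pro10}, and you carry out precisely that specialization (taking $B=\mathbb{R}$, identifying $B^{\otimes n}$ with $\mathbb{R}$ via multiplication, and factoring $\EE(X^{\otimes\nu})$ over the independent coordinates of the product space into the covariances of the pairs). No gaps.
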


\begin{cor}\label{acor1}
For any even integer $p$ and any $N\ge 1$, any sequence $X=(X_j)$ of i.i.d.\ Gaussian variables with values in the space $M_N$ of $N\times N$ (complex) matrices has its $p$-th moments defined by pairings.
(Here the moments are meant with respect to the functional $x\to {\bb E} \text{ \rm tr}(x)$.)
\end{cor}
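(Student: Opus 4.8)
The plan is to reduce the statement to the abstract Wick formula of Proposition~\ref{pro10}, applied to the Gaussian tuple obtained by interleaving the matrices $X_j$ with their adjoints. Fix $p=2n$ and an arbitrary $2n$-tuple $(k_1,\dots,k_{2n})$ of indices, and put $Y_j=X_{k_j}$ when $j$ is odd and $Y_j=X_{k_j}^{*}$ when $j$ is even, so that
\[
 X_{k_1}X_{k_2}^{*}\cdots X_{k_{2n}}^{*}=m(Y_1\otimes\cdots\otimes Y_{2n}),
\]
where $m\colon M_N^{\otimes 2n}\to M_N$ is the linearisation of the iterated product map. First I would observe that $(Y_1,\dots,Y_{2n})$ is a Gaussian variable with values in $(M_N)^{2n}$, viewing $M_N$ as a real vector space as in Proposition~\ref{pro10}: the real and imaginary parts of all the entries of all the $X_j$ form a jointly Gaussian family, and conjugation together with transposition are real-linear, so the same remains true after adjoining the entries of the $X_j^{*}$.

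Next I would apply Proposition~\ref{pro10} to get, in $M_N^{\otimes 2n}$,
\[
 {\bb E}(Y_1\otimes\cdots\otimes Y_{2n})=\sum_{\nu\in P_2(2n)}{\bb E}(Y^{\otimes\nu}),
\]
and then apply the fixed linear functional $L={\rm tr}\circ m$ to both sides; being linear, $L$ commutes with ${\bb E}$. The left-hand side becomes $({\bb E}\,{\rm tr})(X_{k_1}X_{k_2}^{*}\cdots X_{k_{2n}}^{*})=\tau(X_{k_1}X_{k_2}^{*}\cdots X_{k_{2n}}^{*})$. For the right-hand side I would use that, since the coordinates $\omega_1,\dots,\omega_n$ entering the definition of $Y^{\otimes\nu}$ are independent, the tensor ${\bb E}(Y^{\otimes\nu})$ factorises over the blocks of $\nu$, the block $\{i,j\}$ contributing the element ${\bb E}(Y_i\otimes Y_j)\in M_N\otimes M_N$ placed in the $i$-th and $j$-th tensor slots. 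Then two elementary remarks finish the count: (a) because the $X_j$ are independent and centred, ${\bb E}(Y_i\otimes Y_j)=0$ whenever $k_i\neq k_j$, so ${\bb E}(Y^{\otimes\nu})=0$ unless $\nu\sim(k_1,\dots,k_{2n})$ in the sense of \S\ref{sec9}; (b) when $\nu\sim(k_1,\dots,k_{2n})$, each factor ${\bb E}(Y_i\otimes Y_j)$ equals ${\bb E}(X^{(\varepsilon_i)}\otimes X^{(\varepsilon_j)})$, with $X^{(0)}=X$, $X^{(1)}=X^{*}$ and $\varepsilon_j$ the parity of $j$, since the $X_j$ are identically distributed; hence this factor, and therefore $L({\bb E}(Y^{\otimes\nu}))$, does not depend on $(k_1,\dots,k_{2n})$ but only on $\nu$.

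Writing $\psi(\nu)$ for the common value of $L({\bb E}(Y^{\otimes\nu}))$ over all tuples with $\nu\sim(k_1,\dots,k_{2n})$ — equivalently $\psi(\nu)={\bb E}\,{\rm tr}(X_{m_1}X_{m_2}^{*}\cdots X_{m_{2n}}^{*})$ for any $(m_1,\dots,m_{2n})$ with $m_i=m_j$ precisely when $\{i,j\}$ is a block of $\nu$ — we obtain
\[
 \tau(X_{k_1}X_{k_2}^{*}\cdots X_{k_{2n}}^{*})=\sum_{\nu\sim(k_1,\dots,k_{2n})}\psi(\nu),
\]
which is exactly the assertion that $(X_j)$ has $p$-th moments defined by pairings via the function $\psi\colon P_2(2n)\to{\bb C}$. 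I expect no conceptual difficulty here: the only slightly delicate point is the bookkeeping of tensor slots in the block-factorisation of ${\bb E}(Y^{\otimes\nu})$ and the verification that $L$ of that factorised tensor depends on the data only through $\nu$; the genuine probabilistic content — distinct independent centred Gaussians do not pair, while identical ones pair identically — is immediate once Proposition~\ref{pro10} is available.
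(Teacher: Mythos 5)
Your proposal is correct and follows essentially the same route as the paper: apply the abstract Wick formula of Proposition~\ref{pro10}, then use independence to kill the pairings with $k_i\neq k_j$ and identical distribution to see that the surviving terms depend only on $\nu$. The only (cosmetic) difference is that you place the adjoints on the variables themselves, requiring the easy remark that $x\mapsto x^*$ is real-linear so the interleaved tuple is still jointly Gaussian, whereas the paper keeps the tuple $(X_{k_1},\dots,X_{k_p})$ as is and absorbs the stars into the real-linear functional $\varphi(x_1\otimes\cdots\otimes x_p)=\mathrm{tr}(x_1x_2^*\cdots x_{p-1}x_p^*)$.
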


\begin{proof}
Consider the ${\bb R}$-linear map $\varphi\colon \ M_N\otimes\cdots\otimes M_N\to {\bb C}$ defined by
\[
\varphi(x_1\otimes\cdots\otimes x_p) = \text{tr}(x_1x^*_2 \ldots x_{p-1}x^*_p).
\]
Let $k=(k_1,\ldots, k_p)$ where $k_1,k_2,\ldots, k_p$ are positive integers. Let $Y_k = (X_{k_1},\ldots, X_{k_p})$. Applying \eqref{prfeq0} to $X_{k_1} \otimes\cdots\otimes X_{k_p}$ we find
\[
{\bb E}\varphi(X_{k_1} \otimes\cdots\otimes X_{k_p}) = \sum\nolimits_{\nu\in P_2(p)} {\bb E}\varphi(Y^{\otimes\nu}_k).
\]
Since the variables $X_1,X_2,...$ are assumed independent, we have ${\bb E}\varphi(Y^{\otimes\nu}_k)=0$ except possibly when $k\sim \nu$ (indeed, if $\{i,j\}$ is a block of $\nu$ and $k_i\ne k_j$ then the entries of the ${k_i}$ factor of $Y^{\otimes\nu}_k$ are orthogonal to those of the ${k_j}$ factor and independent of all the other factors  of $Y^{\otimes\nu}_k$). Moreover, since $X_1,X_2,...$ all  have the same distribution, it is easy to check that the distribution of $Y_k^{\otimes\nu}$ depends only on $\nu$ and not on $k$. Thus we obtain
\[
{\bb E}\varphi(X_{k_1} \otimes\cdots\otimes X_{k_p}) = \sum\nolimits_{\nu\sim k} \psi(\nu)
\]
with $\psi(\nu) = {\bb E}\varphi(Y^{\otimes\nu})$, with $Y=(X_1,\ldots, X_n)$.
\end{proof}
\begin{rem} The preceding result is used in \cite{HT2} for the complex Gaussian random matrices $(Y^{(N)}_j)_{j\ge 1}$ appearing in Corollary \ref{cor9.13}. In that case, since $Y^{(N)}\otimes Y^{(N)}$ has mean zero, there is an extra cancellation: $\psi(\nu)=0$ for any
    partition  $\nu\in P_2(n)$  admitting a block with two indices of the same parity.
\end{rem}
More generally,   the same proof shows

\begin{cor}\label{acor2}
The preceding corollary is valid for any even integer $p$ for any i.i.d.\ Gaussian sequence with values in $L_p(M,\tau)$ for any non-commutative measure space $(M,\tau)$. (Here the moments are meant with respect to $x\mapsto {\bb E}\tau(x)$.)
\end{cor}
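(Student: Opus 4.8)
The plan is to repeat, essentially word for word, the argument that proves Corollary~\ref{acor1}, replacing $M_N$ by $L_p(M,\tau)$ and $\mathrm{tr}$ by $\tau$. Fix the even integer $p=2n$ and an i.i.d.\ sequence $X=(X_j)_{j\ge 1}$ of $L_p(M,\tau)$-valued Gaussian variables on a (classical) probability space $(\Omega,{\cl A},{\bb P})$, where $L_p(M,\tau)$ is regarded as a real vector space (as in the Appendix). First I would introduce the $\mathbb R$-multilinear form
\[
 \varphi(x_1,\ldots,x_p)=\tau\big(x_1x_2^*\,x_3x_4^*\cdots x_{p-1}x_p^*\big),
\]
which is $\mathbb R$-linear in each variable and, by H\"older's inequality together with $\|x^*\|_p=\|x\|_p$, satisfies $|\varphi(x_1,\ldots,x_p)|\le\prod_j\|x_j\|_p$; hence $\varphi$ is a bounded $\mathbb R$-multilinear functional on $L_p(\tau)^p$, i.e.\ a bounded linear functional on the real projective tensor product $L_p(\tau)\hat\otimes_\pi\cdots\hat\otimes_\pi L_p(\tau)$. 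Note that $\varphi$ only ever involves a product of exactly $p$ elements of $L_p(\tau)$, which lies in $L_1(\tau)$, so the semifiniteness of $\tau$ creates no difficulty.

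Next I would apply Proposition~\ref{pro10} with $B=L_p(M,\tau)$: for the tuple $Y_k=(X_{k_1},\ldots,X_{k_p})$ attached to any $k=(k_1,\ldots,k_p)$ one has ${\bb E}(X_{k_1}\otimes\cdots\otimes X_{k_p})=\sum_{\nu\in P_2(p)}{\bb E}(Y_k^{\otimes\nu})$, and composing both sides with $\varphi$ yields
\[
 {\bb E}\,\tau\big(X_{k_1}X_{k_2}^*\cdots X_{k_{p-1}}X_{k_p}^*\big)=\sum_{\nu\in P_2(p)}{\bb E}\,\varphi\big(Y_k^{\otimes\nu}\big).
\]
The interchange of $\varphi$ and ${\bb E}$ is legitimate because a $B$-valued Gaussian has finite moments of every order, so $X_{k_1}\otimes\cdots\otimes X_{k_p}$ is Bochner integrable in the projective tensor product and $\varphi$, being continuous there, commutes with the integral. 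From here the argument is verbatim that of Corollary~\ref{acor1}: independence of the $X_j$ together with the vanishing of first moments forces ${\bb E}\,\varphi(Y_k^{\otimes\nu})=0$ unless $k\sim\nu$ (if $\{i,j\}$ is a block of $\nu$ with $k_i\ne k_j$, the randomness carried by that block enters only through the mean-zero, mutually independent factors $X_{k_i}$ and $X_{k_j}$, so, $\varphi$ being linear in each, the conditional expectation over that block vanishes), while identical distribution of the $X_j$ makes the law of $Y_k^{\otimes\nu}$ depend on $\nu$ alone. Setting $\psi(\nu)={\bb E}\,\varphi(Y^{\otimes\nu})$ with $Y=(X_1,\ldots,X_p)$ then gives ${\bb E}\,\tau(X_{k_1}X_{k_2}^*\cdots X_{k_p}^*)=\sum_{\nu\sim k}\psi(\nu)$, which is exactly the statement that $(X_j)$ has $p$-th moments defined by pairings with respect to $x\mapsto{\bb E}\tau(x)$.

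The only place where the present corollary is not literally a copy of Corollary~\ref{acor1} is that, for infinite-dimensional $B$, the $B^{\otimes n}$-valued expectations in Proposition~\ref{pro10} must be understood through a bounded pairing rather than as elements of the algebraic tensor product; I expect this to be the main (and still minor) obstacle. One handles it either by observing that the proof of Proposition~\ref{pro10} uses nothing about $B$ beyond stability of Gaussian laws under finite linear combinations and the continuity of the functional one eventually pairs against, or simply by re-running that proof with $\varphi$ inserted from the outset --- using $\widehat X(s)=s^{-1/2}\sum_{t\le s}X(t)$, which has the same law as $X$, then discarding the vanishing and lower-order terms and invoking the counting estimate $|G_s(\nu)|\sim s^{p/2}$ --- so as to obtain the identity of scalars directly.
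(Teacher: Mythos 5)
Your proposal is correct and is essentially the paper's own argument: the paper proves this corollary simply by remarking that ``the same proof shows'' it, i.e.\ by repeating the proof of Corollary~\ref{acor1} with the $\mathbb R$-multilinear functional $\varphi(x_1,\ldots,x_p)=\tau(x_1x_2^*\cdots x_{p-1}x_p^*)$ on $L_p(\tau)$ in place of $\mathrm{tr}$ on $M_N$, exactly as you do. Your additional remarks on boundedness of $\varphi$ via H\"older and on interpreting the tensor-valued expectations for infinite-dimensional $B$ are sensible fillers for details the paper leaves implicit, but they do not change the route.
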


 Using exactly the same method  but replacing stochastic independence  by freeness in the sense of \cite{VDN} and Gaussian by semi-circular (or ``free-Gaussian''), it is easy to extend the preceding Corollary to the free  case. More generally, we can use the $q$-Fock space ($-1\le q\le 1$) and the associated $q$-Gaussian variables described in \cite{BS1,BS2,BKS}.

Fix $q$ with $-1\le q<1$. Given a complex Hilbert space ${\cl H}$, we denote by ${\cl F}_q({\cl H})$ the $q$-Fock space associated to ${\cl H}$. Let us assume that ${\cl H}$ is the complexification of a real Hilbert space $H$ so that  ${\cl H} = H+iH$. For simplicity we assume $H=\ell_2({\bb R})$. To any real Hilbert subspace $K\subset H$ we can associate (following \cite{BKS}) a von~Neumann algebra $\Gamma_q(K)$, so that we have natural embeddings $\Gamma_q(K_1)\subset \Gamma_q(K_2)$ when $K_1\subset K_2$. Moreover $\Gamma_q(H)$ is equipped with a normalized trace (faithful and normal) denoted by $\tau_q$, that we may restrict to $\Gamma_q(K)$ to view the latter as a non-commutative probability space.

For any $h\in H$ we denote by $a^*(h)$ (resp. $a(h)$) the $q$-creation (resp. $q$-annihilation) operator on ${\cl F}_q({\cl H})$ and we  let $g_q(h) = a(h)+ a^*(h)$. By definition, the von~Neumann algebras $\Gamma_q(K)$ is generated by $\{g_q(h)\mid h\in K\}$. We  will say that a family $X_1,\ldots X_n$ in $\Gamma_q(H)$ is $q$-independent if there are mutually orthogonal real subspaces $K_j\subset H$ such that $X_j\in \Gamma_q(K_j)$.

Let $B=B(\ell_2)$. More generally, consider $x_1,\ldots, x_n$ in $B\otimes \Gamma_q(H)$. We will say that $x_1,\ldots, x_n$ are $q$-independent if there are $K_j$ as above such that $x_j\in B\otimes \Gamma_q(K_j)$ for all $j=1,\ldots, n$. The elements of $g_q(H) = \{g_q(h) \mid h\in H\}$ will be called $q$-Gaussian.

More generally, an element $x\in B\otimes \Gamma_q(H)$ will be called $q$-Gaussian if $x\in B\otimes g_q(H)$. The $q$-Gaussian elements satisfy an analogue (called ``second quantization'') of the rotational invariance of Gaussian distributions: For any $\bb R$-isometry $T\colon \ K\to H$   the families $\{g_q(t)\mid t\in K\}$ and $\{g_q(Tt)\mid t\in K\}$ have identical distributions. Here ``same distribution'' means equality of  the  moments of all non-commutative monomials. We will denote by $\widetilde T\colon \ B\otimes g_q(K)\to B\otimes g_q(H)$ the linear map taking $b\otimes g_q(t)$ to $b\otimes g_q(Tt)$ $(b\in B, t\in K)$. In particular, if $x\in g_q(K)$ and if $K_1=K_2=\cdots= K_s=K$ we may use the isometry $u_s\colon \ K\to K_1\oplus\cdots\oplus K_s\subset H$ defined by $u_s(x)=s^{-1/2}(x\oplus\cdots\oplus x)$ in order to define elements $x_j$ in $g_q(K_j)$ each with the same distribution as $x$ such that $x \overset{\text{dist}}{=} s^{-1/2}(x_1+\cdots+ x_s)$. 

Let $x_1,\ldots, x_n$ be any sequence in $B\otimes g_q(H)$ and let $\pi$ be a partition. For any block $\alpha$ of $\pi$ we give ourselves an isometry $u_\alpha\colon \ H\to H_\alpha\subset H$ where $H_\alpha$ are mutually orthogonal (real) Hilbert subspaces.
Then we define a sequence $(y_1,\ldots, y_n)$ in $B\otimes g_q(H)$ by setting
\begin{equation}
y_j = \widetilde u_\alpha x_j.\tag*{$\forall j\in\alpha$}
\end{equation}
It is not hard to check that $\widehat\tau(y_1\dot\otimes\cdots\dot\otimes y_n)$ depends only on $x = (x_1,\ldots, x_n)$ and $\pi$ (and not on the $u_\alpha$'s). Therefore we may set
\[
\widehat\tau(x^\pi) \overset{\text{def}}{=} \widehat\tau(y_1 \dot\otimes\cdots\dot\otimes y_n).
\]
As before, by symmetry $\widehat\tau_q(x_1 \dot\otimes\cdots\dot\otimes x_n) = 0$ for all odd $n$.\\
The $q$-Gaussian analogue of \eqref{lemeq1} is as follows:
\begin{pro} For any even integer $n$ and  any $n$-tuple
  $x_1\ldots, x_n$    in $B\otimes g_q(H)$ ($-1\le q<1$), we have   
\[
\widehat\tau_q(x_1 \dot\otimes\cdots\dot\otimes x_n) = \sum\nolimits_{\nu\in P_2(n)} \widehat\tau_q(x^\nu).
\]
\end{pro}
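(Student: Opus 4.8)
The plan is to mimic exactly the proof of Proposition \ref{pro10}, replacing stochastic independence by $q$-independence and the scalar/$B$-valued Gaussian variables by $q$-Gaussian elements of $B\otimes g_q(H)$, and using the ``second quantization'' invariance (the $q$-analogue of the rotational invariance of Gaussian distributions) in place of the rotational invariance used there. First I would fix the $n$-tuple $x=(x_1,\ldots,x_n)$ in $B\otimes g_q(H)$ and, using the isometries $u_s\colon H\to H_1\oplus\cdots\oplus H_s\subset H$ described just before the statement, produce for each $s\ge 1$ a $q$-independent family of $s$ ``copies'' $x^{(1)},\ldots,x^{(s)}$ of $x$, each $x^{(t)}=(x^{(t)}_1,\ldots,x^{(t)}_n)$ lying in $B\otimes g_q(K_t)$ with $K_t$ mutually orthogonal, such that the ``averaged'' tuple $\widehat x_j(s) = s^{-1/2}\sum_{t\le s} x^{(t)}_j$ has the same joint distribution (= same values of $\widehat\tau_q$ on all monomials) as $x_j$. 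Since $\widehat\tau_q(x_1\dot\otimes\cdots\dot\otimes x_n)$ is determined by such moments, we get the identity
\[
\widehat\tau_q(x_1\dot\otimes\cdots\dot\otimes x_n) = \widehat\tau_q(\widehat x_1(s)\dot\otimes\cdots\dot\otimes \widehat x_n(s))
\]
for every $s$, hence also in the limit $s\to\infty$.

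Next I would expand the right-hand side. Writing $\widehat x_j(s)=s^{-1/2}\sum_{t\le s}x^{(t)}_j$ and multiplying out, $\widehat\tau_q(\widehat x_1(s)\dot\otimes\cdots\dot\otimes\widehat x_n(s))$ becomes $s^{-n/2}\sum_g \widehat\tau_q(x^{(g(1))}_1\dot\otimes\cdots\dot\otimes x^{(g(n))}_n)$, the sum over all $g\colon[1,\ldots,n]\to[1,\ldots,s]$. For a given $g$, let $\pi(g)$ be the associated partition. Two key reductions then enter, exactly as in Proposition \ref{pro10}: (i) if some block of $\pi(g)$ is a singleton, the corresponding term vanishes, because that factor lives in a $\Gamma_q(K_t)$ orthogonal (and $q$-independent) to all the others and $\widehat\tau_q$ of a single $q$-Gaussian factor vanishes — more precisely one factors out $\widehat\tau_q$ over the orthogonal decomposition using the tracial/$q$-independence structure, and the mean of a $q$-Gaussian is zero; (ii) for a fixed partition $\pi$ into blocks all of even cardinality, all $g$ with $\pi(g)=\pi$ give the \emph{same} value of $\widehat\tau_q$, namely $\widehat\tau_q(x^\pi)$, by the definition of $\widehat\tau_q(x^\pi)$ (which was shown to be independent of the choice of the isometries $u_\alpha$). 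Combining, $\widehat\tau_q(\widehat x_1(s)\dot\otimes\cdots\dot\otimes\widehat x_n(s)) = \sum_{\pi\in P'_2(n)} s^{-n/2}|G_s(\pi)|\,\widehat\tau_q(x^\pi)$, where $P'_2(n)$ is the set of partitions into blocks of even size and $G_s(\pi)=\{g:\pi(g)=\pi\}$.

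Finally I would do the asymptotic counting, verbatim from the Gaussian case: $|G_s(\nu)|=s(s-1)\cdots(s-\tfrac n2+1)\sim s^{n/2}$ for $\nu\in P_2(n)$ (so $s^{-n/2}|G_s(\nu)|\to 1$), while $|G_s(\pi)|=O(s^{n/2-1})$ for any $\pi\in P'_2(n)\setminus P_2(n)$ (a block of size $\ge 4$ forces strictly fewer than $n/2$ distinct values), so those terms vanish in the limit. Letting $s\to\infty$ yields
\[
\widehat\tau_q(x_1\dot\otimes\cdots\dot\otimes x_n) = \sum_{\nu\in P_2(n)} \widehat\tau_q(x^\nu),
\]
which is the claim. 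The only genuinely $q$-specific points — and hence the main obstacle — are: first, the fact that the one-block $q$-Gaussian $\widehat\tau_q$ vanishes and that $\widehat\tau_q$ ``factors'' across a $q$-independent orthogonal decomposition well enough to kill the singleton terms (this is where one uses that $\Gamma_q(K_1\oplus K_2)$ with $K_1\perp K_2$ behaves, for the computation of these particular mixed moments, like a tensor/free-type product — a standard consequence of the Bo\.zejko–Speicher Wick/pairing formula for the $q$-Fock space); and second, the well-definedness of $\widehat\tau_q(x^\pi)$ independently of the $u_\alpha$'s, which is asserted in the paragraph preceding the statement and which I would simply cite. Everything else is the same combinatorial limit argument as in Proposition \ref{pro10}, so I would keep the exposition deliberately brief, signalling at each step that ``the same reasoning as in the proof of Proposition \ref{pro10}'' applies.
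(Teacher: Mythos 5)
Your proposal is correct and follows exactly the route the paper takes: the paper's own proof is the single sentence that ``with the preceding ingredients, this can be proved exactly as Proposition 10 above,'' and your write-up is precisely that argument — second quantization invariance replacing rotational invariance, the $s^{-1/2}$-averaged $q$-independent copies, the vanishing of terms whose partition has a singleton block, the well-definedness of $\widehat\tau_q(x^\pi)$, and the same asymptotic count $s^{-n/2}|G_s(\nu)|\to 1$ for pair partitions and $\to 0$ otherwise.
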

\begin{proof}
With the preceding ingredients, this can be proved exactly as Proposition~\ref{pro10} above.
\end{proof}
In particular, replacing $B$ by $\bb C$, we find
\begin{cor}\label{acor3}
Any $q$-Gaussian sequence $(x_j)$ in the sense of \cite[Def.~3.3]{BKS} with covariance equal to the identity matrix has $p$-th moment, defined by parings for any even integer $p$.
\end{cor}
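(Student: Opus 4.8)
The statement to prove is Corollary \ref{acor3}: any $q$-Gaussian sequence $(x_j)$ with identity covariance has $p$-th moments defined by pairings for every even integer $p$. Since the $q$-Gaussian Wick formula (the preceding Proposition) is already established, the plan is essentially a bookkeeping argument analogous to the one used in Corollary \ref{acor1} / Corollary \ref{acor2} for the genuine Gaussian case, only now working with $\widehat\tau_q$ and $q$-independence in place of $\EE\,\mathrm{tr}$ and stochastic independence.

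\textbf{Key steps.} First I would set $B = \mathbb C$ (no operator coefficients are needed here, as noted in the text) and specialize the Proposition: for $x_1,\dots,x_p$ in $g_q(H)$ we have $\tau_q(x_1\cdots x_p) = \sum_{\nu\in P_2(p)} \tau_q(x^\nu)$. Second, given the sequence $(x_j)$ with identity covariance, I would choose multi-indices $k=(k_1,\dots,k_p)$ and apply this to $x_{k_1}\cdots x_{k_p}$, where — since the $x_j$ are $q$-Gaussian and the sequence is what \cite{BKS} calls $q$-Gaussian, i.e.\ a free/$q$-independent family realized via mutually orthogonal subspaces — one checks that the $*$-pattern in the definition of ``moments defined by pairings'' ($x_{k_1} x_{k_2}^* x_{k_3}\cdots$) coincides with $x_{k_1}\cdots x_{k_p}$ because each $g_q(h)$ is self-adjoint, or else one must carry the $*$ through, which is harmless since the covariance is real and the subspaces real. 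Third — the crucial cancellation — for a pairing $\nu$ I would argue that $\tau_q(x_k^\nu) = 0$ unless $k\sim\nu$: if $\{i,j\}$ is a block of $\nu$ but $k_i\ne k_j$, then $x_{k_i}$ and $x_{k_j}$ live in orthogonal subspaces, hence are $q$-independent from each other and from all the remaining factors, and a mixed $q$-Gaussian moment with a single ``isolated'' factor (coming from a block that is not genuinely paired within its own copy) vanishes. This is exactly the $q$-analogue of the orthogonality argument in the proof of Corollary \ref{acor1}. Fourth, I would invoke the stationarity of the construction: because all $x_j$ have the same distribution and the $q$-independence is symmetric, the value $\tau_q(Y^\nu)$ with $Y=(x_1,\dots,x_p)$ depends only on $\nu$, so we may define $\psi(\nu) = \tau_q(Y^\nu)$. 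Conclusion: $\tau_q(x_{k_1}x_{k_2}^*\cdots) = \sum_{\nu\sim k}\psi(\nu)$, which is precisely the definition of having $p$-th moments defined by pairings.

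\textbf{Main obstacle.} The step I expect to require the most care is the vanishing claim $\tau_q(x_k^\nu) = 0$ when $k\not\sim\nu$. In the classical Gaussian case this is immediate from independence plus mean zero of odd/isolated factors; in the $q$-deformed setting one must instead appeal to the structure of $q$-independent families — namely that if $x\in\Gamma_q(K)$ with $K\perp K'$ and the rest of the monomial lies in $\Gamma_q(K')$, then the single occurrence of $x$ forces the moment to vanish, because the $q$-Gaussian moments are sums over pairings of the underlying vectors and an unpaired vector in $K$ cannot be contracted against anything in $K'$. Concretely this follows by applying the $q$-Gaussian Wick formula one more level down (to the enlarged collection of vectors defining $x_k^\nu$) and observing that no pairing can match the lone factor. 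Once that is in hand, everything else is the same counting bookkeeping as in Corollary \ref{acor1}, so I would state it briefly and refer to that proof, exactly as the surrounding text does with ``the same proof shows''.
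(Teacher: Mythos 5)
Your proposal is correct and follows essentially the same route as the paper: the paper derives the corollary from the $q$-Gaussian Wick formula by taking $B=\mathbb{C}$ and implicitly reusing the bookkeeping from the matrix-Gaussian case (Corollary \ref{acor1}), namely the vanishing of $\tau_q(x^\nu)$ when $k\not\sim\nu$ by orthogonality of the underlying vectors, and the fact that $\psi(\nu)=\tau_q(Y^{\nu})$ depends only on $\nu$. You merely spell out these steps (including the harmlessness of the $*$'s by self-adjointness of $g_q(h)$), which the paper leaves to the reader.
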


\n\textbf{Acknowledgment.}  I am extremely grateful to Quanhua Xu for a careful reading that led to numerous corrections and improvements.

\end{document}